\documentclass[12pt]{amsart}
\usepackage[T1]{fontenc}

\usepackage[square,numbers]{natbib}
\bibliographystyle{alpha}

\usepackage{hyperref}
\usepackage{graphicx}
\usepackage{tikz-cd}
\usepackage{amsmath}
\usepackage{xfrac} 

\usepackage{caption}
\usepackage{subcaption}

\setlength{\oddsidemargin}{0in}
\setlength{\evensidemargin}{0in}
\setlength{\marginparwidth}{0in}
\setlength{\marginparsep}{0in}
\setlength{\marginparpush}{0in}
\setlength{\topmargin}{0in}
\setlength{\headheight}{10pt}
\setlength{\headsep}{10pt}
\setlength{\footskip}{.3in}
\setlength{\textheight}{9.2in}
\setlength{\textwidth}{6.5in}
\setlength{\parskip}{4pt}

\newtheorem{theorem}{Theorem}[section]
\newtheorem{lemma}[theorem]{Lemma}
\newtheorem{corollary}[theorem]{Corollary}

\newtheorem{conjecture}[theorem]{Conjecture}
\theoremstyle{definition}

\theoremstyle{remark}
\newtheorem{remark}{Remark}

\newcommand{\HON}{{\tt HONEY}(\tau_n)}
\newcommand{\MOB}{{\tt M \ddot{O} BIUS}(\widetilde{\tau}_{n},\delta)}
\newcommand{\BDR}{{\tt BDRY}(\widetilde{\tau}_{n}, \delta)}
\newcommand{\R}{\mathbb{R}}
\newcommand{\C}{\mathbb{C}}
\newcommand{\Z}{\mathbb{Z}}
\newcommand{\Q}{\mathbb{Q}}
\newcommand{\N}{\mathbb{N}}
\newcommand{\Parn}{{\rm Par}_n}
\newcommand{\ParnQ}{{\rm Par}_n^{\mathbb{Q}}}

\newcommand{\NLsat}{{\rm NL}\text{-}{\rm sat}(n)}
\newcommand{\modtwo}{\hspace{0.2em} ( {\rm mod}\hspace{0.2em} 2)}

\title{Proof of the Newell-Littlewood saturation conjecture}
\author{Jaewon Min}
\address{Dept.~of Mathematics, University of Illinois at Urbana-Champaign, Urbana, IL 61801}
\email{jaewonm2@illinois.edu}
\date{August 30, 2024}

\begin{document}

\maketitle

\vspace{-1ex}

\begin{abstract}
By inventing the notion of \emph{honeycombs}, A.~Knutson and T.~Tao proved the saturation conjecture for Littlewood-Richardson coefficients. The Newell-Littlewood numbers are a generalization of the Littlewood-Richardson coefficients. By introducing honeycombs on a M{\"o}bius strip, we prove the saturation conjecture for Newell-Littlewood numbers posed by S.~Gao, G.~Orelowitz and A.~Yong.  
\end{abstract}

\setcounter{tocdepth}{2}

\tableofcontents


\section{Introduction}\label{sec1}


\subsection{Background} The irreducible polynomial representations $V_\lambda$ of $\text{GL}_n\C$ are indexed by the set of partitions 
\begin{equation}\label{eqn1.1}
\Parn := \{ \lambda = (\lambda_1 ,\cdots , \lambda_n ) \in \Z^n \mid \lambda_1 \geq \cdots \geq \lambda_n \geq 0  \};
\end{equation}
see, \textit{e.g.},  \cite{Ful04}. For each $\mu, \nu \in \Parn$, 
\begin{equation}\label{eqn1.2}
V_\mu \otimes V_\nu \cong \bigoplus_{\lambda \in {\rm Par}_n} V_\lambda^{\oplus c_{\mu , \nu}^{\lambda}}.
\end{equation}
The tensor product multiplicities $c_{\mu, \nu}^{\lambda}$ are the \textbf{Littlewood-Richardson coefficients}. 

For each $k \in \N:=\{1,2,3,\ldots\}$ and $\lambda \in {\rm Par}_n$, let $k \lambda := (k \lambda_1 , \cdots , k\lambda_n)$. 

\begin{theorem}[Saturation of Littlewood-Richardson coefficients {\rm\citep{Tao99}}]\label{thm1.1}
Let $\lambda, \mu, \nu \in \Parn$. If there exists $k \in \N$ such that $c_{k \mu , k \nu}^{k \lambda} > 0$, then $c_{\mu , \nu}^{\lambda} > 0$.
\end{theorem}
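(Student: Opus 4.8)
The plan is to reprove Theorem~\ref{thm1.1} by the \emph{honeycomb method} of Knutson and Tao, set up in the form that will later adapt to the M\"obius strip. Recall that a honeycomb is a finite graph in the plane whose bounded and semi-infinite edges all run in the three directions at mutual angle $120^\circ$, obeying the \emph{zero-tension} (balancing) condition at every internal vertex, together with three parallel families of semi-infinite rays whose transverse positions, read in weakly decreasing order, encode $\mu$, $\nu$, and (the reversal of) $-\lambda$. The first ingredient is the honeycomb model: $c_{\mu\nu}^{\lambda}$ equals the number of honeycombs with this prescribed boundary all of whose vertices lie in the lattice spanned by the boundary data --- equivalently, $c_{\mu\nu}^{\lambda}$ is the number of lattice points of the \emph{honeycomb polytope} $\HON$, where $\tau_n$ is the boundary datum attached to $(\lambda,\mu,\nu)$. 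I would take this equivalence as the starting point; it is classical, reducing through hives and Berenstein--Zelevinsky patterns to the standard combinatorial rule for $c_{\mu\nu}^{\lambda}$.

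Granting the model, the statement becomes a purely polyhedral one. The constraints cutting out $\HON$ are the balancing conditions --- linear and homogeneous in the edge coordinates --- together with the boundary equalities, which are linear in $\tau_n$; hence the honeycomb polytope associated with $k\tau_n$, i.e.\ with $(k\lambda,k\mu,k\nu)$, is exactly the dilate $k\cdot\HON$. So the hypothesis $c_{k\mu,k\nu}^{k\lambda}>0$ says that $k\cdot\HON$ contains a lattice point, and in particular $\HON\neq\emptyset$. The theorem is thereby reduced to the assertion: \emph{if $\lambda,\mu,\nu$ are integral and $\HON$ is nonempty, then $\HON$ contains a lattice point}; and for this it is enough to prove that every vertex of $\HON$ is an integer honeycomb, since a nonempty polytope has a vertex.

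A vertex of $\HON$ is precisely a \emph{rigid} honeycomb --- one admitting no nontrivial deformation that fixes the boundary --- so the crux is the structure theorem that a rigid honeycomb with integral boundary data has all of its vertices at lattice points. Here one studies the combinatorics of a rigid honeycomb --- its edges, their multiplicities, and the configuration of its (possibly higher-valent) vertices --- shows that it can contain no flexible local configuration, such as a short cycle or a pair of nearby parallel edges that could be slid against one another, and concludes that the position of every edge is obtained from the integral boundary positions by a sequence of additions and subtractions of quantities already known to be integral. I expect this structure theorem to be the main obstacle: the polytope bookkeeping above is routine, but identifying exactly which local pictures can persist in a rigid honeycomb, and verifying that integrality propagates through the vertices where edges of several multiplicities meet, is where the genuine difficulty lies. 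Once it is established the theorem follows at once, and --- this being the point of the paper --- essentially the same three-step scheme (honeycomb model, homogeneity of the polytope, integrality of rigid configurations) is what we will run for honeycombs on a M\"obius strip to obtain the Newell--Littlewood saturation theorem.
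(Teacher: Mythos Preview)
Your proposal is correct and follows the same three-step scheme the paper uses (citing Knutson--Tao as Theorems~\ref{thm2.1} and~\ref{thm2.2}): the honeycomb model for $c_{\mu,\nu}^{\lambda}$, rescaling by $1/k$ to get a real honeycomb with boundary $(\mu^*,\nu^*,\lambda)$, and then the existence of an integral honeycomb with that boundary. One small point of difference: you reduce to the claim that \emph{every} vertex of the boundary-fixed polytope is integral, whereas Knutson--Tao (and the paper's later M\"obius analogue in Section~\ref{sec4}) single out one specific vertex---the \emph{largest-lift}, the maximizer of a carefully weighted perimeter functional---and the rigidity/propagation analysis you sketch is really only carried out for that distinguished vertex, whose extremality is used repeatedly (no inflatable hexagon, no slidable loop). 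Your stronger formulation is plausible but is not what is actually proved, so when you adapt this to the M\"obius setting you should plan on working with the largest-lift rather than an arbitrary extreme point.
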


A.~Knutson and T.~Tao proved Theorem~\ref{thm1.1} using \textit{honeycombs} \citep{Tao99}. Honeycombs are combinatorial objects used to count Littlewood-Richardson coefficients. This paper concerns a generalization of Theorem~\ref{thm1.1} and its proof. 

The significance of the saturation theorem stems from \textit{Horn's conjecture} \citep{Hor62} which gives a recursive description of linear inequalities, called \emph{Horn's inequalities}, on the eigenvalues of $n\times n$ Hermitian matrices $A$, $B$ and $A+B$. Theorem~\ref{thm1.1} combined with earlier work of  A.~A.~Klyachko \citep{Kly98} proved Horn's conjecture; see W.~Fulton's survey \citep{Ful00}.


\subsection{Main result}

We generalize Theorem~\ref{thm1.1} and its proof to the  \textbf{Newell-Littlewood numbers}, which are defined, using the Littlewood--Richardson coefficients, as
follows:
\begin{equation}\label{eqn1.4}
N_{\lambda, \mu , \nu}:= \sum_{\alpha , \beta , \gamma \in \Parn} c_{\beta , \gamma}^{\lambda} c_{\gamma , \alpha}^{\mu} c_{\alpha , \beta}^{\nu} \quad (\lambda , \mu , \nu \in \Parn).
\end{equation}

For each $\lambda \in \Parn$, let $|\lambda | := \lambda_1 + \cdots + \lambda_n$. If $c_{\mu, \nu}^\lambda \neq 0$, then $|\mu| + |\nu | = |\lambda |$. According to \citep[Lemma 2.2]{Yon21},
\begin{equation}\label{eqn1.6}
|\mu| + |\nu | = |\lambda | \quad \Rightarrow \quad N_{\lambda , \mu , \nu} = c_{\mu , \nu}^{\lambda}.
\end{equation}
Thus, Newell-Littlewood numbers  generalize  Littlewood-Richardson coefficients.

In 2021, S.~Gao, G.~Orelowitz and A.~Yong \citep[Conjecture~5.5, 5.6]{Yon21} conjectured a generalization of Theorem \ref{thm1.1}.
In \emph{ibid.}, this conjecture was proved for the special cases that $\lambda = \mu = \nu$ \cite[Theorem~4.1]{Yon21} and for $n=2$ \cite[Theorem~4.1]{Yon21}. In \citep[Corollary~6.1]{Yon22}, S.~Gao, G.~Orelowitz, N.~Ressayre, and A.~Yong gave a computational proof of the cases when $n \leq 5$.
Our main result is a complete proof of said conjecture from \citep[Conjecture~1.1]{Yon21}, 
by modifying the proof of Theorem \ref{thm1.1} in \citep{Tao99}.

\begin{theorem}[Newell-Littlewood saturation {\rm \cite[Conjecture 5.5, 5.6]{Yon21}}]\label{thm1.2}
Let  $\lambda,\mu,\nu \in \Parn$ satisfying $|\lambda|+|\mu|+|\nu| \equiv 0 \modtwo$. 
If there exists $k \in \N$ such that $N_{k\lambda,k\mu,k\nu} >0$, then $N_{\lambda,\mu,\nu} >0$.
\end{theorem}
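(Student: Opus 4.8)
The plan is to adapt the Knutson--Tao honeycomb machinery from the proof of Theorem~\ref{thm1.1} to the ``folded'' setting dictated by the structure of the Newell--Littlewood numbers. The starting point is the observation, standard in this circle of ideas, that $N_{\lambda,\mu,\nu}$ counts lattice points in a polytope built by gluing together three Littlewood--Richardson cones according to the three-term product $c_{\beta,\gamma}^{\lambda} c_{\gamma,\alpha}^{\mu} c_{\alpha,\beta}^{\nu}$ in \eqref{eqn1.4}: each factor is a honeycomb with prescribed boundary, and the internal partitions $\alpha,\beta,\gamma$ are shared between adjacent factors. The combinatorial upshot is that $N_{\lambda,\mu,\nu}>0$ if and only if a certain honeycomb-like object---the $\HON$/$\MOB$ object the paper is clearly setting up, a honeycomb drawn on a M\"obius strip---exists with boundary data determined by $\lambda,\mu,\nu$. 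First I would make this dictionary precise: define the M\"obius honeycomb, identify its boundary constraints, and prove that rational (equivalently, after scaling, integral) M\"obius honeycombs with boundary $(\lambda,\mu,\nu)$ exist exactly when $N_{k\lambda,k\mu,k\nu}>0$ for some $k$, and that an \emph{integral} one exists exactly when $N_{\lambda,\mu,\nu}>0$.

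Granting that dictionary, the theorem reduces to a purely polyhedral statement: the polytope of M\"obius honeycombs with given rational boundary, when nonempty, contains a lattice point. This is exactly the shape of Knutson--Tao's argument, so the plan is to run their proof in the new setting. The key steps, in order: (1) set up the M\"obius-honeycomb polytope and its boundary map, checking that the parity hypothesis $|\lambda|+|\mu|+|\nu|\equiv 0\modtwo$ is precisely what makes the relevant affine lattice nonempty (this is the analogue of the ``content'' condition $|\mu|+|\nu|=|\lambda|$ and is the one genuinely new feature compared to the $\mathrm{GL}_n$ story); (2) prove the analogue of the Knutson--Tao structure theory for M\"obius honeycombs---in particular that a generic M\"obius honeycomb has a well-defined combinatorial type, and that degenerate ones can be perturbed; (3) show that if the boundary is integral and the polytope is nonempty, one can find a vertex (or more precisely a point) of the polytope whose coordinates one can argue are integral, using the ``no negative gon'' / tension-in-a-spring rigidity argument that drives the original proof. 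Steps~(2) and~(3) are where the M\"obius identification forces real modifications, because the global topology changes which local configurations can occur---the boundary now wraps around---and one must redo the combinatorial case analysis of how edges and vertices of a honeycomb interact near the seam.

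The main obstacle I expect is step~(3) in the presence of the M\"obius twist: in the Knutson--Tao proof, the crucial mechanism is that an ``overlaid'' pair of honeycombs, if it had a non-integral vertex, would force the existence of an impossible bounded region, and the contradiction is extracted from a planar/convexity argument. On the M\"obius strip, convexity is only local, and the gluing that produces the strip identifies two boundary segments with a flip; I expect that the honest work is in showing that the relevant ``measure'' or ``area'' functional that Knutson--Tao use to derive their contradiction still behaves monotonically, or in replacing it with an invariant adapted to the twisted geometry. A secondary subtlety is bookkeeping with the parity condition throughout: one must check it survives all the perturbation and degeneration steps, i.e. that the sublattice one works in is always a coset of the ``even'' lattice, so that the final lattice point one produces genuinely corresponds to honest partitions $\alpha,\beta,\gamma\in\Parn$ rather than half-integral data. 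Once those points are settled, feeding an integral M\"obius honeycomb back through the dictionary of step~(1) yields $N_{\lambda,\mu,\nu}>0$, completing the proof.
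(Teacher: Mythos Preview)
Your high-level architecture matches the paper's exactly: define M\"obius honeycombs, prove the dictionary (this is Theorem~\ref{thm3.1}), reduce Theorem~\ref{thm1.2} to the polyhedral statement that a nonempty M\"obius-honeycomb polytope with integral boundary contains a lattice point (this is Theorem~\ref{thm3.2}), and then adapt the Knutson--Tao largest-lift machinery. Your instinct that the M\"obius twist is where the real work lies is also correct.

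Where your plan has a genuine gap is in the role of the parity hypothesis and the mechanism it enables. You write that parity ``is precisely what makes the relevant affine lattice nonempty'' and that one must track it through perturbations so the final point lands in the right coset. That is not how it enters. In the paper, a largest-lift $\widetilde{h}$ always exists once the polytope is nonempty, independent of parity; the issue is that its vertices need not be lattice points. The paper shows (Theorem~\ref{thm5.2}) that non-lattice vertices of a largest-lift lie on \emph{half}-lattice points, and that the ``white'' (non-integral) edges organize themselves into loops in $\Gamma_n(h)$ which are forced to be \emph{non-orientable}: an orientable white loop could be slid in either direction, contradicting extremality (Lemma~\ref{lem5.2}). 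This is precisely the new phenomenon the M\"obius topology allows and the planar Knutson--Tao argument never sees.

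The parity condition then enters for a completely different reason than you guess: via a length-counting argument (Theorem~\ref{thm5.3}, using that the total edge length equals $\tfrac{1}{2}\sum\xi_j$), $|\lambda|+|\mu|+|\nu|\equiv 0\modtwo$ forces the number of canonical white loops to be \emph{even}. Since any two non-orientable loops on a M\"obius strip must intersect (this is where $\pi_1(\mathbb{RP}^2)=\Z/2\Z$ is used), one can pair them up and perform a ``double breaking'' at each intersection, shifting all white edges by $\pm\tfrac12$ to reach a lattice M\"obius honeycomb. Your proposal does not anticipate this non-orientable-loop mechanism or the pairing argument, and your references to ``overlaid pairs'' and ``impossible bounded regions'' point toward a different strand of the Knutson--Tao toolkit than the one that actually drives the proof here.
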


This follows from the technical center of this paper, Theorem~\ref{thm3.2} in Subsection \ref{sub3.2}.

In view of \eqref{eqn1.6}, Theorem \ref{thm1.2} immediately implies the saturation of Littlewood-Richardson coefficients. Actually, our method can be used to modify some arguments in the proof of Theorem \ref{thm1.1} in \citep{Tao99}; see Remark \ref{rmk5.1} in Subsection \ref{sub5.2}.

We now discuss consequences of proving Theorem \ref{thm1.2}. Analogous to the Horn's inequalities, S.~Gao, 
G.~Orelowitz and A.~Yong 
\citep[Theorem~1.3]{Yon212} defined \textit{extended Horn inequalities} (which we will not restate here) and proved that they are necessary conditions for $N_{\lambda , \mu , \nu} >0$. Additionally, they conjectured the converse; our paper also confirms this conjecture. 

\begin{corollary}{\rm \citep[Conjecture 1.4]{Yon212}}
If $(\lambda , \mu , \nu) \in (\Parn)^3$ satisfies the extended Horn inequalities and $|\lambda| + |\mu| + |\nu| \equiv 0 \modtwo$, then $N_{\lambda , \mu , \nu} >0$.
\end{corollary}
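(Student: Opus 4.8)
The plan is to obtain this corollary formally from Theorem~\ref{thm1.2} together with the polyhedral results of \citep{Yon212}; essentially all of the substance sits in those two inputs, so the argument is short. The key fact I would draw from \citep{Yon212} is that the extended Horn inequalities carve out precisely the saturated Newell--Littlewood semigroup $\NLsat$, in the following sense: a triple $(\lambda,\mu,\nu)\in(\Parn)^3$ satisfies the extended Horn inequalities if and only if there exists $k\in\N$ with $N_{k\lambda,k\mu,k\nu}>0$. The ``necessary'' half of this equivalence is \cite[Theorem~1.3]{Yon212}, applied to the triple $(k\lambda,k\mu,k\nu)$ and combined with the fact that the inequalities are homogeneous; the ``sufficient'' half belongs to the same circle of ideas, since in the stable range $N_{k\lambda,k\mu,k\nu}$ computes the dimension of an invariant space for a classical group, and the extended Horn inequalities are exactly the Horn-type (Belkale--Kumar/Ressayre) inequalities cutting out its tensor cone.

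Granting this, the argument runs as follows. Assume $(\lambda,\mu,\nu)$ satisfies the extended Horn inequalities and $|\lambda|+|\mu|+|\nu|\equiv 0\modtwo$. By the ``sufficient'' half above there is some $k\in\N$ with $N_{k\lambda,k\mu,k\nu}>0$. But this, together with the assumed parity condition $|\lambda|+|\mu|+|\nu|\equiv 0\modtwo$, is precisely the hypothesis of Theorem~\ref{thm1.2}; hence Theorem~\ref{thm1.2} yields $N_{\lambda,\mu,\nu}>0$, as claimed.

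The one point that needs genuine care --- and the reason the corollary is not merely the literal converse of \cite[Theorem~1.3]{Yon212} --- is the parity hypothesis. The semigroup $\NLsat$ really does contain triples with $|\lambda|+|\mu|+|\nu|$ odd; for instance $((1),(1),(1))\in\NLsat$ because $N_{(k),(k),(k)}=1$ for every even $k$ (the only contribution to \eqref{eqn1.4} being $\alpha=\beta=\gamma=(k/2)$), whereas $N_{(1),(1),(1)}=0$. For any such triple one has $N_{\lambda,\mu,\nu}=0$ automatically, since $N_{\lambda,\mu,\nu}\neq 0$ forces $|\lambda|+|\mu|+|\nu|$ to be even. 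Thus the extended Horn inequalities alone cannot imply $N_{\lambda,\mu,\nu}>0$, and the parity condition is exactly what licenses the descent from ``$N_{k\lambda,k\mu,k\nu}>0$ for some $k$'' down to ``$N_{\lambda,\mu,\nu}>0$'' via Theorem~\ref{thm1.2}. Beyond keeping this bookkeeping straight, there is no obstacle: the corollary is immediate once Theorem~\ref{thm1.2} is in hand.
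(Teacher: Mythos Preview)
Your proposal is correct and follows essentially the same approach as the paper: show that the extended Horn inequalities place $(\lambda,\mu,\nu)$ in $\NLsat$, then invoke Theorem~\ref{thm1.2} with the parity hypothesis. The only difference is bibliographic: the ``sufficient'' direction you sketch informally (extended Horn inequalities $\Rightarrow$ some $N_{k\lambda,k\mu,k\nu}>0$) is not in \cite{Yon212} but is \cite[Corollary~8.5]{Yon22}, which the paper cites directly.
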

\begin{proof}
Due to \citep[Corollary 8.5]{Yon22}, this follows from Theorem \ref{thm1.2}.
\end{proof}

Therefore, the extended Horn inequalities and $|\lambda| + |\mu| + |\nu| \equiv 0 \modtwo$ completely determine the set
\begin{equation}
{\rm NL} := \{ ( \lambda , \mu , \nu) \in (\Parn)^3 \mid N_{\lambda , \mu , \nu} > 0 \}.
\end{equation}

Another application is  to the eigenvalues of a family of complex matrices. Let
\begin{equation}
\ParnQ := \{ \lambda = (\lambda_1 ,\cdots , \lambda_n ) \in \Q^n \mid \lambda_1 \geq \cdots \geq \lambda_n \geq 0  \},
\end{equation}
\begin{equation}\label{eqn1.5}
\NLsat := \{ (\lambda , \mu , \nu) \in (\ParnQ)^3 \mid \exists k >0, \quad N_{k \lambda, k \mu , k \nu} >0 \}.
\end{equation}
In \citep[Proposition 3.1]{Yon22}, S.~Gao, G.~Orelowitz, N.~Ressayre and A.~Yong proved that $\NLsat$ describes an analogue of the Horn problem for matrices in $\mathfrak{sp}_{2n} \C \cap \mathfrak{u}_{2n} \C$. Theorem~\ref{thm1.2} shows that ${\rm NL}$ also controls the same thing.

Lastly, Theorem \ref{thm1.2} is related to the conjecture suggested in \citep[Section~7]{Tao99}. Given a split reductive group $G$ over $\C$, it has a root system and its irreducible representation is indexed by a dominant integral weight $\lambda$. Write the dual weight as $\lambda^*$ and the tensor product multiplicities by $c_{\mu, \nu}^{\lambda}(G)$.

\begin{theorem}{\rm\citep[Theorem 1.1]{Kap08}}\label{thm1.3}
Let $G$ be a split reductive group over $\C$ and $\lambda , \mu , \nu$ be dominant integral weights such that $\lambda^* + \mu + \nu$ is in the root lattice. Then there exists $k_G \in \N$ with following property:
\begin{equation}
\exists k \in \N \text{ such that } c_{k\mu , k\nu}^{k \lambda}(G) >0 \quad \Rightarrow \quad c_{k_G \mu , k_G \nu}^{k_G \lambda}(G) >0.
\end{equation}
\end{theorem}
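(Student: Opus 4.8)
The plan is to follow the ``curved'' analogue of the honeycomb method, namely the geometry of Euclidean buildings in the style of M.~Kapovich and J.~Millson. First I would make the usual structural reductions: writing a reductive $G$ as an almost-direct product of a central torus and its simple factors, the torus imposes only linear constraints and the problem splits over the simple factors, so it suffices to treat $G$ simple and to take $k_G$ to be the least common multiple of the saturation factors of the simple factors. (One might instead hope to deduce the classical-group cases directly from Theorem~\ref{thm1.1} via folding of the type-$A$ Dynkin diagram, but the multiplicities for $G$ and for $\mathrm{SL}_N$ only agree up to twists by the diagram automorphism, so no such reduction is available and the geometric route seems unavoidable.) Assume then that $G$ is simple.

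Next I would pass to geometry. Fix an appropriate non-archimedean field $\mathbb{K}$ with valuation ring $\mathcal{O}$ and large residue field, and let $X$ be the Euclidean building attached to $G(\mathbb{K})$ (or to its Langlands dual, as the Satake dictionary dictates), with a fixed special vertex $o$ and a model positive Weyl chamber. Via the Satake isomorphism --- equivalently, via the Littelmann LS-path model, or via Mirkovi\'c--Vilonen cycles --- one has the dictionary: $c^{\lambda}_{\mu,\nu}(G)\neq 0$ if and only if there is a geodesic triangle in $X$ with vertices in the $G(\mathcal{O})$-orbit of $o$ whose vector-valued side lengths, taking values in the positive Weyl chamber, are $\mu$, $\nu$, and $\lambda^{*}$. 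Under this dictionary the hypothesis ``$c^{k\lambda}_{k\mu,k\nu}(G)>0$ for some $k\in\N$'' says exactly that the triple of chamber-valued lengths $(\mu,\nu,\lambda^{*})$ lies in the \emph{saturated tensor cone} $D_{3}(G)$: the set of vector-valued side lengths of geodesic triangles in $X$ with \emph{arbitrary} (not necessarily lattice) vertices.

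The second ingredient is that $D_{3}(G)$ is a rational convex polyhedral cone --- the solution of the ``generalized triangle inequalities'' for buildings and symmetric spaces, obtained either from GIT/stability arguments or from the combinatorics of folded geodesics. Since $D_{3}(G)$ is a cone, from the hypothesis we obtain $(k_{G}\mu, k_{G}\nu, k_{G}\lambda^{*})\in D_{3}(G)$, where now $k_{G}\mu, k_{G}\nu, k_{G}\lambda$ are genuine dominant weights. What remains, and what the factor $k_{G}$ is for, is to promote ``the chamber-valued lengths lie in $D_{3}(G)$'' to ``there is a geodesic triangle in $X$ with these lengths and with \emph{lattice} vertices.'' Here I would fold such a triangle onto a single apartment, producing a closed ``billiard path'' in the Coxeter complex whose breaks occur along walls; the hypothesis that $\lambda^{*}+\mu+\nu$ lies in the root lattice kills the single global obstruction to choosing the endpoints in the (co)weight lattice, while a finite, type-by-type analysis of the affine Weyl group shows that the remaining \emph{local} obstructions at the walls can be cleared by multiplying all data by a constant $k_{G}$ depending only on the root system (for instance $k_{G}=2$ suffices for types $B$, $C$, $F_{4}$, and a small explicit value for $G_{2}$). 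Reversing the dictionary then gives $c^{k_{G}\lambda}_{k_{G}\mu,k_{G}\nu}(G)>0$.

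The crux --- the only step that is neither formal bookkeeping nor a citation --- is this last point: the gap between the saturated cone $D_{3}(G)$ and the honest tensor semigroup is a torsion/denominator phenomenon in the affine Weyl group, and the real content of the theorem is a \emph{uniform} bound on those denominators, as opposed to the soft statement that $D_{3}(G)$ is merely rational. I expect that to be the main obstacle; the structural reductions, the polyhedrality of $D_{3}(G)$, and the Satake/path-model dictionary can all be taken off the shelf.
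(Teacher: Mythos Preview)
The paper does not prove Theorem~\ref{thm1.3}; it is merely quoted from \cite{Kap08} as background in the introduction, alongside the related Conjecture~\ref{con1.1} and the subsequent discussion of the cases $G=\mathrm{SO}_{2n+1}\C,\ \mathrm{Sp}_{2n}\C,\ \mathrm{SO}_{2n}\C$. There is therefore no proof in this paper to compare your proposal against.

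For what it is worth, your sketch is a reasonable outline of the Kapovich--Millson argument in \cite{Kap08}: the reduction to simple factors, the Satake/path-model dictionary translating tensor multiplicities into geodesic triangles in the Euclidean building, the polyhedrality of the saturated cone $D_3(G)$, and the identification of the saturation factor $k_G$ as a bound on local denominator obstructions when folding the triangle onto an apartment. That is indeed the architecture of their proof, and the step you flag as the crux --- the uniform bound on the torsion gap between $D_3(G)$ and the tensor semigroup --- is where the work lies. But none of this is the content of the present paper, whose main contribution (Theorem~\ref{thm1.2}, proved via M\"obius honeycombs) only touches Theorem~\ref{thm1.3} indirectly, by showing that in the stable range for the classical groups one may in fact take $k_G=1$.
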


\begin{conjecture}{\rm\citep[Conjecture 1.4]{Kap06}}\label{con1.1}
If the root system of $G$ is simply laced, then $k_G$ can be chosen as $1$.
\end{conjecture}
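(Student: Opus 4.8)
Conjecture~\ref{con1.1} is the Kapovich--Millson problem and is open in general, so I can only propose a strategy built on Theorem~\ref{thm1.2} and indicate where it stalls. First I would reduce to simple groups: a connected split reductive $G$ with simply-laced root system is, up to isogeny and tensoring with characters of a central torus, a product of simple groups of types $A_n$, $D_n$, $E_6$, $E_7$, $E_8$. Since $c^\lambda_{\mu,\nu}(G)$ is multiplicative across such products and insensitive to isogeny and to twisting by a torus character --- only the coset of $\lambda^*+\mu+\nu$ in the weight-lattice-modulo-root-lattice is affected, and in a controlled way --- and since the hypothesis factors accordingly, it suffices to prove $k_G=1$ for each simple simply-laced $G$. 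Type $A$ is exactly Theorem~\ref{thm1.1}. For $E_8$ the root and weight lattices coincide, so the hypothesis is vacuous and the claim is plain saturation of the $E_8$ tensor semigroup; for $E_6$, $E_7$, $D_n$ the hypothesis removes a single coset obstruction, the precise analogue of the parity condition $|\lambda|+|\mu|+|\nu|\equiv 0\modtwo$ in Theorem~\ref{thm1.2}.

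For type $D$ I would route through the Newell--Littlewood numbers. By the universal-character formalism of Koike--Terada and King's modification rules, in a stable range of partitions $N_{\lambda,\mu,\nu}$ equals the tensor multiplicity $\dim\mathrm{Hom}_{\mathrm{O}_{2m}}(V_\mu\otimes V_\nu,V_\lambda)$, and $\mathrm{SO}_{2m}$- and tensor-type $\mathrm{Spin}_{2m}$-multiplicities are read off from the $\mathrm{O}_{2m}$-ones by branching. Given $c^{k\lambda}_{k\mu,k\nu}(\mathrm{SO}_{2m})>0$, the steps would be: translate to an $\mathrm{O}_{2m}$-positivity statement; pass into a stable range, using the stabilization of $\mathrm{O}_N$-multiplicities to $N_{\lambda,\mu,\nu}$ together with Littlewood's restriction rule to return; apply Theorem~\ref{thm1.2} to obtain $N_{\lambda,\mu,\nu}>0$, hence the desired $c^\lambda_{\mu,\nu}(\mathrm{SO}_{2m})>0$. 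The two places this is not routine --- and where the argument is at present incomplete --- are the interplay with King's modification rules outside the stable range (a multiplicity can collapse to zero there, so one must control both the forward and the backward passage to the stable range), and the half-spin representations of $\mathrm{Spin}_{2m}$, which lie outside the universal-character picture altogether; for the latter one would use that spin $\otimes$ spin fuses into tensor-type representations and that the root-lattice hypothesis pins down the spin parities, but this reduction needs its own (shifted-tableau) combinatorial input.

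Types $E_6$, $E_7$, $E_8$ are the real obstacle, and the reason the conjecture is still open: there is no hive/honeycomb --- let alone M{\"o}bius-strip --- model for the exceptional groups, so the method of this paper does not transfer. I see two avenues. The polyhedral one: fix a Berenstein--Zelevinsky-type polytope (a string polytope, or a BZ/Lusztig parametrization of the canonical basis) whose lattice points count $c^\lambda_{\mu,\nu}(G)$, and prove it is saturated --- every lattice point of the rational cone it cuts out that satisfies the root-lattice congruence already occurs in the polytope itself, not merely in some dilate. In type $A$ this normality statement is exactly what honeycombs deliver; for $E$ one would need a hive-like combinatorial argument, or a GIT/convexity argument in the spirit of Ressayre's ``well-covering pairs'' description of the tensor cone together with a proof that the relevant GIT quotients carry no integrality defect. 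The geometric one: express $c^\lambda_{\mu,\nu}(G)$ as an intersection number on a Schubert variety and show the pertinent section rings are generated in degree one. Either way the crux is the normality of the family of $E$-type Littlewood--Richardson polytopes, with $E_8$ the hardest case: it is not a folding of any larger simply-laced group, so no ambient-type trick is available, and even gathering enough data to guess a model is computationally heavy. Theorem~\ref{thm1.3} guarantees a finite $k_G$ in all cases; the program above is the attempt to push it down to $1$ in the simply-laced case.
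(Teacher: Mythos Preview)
The paper does not prove Conjecture~\ref{con1.1}; it is quoted as an open conjecture of Kapovich--Millson and remains open. The only contribution of the paper toward it is the corollary immediately following: for $G={\rm SO}_{2n+1}\C$, ${\rm Sp}_{2n}\C$, ${\rm SO}_{2n}\C$ one may take $k_G=1$ \emph{provided} $(\lambda,\mu,\nu)$ lies in the stable range $l(\mu)+l(\nu)\leq n$, and this is just Theorem~\ref{thm1.2} combined with Koike's identification $N_{\lambda,\mu,\nu}=c_{\mu,\nu}^\lambda(G)$ in that range. There is no argument in the paper for type $D$ outside the stable range, for the spin representations, or for any exceptional type.

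Your write-up correctly identifies the statement as open and is an honest survey of possible attack routes rather than a proof. That is the appropriate response here, and your assessment of the obstacles --- modification rules outside the stable range, the half-spin representations, and above all the absence of any honeycomb-type model for $E_6,E_7,E_8$ --- matches the state of the literature. One small correction: ${\rm SO}_{2n+1}$ and ${\rm Sp}_{2n}$ are not simply laced, so your type-$D$ discussion is the only classical case actually relevant to Conjecture~\ref{con1.1}; the paper's stable-range corollary for those two groups is a separate (and weaker) statement than the conjecture. Otherwise there is nothing to compare against, since the paper offers no proof.
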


In particular, we are interested in the cases when $G = {\rm SO}_{2n+1}\C, \hspace{0.2em} {\rm Sp}_{2n} \C, \hspace{0.2em} {\rm SO}_{2n} \C$. In \citep[Theorem 1.1]{Kap08}, M.~Kapovich and J.~J.~Millson proved that $k_G = 4$. Additionally, P.~Belkale and S.~Kumar \citep[Theorem 6, 7]{Bel07} proved that $k_G=2$ if $G$ is ${\rm SO}_{2n+1}\C$ or ${\rm Sp}_{2n} \C$. S.~V.~Sam \citep[Theorem 1.1]{Sam12} proved that $k_G=2$ when $G = {\rm SO}_{2n+1}\C, \hspace{0.2em}{\rm Sp}_{2n} \C,\hspace{0.2em}{\rm SO}_{2n} \C$, by using quiver representations, extending the proof of Theorem \ref{thm1.1} given by H.~Derksen and  J.~Weyman~\citep{Der00}.

The possibility that $k_G=1$ when $G = {\rm SO}_{2n}  \C$ remains open. For recent work concerning  ${\rm SO}_{2n}  \C$ and ${\rm Spin}_{2n}  \C$, see, \textit{e.g.}, \citep{KKM09, Kie21}.

Let $G = {\rm SO}_{2n+1}\C,\hspace{0.2em} {\rm Sp}_{2n} \C,\hspace{0.2em}{\rm SO}_{2n} \C$. For the classical Lie groups, irreducible representations are indexed by the set of partitions $\Parn$; see, \textit{e.g.}, \citep{Ful04, Koi87}. $l(\lambda)$ denotes the number of non-zero components of $\lambda = (\lambda_1 , \cdots , \lambda_n)$. According to \citep[Theorem 3.1]{Koi89}, 
\begin{equation}
l(\mu)+l(\nu) \leq n \quad \Rightarrow \quad N_{\lambda, \mu , \nu} = c_{\mu , \nu}^{\lambda}(G).
\end{equation}
The condition imposed on $\mu , \nu \in \Parn$ is called the \textit{stable range}. The next result is an immediate consequence of Theorem~\ref{thm1.2}:

\begin{corollary}
Let $G = {\rm SO}_{2n+1}\C, \hspace{0.2em}{\rm Sp}_{2n} \C,\hspace{0.2em}{\rm SO}_{2n} \C$. Suppose $\lambda, \mu , \nu \in \Parn$ and $l(\mu)+l(\nu) \leq n$. If there exists $k \in \N$ such that $c_{k\mu, k \nu}^{k \lambda}(G) >0$, then $c_{\mu , \nu}^\lambda(G) >0$. 
\end{corollary}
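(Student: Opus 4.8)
The plan is to derive this corollary directly from Theorem~\ref{thm1.2} together with the stable-range identity $l(\mu)+l(\nu)\le n \Rightarrow N_{\lambda,\mu,\nu}=c_{\mu,\nu}^\lambda(G)$ quoted just above. The first thing to observe is that the stable-range hypothesis $l(\mu)+l(\nu)\le n$ is invariant under dilation: $l(k\mu)=l(\mu)$ and $l(k\nu)=l(\nu)$ for every $k\in\N$, since multiplying by a positive integer changes neither which components are zero nor the sign of the nonzero ones. Hence if $(\lambda,\mu,\nu)$ lies in the stable range, so does $(k\lambda,k\mu,k\nu)$ for all $k$.

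With this in hand the argument is a short chain of implications. Assume $c_{k\mu,k\nu}^{k\lambda}(G)>0$ for some $k\in\N$. By Koike--Terada's identity \citep[Theorem 3.1]{Koi89} applied to $(k\lambda,k\mu,k\nu)$, which is in the stable range by the previous paragraph, we get $N_{k\lambda,k\mu,k\nu}=c_{k\mu,k\nu}^{k\lambda}(G)>0$. Next I must check the parity condition needed to invoke Theorem~\ref{thm1.2}, namely $|k\lambda|+|k\mu|+|k\nu|\equiv 0 \modtwo$. This is automatic: a nonzero Newell--Littlewood number $N_{a,b,c}$ forces $|a|+|b|+|c|$ to be even, because each summand $c_{\beta,\gamma}^{a}c_{\gamma,\alpha}^{b}c_{\alpha,\beta}^{c}$ in \eqref{eqn1.4} is nonzero only when $|\beta|+|\gamma|=|a|$, $|\gamma|+|\alpha|=|b|$, $|\alpha|+|\beta|=|c|$, whose sum gives $|a|+|b|+|c|=2(|\alpha|+|\beta|+|\gamma|)$. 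So from $N_{k\lambda,k\mu,k\nu}>0$ we conclude $k(|\lambda|+|\mu|+|\nu|)$ is even. If $k$ is itself even this gives no information, but then we may note that whenever $N_{k\lambda,k\mu,k\nu}>0$ we also have $|k\lambda|+|k\mu|+|k\nu|$ even, hence in particular $|\lambda|+|\mu|+|\nu|\equiv 0\modtwo$ once we pass through a $k$ of the right parity --- more cleanly, since $N_{k\lambda,k\mu,k\nu}>0$ already gives $2\mid k(|\lambda|+|\mu|+|\nu|)$, and since the statement of Theorem~\ref{thm1.2} only needs $|\lambda|+|\mu|+|\nu|$ even for the \emph{conclusion} $N_{\lambda,\mu,\nu}>0$, I should instead verify the parity hypothesis holds for $(\lambda,\mu,\nu)$ directly. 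In the stable range this is again handled by \citep[Theorem 3.1]{Koi89}: were $|\lambda|+|\mu|+|\nu|$ odd, then $N_{\lambda,\mu,\nu}=c_{\mu,\nu}^\lambda(G)$, and one checks the parity of $|\lambda|+|\mu|+|\nu|$ is forced by representation-theoretic weight considerations for $G$ to match that of $|k\lambda|+|k\mu|+|k\nu|$ modulo $2$ when divided appropriately; the safest route is simply to apply Theorem~\ref{thm1.2} to $(k'\lambda,k'\mu,k'\nu)$ where $k'=k$, obtaining the even parity of $k(|\lambda|+|\mu|+|\nu|)$ and, by a separate elementary check that $|\mu|+|\nu|$ and $|\lambda|$ have a fixed relation mod $2$ in the stable range, deduce the needed parity for $(\lambda,\mu,\nu)$.

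Granting the parity hypothesis, Theorem~\ref{thm1.2} applied to $(\lambda,\mu,\nu)$ with the witness $k$ yields $N_{\lambda,\mu,\nu}>0$. Finally, one more application of \citep[Theorem 3.1]{Koi89}, this time to $(\lambda,\mu,\nu)$ itself (which is in the stable range by hypothesis), gives $c_{\mu,\nu}^\lambda(G)=N_{\lambda,\mu,\nu}>0$, which is the desired conclusion.

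The only genuinely delicate point is the parity bookkeeping: Theorem~\ref{thm1.2} is stated with the standing assumption $|\lambda|+|\mu|+|\nu|\equiv 0\modtwo$, so before invoking it I must confirm this congruence for $(\lambda,\mu,\nu)$ rather than merely for the dilate $(k\lambda,k\mu,k\nu)$. This is the step I expect to require the most care; everything else is a mechanical substitution into the two quoted identities. In fact the cleanest fix, which I would adopt in the final write-up, is to observe that in the stable range $N_{\lambda,\mu,\nu}=c_{\mu,\nu}^\lambda(G)$ and $N_{k\lambda,k\mu,k\nu}=c_{k\mu,k\nu}^{k\lambda}(G)$, and these are both governed by the branching/weight structure of $G$ whose relevant lattice condition ($\lambda^*+\mu+\nu$ in the root lattice, in the language of Theorem~\ref{thm1.3}) scales linearly; combined with the parity constraint extracted from \eqref{eqn1.4} this pins down $|\lambda|+|\mu|+|\nu|\bmod 2$ unambiguously, completing the proof.
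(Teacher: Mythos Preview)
Your skeleton is exactly the paper's: convert $c_{k\mu,k\nu}^{k\lambda}(G)>0$ to $N_{k\lambda,k\mu,k\nu}>0$ via the Koike--Terada stable-range identity (which is dilation-invariant, as you note), invoke Theorem~\ref{thm1.2}, and convert back. The paper declares this ``an immediate consequence of Theorem~\ref{thm1.2}'' and says nothing further, so on the level of strategy you match it.

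Where your write-up stumbles is exactly the point you flag: verifying the parity hypothesis $|\lambda|+|\mu|+|\nu|\equiv 0\modtwo$ needed for Theorem~\ref{thm1.2}. Your first observation is correct---$N_{k\lambda,k\mu,k\nu}>0$ forces $k(|\lambda|+|\mu|+|\nu|)$ to be even via \eqref{eqn1.4}---but this decides nothing when $k$ is even. Your fallback to the root-lattice condition of Theorem~\ref{thm1.3} does not rescue the situation: for $G=\mathrm{SO}_{2n+1}\C$ (type $B_n$) the root lattice is all of $\Z^n$, so that condition is vacuous and yields no parity constraint; and even for $\mathrm{Sp}_{2n}\C$ and $\mathrm{SO}_{2n}\C$, where the root lattice is the even-sum sublattice, applying it to the \emph{dilated} triple only reproduces the useless $2\mid k(|\lambda|+|\mu|+|\nu|)$. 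The phrases ``by a separate elementary check'' and ``this pins down $|\lambda|+|\mu|+|\nu|\bmod 2$ unambiguously'' are not arguments. In fact the gap appears to be real rather than cosmetic: with $n=2$ and $\lambda=\mu=\nu=(1,0)$ one has $l(\mu)+l(\nu)=2\le n$, and the stable-range identity gives $c_{(2,0),(2,0)}^{(2,0)}(G)=N_{(2,0),(2,0),(2,0)}=1$ while $c_{(1,0),(1,0)}^{(1,0)}(G)=N_{(1,0),(1,0),(1,0)}=0$, so the corollary fails as literally stated. The intended reading should carry the standing hypothesis $|\lambda|+|\mu|+|\nu|\equiv 0\modtwo$; under that assumption your argument collapses to the clean two-line proof you outline, with no parity bookkeeping needed.
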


Thus, $k_G$ from Conjecture \ref{con1.1} may be taken as $1$ for $G = {\rm SO}_{2n+1}\C, \hspace{0.2em}{\rm Sp}_{2n} \C,\hspace{0.2em}{\rm SO}_{2n} \C$ if $(\lambda , \mu , \nu)$ is in the stable range.


\subsection{Overview}

In Section \ref{sec2}, we review the construction of \textit{honeycombs} from \citep[Section 2]{Tao99}. Roughly speaking, when given a directed graph which looks like a ``bee hive'', a honeycomb is a map assigning each vertex of the graph to a vector in a plane. We recapitulate how honeycombs compute Littlewood-Richardson coefficients.

In Section \ref{sec3}, we define \textit{M{\"o}bius honeycombs} to be honeycombs ``embroidered'' on a M{\"o}bius strip, building on the setup of Section~2. Since a M{\"o}bius strip cannot be embedded into a plane, we rigorously define the concept by using its covering space. We then prove that they compute Newell-Littlewood numbers. We state our main result about M\"obius honeycombs, Theorem \ref{thm3.2}, and show that Theorem \ref{thm1.2} quickly follows from it.

In Section \ref{sec4}, we construct a particular M{\"o}bius honeycomb, which is an extremal point of related polytope, analogous to \citep[Section 5]{Tao99}. We do this by formulating a linear functional which provides ``height'' to the elements of the polytope. The modification rules of honeycombs from \citep{Tao99} can still be used, but only on limited part which we color in white. 

In Section \ref{sec5}, we observe that there can be a non-oriented loop defined in M{\"o}bius honeycombs, unlike in \citep{Tao99}. Handling these non-oriented loops precisely concerns the condition $|\lambda|+|\mu|+|\nu| \equiv 0 \modtwo$ given in Theorem \ref{thm1.2};
the reason comes down to the fact that the fundamental group of $\R{\mathbb P}^2$ is $\Z / 2 \Z$ . We end the section by proving Theorem~\ref{thm3.2}.


\section{Honeycombs}\label{sec2}

In this section, we review \textit{honeycombs} from \citep[section 2]{Tao99}. In particular, we recall how honeycombs compute Littlewood-Richardson coefficients.

\subsection{Tinkertoys}\label{sub2.1}

Let $B$ be a finite dimensional real vector space. Let $\Gamma$ be a directed graph with $V_{\Gamma}$ and $E_{\Gamma}$ being the set of vertices and edges, respectively. 

A \textbf{tinkertoy} $\tau$ is a triple $(B,\Gamma,d)$  consisting of $B$, $\Gamma$ and a map 
\begin{equation}
d: E_{\Gamma}\rightarrow B.
\end{equation}
The map $d$ is called the \textbf{direction map}. A \textbf{subtinkertoy} $(B,\Delta,d|_{E_{\Delta}})\leq (B,\Gamma,d)$ is a tinkertoy where $\Gamma$ is replaced by
an induced subgraph $\Delta \leq \Gamma$.

In \cite{Tao99}, as it will be in this paper, $B$ is fixed to be the two-dimensional real vector space
\begin{equation}\label{eqn2.1}
B:=\{(x,y,z) \in \mathbb{R}^{3} \mid x+y+z = 0\}.
\end{equation}
Let the \textbf{lattice points} of $B$ be
\begin{equation}\label{eqn2.2}
B_{\mathbb Z}:= \{(x,y,z) \in \mathbb{Z}^{3} \mid x+y+z = 0\}.
\end{equation}
For each $a \in \mathbb{R}$, define three lines in the plane $B$
\begin{subequations}\label{eqn2.3}
\begin{equation}
(a,*,*):=\{(x,y,z) \in B \mid x=a\},
\end{equation}
\begin{equation}
(*,a,*):=\{(x,y,z) \in B \mid y=a\},
\end{equation}
\begin{equation}
(*,*,a):=\{(x,y,z) \in B \mid z=a\}.
\end{equation}
\end{subequations}
Each value $a \in \R$  is  the \textbf{constant coordinate} of the associated line. If 
$a\in {\mathbb Z}$, that line is a \textbf{lattice line}. In Figure \ref{fig1}, we depict
$B_{\mathbb Z}$ and its lattice lines.

\begin{figure}
\centering
\includegraphics[scale=0.45]{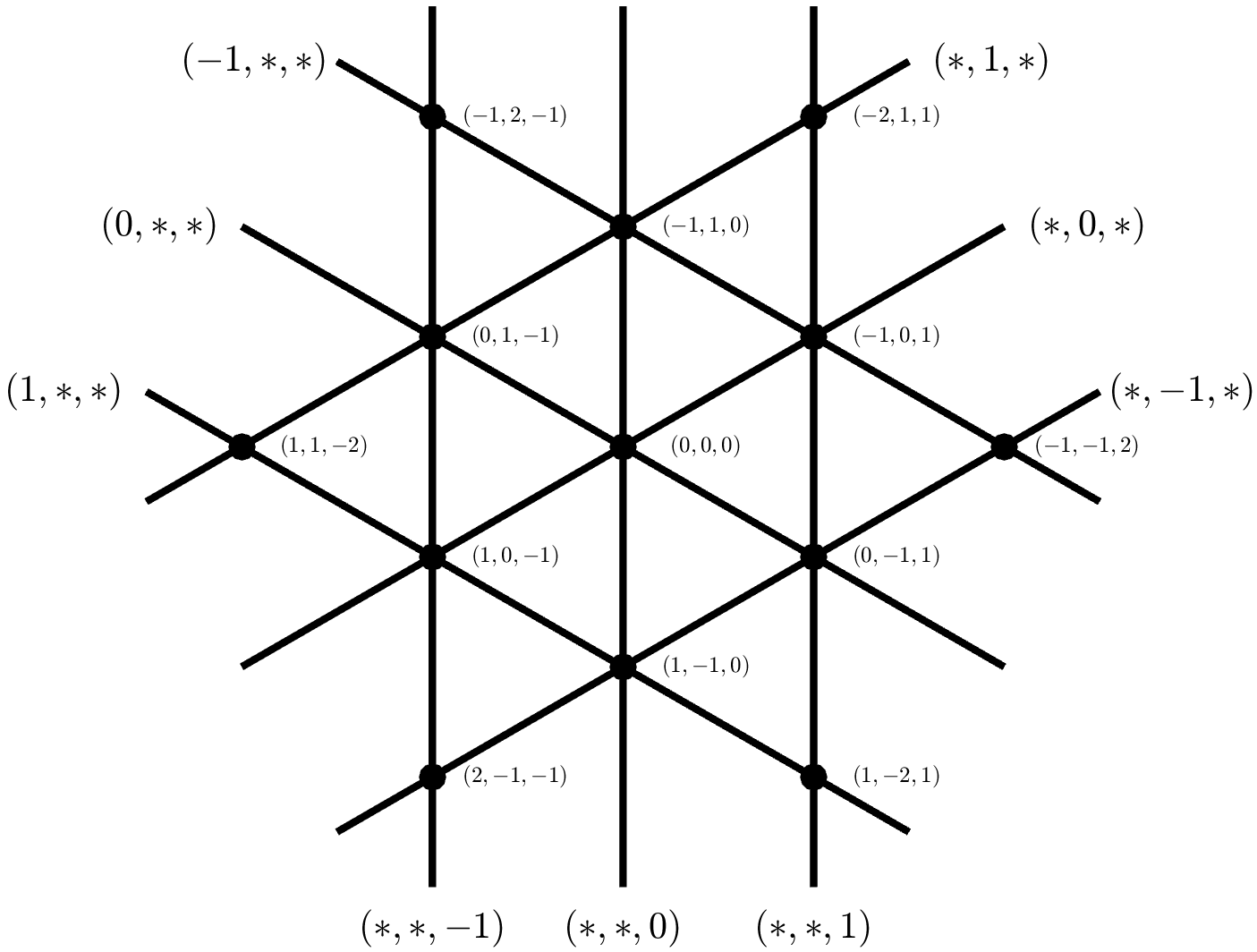}
\caption{$B_{\mathbb Z}$ and its lattice lines}
\label{fig1}
\end{figure}

Next, we  define a directed graph $\Gamma_{\infty}$. Its vertices and edges are
\begin{subequations}
\begin{equation}
V_{\Gamma_\infty} = \{\widetilde{A}_{i,j} \mid i,j \in \Z \} \cup \{\widetilde{B}_{i,j} \mid i,j \in \Z \},
\end{equation}
\begin{align}
E_{\Gamma_\infty} &= \{ (\widetilde{A}_{i,j}, \widetilde{B}_{i,j}) \mid i,j \in \Z \} \\
& \cup \{(\widetilde{A}_{i,j}, \widetilde{B}_{i-1,j}) \mid i,j \in \Z \} \\
& \cup \{ (\widetilde{A}_{i,j}, \widetilde{B}_{i-1,j-1}) \mid i,j \in \Z \}.
\end{align}
\end{subequations}
Here, we denote a directed edge from $U$ to $W$ as $(U,W)$. Consequently, $\widetilde{A}_{i,j}$ has three outgoing edges whereas $\widetilde{B}_{i,j}$ has three incoming edges. See the depiction of $\Gamma_{\infty}$ in Figure \ref{fig2}. 

Lastly, define a direction map $d: E_{\Gamma_{\infty}} \rightarrow B$ by mapping
\begin{align}
(\widetilde{A}_{i,j}, \widetilde{B}_{i-1,j-1}) &\mapsto (0,-1,1), \\
(\widetilde{A}_{i,j}, \widetilde{B}_{i-1,j}) &\mapsto (1,0,-1), \\
(\widetilde{A}_{i,j}, \widetilde{B}_{i,j}) & \mapsto (-1,1,0).   
\end{align}
As in Figure \ref{fig2}, $d$ maps each southeast edges to $(0,-1,1)$, southwest edges to $(1,0,-1)$, and north edges to $(-1,1,0)$. Now, the \textbf{infinite honeycomb tinkertoy} $\tau_{\infty}$ is the triple $\tau_{\infty}:=(B,\Gamma_{\infty},d)$.

\begin{figure}
\centering
\includegraphics[scale = 0.52]{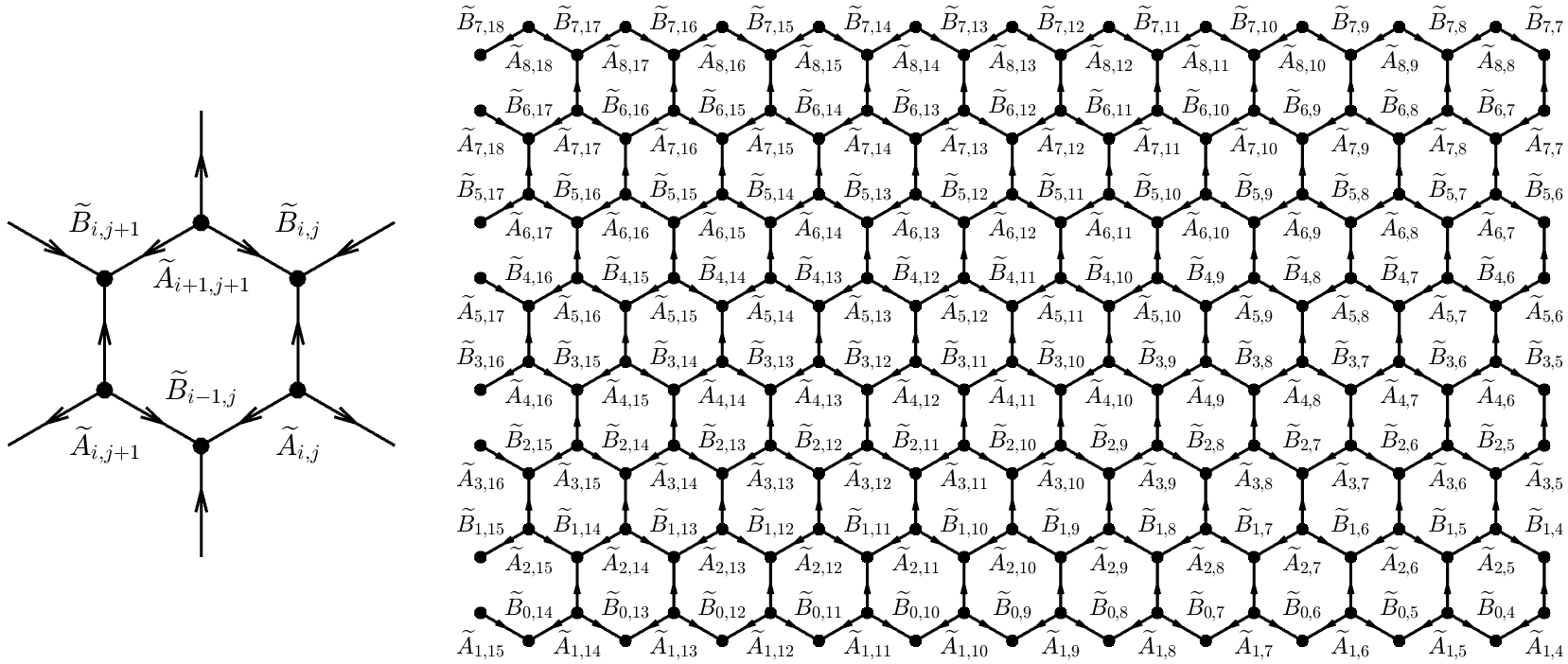}
\caption{The graph $\Gamma_\infty$ of the infinite honeycomb tinkertoy $\tau_\infty$.}
\label{fig2}
\end{figure}

Define the \textbf{$GL_{n}$ honeycomb tinkertoy} $\tau_{n}:= (B, \Delta_{n}, d|_{E_{\Delta_{n}}})$  of $\tau_{\infty}$ as follows.\footnote{From now on, we assume $n \in \N$ without saying so.} The graph $\Delta_{n}$ is the induced subgraph of $\Gamma_n$ using the subset of vertices
\begin{equation}\label{eqn2.5}
V_{\Delta_{n}}:= \{ \widetilde{A}_{i,j} \mid 1 \leq i < j \leq n \} \cup \{ \widetilde{B}_{i,j} \mid 0 \leq i < j \leq n \} .
\end{equation}
$E_{\Delta_{n}}$ is the resulting edge set.  A vertex that is not connected to three edges is a \textbf{boundary vertex}. There are exactly $3(n-1)$-many boundary vertices in $\Delta_n$: for $0 \leq i \leq n-1$,
\begin{itemize}
\item $\widetilde{B}_{i,n}$ : not connected to edge $e$ such that $d(e) = (0,-1,1)$,
\item $\widetilde{B}_{i,i+1}$ : not connected to edge $e$ such that $d(e) = (1,0,-1)$,
\item $\widetilde{B}_{0,i+1}$ : not connected to edge $e$ such that $d(e) = (-1,1,0)$.
\end{itemize}
See Figure \ref{fig3} for the case $n=5$.

\begin{figure}
\centering
\includegraphics[scale = 0.6]{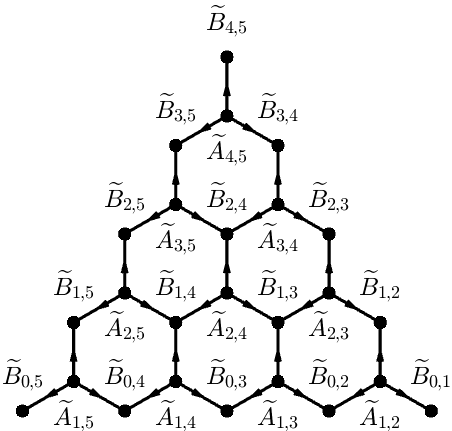}
\caption{The graph $\Delta_{5}$ of the $GL_{5}$ honeycomb tinkertoy $\tau_{5}$.}
\label{fig3}
\end{figure}


\subsection{Configurations}\label{sub2.2}

Let $B$ be a finite dimensional real vector space and $\tau = (B, \Gamma , d)$ be a tinkertoy. A \textbf{configuration $h$ of a tinkertoy $\tau$} is a function $h:V_{\Gamma} \rightarrow B$ satisfying
\begin{equation}\label{eqn2.6}
h({\sf head}(e))-h({\sf tail}(e)) \in \{ a \cdot v \in B \mid a\in {\mathbb R}_{\geq 0}, v= d(e)\}.
\end{equation}
A configuration $h$ of a $GL_n$ honeycomb tinkertoy $\tau_n$ is a \textbf{honeycomb} \citep{Tao99}.

Assuming that $B$ is a plane as in \eqref{eqn2.1}, draw a picture of a configuration $h$ of $\tau = (B, \Gamma , d)$ by marking the position of $h(P)$ in $B$ for all $P \in V_\Gamma$. In addition, if vertices $P$ and $Q$ are connected by a directed edge $e$, then connect $h(P)$ and $h(Q)$ by a line segment. For instance, Figure \ref{fig4} illustrates two honeycombs $h_{1},h_{2}$. Observe, $h_{1}$ and $h_{2}$ are ``distortions'' of the graph $\Delta_{5}$ in Figure \ref{fig3}; the edge directions are the same, but lengths may differ. 

\begin{figure}
\centering
\includegraphics[scale = 0.5]{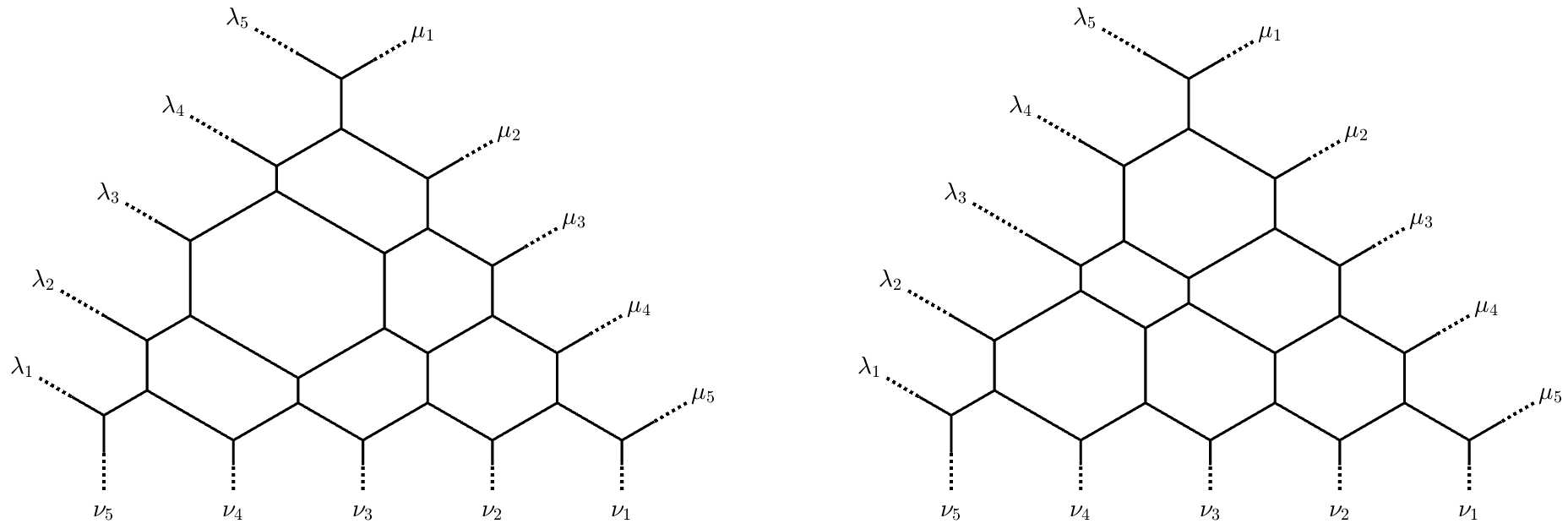}
\caption{Configurations $h_{1}$ and $h_{2}$ of $GL_{5}$ honeycomb tinkertoy $\tau_{5}$.}
\label{fig4}
\end{figure}

Regard $h=(h(v))_{v \in V_\Gamma}$ as an element of a vector space
\begin{equation}\label{eqn2.7}
B^{V_\Gamma} = \prod_{v \in V_{\Gamma}} B(v),  
\end{equation}
where $B(v)=B$ for each $v\in V_{\Gamma}$. 

For configurations $h_{1},h_{2}$ and $c_{1}, c_{2} \in \mathbb{R}$, define $c_{1} \cdot h_{1}+c_{2} \cdot h_{2}$ as a function $V_\Gamma \rightarrow B$ mapping
\begin{equation}\label{eqn2.13}
(c_{1} \cdot h_{1}+c_{2} \cdot h_{2})(v):=c_{1} \cdot h_{1}(v) + c_{2} \cdot h_{2}(v), \quad v \in V_\Gamma.
\end{equation}

\begin{lemma}\label{lem2.1}
Let $\tau = (B , \Gamma , d)$ be a tinkertoy, $h_1 , h_2$ be configurations of $\tau$ and $c_1 , c_2 \in \R_{\geq 0}$. Then $c_1 \cdot h_1 + c_2 \cdot h_2$ is a configuration of $\tau$.
\end{lemma}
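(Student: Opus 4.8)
The plan is to verify the defining condition \eqref{eqn2.6} directly, using the observation that for a fixed edge $e$ the set $R_e := \{ a \cdot d(e) \mid a \in \R_{\geq 0}\}$ is a convex cone, hence closed under nonnegative linear combinations. First I would fix an arbitrary edge $e \in E_\Gamma$ and write $P = {\sf head}(e)$, $Q = {\sf tail}(e)$. Since $h_1$ and $h_2$ are configurations, by \eqref{eqn2.6} there exist $a_1 , a_2 \in \R_{\geq 0}$ with $h_1(P) - h_1(Q) = a_1 \cdot d(e)$ and $h_2(P) - h_2(Q) = a_2 \cdot d(e)$.

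Next I would compute, using the definition \eqref{eqn2.13} and linearity in $B$,
\[
(c_1 \cdot h_1 + c_2 \cdot h_2)(P) - (c_1 \cdot h_1 + c_2 \cdot h_2)(Q) = c_1\bigl(h_1(P) - h_1(Q)\bigr) + c_2 \bigl(h_2(P) - h_2(Q)\bigr) = (c_1 a_1 + c_2 a_2)\cdot d(e).
\]
Since $c_1 , c_2 , a_1 , a_2 \geq 0$, we have $c_1 a_1 + c_2 a_2 \in \R_{\geq 0}$, so the displayed vector lies in $R_e$, which is exactly condition \eqref{eqn2.6} for the edge $e$. As $e$ was arbitrary, $c_1 \cdot h_1 + c_2 \cdot h_2$ satisfies \eqref{eqn2.6} for every edge, hence is a configuration of $\tau$.

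There is no real obstacle here: the statement is an immediate consequence of the fact that each ray $R_e$ is a convex cone, and the only thing to be careful about is to unwind the definitions \eqref{eqn2.6} and \eqref{eqn2.13} correctly and to use the hypothesis $c_1, c_2 \geq 0$ (nonnegativity is essential, since a negative coefficient would flip the ray to its opposite). If desired, one could phrase the whole argument in one line by noting that $h \mapsto h(P) - h(Q)$ is linear and maps the set of configurations into $R_e$, and $R_e$ is stable under $\R_{\geq 0}$-combinations.
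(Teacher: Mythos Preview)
Your proof is correct and essentially identical to the paper's own argument: both fix an arbitrary edge, extract nonnegative scalars $a_1,a_2$ from \eqref{eqn2.6} for $h_1,h_2$, compute the difference at head and tail for $c_1 h_1 + c_2 h_2$ to obtain $(c_1 a_1 + c_2 a_2)\cdot d(e)$, and observe this coefficient is nonnegative. Your framing in terms of the ray $R_e$ being a convex cone is just a conceptual gloss on the same computation.
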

\begin{proof}
Let $e \in E_\Gamma$. Denote $w_h:={\sf head}(e)$ and $w_t:={\sf tail}(e)$ and $v:=d(e)$. From \eqref{eqn2.6}, there exists $a_1 , a_2 \in \R_{\geq 0}$ such that
\begin{equation}
h_1(w_h) - h_1 (w_t) = a_1 v, \quad h_2(w_h) - h_2 (w_t) = a_2 v. 
\end{equation}
Denote $h:=c_1 \cdot h_1 + c_2 \cdot h_2$. Then
\begin{equation}
h(w_h) - h(w_t) = a_1 c_1 v + a_2 c_2 v.
\end{equation}
Hence, $h = c_1 \cdot h_1 + c_2 \cdot h_2$ satisfies \eqref{eqn2.6}, proving that it is a configuration of $\tau$.
\end{proof} 

Fix $GL_{n}$ honeycomb tinkertoy $\tau_n = (B , \Delta_n , d|_{\Delta_n } )$. As in Figure \ref{fig3}, the boundary vertices of $\Delta_n$ are $\widetilde{B}_{i,n} , \widetilde{B}_{i,i+1} , \widetilde{B}_{0,i+1} $ for each $0 \leq i \leq n-1$. Let $h : V_{\Delta_n} \rightarrow B$ be a honeycomb. Denote
\begin{equation}
\HON := \{ h \in B^{V_{\Delta_n}} \mid h \text{ is a configuration of }\tau_n\}.
\end{equation}
The \textbf{boundary map} is
\begin{equation}\label{eqn2.8}
\partial : \HON \rightarrow \mathbb{R}^{3n}, \quad h \mapsto (\lambda_1 , \cdots , \lambda_n, \mu_1 , \cdots, \mu_n, \nu_1 , \cdots , \nu_n).
\end{equation}
Here, for each $1\leq i \leq n$, $\lambda_i, \mu_i , \nu_i$ are chosen by
\begin{itemize}
\item $\lambda_i$ : $x$ coordinate of $h(\widetilde{B}_{i-1,n})$,
\item $\mu_i$ : $y$ coordinate of $h(\widetilde{B}_{n-i,n-i+1})$,
\item $\nu_i$ : $z$ coordinate of $h(\widetilde{B}_{0,i})$.
\end{itemize}
From now on, write $(\lambda_1 , \cdots , \lambda_n, \mu_1 , \cdots, \mu_n, \nu_1 , \cdots , \nu_n)$ as $(\lambda , \mu , \nu)$. 
By definition of $h$, $h(\widetilde{B}_{i-1,n})$ is on the line $(\lambda_i , * , *)$. Similarly, $h(\widetilde{B}_{n-i,n-i+1}) \in (*, \mu_i , *)$ and $h(\widetilde{B}_{0,i}) \in (*, * , \nu_i)$. The dotted lines indexed by $\lambda_i , \mu_i , \nu_i$ in Figure \ref{fig4} are these lines. For example, Figure \ref{fig4} depicts $h_1,h_2\in \HON$ with $\partial h_1 = \partial h_2 = (\lambda , \mu , \nu)$.

It is immediate from \eqref{eqn2.13} that
\begin{equation}\label{eqn2.14}
\partial (c_1 \cdot h_1 + c_2 \cdot h_2) = c_1 \cdot \partial (h_1) + c_2 \cdot \partial (h_2), \quad (h_1 , h_2 \in \HON, c_1 ,c_2 \in \R_{\geq 0}).
\end{equation}


\subsection{Littlewood-Richardson coefficients}

For each $\lambda = (\lambda_1 , \cdots , \lambda_n) \in \R^n$, write its dual weight $\lambda^*:= (- \lambda_n , \cdots , - \lambda_1)$.

\begin{theorem}{\rm {\citep[Theorem 4]{Tao99}}}\label{thm2.1}
Let $\lambda,\mu,\nu \in \Parn$ and $\tau_n = (B, \Delta_n , d|_{\Delta_n} )$ be the $GL_n$ honeycomb tinkertoy. Then $c_{\mu , \nu}^{\lambda}$ counts the
number of honeycombs $h \in \HON$ satisfying:
\begin{itemize}
\item $\partial (h) = (\mu^* , \nu^*, \lambda )$, and
\item $\forall v \in V_{\Delta_n}$, $h(v) \in B_\Z$.
\end{itemize}
\end{theorem}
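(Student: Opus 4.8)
The plan is to realize the Littlewood--Richardson rule through honeycombs by transporting the classical Knutson--Tao bijection between honeycombs and Berenstein--Zelevinsky (or hive) patterns. First I would set up the combinatorial backbone: a honeycomb $h \in \HON$ whose vertices all lie in $B_\Z$ determines, via the three families of lattice lines $(\lambda_i,*,*)$, $(*,\mu_j,*)$, $(*,*,\nu_k)$, a rhombus/hive labeling. Concretely, each edge of $\Delta_n$ carrying direction $(-1,1,0)$, $(0,-1,1)$, or $(1,0,-1)$ sits on a unique lattice line in one of the three rulings, and the nonnegative lengths of these edges (forced to be integers by the lattice condition on the endpoints) are exactly the ``rhombus inequalities'' data. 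The boundary map $\partial$ then pins the three outer rows of this data to $\mu^*$, $\nu^*$, $\lambda$.

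The key steps, in order, are: (1) translate the configuration conditions \eqref{eqn2.6} and the integrality $h(v) \in B_\Z$ into a system of integer points of a polytope --- this is where Lemma~\ref{lem2.1} guarantees convexity is respected, though here I only need the affine/lattice structure; (2) identify this integer polytope with the set of honeycomb-hive correspondents, i.e.\ exhibit an explicit bijection between lattice honeycombs with $\partial(h) = (\mu^*,\nu^*,\lambda)$ and the hive-type triangular arrays (equivalently, Littlewood--Richardson skew tableaux of shape $\lambda/\mu$ and content $\nu$, after accounting for the $*$-duality); (3) invoke the classical statement that such arrays are counted by $c_{\mu,\nu}^\lambda$. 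Step (3) is standard (e.g.\ via \cite{Ful04}); the substance is steps (1)--(2). One must check carefully that the boundary conventions in \eqref{eqn2.8} --- the $x$-coordinate at $\widetilde{B}_{i-1,n}$, the $y$-coordinate at $\widetilde{B}_{n-i,n-i+1}$, the $z$-coordinate at $\widetilde{B}_{0,i}$, together with the reindexings $i \mapsto n-i$ and $i \mapsto i$ --- match the dualizations $\mu \mapsto \mu^*$, $\nu \mapsto \nu^*$ appearing in the theorem, so that the three boundary partitions are read off with the correct orientation around the hexagonal region.

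The main obstacle I expect is precisely this bookkeeping of orientations and dualities: honeycombs are naturally ``centrally symmetric'' objects and the LR coefficient $c_{\mu,\nu}^\lambda$ is not symmetric in an obvious way under the $120^\circ$ rotation of the picture, so one has to be scrupulous about which corner of the honeycomb corresponds to which of $\lambda$, $\mu^*$, $\nu^*$, and about the fact that reading a boundary edge ``inward'' versus ``outward'' introduces a sign. A clean way to handle this is to first prove the case where $h$ is a generic (simply-degenerate) honeycomb, read off the bijection with LR tableaux there, and then argue that degenerations do not change the count because the integrality and boundary data are preserved under the standard perturbation arguments. Since this theorem is quoted verbatim from \cite[Theorem~4]{Tao99}, I would ultimately defer the detailed verification to that source and present here only the correspondence dictionary needed for the sequel.
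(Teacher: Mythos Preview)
The paper does not supply its own proof of this theorem at all: it is quoted as \cite[Theorem~4]{Tao99}, and the only commentary the paper adds is a one-line pointer to the relationship with Berenstein--Zelevinsky patterns, deferring entirely to \cite{Tao99} for details. Your proposal arrives at the same endpoint---you explicitly say you would ``ultimately defer the detailed verification to that source''---so in substance you are doing exactly what the paper does, just with a longer preamble sketching the hive/BZ bijection before deferring.
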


There is a relationship between \textit{Berenstein-Zelevinsky patterns}
\citep{Ber89, Ber92} and honeycombs; see \citep{Tao99} for further discussion.

\begin{theorem}{\rm {\citep[Theorem 2]{Tao99}}} \label{thm2.2}
Let $\tau_n = (B, \Delta_n , d|_{\Delta_n})$ be the $GL_n$ honeycomb tinkertoy and $h \in \HON$ such that $\partial (h) \in \mathbb{Z}^{3n}$. Then there exists $g \in \HON$ such that:
\begin{itemize}
\item $\partial (g) = \partial (h)$, and
\item $\forall v \in V_{\Delta_n}$, $g(v) \in B_\Z$.
\end{itemize}
\end{theorem}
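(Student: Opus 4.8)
## Proof Proposal for Theorem 2.2 (Integrality of Honeycombs)

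\textbf{Overall strategy.} The statement asserts that any honeycomb $h \in \HON$ with integral boundary $\partial(h) \in \Z^{3n}$ can be replaced by one with all vertices in $B_\Z$, without changing the boundary. The plan is to induct on the number of vertices $v \in V_{\Delta_n}$ for which $h(v) \notin B_\Z$, each step replacing $h$ by a new honeycomb $g$ with the same boundary and strictly fewer non-lattice vertices. Since $\partial(h) \in \Z^{3n}$ pins down the constant coordinates of the lines through all boundary vertices to be integers, the non-lattice vertices — if any — are forced to lie in the interior, which is what makes a local modification possible.

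\textbf{Key steps.} First I would make precise the coordinate bookkeeping: each vertex $h(v)$ lies at the intersection of three lattice-directed lines, one of each of the three families in \eqref{eqn2.3}, and the edge condition \eqref{eqn2.6} says consecutive vertices along an edge share one constant coordinate. So the data of a honeycomb is equivalent to assigning a real ``constant coordinate'' to each maximal line-segment (each line in the diagram of $h$), subject to the nonnegativity-of-length inequalities coming from \eqref{eqn2.6}; call the collection of all these constant coordinates the \emph{parameters} of $h$. The boundary lines carry integer parameters by hypothesis. Second, observe that $\HON$, cut out by the linear equalities (adjacent vertices on a common line) and linear inequalities (edge lengths $\geq 0$) of \eqref{eqn2.6}, is a rational polyhedron, and the fiber $\partial^{-1}(\lambda,\mu,\nu)$ over an integral point is again a rational polyhedron. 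If this fiber is a single point, that point is automatically rational; but we need integrality, not just rationality, so a dimension/extremal-point argument alone is not enough. Third — and this is the crux — I would run the explicit local move from \citep{Tao99}: pick a non-lattice vertex; because its three parameters cannot all be forced integral by the boundary, there is a connected region of ``slack'' around it, and one slides an entire sub-configuration (a set of parallel segments, or a small hexagon/triangle) by a real parameter $t$, staying in $\HON$ with fixed boundary, until some additional edge-length hits $0$ — a degeneration that merges vertices or shortens a segment, reducing the count of distinct non-lattice vertices. One must check the move can always be chosen so that it does not \emph{create} new non-lattice vertices, and that after finitely many moves all surviving vertices, now constrained by enough integer conditions, lie in $B_\Z$.

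\textbf{Main obstacle.} The delicate point is the last one: showing the degeneration move always makes genuine progress and terminates. Sliding to kill one non-lattice vertex could in principle drag another vertex off the lattice, or the ``slack region'' could be empty (all parameters already integrally constrained), contradicting the assumption that the vertex is non-lattice — so one has to argue carefully that a non-lattice vertex \emph{forces} the existence of a nontrivial slack direction in the polytope $\partial^{-1}(\lambda,\mu,\nu)$, then choose the direction to be ``rigid'' along already-integral lines. I expect the cleanest route is to pass to an extremal point of the fiber polytope (which exists since the fiber is a bounded — by the boundary data — rational polytope), note extremal points of a polytope defined by integer inequalities with integer right-hand sides have a controlled denominator, and then use the honeycomb-specific combinatorics (the trivalent structure and the three line families) to upgrade ``bounded denominator'' to ``denominator $1$.'' In fact the shortest proof may simply cite the explicit construction in \citep{Tao99}: the existence of integral honeycombs follows from the fact that they biject with Berenstein--Zelevinsky patterns, which are integral by definition; so an alternative plan is to invoke that bijection for $\partial(h)$ integral and be done. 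I would present the polytope-degeneration argument as the primary proof and remark on the BZ-pattern shortcut.
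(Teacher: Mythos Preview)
The paper does not give its own proof of Theorem~\ref{thm2.2}; it cites \citep[Theorem~2]{Tao99} and summarizes that argument at the opening of Section~\ref{sec4} and in Remark~\ref{rmk5.1}. The Knutson--Tao proof runs as follows: the fiber $\partial^{-1}(\xi)\subset\HON$ is a bounded convex polytope; one equips it with a specific linear functional, the \emph{weighted perimeter} ${\sf wperim}$ with weights $w$ satisfying \eqref{eqn4.6}, and takes $g$ to be its unique maximizer (the \emph{largest-lift}). One then shows, via the ``molting'' moves (inflating hexagons along trails of maximal-multiplicity degenerate edges, as in \citep[Lemma~10]{Tao99}), that in the largest-lift every edge of the contracted diagram has multiplicity~$1$. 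A simple honeycomb of this type has all its edge constant-coordinates determined by the boundary, hence integral. Your proposal gestures at the right polytope but misses this mechanism, and two of your three suggested routes have genuine gaps.

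\emph{The sliding induction.} Your primary plan --- slide a sub-configuration until an edge degenerates, thereby reducing the count of non-lattice vertices --- does not work, and you already flag that you cannot show progress. Degenerating an edge merges two vertices, but there is no reason the merged vertex lands on $B_\Z$, nor that previously integral vertices stay integral, so the induction has no well-founded measure. Knutson--Tao do not argue by such an induction; the role of ``sliding'' in their paper is the opposite --- it shows that \emph{if} a bad configuration were present in the largest-lift, one could strictly increase ${\sf wperim}$, contradicting maximality.

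\emph{Extremal points.} Passing to a vertex of the rational polytope $\partial^{-1}(\xi)$ is closer, but an arbitrary vertex is only rational with bounded denominator, and you offer no device to upgrade to denominator~$1$. The missing idea is the \emph{choice of functional}: the weights in \eqref{eqn4.6} are engineered so that inflating any hexagon strictly increases ${\sf wperim}$; this is exactly what rules out all multiplicity-$\geq 2$ edges in the maximizer and forces the simple structure from which integrality follows.

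\emph{The BZ shortcut is circular.} The bijection with Berenstein--Zelevinsky patterns identifies \emph{integral} honeycombs with BZ patterns (this is the content behind Theorem~\ref{thm2.1}). It says nothing about producing an integral honeycomb from a real one with integral boundary. Theorem~\ref{thm2.2} is precisely the assertion that the real fiber is nonempty iff its integer points are nonempty --- this is the saturation theorem itself, and it does not follow from the BZ correspondence.
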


\begin{proof}[Proof of Theorem \ref{thm1.1}]
Suppose $\lambda, \mu, \nu \in \Parn$ and $k \in \N$ such that $c_{k \mu, k \nu}^{k \lambda}>0$. By Theorem \ref{thm2.1}, there exists $h \in \HON$ such that
\begin{equation}\label{eqn2.9}
\partial (h) = (k \mu^* , k \nu^* , k\lambda).
\end{equation}
Since $k>0$, $\frac{1}{k} h \in \HON$ by Lemma \ref{lem2.1}. Due to \eqref{eqn2.14},
\begin{equation}\label{eqn2.10}
\partial \left(\frac{1}{k} h \right) =(\mu^* , \nu^* , \lambda).
\end{equation}
Apply Theorem \ref{thm2.2} to find $g \in \HON$ such that $\partial (g) = \partial (\frac{1}{k}h)$ and $g(v) \in B_\Z$ for all $v \in V_{\Delta_n}$. By Theorem \ref{thm2.1} once more, $c_{\mu, \nu}^{\lambda}>0$.
\end{proof}


\section{M{\"o}bius honeycombs}\label{sec3}

In this section, we introduce a new concept, \textit{M{\"o}bius honeycombs}. We prove that the number of M{\"o}bius honeycombs is the same as Newell-Littlewood numbers, analogous to (and, in fact generalizing) how honeycombs compute Littlewood-Richardson coefficients (Theorem~\ref{thm2.1}).


\subsection{M{\"o}bius honeycomb tinkertoys}

Recall, in Subsection \ref{sub2.1}, the infinite honeycomb tinkertoy $\tau_{\infty} = (B, \Gamma_{\infty},d)$ was defined. Define a subgraph $\widetilde{\Gamma}_n$ of $\Gamma_\infty$ induced by the vertices
\begin{equation}\label{eqn3.1}
V_{\widetilde{\Gamma}_{n}}:=\{ \widetilde{A}_{i,j} \in V_{\Gamma_\infty} \mid 0 \leq i \leq n \} \cup \{ \widetilde{B}_{i,j} \in V_{\Gamma_\infty} \mid 0 \leq i \leq n \} .
\end{equation}
We define the \textbf{M{\"o}bius honeycomb tinkertoy} as the subtinkertoy 
\[\widetilde{\tau}_{n}:=(B,\widetilde{\Gamma}_{n}, d|_{E_{\widetilde{\Gamma}_{n}}})\] of $\tau_\infty$. From now on, the direction map $d|_{E_{\widetilde{\Gamma}_n}}: E_{\widetilde{\Gamma}_n} \rightarrow B$ is denoted simply as $d$.

$\widetilde{\Gamma}_{n}$ is an infinite strip composed of $(n-1)$-number of layers of hexagons. For instance, $\widetilde{\Gamma}_{5}$ is depicted in Figure \ref{fig5}. There are vertices connected to exactly one edge in Figure \ref{fig5}, namely $\widetilde{A}_{0,j}, \widetilde{B}_{n,j}$ for $j \in \Z$. Such vertices of $\widetilde{\Gamma}_{n}$ are the \textbf{boundary vertices} in $\widetilde{\Gamma}_n$.

\begin{figure}
\centering
\begin{subfigure}[b]{\textwidth}
\centering
\includegraphics[scale = 0.52]{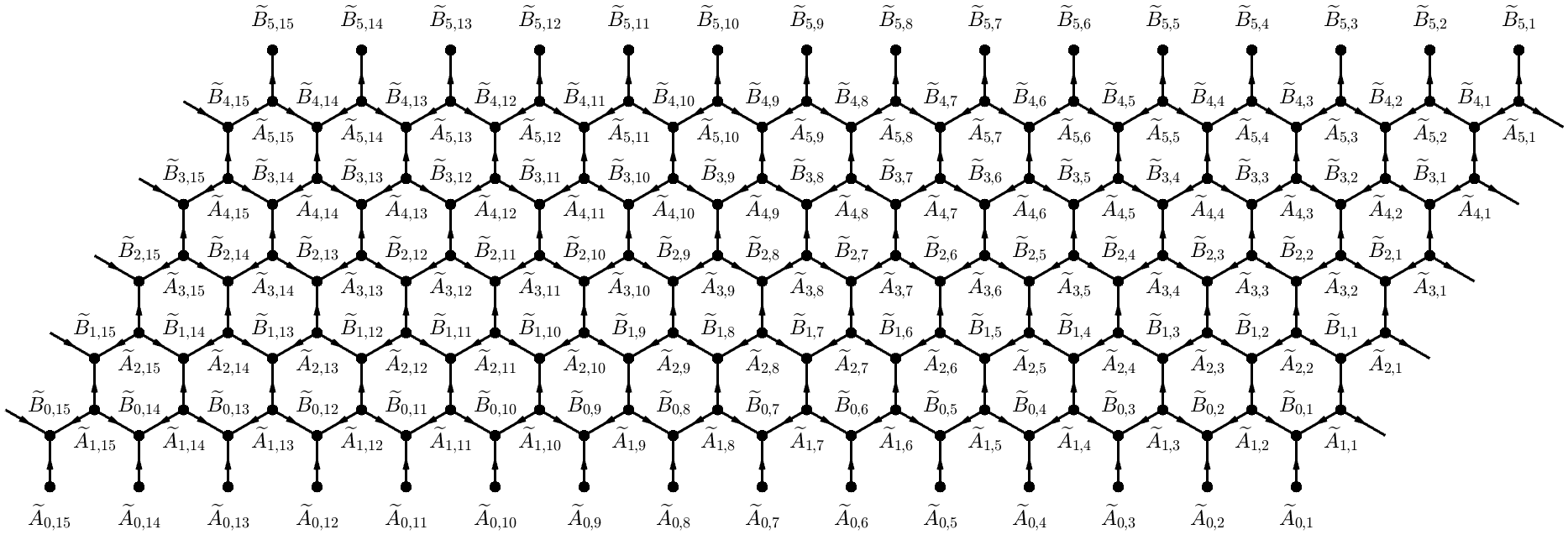}
\caption{The directed graph $\widetilde{\Gamma}_{5}$ of the M{\"o}bius honeycomb tinkertoy $	\widetilde{\tau}_{5}$.}
\label{fig5}
\end{subfigure}

\begin{subfigure}[b]{\textwidth}
\centering
\includegraphics[scale = 0.52]{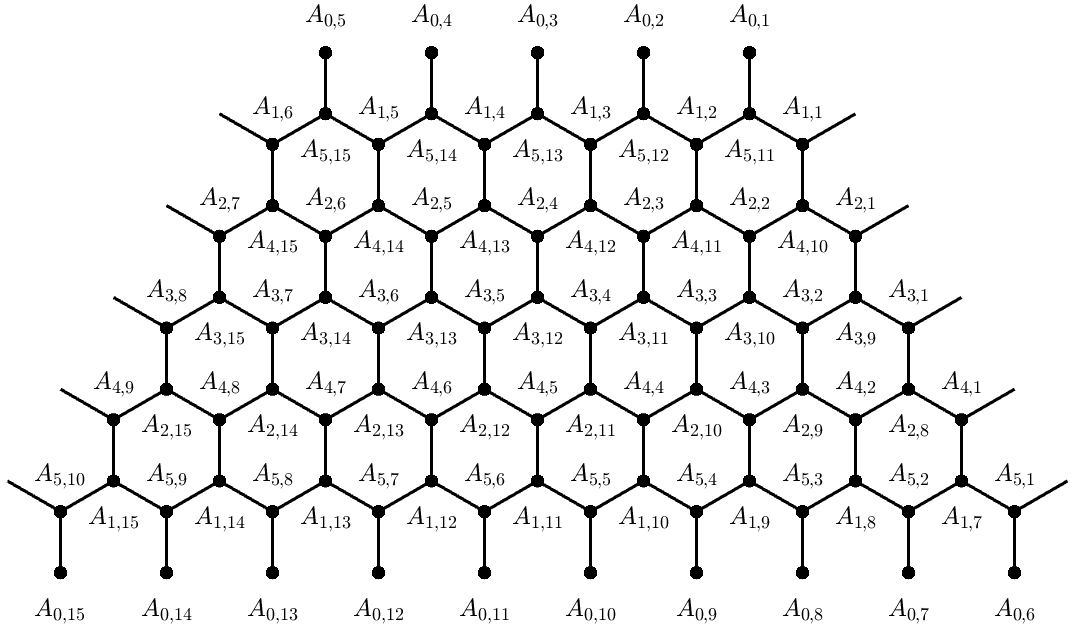}
\caption{The graph $\Gamma_{5}$.}
\label{fig6}
\end{subfigure}
\caption{$\widetilde{\Gamma}_5$ and its quotient graph $\Gamma_5$.}
\label{fig56}
\end{figure}

We now define a graph $\Gamma_{n}$, which will be a ``quotient graph'' of 
$\widetilde{\Gamma}_n$. Intuitively, ``slice'' $\widetilde{\Gamma}_n$ into pieces by using trapezoids as in Figure \ref{fig41}. We want to identify all trapezoids as one, which corresponds to the quotient graph $\Gamma_n$. For instance, four bold vertices of $\widetilde{\Gamma}_5$ in Figure \ref{fig41} are identified as a vertex of $\Gamma_5$. 

To be precise, identify the vertices of $\widetilde{\Gamma}_{n}$ using the equivalence relation $\sim$ defined by
\begin{subequations}\label{eqn3.3}
\begin{equation}
\widetilde{A}_{i,j} \sim \widetilde{B}_{-i+n,-i+j+2n}, \quad (i,j \in \Z, 0 \leq i \leq n)
\end{equation}
and
\begin{equation}
\widetilde{B}_{i,j} \sim \widetilde{A}_{-i+n,-i+j+2n}. \quad (i,j \in \Z, 0 \leq i \leq n).
\end{equation}
\end{subequations}

\begin{figure}
\centering
\begin{subfigure}[b]{\textwidth}
\centering
\includegraphics[scale = 0.44]{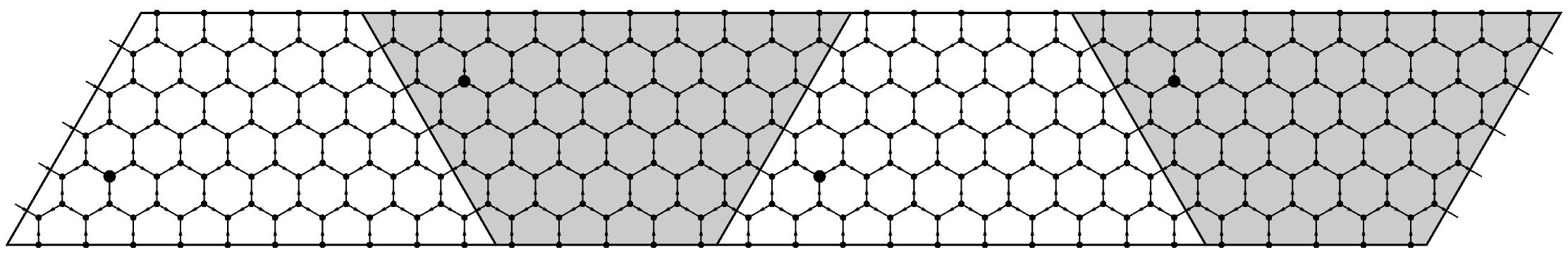}
	\caption{$\widetilde{\Gamma}_{5}$ and $\Gamma_5$ .}
\label{fig41}
\end{subfigure}

\begin{subfigure}[b]{\textwidth}
\centering
\includegraphics[scale = 0.44]{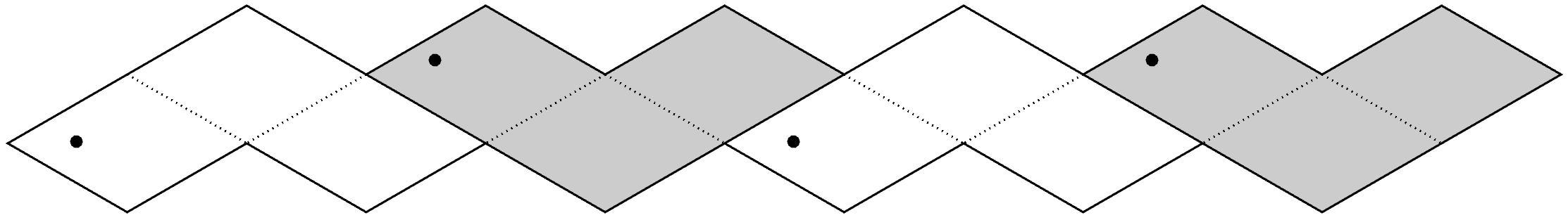}
\caption{$\widetilde{B}_\delta$ and $B_\delta$.}
\label{fig43}
\end{subfigure}

\begin{subfigure}[b]{\textwidth}
\centering
\includegraphics[scale = 0.44]{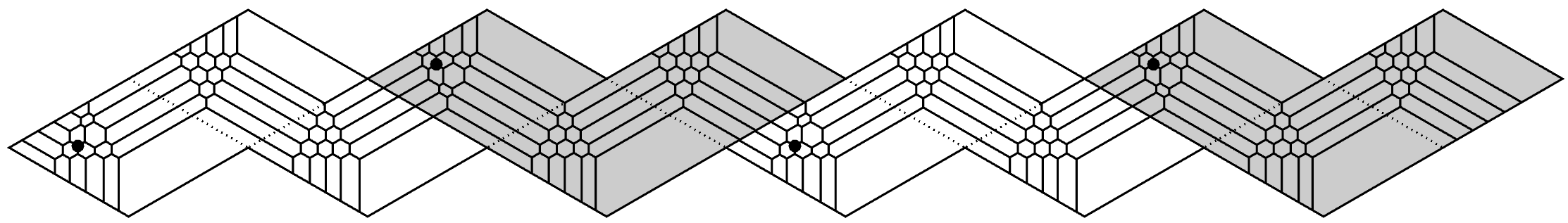}
\caption{Images of $\widetilde{h}$ and $h$.}
\label{fig44}
\end{subfigure}
\caption{Equivalence relations.}
\label{fig414344}
\end{figure}

The vertices of $\Gamma_n$ are representatives of the equivalence classes $[\widetilde{P}]$ for each $\widetilde{P} \in V_{\widetilde{\Gamma}_n}$; we have the quotient map induced by the equivalence relation: 
\begin{equation}
	p_v : V_{\widetilde{\Gamma}_{n}} \rightarrow V_{\Gamma_{n}}, \quad \widetilde{P} \mapsto [\widetilde{P}].
\end{equation}

Next, we define an equivalence relation $\equiv$ on the edges in $\widetilde{\Gamma}_n$. Write a directed edge $\widetilde{e} = ({\sf tail}(\widetilde{e}), {\sf head}(\widetilde{e}))$. For each $\widetilde{e} = (\widetilde{A},\widetilde{B})$ and $\tilde{e}' = (\widetilde{A}' , \widetilde{B}')$, set
\begin{equation}
	\widetilde{e} \equiv \tilde{e}' \iff \widetilde{A} \sim \widetilde{A}' , \widetilde{B} \sim \widetilde{B}' \text{ or } \widetilde{A} \sim \widetilde{B}' , \widetilde{B} \sim \widetilde{A}'.
\end{equation}
The edges of $\Gamma_n$ are representatives of equivalence classes $[\widetilde{e}]$ for each $\widetilde{e} \in E_{\widetilde{\Gamma}_n}$. Here, $[\widetilde{e}]$ is a \emph{non-directed} edge connecting $p_v({\sf tail}(\widetilde{e}))$ and $p_v ({\sf head}(\widetilde{e}))$. We denote a non-directed edge $e = \{ A, B \}$ if $e$ connects vertices $A$ and $B$. The quotient map is defined by
\begin{equation}
	p_e : E_{\widetilde{\Gamma}_n} \rightarrow E_{\Gamma_n}, \quad \widetilde{e} \mapsto [\widetilde{e}].
\end{equation}

From \eqref{eqn3.3}, $\widetilde{A}_{i,j} \sim \widetilde{A}_{i,j+3n}$ and $\widetilde{B}_{i,j} \sim \widetilde{B}_{i,j+3n}$ for all indices. Therefore, there are $3n(n+1)$-many equivalence classes in $V_{\widetilde{\Gamma}_n}$, each represented by $\widetilde{A}_{i,j}$ for $0 \leq i \leq n, 1 \leq j \leq 3n$. Set $A_{i,j} := p_v (\widetilde{A}_{i,j})$ for $0\leq i \leq n, 1 \leq j \leq 3n$. Then the elements of $\Gamma_n$ are indexed by
\begin{subequations}\label{eqn3.23}
\begin{equation}
V_{\Gamma_n}= \{ A_{i,j} \mid i , j \in  \Z, 0\leq i \leq n, 1 \leq j \leq 3n \},
\end{equation}
\begin{align}
E_{\Gamma_n} &=  \{ \{A_{i,j},A_{-i+n,-i+j+2n} \} \mid 0 \leq i \leq n, \hspace{1em} 1 \leq j \leq i+n \}\\
& \cup \{ \{A_{i,j},A_{-i+n+1,-i+j+2n}\} \mid 1 \leq i \leq n, \hspace{1em} 1 \leq j \leq i+n \}\\
& \cup \{ \{A_{i,j},A_{-i+n+1,-i+j+2n+1}\} \mid 1 \leq i \leq n, \hspace{1em} 1 \leq j \leq i+n-1 \}.
\end{align}
\end{subequations}

In summary, $\Gamma_n$ is a finite graph embedded in a M{\"o}bius strip. For instance, consider $\Gamma_5$ in Figure \ref{fig6}. Following \eqref{eqn3.23}, each of the vertices $A_{1,1},A_{2,1},A_{3,1},A_{4,1}, A_{5,1}$ are connected to $A_{5,10},A_{4,9},A_{3,8},A_{2,7},A_{1,6}$, respectively.

For $1 \leq j \leq 3n$, we call $A_{0,j}$ \textbf{boundary vertices} in $\Gamma_n$.


\subsection{Definition of M{\"o}bius honeycombs}\label{sub3.2}

Fix $\delta \in \mathbb{N}$. For each $k \in \mathbb{Z}$, define subsets of $B$
\begin{align}\label{eqn3.15}
D_{\delta}^{(2k)}& := \{ (x,y,z) \in B \mid (k-1)\delta \leq x \leq k \delta, \hspace{1em} (k-1)\delta \leq y \leq k\delta \}, \\
D_{\delta}^{(2k+1)}&:= \{ (x,y,z) \in B \mid (k-1)\delta \leq x \leq k \delta, \hspace{1em}  k\delta \leq y \leq (k+1)\delta \},
\end{align}
\begin{equation}\label{eqn3.4}
\widetilde{B}_{\delta}:=\bigcup_{k \in \mathbb{Z}} D_{\delta}^{(k)}.
\end{equation}
$\widetilde{B}_{\delta}$ is depicted in Figure \ref{fig7}, as an infinite zigzag strip. Here, $D_{\delta}^{(k)}$ is a rhombus. In Figure \ref{fig7}, there are six rhombi, which are $D_{\delta}^{(0)}, D_{\delta}^{(-1)} , \cdots , \cdots D_{\delta}^{(-5)} $, from the left to the right.

We want to define a quotient space $B_\delta$ of $\widetilde{B}_\delta$. Intuitively, we ``slice'' $\widetilde{B}_\delta$ into pieces and identify them into one to construct $B_\delta$. See Figure \ref{fig43}. The four bold points are identified as one element in $B_\delta$.

To write a formal definition, define an equivalence relation on $B$, namely
\begin{equation}\label{eqn3.5}
(x,y,z) \sim (y-2\delta, x-\delta, z+3\delta).\footnote{Momentarily, we will justify this overload
of the use the symbol
$\sim$. See \eqref{eqn3.8}.} 
\end{equation}
Denote the quotient map by $q : B \rightarrow \sfrac{B}{\sim}$. Define $B_{\delta} := q( \widetilde{B}_{\delta} )$. $\widetilde{B}_\delta$ is an infinite strip whereas $B_\delta$ is a M{\"o}bius strip; see Figure \ref{fig78}.

\begin{figure}
\centering
\begin{subfigure}[b]{\textwidth}
\centering
\includegraphics[scale = 0.45]{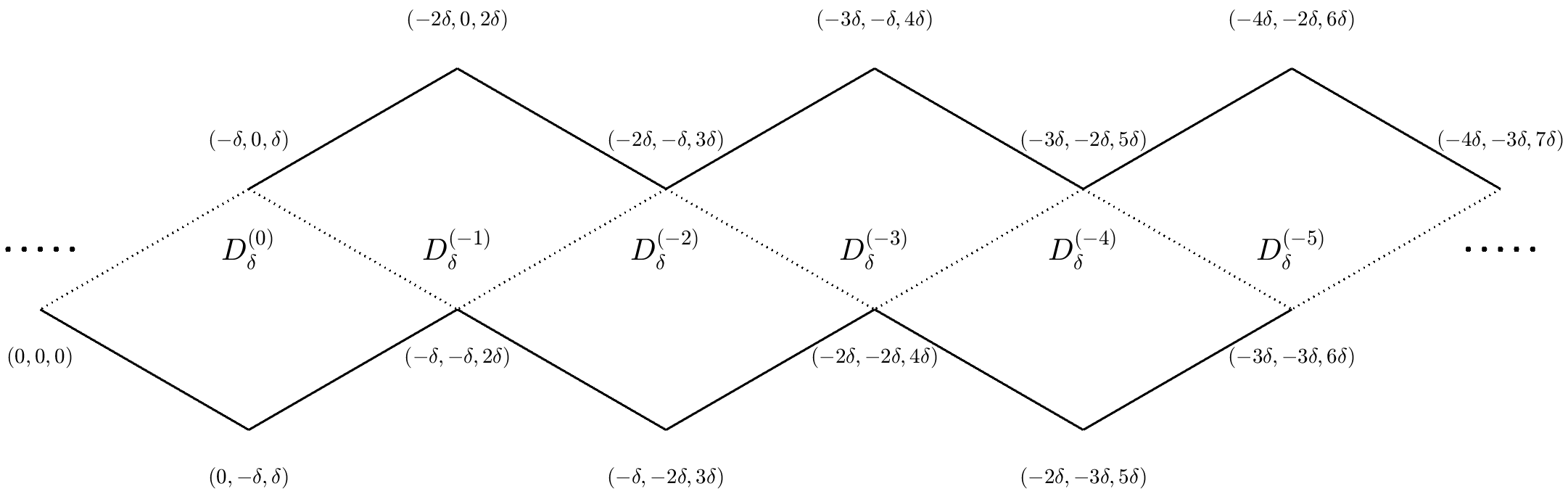}
\caption{$\widetilde{B}_{\delta}$ contained in $B$.}
\label{fig7}
\end{subfigure}

\begin{subfigure}[b]{\textwidth}
\centering
\includegraphics[scale = 0.45]{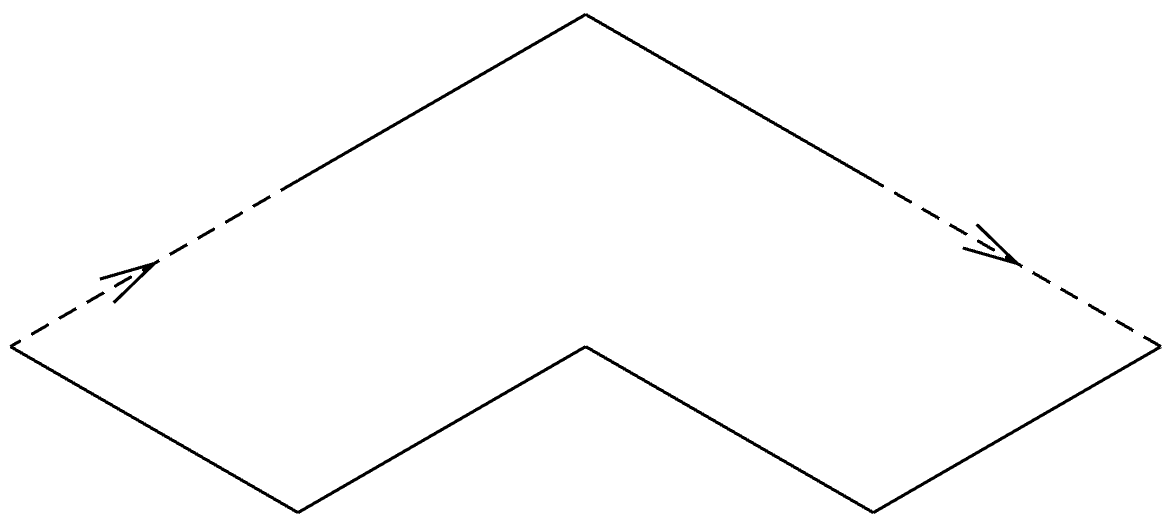}
\caption{A M{\"o}bius strip $B_{\delta}$. The pair of arrows indicate the gluing.}
\label{fig8}
\end{subfigure}
\caption{$\widetilde{B}_\delta$ and its quotient space $B_\delta$.}
\label{fig78}
\end{figure}

By the equivalence relation on $B$, $D_{\delta}^{(k)}$ is identified to $D_{\delta}^{(k-3)}$ for all $k \in \mathbb{Z}$. For instance, $D_{\delta}^{(0)}$ and $D_{\delta}^{(-3)}$, $D_{\delta}^{(-1)}$ and $D_{\delta}^{(-4)}$, $D_{\delta}^{(-2)}$ and $D_{\delta}^{(-5)}$ are identified by the map $q$ in Figure \ref{fig7}.

Now, consider the M{\"o}bius honeycomb tinkertoy $\widetilde{\tau}_{n} = (B, \widetilde{\Gamma}_{n},d)$. Let $\widetilde{h} : V_{\widetilde{\Gamma}_{n}} \rightarrow B$ be a configuration of a tinkertoy $\widetilde{\tau}_{n}$. By definition \eqref{eqn2.6}, $\widetilde{h}$ is required to satisfy
\begin{equation}\label{eqn3.9}
\tag{MH1}
\widetilde{h}({\sf head}(\widetilde{e}))-\widetilde{h}({\sf tail}(\widetilde{e})) \in \{ a \cdot v \in B \mid a \geq 0, v= d(\widetilde{e})\}, \quad \widetilde{e} \in E_{\widetilde{\Gamma}_n}.
\end{equation}
$\widetilde{h}$ is an element of the infinite-dimensional vector space
\begin{equation}\label{eqn3.6}
B^{V_{\widetilde{\Gamma}_n}} =  \prod_{\widetilde{v} \in V_{\widetilde{\Gamma}_{n}}} B(\widetilde{v}), 
\end{equation}
where $B(\widetilde{v}) = B$ for each $\widetilde{v} \in V_{\widetilde{\Gamma}_n}$.

For fixed $\delta \in \mathbb{N}$, $\widetilde{h}$ is required to satisfy additional conditions explained below. Consider $\widetilde{A}_{0,1}, \widetilde{A}_{0,2} ,\cdots , \widetilde{A}_{0,3n}$, which are representatives of equivalence classes of boundary vertices. For instance, in Figure \ref{fig5}, these vertices are on the lowest level, from the right to the left. $\widetilde{h}$ is required to satisfy
\begin{align*}\label{eqn3.7}
\widetilde{h} (\widetilde{A}_{0,j}) & \in \{(-2 \delta , 2 \delta - \xi , \xi) \mid 4\delta \leq \xi \leq 5\delta \}, \quad (1 \leq j \leq n) \\
\tag{MH2}
\widetilde{h} (\widetilde{A}_{0,j}) & \in \{(- \delta ,  \delta - \xi , \xi) \mid 2\delta \leq \xi \leq 3\delta \}, \quad (n+1 \leq j \leq 2n) \\
\widetilde{h} (\widetilde{A}_{0,j}) & \in \{(0 ,  - \xi , \xi) \mid 0 \leq \xi \leq \delta \}, \quad (2n+1 \leq j \leq 3n).
\end{align*}

When $n=5$, for each $1 \leq j \leq 5$, $\widetilde{A}_{0,j}$ should be mapped to the line segment connecting $(-2\delta, -3\delta, 5\delta)$ and $(-2\delta, -2\delta, 4\delta)$, which is in the boundary of $D_\delta^{(-4)}$; see Figure~\ref{fig7}. The cases of $6 \leq j \leq 10$ and $11\leq j \leq 15$ can be interpreted in similar fashion. 

The last condition on $\widetilde{h}$ is
\begin{equation}\label{eqn3.8}
\tag{MH3}
\widetilde{P}_1 \sim \widetilde{P}_2 \hspace{0.5em} \in V_{\widetilde{\Gamma}_n} \quad \Rightarrow \quad \widetilde{h}(\widetilde{P}_1) \sim \widetilde{h}(\widetilde{P}_2) \hspace{0.5em} \in B.
\end{equation}

For fixed $\delta \in \mathbb{N}$, $\widetilde{h} : V_{\widetilde{\Gamma}_{n}} \rightarrow B$ is a \textbf{M{\"o}bius honeycomb} if $\widetilde{h}$ satisfies \eqref{eqn3.9}, \eqref{eqn3.7} and \eqref{eqn3.8}. Denote $\MOB$ as the subset of $B^{V_{\widetilde{\Gamma}_n}}$ consisting M{\"o}bius honeycombs. In \eqref{eqn3.7}, write $\xi_i$ as the $z$-coordinate of $\widetilde{h}(\widetilde{A}_{0,i})$ and define the \textbf{boundary map}
\begin{equation}
\partial : \MOB \rightarrow \R^{3n}, \quad \widetilde{h} \mapsto (\xi_1 , \cdots, \xi_{3n}).
\end{equation}

\begin{theorem}\label{thm3.1}
Let $\lambda,\mu,\nu \in \text{\emph{Par}}_{n}$ and $\delta \in \N$ such that $\delta \geq \lambda_1, \mu_1, \nu_1$. Then $N_{\lambda,\mu,\nu}$ counts the number of M{\"o}bius honeycombs $\widetilde{h} \in \MOB$ satisfying:
\begin{itemize}
\item $\partial (\widetilde{h}) = (\lambda_{1}+4\delta,\cdots, \lambda_{n}+4\delta, \mu_{1}+2\delta, \cdots, \mu_{n}+2\delta, \nu_{1}, \cdots, \nu_{n})$, and
\item $\forall \widetilde{W} \in V_{\widetilde{\Gamma}_n}$, $\widetilde{h} ( \widetilde{W} ) \in B_{\Z}$.
\end{itemize}
\end{theorem}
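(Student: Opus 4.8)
The plan is to reduce the counting of M\"obius honeycombs to a product of three ordinary honeycomb counts, and then invoke Theorem~\ref{thm2.1} together with the defining formula \eqref{eqn1.4} for Newell-Littlewood numbers. The essential observation is that a M\"obius honeycomb, being periodic under $\sim$ by \eqref{eqn3.8}, is completely determined by its restriction to one ``trapezoidal fundamental domain'' of $\widetilde{\Gamma}_n$ inside one rhombus $D_\delta^{(k)}$; the quotient graph $\Gamma_n$ of Figure~\ref{fig6} is precisely this fundamental domain with its self-gluing. So I would first set up the bijection between $\MOB$ (with integrality and the prescribed boundary) and the set of integral configurations of the quotient tinkertoy on $\Gamma_n$ landing in $B_\delta$ with the appropriate boundary data. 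This is where the covering-space formalism of Subsection~3.1 does its work: lifting such a configuration on $\Gamma_n$ back to $\widetilde{\Gamma}_n$ via $p_v$, and checking that \eqref{eqn3.9} is automatically consistent because the direction map $d$ is equivariant under the affine map $(x,y,z)\mapsto(y-2\delta,x-\delta,z+3\delta)$ of \eqref{eqn3.5} — indeed that map permutes the three direction vectors $(0,-1,1)$, $(1,0,-1)$, $(-1,1,0)$ cyclically, which is the geometric reason the gluing is compatible with honeycomb edges.

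Next I would ``cut'' the configuration on $\Gamma_n$ along the three edges of the strip that separate the three ``thirds'' corresponding to the three blocks of boundary vertices in \eqref{eqn3.7} (the $j\in[1,n]$, $j\in[n+1,2n]$, $j\in[2n+1,3n]$ ranges, sitting in $D_\delta^{(-4)}$, $D_\delta^{(-2)}$, $D_\delta^{(0)}$ respectively in the $n=5$ picture of Figure~\ref{fig7}). Each third is an ordinary $GL_n$ honeycomb-tinkertoy region, but now with boundary data on \emph{all} three sides of its triangular region: the outer boundary vertices $\widetilde{A}_{0,j}$ carry one of $\lambda,\mu,\nu$ (shifted by the $4\delta,2\delta,0$ offsets, which is exactly the bookkeeping that places each third in its own rhombus), while the two internal cut-edges carry auxiliary partitions. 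Calling these internal partitions $\alpha,\beta,\gamma$, the content of Theorem~\ref{thm2.1} (applied three times, with the dual-weight conventions $\lambda^*=(-\lambda_n,\dots,-\lambda_1)$ accounting for the orientation reversal along each cut) says that for fixed $\alpha,\beta,\gamma$ the number of ways to fill in the three thirds is $c_{\beta,\gamma}^\lambda\, c_{\gamma,\alpha}^\mu\, c_{\alpha,\beta}^\nu$. Summing over $\alpha,\beta,\gamma\in\Parn$ — the hypothesis $\delta\ge\lambda_1,\mu_1,\nu_1$ guaranteeing that the constraints from \eqref{eqn3.7} keep everything inside $\widetilde{B}_\delta$ and force the auxiliary weights to be genuine partitions in the right range — gives exactly $N_{\lambda,\mu,\nu}$ by \eqref{eqn1.4}.

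The main obstacle I expect is the third step: verifying rigorously that the three cut-pieces glue back along the M\"obius identification in \emph{all} and \emph{only} the ways counted by the triple product, i.e.\ that the self-gluing of $\Gamma_n$ matches up the outgoing cut-edge of one third with the incoming cut-edge of the \emph{next} third (cyclically), and does so with the correct coordinate axis relabeling so that the shared weight along a cut is read as $\gamma$ from the $\lambda$-side and as $\gamma$ (not $\gamma^*$, or vice versa) from the $\mu$-side. Concretely this means tracking, through the index arithmetic of \eqref{eqn3.3} and \eqref{eqn3.23}, which vertices of $\widetilde{\Gamma}_n$ get identified and confirming that the induced identification on configurations is exactly \eqref{eqn3.8}; and checking that the boundary offsets $4\delta,2\delta,0$ in the statement are forced, not chosen — i.e.\ that any M\"obius honeycomb with integral vertices and the zig-zag boundary constraints \eqref{eqn3.7} automatically has its three thirds sitting in $D_\delta^{(-4)},D_\delta^{(-2)},D_\delta^{(0)}$ with no other possibility. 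Once the combinatorics of this gluing is pinned down, the rest is a direct citation of Theorems~\ref{thm2.1} and the definition \eqref{eqn1.4}.
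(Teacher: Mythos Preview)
Your proposal is correct and follows essentially the same approach as the paper: both arguments establish a bijection between lattice M\"obius honeycombs with the prescribed boundary and triples $(h_\lambda,h_\mu,h_\nu)$ of ordinary lattice honeycombs with boundaries $(\beta^*,\gamma^*,\lambda)$, $(\gamma^*,\alpha^*,\mu)$, $(\alpha^*,\beta^*,\nu)$, then invoke Theorem~\ref{thm2.1} and sum over $\alpha,\beta,\gamma$ via \eqref{eqn1.4}. The paper carries out the ``cut'' direction by appealing to Lemma~\ref{lem3.1} (which pins each vertex to a specific rhombus $D_\delta^{(k)}$, exactly the ``no other possibility'' check you flag as the main obstacle) and the ``glue'' direction by the explicit arrangement of rhombi in Figure~\ref{fig13}; your covering-space and equivariance language is a repackaging of the same verification.
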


\begin{figure}
\centering
\includegraphics[scale = 0.42]{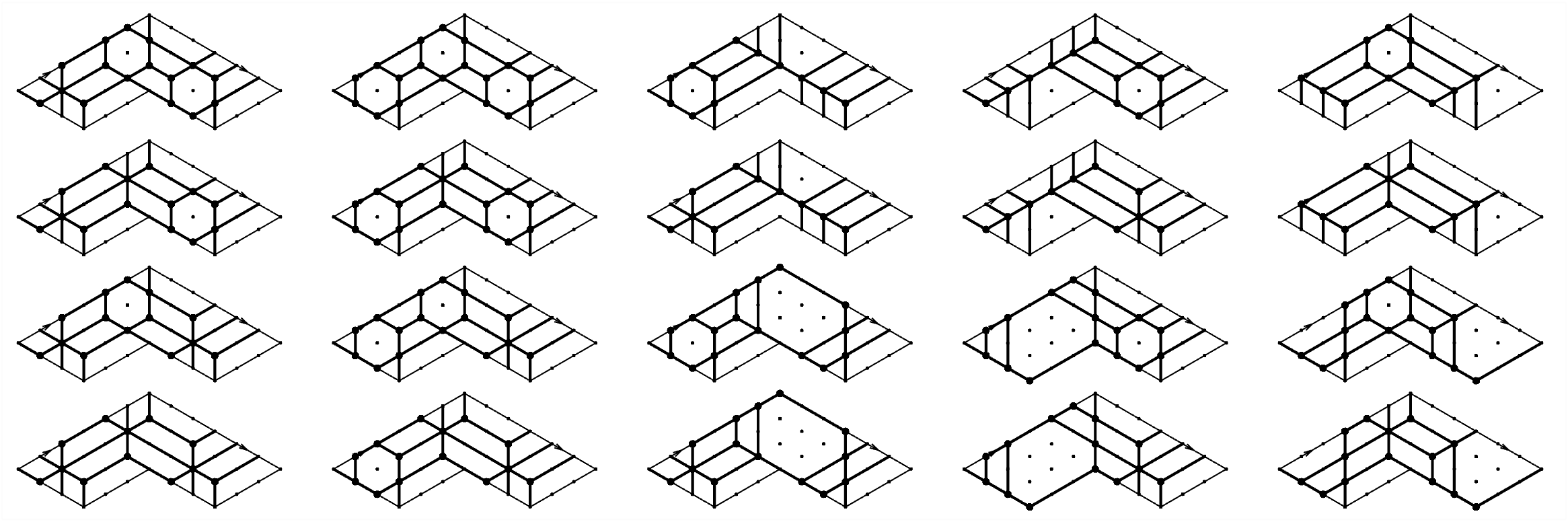}
\caption{$n=3$, $\delta = 3$, $\lambda = \mu = \nu = (3,2,1)$. Then $N_{\lambda, \mu , \nu} = 20$.}
\label{fig57}
\end{figure}

For instance, let $n=3$ and $\lambda = \mu = \nu = (3,2,1)$. Since $\lambda_1 = \mu_1 = \nu_1 =3$, take $\delta =3$. In Figure \ref{fig57}, the number of M{\"o}bius honeycombs satisfying the conditions is $20$. Therefore, $N_{\lambda , \mu , \nu} = 20$. 

\begin{theorem}\label{thm3.2}
Let $\delta \in \mathbb{N}$. Let $\widetilde{h} \in \MOB$ such that $\partial (\widetilde{h}) = (\xi_{1}, \cdots, \xi_{3n}) \in \mathbb{Z}^{3n}$ and $\sum_{1\leq j \leq 3n}\xi_{j} \equiv 0 \modtwo $. Then there exists $\widetilde{g} \in \MOB$ such that:
\begin{itemize}
\item $\partial (\widetilde{g}) = \partial (\widetilde{h})$, and
\item $\forall \widetilde{W} \in V_{\widetilde{\Gamma}_n}$, $\widetilde{g} ( \widetilde{W} ) \in B_{\Z}$.
\end{itemize}
\end{theorem}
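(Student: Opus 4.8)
The strategy is to mimic the proof of Theorem~\ref{thm2.2} from \citep{Tao99}, transported to the covering space $\widetilde{B}_{\delta}$, while tracking the extra identification \eqref{eqn3.8}. The first step is to understand the structure of $\MOB$ as a polytope: the conditions \eqref{eqn3.9}, \eqref{eqn3.7}, \eqref{eqn3.8} are all closed linear (in)equalities on the infinite product $B^{V_{\widetilde{\Gamma}_n}}$, but because of the periodicity forced by \eqref{eqn3.8} a M\"obius honeycomb is determined by its restriction to a fundamental domain (the trapezoids of Figure~\ref{fig41}), so the fiber $\partial^{-1}(\xi)$ is a genuine finite-dimensional rational polytope $P_\xi$. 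Given the integral point $\partial(\widetilde{h}) = \xi$, I want to produce an \emph{integral} point of $P_\xi$. The natural approach, following Knutson--Tao, is to look at a vertex (extreme point) of $P_\xi$ and argue it is automatically a lattice point. So the plan is: (i) reduce to a vertex $\widetilde{g}$ of $P_\xi$; (ii) show every vertex of $P_\xi$ lies in $B_\Z$ at each $\widetilde{W}$.

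For step (ii) the mechanism in \citep{Tao99} is the following: at a vertex, the ``slack'' edges (those where the head-minus-tail vector is a \emph{positive} multiple of $d(\widetilde{e})$, not zero) cannot form a cycle, or more precisely the rigid subgraph pins down all coordinates; half-integrality or non-integrality of a vertex is ruled out by an averaging/perturbation argument. Concretely, if a vertex $\widetilde{g}$ were not integral, one writes $\widetilde{g} = \frac{1}{2}(\widetilde{g}_+ + \widetilde{g}_-)$ for two distinct configurations with the same boundary, contradicting extremality — and this is exactly where the parity hypothesis $\sum_j \xi_j \equiv 0 \pmod 2$ must enter. In the $GL_n$ honeycomb case the boundary already has integer sum automatically (since $|\lambda|+|\mu|+|\nu|$ balances), but on the M\"obius strip a non-oriented loop can appear (the paper flags this in Subsection~1.3 and Section~5): going once around the strip is an orientation-reversing loop, so the holonomy of the ``lattice structure'' along it is the antipodal-type map coming from \eqref{eqn3.5}, and $\pi_1(\R\mathbb{P}^2) = \Z/2\Z$. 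The upshot is that a vertex is forced to be half-integral in general, and is forced to be \emph{integral} precisely when the parity condition holds — because the obstruction to integrality is a $\Z/2\Z$-valued quantity that equals $\sum_j \xi_j \bmod 2$.

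Therefore the key steps, in order, are: (1) formalize that $P_\xi := \{\widetilde{h} \in \MOB : \partial(\widetilde{h}) = \xi\}$ is a nonempty (it contains $\widetilde{h}$), bounded, rational polytope, using \eqref{eqn3.8} to cut down to a fundamental domain and using \eqref{eqn3.7} together with the direction map to bound all coordinates; (2) pass to a vertex $\widetilde{g}$ of $P_\xi$; (3) analyze the ``taut'' subgraph of $\widetilde{g}$ — the edges where equality holds in the degeneracy sense — and show the affine span of the constraints active at $\widetilde{g}$ has a unique solution, realizing each $\widetilde{g}(\widetilde{W})$ as an explicit integer-affine combination of the boundary data $\xi$; (4) observe that the only possible denominator is $2$, arising from the orientation-reversing loop around the strip, and that the numerator's parity is governed by $\sum_j \xi_j$; (5) conclude that $\sum_j \xi_j \equiv 0 \pmod 2$ forces $\widetilde{g}(\widetilde{W}) \in B_\Z$ for all $\widetilde{W}$, and $\partial(\widetilde{g}) = \xi$ since $\widetilde{g} \in P_\xi$.

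\textbf{Main obstacle.} The serious difficulty is step (4): making precise the claim that the \emph{only} source of non-integrality at a vertex is the $\Z/2\Z$ monodromy around the M\"obius strip, and pinning down that the relevant $\Z/2\Z$ invariant is exactly $\sum_j \xi_j \bmod 2$. In \citep{Tao99} this is handled by a clever combinatorial/geometric argument about honeycombs (the ``crossover'' and perturbation moves, or equivalently the structure of tensions on the graph); here I expect to need an analogous but more delicate argument on $\widetilde{\Gamma}_n$ that interacts correctly with the gluing \eqref{eqn3.3}–\eqref{eqn3.5} — in particular, one must check that a would-be half-integral vertex genuinely violates \eqref{eqn3.8} unless the parity is even, and that when parity is even one can always ``round'' along the loop without destroying \eqref{eqn3.9}. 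This is presumably the content of Sections~4 and~5 of the paper (constructing the extremal M\"obius honeycomb and handling the non-oriented loop), so I would expect to invoke or adapt the machinery developed there rather than re-derive it from scratch.
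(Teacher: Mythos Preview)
Your broad architecture is right (polytope, extremal point, non-orientable loops as the $\Z/2\Z$ obstruction, parity condition), but steps (4)--(5) contain a genuine conceptual gap: it is \emph{not} true that vertices of $P_\xi$ are integral once $\sum_j \xi_j$ is even. The paper's Theorem~\ref{thm5.2} shows exactly the opposite: the largest-lift (which is extremal by Lemma~\ref{lem3.4}) is in general only \emph{half-integral}, with the non-integral ``white'' vertices organized along non-orientable white loops. Your averaging move $\widetilde{g} = \tfrac{1}{2}(\widetilde{g}_+ + \widetilde{g}_-)$ works only for \emph{orientable} loops (this is Lemma~\ref{lem5.2}); for a non-orientable loop the sliding by $\pm\epsilon$ does not close up, so no contradiction to extremality arises and half-integrality genuinely survives at the vertex.

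What the parity hypothesis actually buys is not integrality of the extremal point but \emph{evenness of the number of canonical non-orientable white loops} (Theorem~\ref{thm5.3}, Corollary~\ref{coro5.1}). The integral $\widetilde{g}$ is then obtained by an explicit \emph{modification} of the largest-lift, not by extremality: one pairs the loops (even count), uses the topological fact that two non-orientable loops in a M\"obius strip must intersect, and performs ``double breaking'' at an intersection to define a correction $\varphi:E_{\Gamma_n}\to\{0,\pm\tfrac{1}{2},\pm 1\}$ (Figure~\ref{fig54}). A further exceptional case---a ``white triangle of size $\tfrac{1}{2}$''---requires a separate ``sixfold breaking'' fix (Figure~\ref{fig56}) to keep the length inequality~\eqref{eqn5.3} valid. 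So your hedge ``one can always round along the loop without destroying \eqref{eqn3.9}'' is the real content rather than a check, it is the construction of Subsection~\ref{sub5.4}, and the resulting $\widetilde{g}=\widetilde{h}_\varphi$ is typically \emph{not} a vertex of $P_\xi$.
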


We prove Theorem \ref{thm3.1} and Theorem \ref{thm3.2} in Subsection \ref{sub3.4} and Subsection \ref{sub5.4}, respectively.

\begin{proof}[Proof of Theorem \ref{thm1.2}]
Choose $\delta \in \N$ such that $\delta \geq \lambda_1,\mu_1, \nu_1$. Apply Theorem \ref{thm3.1}: from $N_{k\lambda , k\mu , k\nu}>0$, there exists $\widetilde{h} \in {{\tt M \ddot{O} BIUS}(\widetilde{\tau}_{n}, k\delta)}$ satisfying
\begin{equation}
\partial (\widetilde{h}) = (k\lambda_{1}+4k\delta,\cdots, k\lambda_{n}+4k\delta, k\mu_{1}+2k\delta, \cdots, k\mu_{n}+2k\delta, k\nu_{1}, \cdots, k\nu_{n}).
\end{equation}

Due to Lemma \ref{lem3.0}, $\frac{1}{k} \widetilde{h} \in \MOB$ and
\begin{equation}
\partial \left( \frac{1}{k}\widetilde{h} \right) = (\lambda_{1}+4\delta,\cdots, \lambda_{n}+4\delta, \mu_{1}+2\delta, \cdots, \mu_{n}+2\delta, \nu_{1}, \cdots, \nu_{n}).
\end{equation}
In particular, $\partial \left( \frac{1}{k}\widetilde{h} \right) \in \Z^{3n}$ and the sum of components is $|\lambda| + |\mu| + |\nu| + 6n\delta$, which is an even integer. Apply Theorem \ref{thm3.2} to find $\widetilde{g} \in \MOB$ such that
\begin{equation}
\partial \left( \frac{1}{k}\widetilde{h}  \right) = \partial (\widetilde{g}) \hspace{0.5em} \text{and} \hspace{0.5em} \forall \widetilde{W} \in V_{\widetilde{\Gamma}_n}, \hspace{0.5em} \widetilde{g} (\widetilde{W}) \in B_\Z.
\end{equation}

Again, due to the existence of $\widetilde{g} \in \MOB$, $N_{\lambda , \mu , \nu} >0$ follows from Theorem \ref{thm3.1}, completing the proof.
\end{proof}


\subsection{Newell-Littlewood numbers}\label{sub3.4}

In this subsection, we prove Theorem \ref{thm3.1}.

\begin{figure}
\centering
\begin{subfigure}[b]{\textwidth}
\centering
\includegraphics[scale = 0.45]{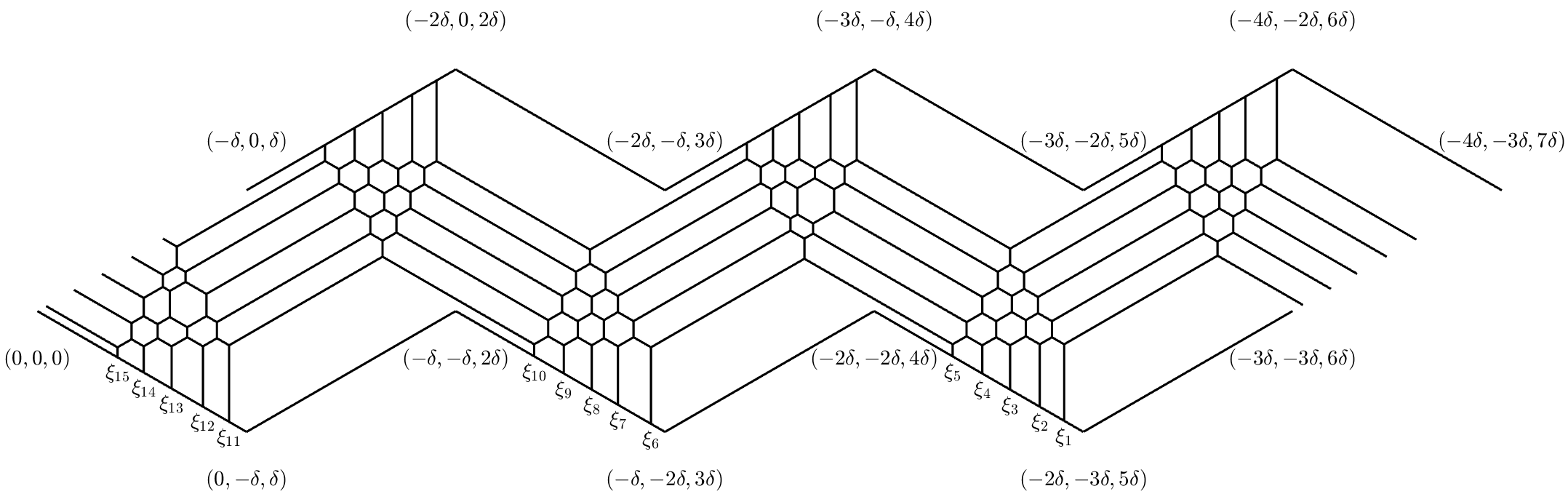}
\caption{Image of $\widetilde{h}$ contained in $\widetilde{B}_{\delta}$ when $n=5$.}
\label{fig9}
\end{subfigure}

\begin{subfigure}[b]{\textwidth}
\centering
\includegraphics[scale = 0.45]{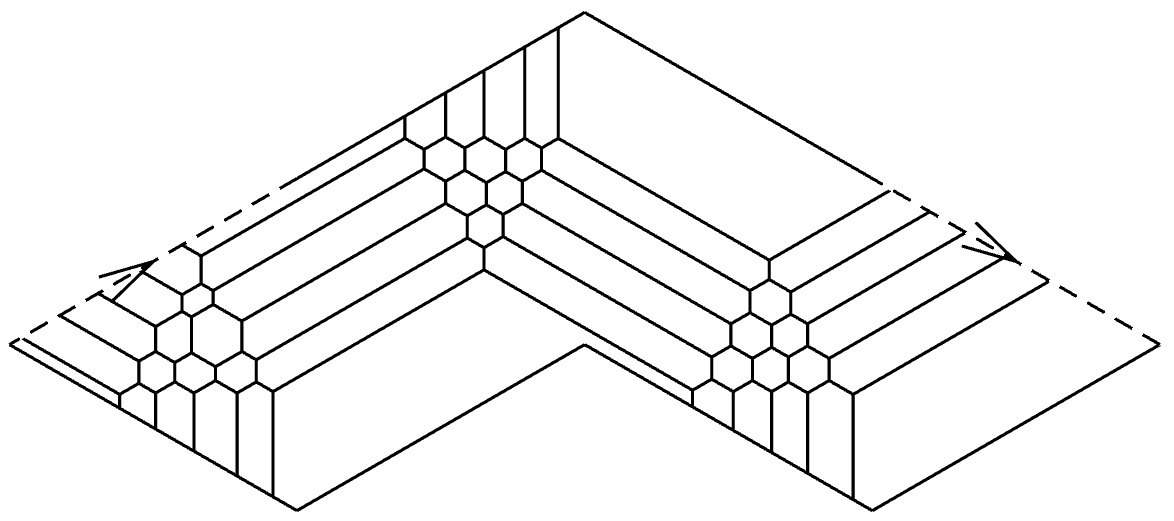}
\caption{Image of $h$ contained in $B_{\delta}$ when $n=5$.}
\label{fig11}
\end{subfigure}
\caption{$\widetilde{h}$ and its associated map $h$.}
\label{fig911}
\end{figure}

\begin{figure}
\centering
\begin{subfigure}[t]{0.35\textwidth}
\centering
	\includegraphics[width=\textwidth]{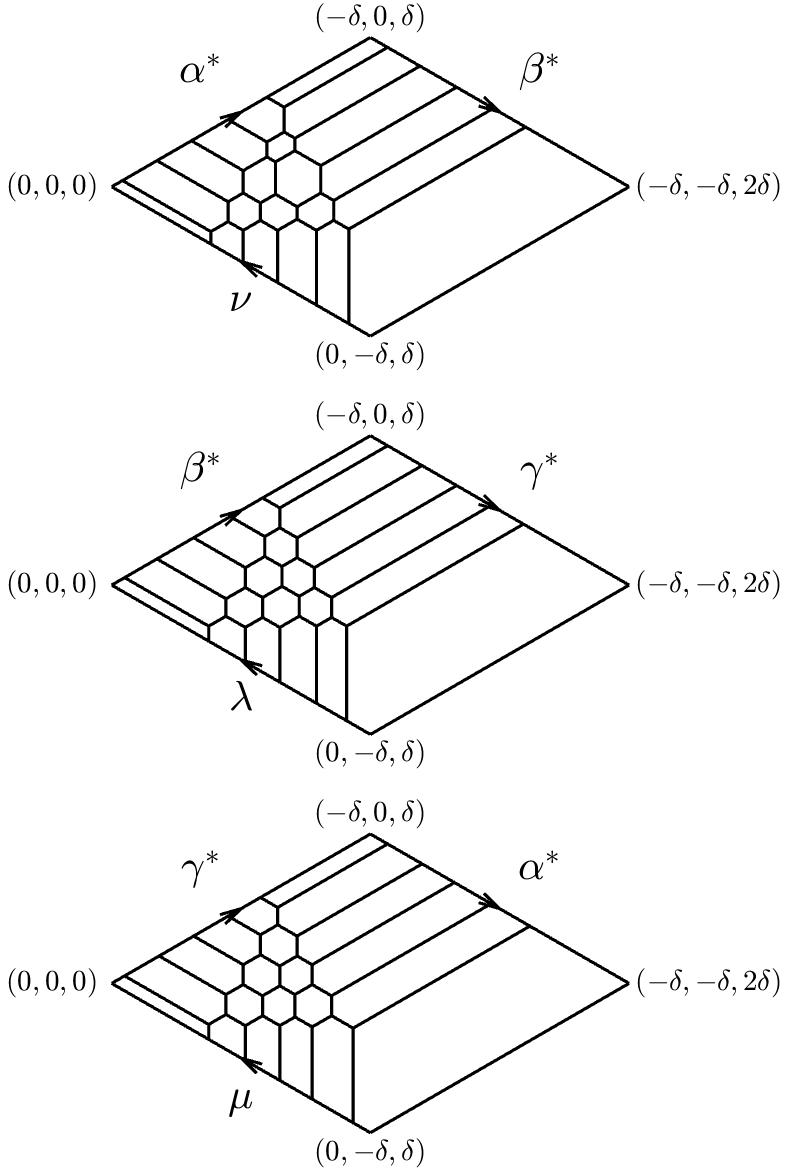}
	\caption{Image of $h_{\lambda},h_{\mu},h_{\nu}$ contained in $D_{\delta}^{(0)}$ when $n=5$.}
	\label{fig12}
\end{subfigure}
\hfill
\begin{subfigure}[t]{0.61\textwidth}
\centering
\includegraphics[width=\textwidth]{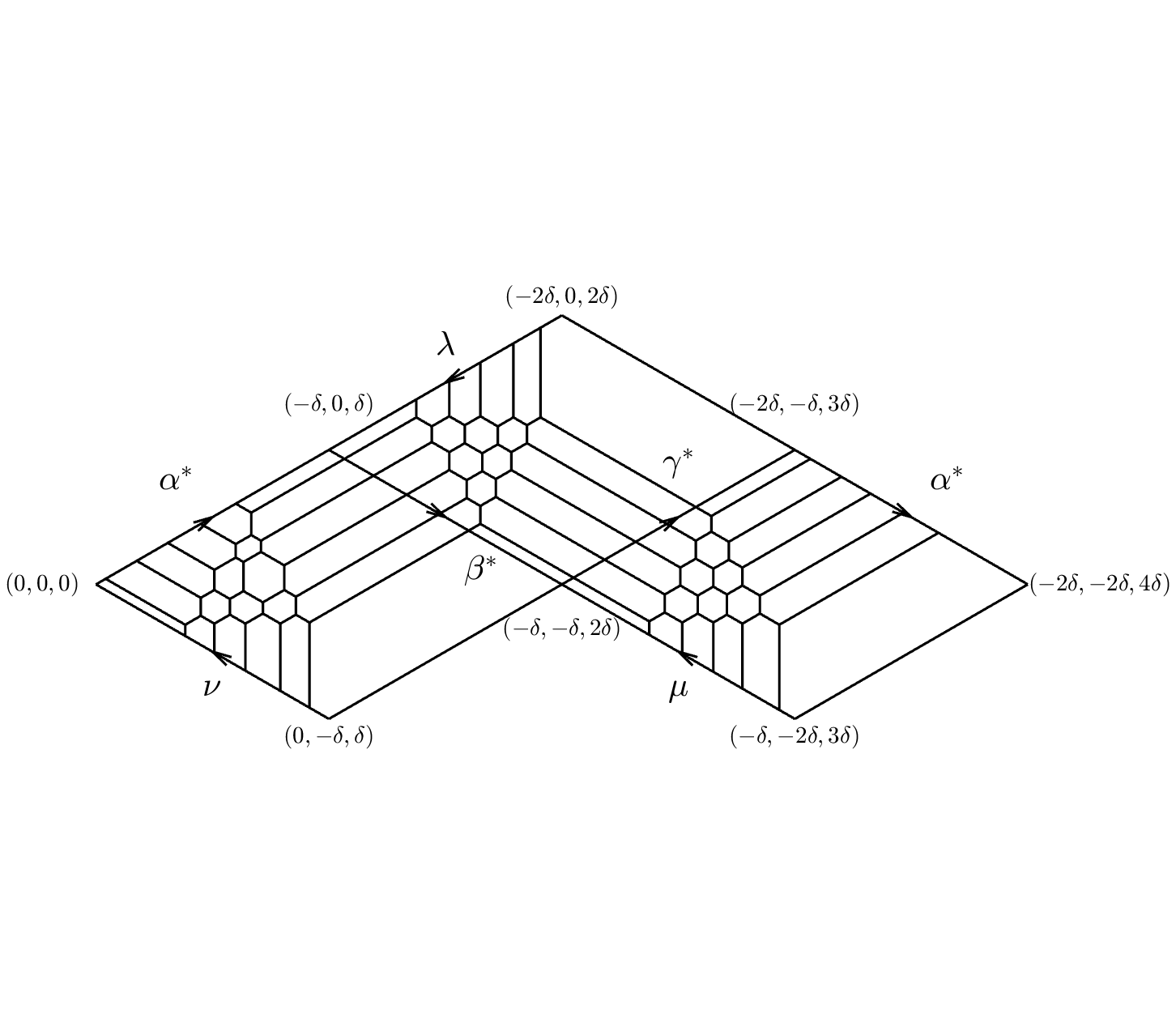}
	\caption{Gluing $h_{\lambda},h_{\mu},h_{\nu}$ to obtain $\widetilde{h}$ when $n=5$.}
	\label{fig13}
\end{subfigure}
\caption{Honeycombs and M{\"o}bius honeycombs.}
\label{fig1213}
\end{figure}

\begin{proof}[Proof of Theorem \ref{thm3.1}]
By Theorem \ref{thm1.2}, $c_{\beta ,\gamma}^{\lambda}c_{\gamma , \alpha}^{\mu} c_{\alpha, \beta}^{\nu}$ is the number of ordered triples $(h_{\lambda}, h_{\mu}, h_{\nu})$ satisfying:
\begin{itemize}
\item $h_{\lambda} , h_{\mu}, h_{\nu} \in \texttt{HONEY}(\tau_{n})$,
\item $\partial (h_{\lambda}) = (\beta^{*},\gamma^{*},\lambda)$, $\partial (h_{\mu}) = (\gamma^{*}, \alpha^{*},\mu)$, $\partial (h_{\nu}) = (\alpha^{*}, \beta^{*},\nu)$, and
\item $\forall v \in V_{\Delta_n}$, $h_\lambda (v) , h_\mu (v) , h_\nu (v) \in B_\Z$.
\end{itemize}
If $c_{\beta ,\gamma}^{\lambda}c_{\gamma , \alpha}^{\mu} c_{\alpha, \beta}^{\nu} \neq 0$, then $\delta \geq \alpha_1, \beta_1, \gamma_1$ follows from $\delta \geq \lambda_1, \mu_1, \nu_1$. As a result,
\begin{equation}\label{eqn3.31}
\forall v \in V_{\Delta_n}, \quad h_\lambda (v) , h_\mu (v) , h_\nu (v) \in D_\delta^{(0)}.
\end{equation}
This is depicted in Figure \ref{fig12}. We have infinite copies of three different types of rhombi depicted in Figure \ref{fig12}. Each type of rhombi is arranged in $B$ as follows.
\begin{itemize}
\item $h_\lambda$ rhombus: $\cdots , D_{\delta}^{(-4)}, D_{\delta}^{(-1)}, D_{\delta}^{(2)}, D_{\delta}^{(5)}, \cdots$
\item $h_\mu$ rhombus: $\cdots , D_{\delta}^{(-5)}, D_{\delta}^{(-2)}, D_{\delta}^{(1)}, D_{\delta}^{(4)}, \cdots$
\item $h_\nu$ rhombus: $\cdots , D_{\delta}^{(-6)}, D_{\delta}^{(-3)}, D_{\delta}^{(0)}, D_{\delta}^{(3)}, \cdots$
\end{itemize}

Gluing pieces along the line segments $\alpha^{*}, \beta^{*}$ and $\gamma^{*}$, we have $\widetilde{h} \in \MOB$ satisfying the given conditions. Therefore, the number of $\widetilde{h} \in \MOB$ satisfying the given conditions is greater than or equal to $N_{\lambda,\mu,\nu}$.

Conversely, suppose we have $\widetilde{h} \in \MOB$ satisfying the given conditions. Then the image of vertices under $\widetilde{h}$ can be depicted, for instance, in Figure \ref{fig9}. Slice $\widetilde{B}_{\delta}$ into $D_{\delta}^{(k)}$. Due to Lemma \ref{lem3.1}, we can retrieve Figure \ref{fig12} from Figure \ref{fig9}. In other words, we construct $h_\lambda , h_\mu , h_\nu \in \HON$ from $\widetilde{h} \in \MOB$, satisfying \eqref{eqn3.31}. Since the images of vertices under $\widetilde{h}$ lie on the lattice points, so do the images under $h_\lambda , h_\mu , h_\nu$. Also, each component of $\alpha,\beta,\gamma$ is non-negative, due to \eqref{eqn3.31}. Therefore, $\alpha ,\beta, \gamma \in \Parn$. This proves that $N_{\lambda, \mu, \nu}$ is greater than or equal to the number of $\widetilde{h} \in \MOB$ satisfying the given conditions.
\end{proof}


\section{Largest-lifts}\label{sec4}

Recall that Theorem \ref{thm1.1} follows from Theorem \ref{thm2.2}, asserting there exists $g \in \HON$ of which image contained in $B_\Z$. To find such a $g$, A.~Knutson and T.~Tao identified a section of $\HON$ as a convex polytope embedded in a finite dimensional vector space equipped with a linear functional. $g$ is chosen as the unique maximum in that polytope with respect to the linear functional. They proved that the image of $g$ has a simple pattern, concluding that it is contained in $B_\Z$. They called $g$ a \textit{largest-lift}.

In this section, we construct an analogue of largest-lift in $\MOB$ as a candidate for $\widetilde{g}$ in Theorem \ref{thm3.2}.\footnote{From now on, we assume $\delta \in \N$ without saying so.} Then we construct a two-colored graph by using edge contraction on $\Gamma_n$, based on $\widetilde{g}$. Using this graph, we prove that the image of $\widetilde{g}$ also has a simple pattern, analogous to \citep[Section 5]{Tao99}.


\subsection{Construction of largest-lifts}

A \textbf{hexagon} $\widetilde{\alpha}_{i,j}$ in $\widetilde{\Gamma}_n$ is
\begin{subequations}\label{eqn2.11}
\begin{equation}
\widetilde{\alpha}_{i,j}:= \{ \widetilde{A}_{i,j}, \widetilde{B}_{i,j}, \widetilde{A}_{i+1,j+1},\widetilde{B}_{i,j+1},\widetilde{A}_{i,j+1},\widetilde{B}_{i-1,j} \} \subseteq V_{\widetilde{\Gamma}_n}.
\end{equation}
For a depiction, see the left-hand picture of Figure \ref{fig2}. The set of hexagons in $\widetilde{\Gamma}_n$ is
\begin{equation}
H_{\widetilde{\Gamma}_n}:=\{\widetilde{\alpha}_{i,j} \mid 1 \leq i \leq n-1, j \in \Z\}.
\end{equation}
\end{subequations}

Similarly, define a hexagon $\alpha_{i,j}$ in $\Gamma_n$
\begin{subequations}\label{eqn2.12}
\begin{equation}
\alpha_{i,j}:=\left\{ p_v(\widetilde{A}_{i,j}), p_v(\widetilde{B}_{i,j}), p_v(\widetilde{A}_{i+1,j+1}), p_v(\widetilde{B}_{i,j+1}),p_v(\widetilde{A}_{i,j+1}), p_v(\widetilde{B}_{i-1,j}) \right\} .
\end{equation}
The set of hexagons in $\Gamma_n$ is 
\begin{equation}
H_{\Gamma_n}:=\{ \alpha_{i,j} \mid 1 \leq i\leq n-1, 1 \leq j \leq n+i \}.
\end{equation}
\end{subequations}
Define
\begin{align}
p_h : H_{\widetilde{\Gamma}_n} & \rightarrow H_{\Gamma_n}, \\
\{ \widetilde{A},\widetilde{B}, \widetilde{C}, \widetilde{D},\widetilde{E},\widetilde{F} \} & \mapsto \{ p_v(\widetilde{A}), p_v(\widetilde{B}), p_v(\widetilde{C}), p_v(\widetilde{D}), p_v(\widetilde{E}), p_v(\widetilde{F}) \} . 
\end{align}

For $(x,y,z),  (x',y',z') \in B$, define a metric $l$ in $B$
\begin{equation}\label{eqn3.20}
l \left( (x,y,z),(x',y',z') \right) := \frac{1}{\sqrt{2}} \sqrt{(x-x')^2 + (y-y')^2 + (z-z')^2}.
\end{equation}
The metric $l$ is scaled so that the distance between consecutive lattice points is $1$. Suppose $\widetilde{h} \in \MOB$ and $\widetilde{e} \in E_{\widetilde{\Gamma}_n}$. Let
\begin{equation}\label{eqn3.26}
{\sf length}(\widetilde{h}; \widetilde{e}):= l\left( \widetilde{h}({\sf head}(\widetilde{e})), \widetilde{h}({\sf tail}(\widetilde{e}) )  \right);
\end{equation}
${\sf length}$ measures each line segment in Figure \ref{fig44}. From Lemma \ref{lemZ.6},
\begin{equation}\label{eqn4.4}
p_e(\widetilde{e}) = p_e(\tilde{e}') \quad \Rightarrow \quad {\sf length}(\widetilde{h}; \widetilde{e}) = {\sf length}(\widetilde{h}; \tilde{e}').
\end{equation}

Let $\widetilde{e}_1, \cdots , \widetilde{e}_6$ be six edges surrounding $\widetilde{\alpha} \in H_{\widetilde{\Gamma}_n}$. Define
\begin{equation}\label{eqn3.28}
{\sf perimeter}(\widetilde{h} ; \widetilde{\alpha}):= \sum_{i=1}^6 {\sf length}(\widetilde{h} ; \widetilde{e}_i).
\end{equation}
From \eqref{eqn4.4},
\begin{equation}\label{eqn4.5}
p_h(\widetilde{\alpha}) = p_h(\widetilde{\alpha}') \quad \Rightarrow \quad {\sf perimeter}(\widetilde{h}; \widetilde{\alpha}) = {\sf perimeter}(\widetilde{h}; \widetilde{\alpha}').
\end{equation}

\begin{lemma}\label{lemA.1}
The following map is an injection:
\begin{equation}\label{eqnA.1}
\iota : \MOB \rightarrow \R^{\frac{3}{2}n(n-1)} \times \R^{3n}, \quad \widetilde{h}  \mapsto \left( (p_{i,j})_{1 \leq i \leq n-1, \hspace{0.2em} 1 \leq j \leq n+i} , (\xi_j)_{1\leq j \leq 3n} \right),
\end{equation}
where $p_{i,j} := {\sf perimeter}(\widetilde{h} ; \widetilde{\alpha}_{i,j})$ and $(\xi_j)_{1\leq j \leq 3n} := \partial \widetilde{h}$.
\end{lemma}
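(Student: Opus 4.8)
The plan is to show that from the perimeter data $(p_{i,j})$ and the boundary data $(\xi_j)$ one can reconstruct $\widetilde h$ uniquely. Working with the lift $\widetilde h : V_{\widetilde\Gamma_n} \to B$ is awkward because $\widetilde\Gamma_n$ is infinite, so I would first reduce to a finite ``fundamental domain'' of vertices — say the representatives $\widetilde A_{i,j},\widetilde B_{i,j}$ with $0\le i\le n$ and $j$ in a suitable range of length $3n$ — and observe that by \eqref{eqn3.8} (condition (MH3)) the values of $\widetilde h$ on all of $V_{\widetilde\Gamma_n}$ are determined by its values on this fundamental domain. So it suffices to recover those finitely many values. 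Note also that \eqref{eqn4.5} guarantees the map $\iota$ is well defined, since $p_{i,j}$ only depends on the hexagon class $\alpha_{i,j}\in H_{\Gamma_n}$, i.e. on the equivalence class of $\widetilde\alpha_{i,j}$.

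The core of the argument is an inductive ``sweep'': start from the boundary vertices $\widetilde A_{0,j}$, whose images are pinned down exactly by the boundary condition \eqref{eqn3.7} together with the coordinate $\xi_j$ — indeed \eqref{eqn3.7} forces $\widetilde h(\widetilde A_{0,j})$ onto a specific unit segment, and the $z$-coordinate $\xi_j$ then specifies the point on that segment. Now propagate inward, one hexagon layer at a time. The key local fact is that inside a single hexagon $\widetilde\alpha_{i,j}$ the six edge directions are fixed (they are the three vectors $(0,-1,1),(1,0,-1),(-1,1,0)$, each appearing on a pair of opposite sides), so the hexagon's shape is that of a (possibly degenerate) semiregular hexagon; its three ``long diagonals'' are parallel to the coordinate lines, and knowing the positions of three consecutive vertices on one side plus the perimeter pins down the rest. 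More concretely: if $\widetilde h$ is already known on the lower boundary of the $i$-th layer of hexagons, then for each hexagon in that layer the positions of four of its six vertices are known, the two edge directions out of those known vertices are prescribed by $d$, and the single remaining scalar degree of freedom (how far the hexagon ``opens up'') is exactly captured by the perimeter $p_{i,j}$ — because with the directions fixed, perimeter is an affine function of that one parameter with nonzero slope. This determines the images of the top two vertices of each hexagon in the layer, hence $\widetilde h$ on the lower boundary of the next layer. Iterating $n-1$ times recovers $\widetilde h$ on the whole fundamental domain.

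The main obstacle I anticipate is the bookkeeping at the ends of each layer and at the seam where the Möbius identification $\sim$ acts: near the boundary vertices $\widetilde A_{0,j}$ and $\widetilde B_{n,j}$ some hexagons are truncated, so one must check that the ``one perimeter determines one parameter'' count still closes up correctly and that the reconstruction is consistent with \eqref{eqn3.8} rather than over- or under-determined — this is precisely where the specific ranges $1\le j\le i+n$ in \eqref{eqn3.23} and the three-tiered boundary condition \eqref{eqn3.7} have to be used. A clean way to organize this is to first prove a purely local lemma: given a hexagon (or truncated hexagon) of $\widetilde\Gamma_n$ with prescribed edge directions, the positions of the vertices on one distinguished side, and the perimeter, the positions of all its vertices are uniquely determined; then the global statement follows by induction on the layer index, with the base case handled by \eqref{eqn3.7}. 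Degenerate hexagons (where one pair of opposite sides has length zero) need a brief separate check, but there the perimeter still determines the configuration since the remaining two direction pairs are linearly independent in the plane $B$.
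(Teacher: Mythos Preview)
Your approach --- a constructive layer-by-layer reconstruction --- is genuinely different from the paper's, which argues by contradiction: assuming $\iota(\widetilde h_1)=\iota(\widetilde h_2)$ with $\widetilde h_1\neq\widetilde h_2$, the paper sets ${\sf arrow}(\widetilde e):={\sf const}(\widetilde h_2;\widetilde e)-{\sf const}(\widetilde h_1;\widetilde e)$, builds a loop in $\Gamma_n$ along which the sign of ${\sf arrow}$ strictly alternates, upgrades it (using the M\"obius topology) to an \emph{orientable} loop without self-intersection, and then sums the perimeter differences over the hexagons enclosed by that loop; the sum telescopes to a nonzero alternating sum of ${\sf arrow}$-values along the loop, contradicting the hypothesis that all perimeter differences vanish.

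Your sweep has two concrete problems. First, the local count is off: with $\widetilde\alpha_{i,j}=\{\widetilde A_{i,j},\widetilde B_{i,j},\widetilde A_{i+1,j+1},\widetilde B_{i,j+1},\widetilde A_{i,j+1},\widetilde B_{i-1,j}\}$, knowing everything at levels $\le i-1$ together with the row $\widetilde A_{i,*}$ gives \emph{three} vertices of the hexagon, not four. The remaining freedom is the pair of north-edge lengths $t_{i,j},t_{i,j+1}$, and one checks (using \eqref{eqnA.5}) that the perimeter $p_{i,j}$ determines only the sum $t_{i,j}+t_{i,j+1}$ modulo already-known quantities. So within each layer you face a cyclic system $t_{i,j}+t_{i,j+1}=c_j$, not a single scalar equation per hexagon, and unique solvability of such a system depends on the parity of the cycle length --- i.e.\ on the orientability of that cycle in $\Gamma_n$. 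Second, and more seriously, the M\"obius identification sends level $i$ to level $n-i$ (since $\widetilde A_{i,j}\sim\widetilde B_{n-i,*}$), so there is no well-defined ``sweep from the boundary inward'' on $\Gamma_n$: the two ends of your inductive direction are glued together with a twist, and showing that the reconstruction closes up consistently across the seam is exactly the content of the lemma. You correctly flag this as the main obstacle, but the proposal does not supply the argument; it is precisely here that the paper's loop construction does the real work, by manufacturing an orientable loop (over which the telescoping sum makes sense) even when the naive alternating-sign path produces a non-orientable one.
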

\begin{proof}
See Appendix \ref{secA}.
\end{proof}

\begin{lemma}\label{lem4.4}
$\iota \left( \MOB \right)$ is a convex polytope.
\end{lemma}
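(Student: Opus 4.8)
The plan is to realize $\iota(\MOB)$ as the solution set of finitely many linear inequalities and equalities in the coordinates $(p_{i,j},\xi_j)$, together with boundedness. The key observation is that all three defining conditions \eqref{eqn3.9}, \eqref{eqn3.7}, \eqref{eqn3.8} on a M\"obius honeycomb $\widetilde{h}$ are themselves linear (in $\widetilde{h}$, viewed in $B^{V_{\widetilde{\Gamma}_n}}$): \eqref{eqn3.9} asks that each edge vector be a nonnegative multiple of a fixed direction $d(\widetilde{e})$, which amounts to one linear equality (the component orthogonal to $d(\widetilde{e})$ vanishes) plus one linear inequality (the component along $d(\widetilde{e})$ is $\geq 0$); \eqref{eqn3.7} confines each boundary vertex to a fixed line segment, i.e. linear equalities fixing two coordinates plus interval inequalities on the third; and \eqref{eqn3.8} is a system of linear equalities identifying coordinates of $\sim$-related vertices. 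Hence $\MOB$ itself is a (possibly unbounded) polyhedron inside the infinite-dimensional $B^{V_{\widetilde{\Gamma}_n}}$, but $\sim$-invariance \eqref{eqn3.8} means only finitely many vertex-coordinates are independent, so $\MOB$ lives in a finite-dimensional affine subspace and is a genuine polyhedron there.

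First I would pass from $B^{V_{\widetilde{\Gamma}_n}}$ to the finite-dimensional space of $\sim$-equivalence-class data, using \eqref{eqn3.8} to see that $\widetilde{h}$ is determined by its values on the $3n(n+1)$ representative vertices $\widetilde{A}_{i,j}$ ($0\leq i\leq n$, $1\leq j\leq 3n$); restricting the inequalities of \eqref{eqn3.9} and \eqref{eqn3.7} to this finite coordinate set exhibits $\MOB$ as a polyhedron $P$ in $\R^{2\cdot 3n(n+1)}$. Next I would show $\iota$ factors through a \emph{linear} map on $P$: by Lemma~\ref{lemA.1} $\iota$ is injective, and each perimeter coordinate $p_{i,j}=\sum_{k=1}^6 {\sf length}(\widetilde{h};\widetilde{e}_k)$ is a sum of lengths of edge vectors. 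Here one uses that along a fixed edge $\widetilde{e}$, the length ${\sf length}(\widetilde{h};\widetilde{e})=l(\widetilde{h}({\sf head}),\widetilde{h}({\sf tail}))$ is, \emph{because of} \eqref{eqn3.9}, precisely the nonnegative scalar $a$ with $\widetilde{h}({\sf head})-\widetilde{h}({\sf tail})=a\,d(\widetilde{e})$ times the fixed length $l(0,d(\widetilde{e}))=1$ — i.e. it is a \emph{linear} functional of $\widetilde{h}$ on the polyhedron $P$ (the absolute-value/square-root nonlinearity of $l$ is killed by the sign constraint in \eqref{eqn3.9}). Likewise each $\xi_j$ is the $z$-coordinate of $\widetilde{h}(\widetilde{A}_{0,j})$, manifestly linear. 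So $\iota|_{\MOB}$ is the restriction to $P$ of an affine-linear map $L:\R^{2\cdot 3n(n+1)}\to\R^{\frac32 n(n-1)}\times\R^{3n}$, and the image of a polyhedron under an affine-linear map is a polyhedron.

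It remains to check $\iota(\MOB)$ is bounded, hence a polytope. For this I would use \eqref{eqn3.7}: the boundary vertices are pinned to fixed compact segments, so their images are bounded; then, walking along the three outgoing/incoming edges at each interior vertex and using that \eqref{eqn3.9} forces each edge to be a \emph{nonnegative} multiple of its direction while the vertex must also respect the coordinate-line constraints coming from the hexagonal structure (the $x$-, $y$-, $z$-coordinates of adjacent vertices are linearly ordered along lattice lines as in the $GL_n$ case), one gets that every vertex image lies in a fixed bounded region of $B$ — concretely, inside $\widetilde{B}_\delta\cap(\text{finitely many }D_\delta^{(k)})$ after using \eqref{eqn3.8} to confine to a fundamental domain, exactly as in the picture of Figure~\ref{fig9}. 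Bounded vertex images give bounded edge lengths, hence bounded perimeters and bounded $\xi_j$. Combining: $\iota(\MOB)=L(P)$ is a bounded polyhedron, i.e. a convex polytope. The main obstacle I expect is the boundedness argument — verifying that the local constraints genuinely propagate to a global bound on all interior vertex images (rather than allowing some "diagonal" direction of escape). Here the cleanest route is to reuse the honeycomb picture: by Theorem~\ref{thm3.1}'s construction (gluing $h_\lambda,h_\mu,h_\nu\in\HON$), a M\"obius honeycomb restricted to a fundamental domain is built from ordinary honeycombs with boundary data bounded in terms of $\delta$, and ordinary honeycombs with bounded boundary are bounded (a standard fact about $\HON$, implicit in \citep{Tao99}), so the bound follows.
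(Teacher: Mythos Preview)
Your proposal is correct and follows essentially the same route as the paper: the paper invokes Lemma~\ref{lemZ.3} to extend $\iota$ to an $\R$-linear map on $B^{V_{\widetilde{\Gamma}_n}}$ (your observation that each edge length, hence each perimeter and each $\xi_j$, is linear on the constrained domain), uses the explicit finite list of linear equalities/inequalities \eqref{eqnB.2}--\eqref{eqnB.4} (packaged as the polytope $P_{\lambda,\mu,\nu,\delta}$ just before Corollary~\ref{coroC.1}) to realize $\MOB$ as a finite-dimensional polyhedron, cites Lemma~\ref{lem3.0} for convexity, and uses Lemma~\ref{lem3.1} for boundedness. Your ``propagate from the boundary along diagonal paths'' sketch for boundedness is exactly what Lemma~\ref{lem3.1} does, so there is no gap there either.
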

\begin{proof}
Using Lemma \ref{lemZ.3}, $\iota$ is extended to a $\R$-linear map between vector spaces. Combined with Corollary \ref{coroC.1}, $\iota (\MOB)$ is determined by finite number of linear equations and inequalities. Convexity follows from Lemma \ref{lem3.0}. The image is bounded by Lemma \ref{lem3.1}.
\end{proof}

Let $w : H_{\Gamma_n} \rightarrow \R$ be a map satisfying a condition as follows: for each $\alpha \in H_{\Gamma_n}$ and $\alpha_1 , \cdots , \alpha_k \in H_{\Gamma_n}$ adjacent to $\alpha$,
\begin{equation}\label{eqn4.6}
w(\alpha) > \frac{1}{6} \left( w(\alpha_1) + \cdots + w(\alpha_k) \right).
\end{equation}
In other words, $w$ assigns real numbers to hexagons so that each number is greater than the average of the surrounding six numbers, as in Figure \ref{fig14}. Indeed such a map exists: let
\begin{equation}\label{eqn4.8}
w: H_{\Gamma_n} \rightarrow \R , \quad \alpha_{i,j} \mapsto i (n-i).
\end{equation}
Then $w$ satisfies \eqref{eqn4.6}.

\begin{figure}
\centering
\includegraphics[scale = 0.52]{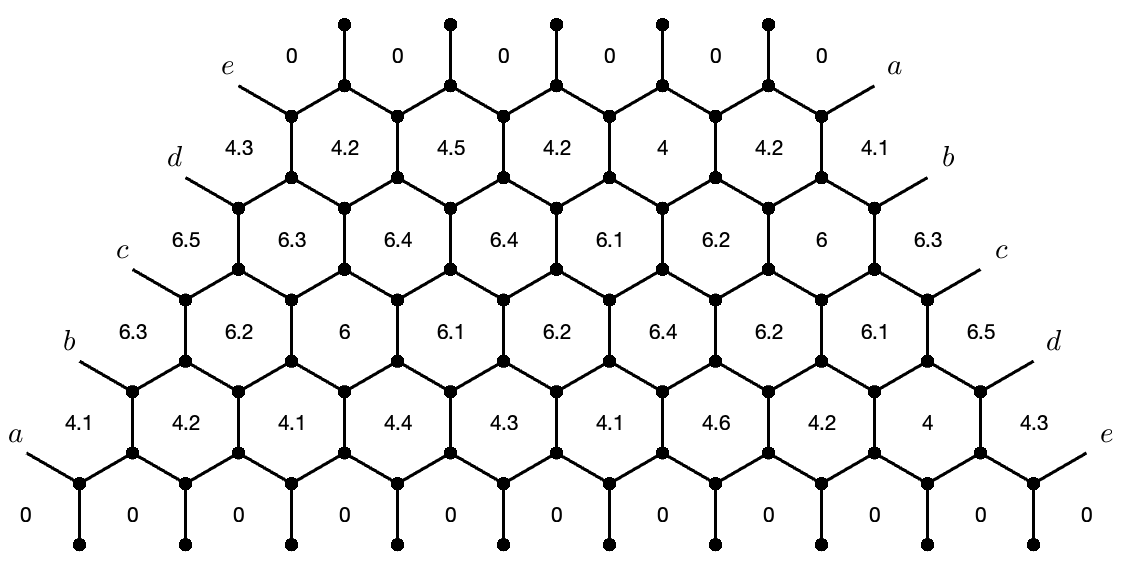}
\caption{Assigning $w$ to the graph $\Gamma_{5}$.}
\label{fig14}
\end{figure}

Using this concept, define a \textbf{weighted perimeter} of $\widetilde{h}$ by
\begin{equation}\label{eqn3.21}
{\sf wperim}(\widetilde{h}) := \sum_{\alpha \in H_{\Gamma_n}} w(\alpha) \times {\sf perimeter} (\widetilde{h} ; \widetilde{\alpha}).
\end{equation}
Here, for each $\alpha \in H_{\Gamma_n}$, $\widetilde{\alpha} \in H_{\widetilde{\Gamma}_n}$ satisfies that $p_h(\widetilde{\alpha}) = \alpha$; this is well-defined due to \eqref{eqn4.5}. 

Denote $\BDR := \partial (\MOB)$. Let $\xi \in \BDR$. We say $\widetilde{h} \in \MOB$ is a \textbf{largest-lift} of $\xi$ if $\partial (\widetilde{h}) = \xi$ and there exists $w : H_{\Gamma_n} \rightarrow \R $ and corresponding weighted perimeter so that
\begin{equation}
\widetilde{g} \in \MOB \text{ such that }  \partial (\widetilde{h}) = \partial (\widetilde{g}), \hspace{0.5em} \widetilde{h} \neq \widetilde{g} \quad \Rightarrow \quad {\sf wperim}(\widetilde{h}) > {\sf wperim} (\widetilde{g}) .
\end{equation}

\begin{lemma}\label{lem3.3}
Let $\xi \in \BDR$. Then there exists a largest-lift of $\xi$.
\end{lemma}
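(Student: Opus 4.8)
The plan is to realize the largest-lift as the solution of a linear program over the polytope $\iota(\MOB)$ and then lift it back through the injection $\iota$. Concretely, fix $\xi \in \BDR$ and consider the fiber $\partial^{-1}(\xi) \subseteq \MOB$. By Lemma \ref{lem4.4}, $\iota(\MOB)$ is a convex polytope in $\R^{\frac32 n(n-1)} \times \R^{3n}$, and $\iota$ carries $\partial^{-1}(\xi)$ to the intersection of $\iota(\MOB)$ with the affine subspace $\{(\xi_j)_{1\le j\le 3n} = \xi\}$; call this intersection $P_\xi$. Since $\xi \in \BDR = \partial(\MOB)$, the set $P_\xi$ is nonempty, and it is a face-section of a polytope cut out by an affine slice, hence itself a nonempty compact convex polytope (boundedness is inherited from Lemma \ref{lem3.1}, which bounds $\iota(\MOB)$ entirely).

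Next I would introduce the linear functional coming from the weight map. Take $w : H_{\Gamma_n} \to \R$ defined by $w(\alpha_{i,j}) = i(n-i)$ as in \eqref{eqn4.8} — the excerpt already asserts this satisfies \eqref{eqn4.6}, though for the mere \emph{existence} of a largest-lift any fixed choice of $w$ works, since $\mathsf{wperim}$ is then a fixed linear functional in the perimeter coordinates $p_{i,j}$ (note $\mathsf{wperim}(\widetilde h) = \sum_{\alpha \in H_{\Gamma_n}} w(\alpha)\, p_{i,j}$ where $p_h(\widetilde\alpha_{i,j}) = \alpha$, using \eqref{eqn4.5} for well-definedness). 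Thus $\mathsf{wperim}$ descends to a genuine linear functional $L$ on $\R^{\frac32 n(n-1)} \times \R^{3n}$ via $L((p_{i,j}),(\xi_j)) := \sum w(\alpha)\, p_{i,j}$, and $\mathsf{wperim}(\widetilde h) = L(\iota(\widetilde h))$ for every $\widetilde h \in \MOB$.

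Now $L$ attains a maximum on the nonempty compact convex polytope $P_\xi$; let $x^\star \in P_\xi$ be a point where the maximum is achieved. Since $\iota$ is injective (Lemma \ref{lemA.1}) and $x^\star \in P_\xi = \iota(\partial^{-1}(\xi))$, there is a unique $\widetilde h^\star \in \partial^{-1}(\xi) \subseteq \MOB$ with $\iota(\widetilde h^\star) = x^\star$. I claim $\widetilde h^\star$ is a largest-lift of $\xi$: if $\widetilde g \in \MOB$ satisfies $\partial(\widetilde g) = \partial(\widetilde h^\star) = \xi$ and $\widetilde g \neq \widetilde h^\star$, then by injectivity of $\iota$ we have $\iota(\widetilde g) \neq \iota(\widetilde h^\star) = x^\star$, and $\iota(\widetilde g) \in P_\xi$, so $L(\iota(\widetilde g)) \le L(x^\star)$; to get strict inequality I would choose $x^\star$ not merely as a maximizer but — since uniqueness of the maximizer is not automatic for an arbitrary linear functional — observe instead that a largest-lift only requires \emph{some} $w$ achieving strict dominance, and the standard device (as in \citep{Tao99}) is to argue that a generic enough $w$, or the specific $w$ of \eqref{eqn4.8} together with the combinatorial structure of $P_\xi$, forces the maximizer to be unique.

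The main obstacle is exactly this last point: proving that the maximum of $\mathsf{wperim}$ on $P_\xi$ is attained at a \emph{unique} point rather than along a positive-dimensional face. The cleanest route is to mimic Knutson–Tao: show that if two distinct lifts $\widetilde h_0, \widetilde h_1$ both maximize, then by convexity (Lemma \ref{lem3.0}, giving $\frac12\widetilde h_0 + \frac12\widetilde h_1 \in \MOB$) the whole segment maximizes, and then use the superaveraging condition \eqref{eqn4.6} on $w$ to derive that along such a segment some hexagon perimeter must strictly increase at an endpoint — contradicting maximality. This is where the geometry of $B_\delta$ and the inequality \eqref{eqn4.6} genuinely enter; I expect the paper defers the detailed verification that \eqref{eqn4.6} yields uniqueness to the subsequent subsections (where the "simple pattern" of the largest-lift is analyzed), so for Lemma \ref{lem3.3} itself it may suffice to invoke compactness, convexity, and linearity to get existence of a maximizer, then package the uniqueness argument through the white-colored part of the contracted graph as flagged in the section overview. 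If a self-contained proof is wanted here, I would spell out the segment-perturbation argument: parametrize $\widetilde h_t = (1-t)\widetilde h_0 + t\widetilde h_1$, note $\mathsf{wperim}(\widetilde h_t)$ is a convex piecewise-linear function of $t$ that is constant iff every hexagon perimeter is affine in $t$, then show that the only configurations with all hexagon perimeters affine along a segment and fixed boundary are constant segments, using that an edge length $l(\widetilde h_t(\mathsf{head}(\widetilde e)), \widetilde h_t(\mathsf{tail}(\widetilde e)))$ is affine in $t$ precisely when the direction is preserved, which propagates rigidity across the connected graph $\widetilde\Gamma_n$ from the fixed boundary.
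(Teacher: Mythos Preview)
Your overall framework---realizing $\partial^{-1}(\xi)$ as a nonempty compact convex polytope $P_\xi$ via $\iota$, then maximizing the linear functional $\mathsf{wperim}$ on it and pulling back through the injection---is exactly the paper's argument. Where you diverge is in handling uniqueness of the maximizer. The paper's device is short: the condition \eqref{eqn4.6} on $w$ is an \emph{open} condition in $\R^{\frac32 n(n-1)}$ (a finite set of strict linear inequalities), so the specific $w$ of \eqref{eqn4.8} has an open neighborhood $O$ of weight vectors all satisfying \eqref{eqn4.6}. Since the set of linear functionals maximized at a unique vertex of a fixed polytope is dense (its complement is a finite union of hyperplanes), one may perturb $w$ inside $O$ to a generic $\vec w'$ still satisfying \eqref{eqn4.6} and for which the maximum on $P_\xi$ is attained at a single point. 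That point pulls back to the desired largest-lift. You allude to ``generic enough $w$'' but never state the key fact that \eqref{eqn4.6} is open, which is what makes the perturbation legitimate.

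Your proposed alternative---the segment argument---has a genuine error. You write that $\mathsf{wperim}(\widetilde h_t)$ is ``a convex piecewise-linear function of $t$,'' but it is not: by Lemma~\ref{lemZ.3} the perimeters are $\R$-linear in $\widetilde h$, so $\mathsf{wperim}(\widetilde h_t)$ is \emph{affine} in $t$ along any segment in $\MOB$. Thus constancy of $\mathsf{wperim}$ along a segment carries no convexity information, and the rigidity propagation you sketch (from fixed boundary through preserved edge directions) does not separate distinct maximizers---indeed, injectivity of $\iota$ only says that equal \emph{perimeters and boundary} force equality, not that a single linear relation among perimeters does. Drop that alternative and simply invoke the openness of \eqref{eqn4.6} plus genericity of unique-maximizer functionals; this is both correct and what the paper does.
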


\begin{proof}
By Lemma \ref{lem4.4},
\begin{equation}\label{eqn4.7}
\iota (\MOB) \cap \left( \R^{\frac{3}{2}n(n-1)} \times \{ \xi \} \right)
\end{equation}
is a convex polytope. Regarding $\vec w=(w(\alpha_{i,j}))_{\alpha_{i,j} \in H_{\Gamma_n}}$ as a vector in $\R^{\frac{3}{2}n(n-1)}$, ${\sf wperim}$ defines the ``height'' of the elements in \eqref{eqn4.7} \emph{via} the dot product with the vector $\vec w$.

Notice there exists a sufficiently small open neighborhood $O$ of the $\vec w \in \R^{\frac{3}{2}n(n-1)}$ in \eqref{eqn4.8}  such that any $\vec w'\in O$ satisfies \eqref{eqn4.6}. Therefore, we may assume $\vec w$ so that there is the unique ``highest'' element in \eqref{eqn4.7}, with respect to $\vec w$. Due to Lemma \ref{lemA.1}, $\iota$ is injective, proving that the ``highest'' element corresponds to a largest-lift under $\iota$.
\end{proof}

Suppose in the image of $\widetilde{h} \in \MOB$, we have a hexagon $\widetilde{\alpha}$ depicted in Figure \ref{fig15} with six surrounding edges of $\widetilde{\alpha}$ and six ``spoke'' edges of positive lengths. Let $g:= w(p_h (\widetilde{\alpha} ))$. Similarly, let $a,b,c,d,e,f$ be the assigned values of surrounding hexagons of $\widetilde{\alpha}$. Clearly, one can inflate the image of $\widetilde{\alpha}$ by a sufficiently small $\epsilon >0$, as in Figure \ref{fig15}, and obtain another $\widetilde{h}' \in \MOB$. Then the perimeter of $\widetilde{\alpha}$ increases by $6\epsilon$, whereas those of surrounding hexagons decrease by $\epsilon$, respectively. That is,
\begin{equation}\label{eqn4.9}
{\sf wperim}(\widetilde{h}') - {\sf wperim}(\widetilde{h}) = 6\epsilon g - \epsilon (a+ b + c+ d+ e+ f).\footnote{This comes from \citep[Lemma 9]{Tao99}.}
\end{equation}
Since $w$ is assigned in \eqref{eqn4.6} so that $g > \frac{1}{6}(a+b+c+d+e+f)$, we have ${\sf wperim} (\widetilde{h}) < {\sf wperim} (\widetilde{h}')$. In short, inflating a hexagon increases the value of ${\sf wperim}$.

We now formulate inflation rigorously. Let $(\xi_i , p_{i,j}):= \iota (\widetilde{h})$. For each $\alpha \in H_{\Gamma_n}$, define
\begin{equation}
\xi_j':=\xi_j , \quad p_{i,j}':=
\begin{cases}
p_{i,j} + 6\epsilon &  \alpha_{i,j} = \alpha \\
p_{i,j}-\epsilon & \alpha_{i,j} \text{ is adjacent to }\alpha \\
p_{i,j} & \text{otherwise}
\end{cases}
.
\end{equation}
If $(\xi_j ' , p_{i,j}' ) \in \iota (\MOB)$, let $\widetilde{h}' := \iota^{-1}(\xi_j' , p_{i,j}')$. Then we say that $\widetilde{h}' $ is obtained from $\widetilde{h}$ by \textbf{inflating a hexagon $\alpha$} of $\Gamma_n$ by $\epsilon$.

\begin{figure}
\centering
\includegraphics[scale = 0.35]{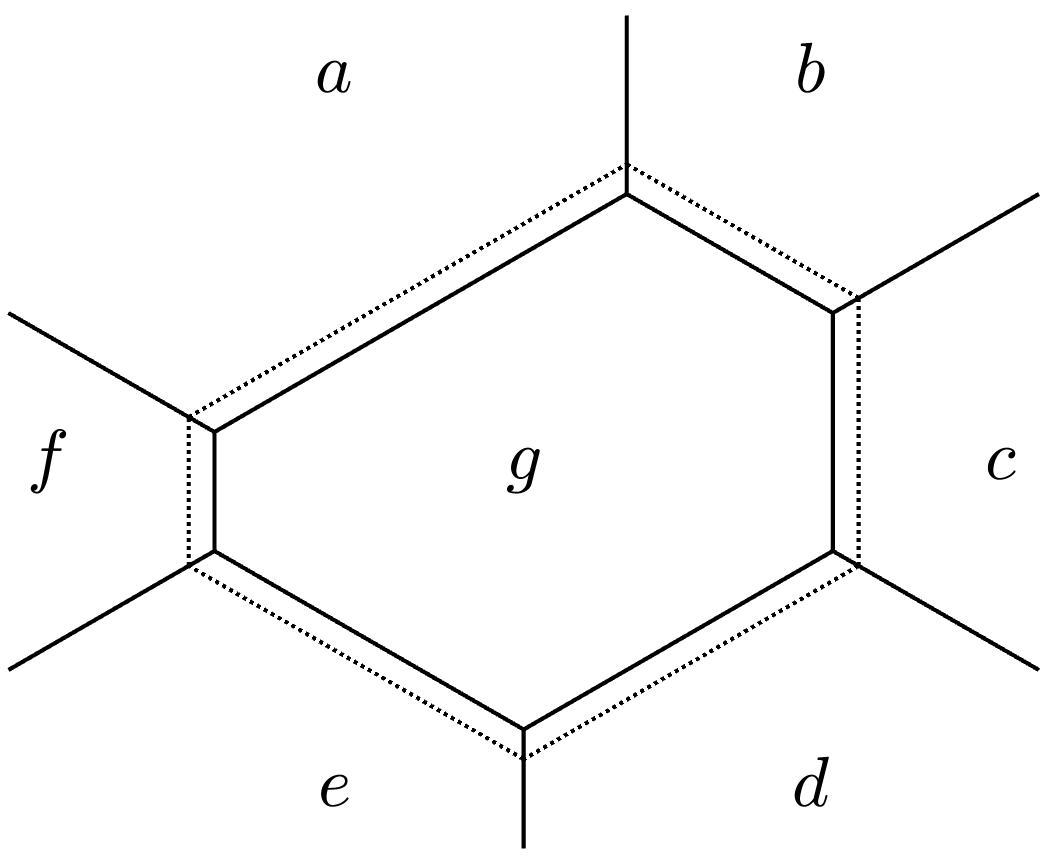}
\caption{Inflating a hexagon increases the value of ${\sf wperim}$.}
\label{fig15}
\end{figure}

\begin{lemma}\label{lem3.4}
Let $\widetilde{h}$ be a largest-lift of $\xi \in \BDR$. Then
\begin{itemize}
\item If $\widetilde{h}_1, \widetilde{h}_2 \in \partial^{-1}(\xi)$, $c_1 , c_2 \in \R_{\geq 0}$ such that $c_1 + c_2 =1$ and $\widetilde{h} = c_1 \cdot \widetilde{h}_1 + c_2 \cdot \widetilde{h}_2$, then $\widetilde{h} = \widetilde{h}_1$ or $\widetilde{h} = \widetilde{h}_2$.
\item No hexagon can be inflated to obtain another M{\"o}bius honeycomb from $\widetilde{h}$.
\end{itemize}
\end{lemma}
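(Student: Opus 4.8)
The plan is to prove the two bullet points of Lemma~\ref{lem3.4} separately, both by contraposition, exploiting the fact that $\widetilde{h}$ maximizes ${\sf wperim}$ strictly on the fiber $\partial^{-1}(\xi)$ for a suitable choice of $w$ satisfying \eqref{eqn4.6}.

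For the first bullet, suppose $\widetilde{h} = c_1 \cdot \widetilde{h}_1 + c_2 \cdot \widetilde{h}_2$ with $c_1 , c_2 \in \R_{> 0}$ (the degenerate cases $c_1=0$ or $c_2=0$ give the conclusion immediately), where $\widetilde{h}_1 , \widetilde{h}_2 \in \partial^{-1}(\xi)$. First I would note that $\widetilde{h}_1, \widetilde{h}_2 \in \MOB$: condition \eqref{eqn3.9} is a convexity-type condition preserved under $\R_{\geq 0}$-combinations by (the M\"obius analogue of) Lemma~\ref{lem2.1}, condition \eqref{eqn3.8} is $\R$-linear in the sense of \eqref{eqn3.5}, and \eqref{eqn3.7} together with $\partial \widetilde{h}_1 = \partial \widetilde{h}_2 = \xi$ pins the boundary vertices to the same line segments; these are exactly the facts packaged in Lemma~\ref{lem3.0}. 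Then I would argue that ${\sf wperim}$ is \emph{concave} along the segment $t \mapsto (1-t)\widetilde{h}_1 + t \widetilde{h}_2$: each ${\sf perimeter}(\widetilde{h};\widetilde{\alpha})$ is a sum of six ${\sf length}$ terms, and ${\sf length}$ is the $l$-distance between the images of head and tail, hence a norm of an affine function of $t$, hence convex in $t$; but on the honeycomb locus the edge directions are fixed, so along this particular segment each ${\sf length}$ is in fact \emph{affine} in $t$ (the head-minus-tail vector stays a non-negative multiple of the fixed direction $d(\widetilde{e})$, and its coefficient varies affinely). Consequently ${\sf wperim}$ is affine along the segment, so ${\sf wperim}(\widetilde{h}) = c_1 \,{\sf wperim}(\widetilde{h}_1) + c_2\, {\sf wperim}(\widetilde{h}_2) \le \max_i {\sf wperim}(\widetilde{h}_i) \le {\sf wperim}(\widetilde{h})$, forcing ${\sf wperim}(\widetilde{h}_1) = {\sf wperim}(\widetilde{h}_2) = {\sf wperim}(\widetilde{h})$; by the strict-uniqueness in the definition of largest-lift, $\widetilde{h}_1 = \widetilde{h}_2 = \widetilde{h}$.

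For the second bullet, suppose some hexagon $\alpha \in H_{\Gamma_n}$ can be inflated: there is $\epsilon > 0$ with $(\xi_j, p_{i,j}') \in \iota(\MOB)$, and we set $\widetilde{h}' := \iota^{-1}(\xi_j, p_{i,j}')$. Since inflation changes only the perimeter coordinates and leaves $\xi$ fixed, $\partial \widetilde{h}' = \xi = \partial \widetilde{h}$, and by Lemma~\ref{lemA.1} the map $\iota$ is injective, so $p_{i,j}' \neq p_{i,j}$ forces $\widetilde{h}' \neq \widetilde{h}$. Now I invoke the perimeter-bookkeeping identity \eqref{eqn4.9}, together with the defining inequality \eqref{eqn4.6} for $w$ at the hexagon $p_h(\widetilde{\alpha}) = \alpha$ — here one must be slightly careful that $\alpha$ might be adjacent to fewer than six hexagons if it sits near the boundary of $\Gamma_n$, in which case the ``missing'' neighbours contribute $0$ to the right-hand side of \eqref{eqn4.6}, which only helps — to conclude ${\sf wperim}(\widetilde{h}') - {\sf wperim}(\widetilde{h}) = \epsilon\bigl(6 w(\alpha) - \sum_k w(\alpha_k)\bigr) > 0$. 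This contradicts the largest-lift property of $\widetilde{h}$, which demands ${\sf wperim}(\widetilde{h}) > {\sf wperim}(\widetilde{g})$ for every other $\widetilde{g} \in \MOB$ with the same boundary.

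The main obstacle I anticipate is the bookkeeping in the second bullet: verifying that \eqref{eqn4.9} continues to hold in the M\"obius setting exactly as stated, including at hexagons near the boundary of $\Gamma_n$ where a spoke edge may be a boundary edge of $\widetilde{\Gamma}_n$ or where identifications under $p_h$ could in principle make a hexagon ``adjacent to itself.'' One needs to check that the quotient construction does not create such pathologies for the hexagons in $H_{\Gamma_n}$ and that, after passing to $\widetilde{\Gamma}_n$, inflating the hexagon $\widetilde{\alpha}$ with $p_h(\widetilde{\alpha}) = \alpha$ is a legitimate local move compatible with \eqref{eqn3.8} — i.e., it must be performed simultaneously and equivariantly on \emph{all} lifts of $\alpha$ in $\widetilde{\Gamma}_n$ — which is precisely why the inflation operation was defined downstairs on $\Gamma_n$ via $\iota$ rather than directly on $\widetilde{h}$. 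A secondary subtlety is the affineness claim in the first bullet: it must be checked that along a segment between two honeycombs with the same edge directions, no ${\sf length}$ term is the norm of a genuinely non-collinear affine path (which would make it strictly convex and could break the affineness of ${\sf wperim}$); this follows because \eqref{eqn3.9} forces $\widetilde{h}_t({\sf head}(\widetilde{e})) - \widetilde{h}_t({\sf tail}(\widetilde{e}))$ to remain a non-negative scalar multiple of the fixed vector $d(\widetilde{e})$ for all $t \in [0,1]$, so the path is collinear and ${\sf length}$ is affine in $t$.
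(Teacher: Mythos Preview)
Your proposal is correct and follows essentially the same approach as the paper: both bullets are proved by contraposition, the first using affineness of ${\sf wperim}$ along convex combinations (the paper cites Lemma~\ref{lemZ.3} for this, which packages exactly your observation that each ${\sf length}$ is linear once the edge direction is fixed), and the second using the perimeter bookkeeping \eqref{eqn4.9} together with the weight inequality \eqref{eqn4.6}. One minor redundancy: you need not argue that $\widetilde{h}_1, \widetilde{h}_2 \in \MOB$, since by definition $\partial^{-1}(\xi) \subseteq \MOB$.
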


\begin{proof}
Suppose $\widetilde{h} \neq \widetilde{h}_1$ and $\widetilde{h} \neq \widetilde{h}_2$. Due to Lemma \ref{lemZ.3}, for each $\widetilde{\alpha} \in H_{\widetilde{\Gamma}_n}$
\begin{equation}
{\sf perimeter} (\widetilde{h} ; \widetilde{\alpha}) = c_1 \cdot {\sf perimeter} (\widetilde{h}_1 ; \widetilde{\alpha}) + c_2 \cdot {\sf perimeter} (\widetilde{h}_2 ; \widetilde{\alpha}).
\end{equation}
Therefore,
\begin{equation}
{\sf wperim}(\widetilde{h}) = c_1 \cdot {\sf wperim}(\widetilde{h}_1) + c_2 \cdot {\sf wperim}(\widetilde{h}_2).  
\end{equation}
However, since $\widetilde{h}$ is a largest-lift, ${\sf wperim}(\widetilde{h}) > {\sf wperim}(\widetilde{h}_i)$ for $i =1,2$, leading to contradiction. Hence, $\widetilde{h} = \widetilde{h}_1$ or $\widetilde{h} = \widetilde{h}_2$.

If a hexagon in the image of $\widetilde{h}$ can be inflated, then the value of ${\sf wperim}$ increases due to \eqref{eqn4.9}. Note that inflating a hexagon does not change the boundary vertices. Hence, no hexagon of $\Gamma_n$ can be inflated as in Figure \ref{fig15} to obtain another M{\"o}bius honeycomb.
\end{proof}

We conclude that in the image of a largest-lift $\widetilde{h}$, there is no such hexagon as in Figure \ref{fig15}. In other words, one ``spoke'' edge has zero length, making it impossible to inflate the hexagon in the middle.


\subsection{Coloring}\label{sub4.2}

While the image of a M{\"o}bius honeycomb consists of infinite copies of M\"obius strips, as depicted
in Figure \ref{fig414344}, it is sufficient to deal with just one of them. That is, for each $\widetilde{h} \in \MOB$, define
\begin{equation}
h : V_{\Gamma_n} \rightarrow B_\delta, \quad W \mapsto (q \circ \widetilde{h} \circ  p_v^{-1})(W) .
\end{equation}
This is well-defined because of \eqref{eqn3.8}. We call $h$ the \textbf{associated map} of $\widetilde{h}$. 

We call an edge $e \in E_{\Gamma_n}$ \textbf{degenerate} if its endpoints are mapped to the same point in $B_\delta$ under $h$. Also, we call a vertex $W \in V_{\Gamma_n}$ \textbf{degenerate} if $W$ is one of the endpoints of a degenerate edge. 

Suppose we are given $\Gamma_n$ as input. Contract each degenerate edge $e \in E_{\Gamma_n}$,
\emph{i.e.}, delete $e$ and merge its endpoints. The resulting graph may have multiple edges. Next, merge such multiple edges into a single edge. This procedure outputs a simple graph $\Gamma_n (h)$. It makes sense to define the identification map between vertices
\begin{equation}
\rho_h : V_{\Gamma_n} \rightarrow V_{\Gamma_n (h)}.
\end{equation}
If an edge of $e \in E_{\Gamma_n (h)}$ was obtained by merging $m\geq 1$ number of edges of $\Gamma_n$, we say $e$ has \textbf{multiplicity} $m$. 

$W \in V_{\Gamma_n (h)}$ is called \textbf{boundary vertex} of $\Gamma_n (h)$ if $W = \rho_h(U)$ for some $U$ which is a boundary vertex of $\Gamma_n$.

\begin{lemma}\label{lem4.5}
Let $\widetilde{h} \in \MOB$ and its associated map $h : V_{\Gamma_n} \rightarrow B_\delta$ be defined. If $W$ is not a boundary vertex of $\Gamma_n (h)$, then it is one of five types in Figure \ref{fig16}, up to rotation, each number denoting multiplicities of adjoining edges.
\end{lemma}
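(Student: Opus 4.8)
The plan is to carry out a local analysis around an interior vertex $W$ of $\Gamma_n(h)$, mirroring the vertex-classification step of the largest-lift argument in \citep[Section 5]{Tao99}, but on the M\"obius quotient. First I would lift the question to $\widetilde{\Gamma}_n$: pick any $\widetilde{W} \in p_v^{-1}(W)$, so that, since $W$ is not a boundary vertex, $\widetilde{W}$ is an interior vertex of $\widetilde{\Gamma}_n$ of the form $\widetilde{A}_{i,j}$ (three outgoing edges) or $\widetilde{B}_{i,j}$ (three incoming edges) with $1 \le i \le n-1$. Around $\widetilde{W}$ the six incident edges (three at $\widetilde{W}$ plus the three completing the hexagons, or more precisely the edges of the three hexagons meeting at $\widetilde{W}$) have the standard three honeycomb directions $(0,-1,1)$, $(1,0,-1)$, $(-1,1,0)$, and the configuration condition \eqref{eqn3.9} forces the six edges emanating into the six incident hexagonal regions to come in the rigid ``three lines through a point'' local picture familiar from honeycombs. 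Passing to the associated map $h$ and then contracting degenerate edges, a neighborhood of $W$ in $\Gamma_n(h)$ is obtained from this local picture by collapsing precisely the zero-length edges.

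Next I would use the largest-lift property (Lemma \ref{lem3.4}): no hexagon of $\Gamma_n$ can be inflated. This is exactly the hypothesis needed to rule out the ``generic'' vertex in which all six spoke edges around some adjacent hexagon have positive length; concretely, it forces that around each hexagon at least one spoke edge is degenerate, which in the contracted graph $\Gamma_n(h)$ restricts the possible local shapes at $W$. Combining this with the direction constraints — at most three distinct edge-directions can meet at $W$, edges along the same line merge into one edge of $\Gamma_n(h)$ carrying the summed multiplicity, and degenerate edges disappear — I would enumerate the combinatorial possibilities for the link of $W$. This enumeration is a finite case-check: the interior vertex has local valence at most $6$ before contraction, and after contraction the surviving edges group into one, two, or three direction-classes; tracking which hexagons collapsed and in what pattern yields exactly the five types drawn in Figure \ref{fig16}, with the multiplicity labels being the counts of the merged parallel edges. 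Here I would lean on the fact that the multiplicities satisfy a local ``balancing'' (tension/zero-tension) relation inherited from the honeycomb side, which is what pins down the multiplicity labels (e.g.\ forcing equalities like $m_1 = m_2$ on opposite sides) rather than leaving them arbitrary.

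The main obstacle I anticipate is bookkeeping the effect of the M\"obius identification near vertices whose preimage in $\widetilde{\Gamma}_n$ lies on the ``seam'' where the equivalence relation \eqref{eqn3.3} glues $\widetilde{A}$-vertices to $\widetilde{B}$-vertices. At such a $W$ two a priori different local pictures — one ``$\widetilde{A}$-type'' (outgoing) and one ``$\widetilde{B}$-type'' (incoming) — are being superimposed after the quotient, and I must check that the gluing is compatible with the direction map (using \eqref{eqn3.5} and the $\widetilde{h}(\widetilde P_1) \sim \widetilde{h}(\widetilde P_2)$ condition \eqref{eqn3.8}, together with the perimeter-invariance \eqref{eqn4.4}--\eqref{eqn4.5}) so that no new vertex type beyond the five listed can arise there, and in particular that the orientation-reversal of the M\"obius identification does not create a spurious degree-$6$ ``uninflatable but non-degenerate'' configuration. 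I would handle this by working entirely upstairs in $\widetilde{\Gamma}_n$ — where everything is an honest planar honeycomb picture — performing the classification there, and only at the end pushing forward along $p_v$, noting that $p_v$ is a local isomorphism on interior vertices so the local types are unchanged. The remaining steps (matching each surviving case against Figure \ref{fig16} and verifying the multiplicity labels) are then routine.
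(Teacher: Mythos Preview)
There is a genuine gap: you invoke the largest-lift property (Lemma~\ref{lem3.4}) as a key step, but Lemma~\ref{lem4.5} carries no such hypothesis. The statement is for an \emph{arbitrary} $\widetilde{h}\in\MOB$, and the lemma is used as input to Theorem~\ref{lem4.2}, where the largest-lift assumption first appears. So your second paragraph appeals to something you are not given. Worse, the way you use it is backwards: you say the no-inflation property ``forces that around each hexagon at least one spoke edge is degenerate,'' aiming to cut down the local possibilities---but the 6-valent vertex is one of the five allowed types in Figure~\ref{fig16}, not something to be ruled out here. The elimination of 6-valent vertices happens only for \emph{white} vertices and only under the largest-lift hypothesis, in the proof of Theorem~\ref{lem4.2}.

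The paper's argument needs none of this. It simply observes that the local picture at a non-boundary vertex of $\Gamma_n(h)$ is the same as the local picture in the \emph{diagram} of an ordinary honeycomb in the sense of \citep[Lemma~3]{Tao99}: a finite set of edges in the three honeycomb directions meeting at a point, subject to the zero-tension (balancing) condition. That classification---\citep[Figure~9]{Tao99}, reproduced here as Figure~\ref{fig16}---is a purely local, combinatorial fact about balanced trivalent collisions, and the paper invokes it directly, noting that edge-contracting the local model \citep[Figure~8]{Tao99} yields exactly the five types. Your instinct to work upstairs in $\widetilde{\Gamma}_n$ and push down along $p_v$ is fine, and your worry about the M\"obius seam is unnecessary for the same reason (the classification is pointwise in $B$), but the core enumeration should rest on the zero-tension condition alone, not on any extremality of $\widetilde{h}$.
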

\begin{proof}
Figure \ref{fig16} is identical to \citep[Figure 9]{Tao99}. According to \citep[Lemma 3]{Tao99}, the \textit{diagram} of a honeycomb is the image of the honeycomb in the vector space, which are one of the types in Figure \ref{fig16}. This is, in fact, a distortion of $\Gamma_n(h)$, which means that the vertices of $\Gamma_n (h)$ are also classified by Figure \ref{fig16}. More specifically, by applying edge contraction on all non-boundary edges in \citep[Figure 8]{Tao99}, we have Figure \ref{fig16}. Here, we omit the cases when the vertex is a boundary vertex, since the degree of a boundary vertex of $\Gamma_n$ is $1$, unlike \citep[Figure 8]{Tao99}.
\end{proof}

\begin{figure}
\centering
\includegraphics[scale=0.7]{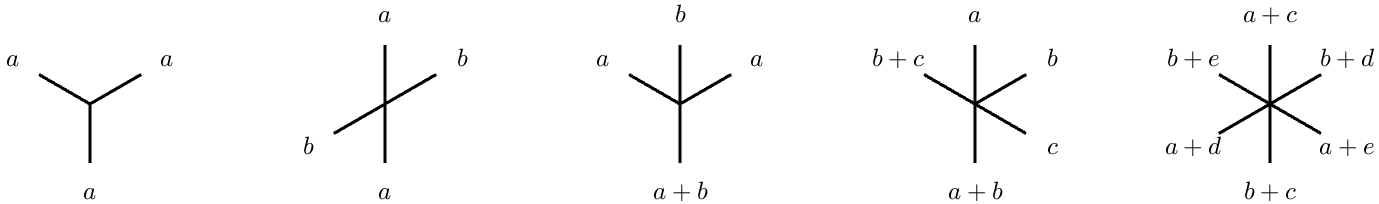}
\caption{The \textbf{Y}, crossing, rake, 5-valent, and 6-valent vertices.}
\label{fig16}
\end{figure}

Let $\widetilde{h} \in \MOB$. For each edge $\widetilde{e} \in E_{\widetilde{\Gamma}_n}$, there exists $a \in \R$ such that a line segment from $\widetilde{h}({\sf tail}(\widetilde{e}))$ to $\widetilde{h}({\sf head}(\widetilde{e}))$ is contained in one of the lines in \eqref{eqn2.3} due to \eqref{eqn3.9}. Write ${\sf const}(\widetilde{h} ; \widetilde{e}) = a$. More specifically, suppose $\widetilde{W}$ is an endpoint of $\widetilde{e}$ and 
let $(x,y,z)=\widetilde{h}(\widetilde{W})$. Then set
\begin{equation}\label{eqnA.2}
{\sf const}(\widetilde{h} ; \widetilde{e}):= \left\{
\begin{array}{ll}
x & \text{if }d(\widetilde{e})=(0,-1,1) \\
y & \text{if }d(\widetilde{e})=(1,0,-1).  \\
z & \text{if }d(\widetilde{e})=(-1,1,0)
\end{array}
\right. 
\end{equation}
Here, ${\sf const}(h; \widetilde{e})$ is well-defined regardless of which endpoint is chosen. Let $\widetilde{W}$ be a vertex of $\widetilde{\Gamma}_n$.
\begin{itemize}
\item If $\widetilde{h}(\widetilde{W}) \notin B_\Z$, then color $\widetilde{W}$ white.
\item Otherwise, color $\widetilde{W}$ black.
\end{itemize}
Let $\widetilde{e}$ be an edge of $\widetilde{\Gamma}_n$.
\begin{itemize}
\item If ${\sf const}( \widetilde{h} ; \widetilde{e}) \notin \Z$, then color $\widetilde{e}$ white.
\item Otherwise, color $\widetilde{e}$ black.
\end{itemize}

If $p_v(\widetilde{W}_1) = p_v(\widetilde{W}_2)$, then $\widetilde{W}_1$ and $\widetilde{W}_2$ are in the same color due to Lemma \ref{lemZ.1}. Similarly, if $p_e(\widetilde{e}_1) = p_e(\widetilde{e}_2)$, then $\widetilde{e}_1$ and $\widetilde{e}_2$ are in the same color due to Lemma \ref{lemZ.5}. Therefore, we can define coloring on $\Gamma_n$ from $\widetilde{\Gamma}_n$ as follows.

\begin{itemize}
\item For each vertex $W$ in $\Gamma_n$, color it the same as $\widetilde{W}$ where $p_v(\widetilde{W}) = W$. 
\item For each edge $e$ in $\Gamma_n$, color it the same as $\widetilde{e}$ where $p_e(\widetilde{e}) = e$. 
\end{itemize}
In the left-hand picture of Figure \ref{fig36}, the vertices and edges of $\Gamma_n$ are colored in black and white. 

\begin{figure}
\centering
\includegraphics[scale=0.33]{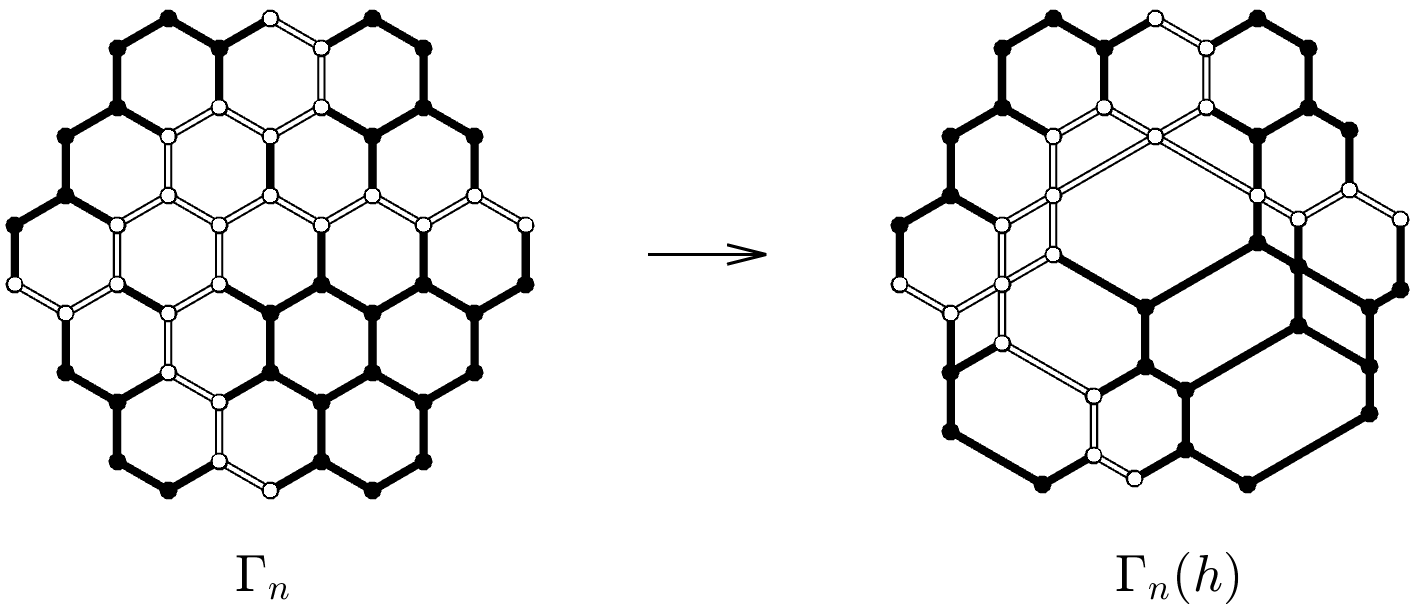}
\caption{Coloring the graph $\Gamma_n$ and $\Gamma_n (h)$.}
\label{fig36}
\end{figure}

Let $h: V_{\Gamma_n} \rightarrow B_\delta$ be the associated map of $\widetilde{h}$. Here, vertices and edges are merged together whenever they are mapped to the same elements under $h$. Since the coloring is based on the image of $\widetilde{h}$, vertices and edges are in the same color before they are merged together. Hence, the coloring of $\Gamma_n$ induces a 
coloring of vertices and edges in $\Gamma_n (h)$, as in Figure \ref{fig36}.

White vertices are precisely those in $\widetilde{\Gamma}_n$ not mapped to $B_\Z$ under $\widetilde{h}$. We view this as a deficiency, and our next step is to study them.

\begin{lemma}\label{lem4.3}
Let $\widetilde{h} \in \MOB$ be chosen so that $\partial \widetilde{h} \in \Z^{3n}$. Suppose $\widetilde{W} \in V_{\widetilde{\Gamma}_n}$ is a white vertex in $\widetilde{\Gamma}_n$. Then $\widetilde{h} (\widetilde{W})$ is in the interior of $\widetilde{B}_{\delta}$. 
\end{lemma}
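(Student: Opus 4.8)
\textbf{Proof proposal for Lemma \ref{lem4.3}.} The plan is to show that the only way for $\widetilde{h}(\widetilde{W})$ to land on the boundary of $\widetilde{B}_\delta$ is for $\widetilde{W}$ to be forced onto a lattice point, contradicting whiteness. Recall that $\partial\widetilde{B}_\delta$, by the description in \eqref{eqn3.15}--\eqref{eqn3.4}, is the union of the ``outer'' edges of the rhombi $D_\delta^{(k)}$; concretely, these are segments lying on lattice lines of the form $(m\delta, *, *)$ and $(*, m\delta, *)$ for $m \in \Z$. Since $\delta \in \N$, each such line is a lattice line of $B$ in the sense of \eqref{eqn2.3}, and its constant coordinate is the integer $m\delta$.

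First I would set up the geometry: fix a white vertex $\widetilde{W}$ and suppose for contradiction that $\widetilde{h}(\widetilde{W})$ lies on $\partial\widetilde{B}_\delta$. Then $\widetilde{h}(\widetilde{W})$ has at least one coordinate (say $x$ or $y$) equal to a multiple of $\delta$, hence an integer. Next I would use the edge structure of $\widetilde{\Gamma}_n$ together with (MH1): every vertex of $\widetilde{\Gamma}_n$ lies on at least one edge, and in fact the non-boundary vertices lie on three edges with the three direction types, while the boundary vertices $\widetilde{A}_{0,j}$ are constrained directly by (MH2) to lie on the specified boundary segments. The key point is that the images of vertices are pinned down coordinate-by-coordinate along edges: if $\widetilde{h}(\widetilde{W})$ lies on the boundary, I claim the configuration is ``trapped'' against the boundary, so that the edge from $\widetilde{W}$ going back into the strip runs along that same boundary line, forcing neighboring vertices onto the boundary too. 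Chasing this along a path to a boundary vertex $\widetilde{A}_{0,j}$ — whose $z$-coordinate $\xi_j$ is an integer by hypothesis $\partial\widetilde{h}\in\Z^{3n}$, and whose other two coordinates are then integers since they are $-2\delta,-\delta$ or $0$ (integers, as $\delta\in\N$) minus integer combinations — would show $\widetilde{h}(\widetilde{W})\in B_\Z$, contradicting that $\widetilde{W}$ is white.

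The main obstacle will be making precise the ``trapped against the boundary'' argument: one must verify that a honeycomb edge emanating from a point on $\partial\widetilde{B}_\delta$ cannot immediately leave the boundary in a way that keeps $\widetilde{h}(\widetilde{W})$'s relevant coordinate non-integral. This is really a statement about the direction vectors $(0,-1,1),(1,0,-1),(-1,1,0)$ relative to the boundary segments of the rhombi: along each boundary segment, exactly one coordinate is the constant multiple of $\delta$, and the admissible edge directions at such a point either run parallel to that segment (keeping the integral coordinate) or point into the interior. I would handle this by a short case analysis on which of the three boundary-segment types $\widetilde{h}(\widetilde{W})$ lies on and which of the (up to three) incident edges we follow; in each case the constraint \eqref{eqn3.9} together with the requirement that the image stays inside $\widetilde{B}_\delta$ forces the return edge to be the boundary-parallel one. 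Propagating integrality then follows because every step changes coordinates by integer multiples (an integer multiple of $\delta$ at worst, but always an integer increment of the tracked coordinate once it is anchored), and the anchoring at an $\widetilde{A}_{0,j}$ uses $\partial\widetilde{h}\in\Z^{3n}$.

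Finally I would remark that an alternative, perhaps cleaner, route is to descend to the associated map $h : V_{\Gamma_n}\to B_\delta$ and argue that a point of $h$ on $\partial B_\delta$ must, by the gluing \eqref{eqn3.5} and the boundary conditions (MH2), correspond to a vertex whose image coincides with one of the $\widetilde{A}_{0,j}$-images or a lattice translate thereof — again integral — so whiteness is impossible. Either way, the conclusion is that white vertices are confined to the open region $\mathrm{int}(\widetilde{B}_\delta)$, which is exactly what is needed for the subsequent analysis of largest-lifts.
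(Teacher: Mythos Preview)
Your high-level strategy matches the paper's: show that any vertex mapping to $\partial\widetilde{B}_\delta$ must in fact share its image with a boundary vertex of $\widetilde{\Gamma}_n$, and boundary vertices are black because $\partial\widetilde{h}\in\Z^{3n}$ and $\delta\in\N$. The paper does this in one line by invoking Lemma~\ref{lemZ.4}, which in turn rests on the diagonal path machinery of Lemma~\ref{lemZ.0}: through any $\widetilde{W}$ there is a diagonal path ${\sf DPath}_j^{(k)}$ whose edges use only two of the three direction types, and along which the relevant coordinate is monotone; since the path terminates at a boundary vertex $\widetilde{W}_{2n+2}$ already on $\partial\widetilde{B}_\delta$, monotonicity forces $\widetilde{h}(\widetilde{W})=\widetilde{h}(\widetilde{W}_{2n+2})$ exactly.

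Your execution, however, has a real gap. The sentence ``propagating integrality then follows because every step changes coordinates by integer multiples'' is false: an edge with direction $d(\widetilde{e})$ contributes a displacement $a\cdot d(\widetilde{e})$ with $a\in\R_{\geq 0}$ arbitrary, so walking along edges does not preserve integrality of coordinates. Knowing that one coordinate of $\widetilde{h}(\widetilde{W})$ equals $m\delta\in\Z$ tells you nothing about the other two. What you actually need is the \emph{equality} $\widetilde{h}(\widetilde{W})=\widetilde{h}(\widetilde{W}')$ for some boundary vertex $\widetilde{W}'$, not a propagation of integrality along a path. Your ``trapped against the boundary'' intuition points in this direction, but the ad~hoc case analysis you sketch does not deliver it: at a point of $\partial\widetilde{B}_\delta$ one of the three incident edge directions is parallel to the boundary, one points outward (hence must have length zero), and one points \emph{into} the interior with no length constraint---so generic neighbors need not stay on the boundary. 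The clean way to force equality is precisely the monotone two-direction diagonal path of Lemma~\ref{lemZ.4}; your ``alternative, perhaps cleaner, route'' is essentially a restatement of that lemma without its proof.
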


\begin{proof}
Suppose $\widetilde{h}(\widetilde{W})$ is on the boundary of $\widetilde{B}_{\delta}$. Using Lemma \ref{lemZ.4}, there exists a boundary vertex $\widetilde{W}'$ such that $\widetilde{h}(\widetilde{W}) = \widetilde{h}(\widetilde{W}')$. Therefore, $\widetilde{W}$ and $\widetilde{W}'$ are in the same color. However, the components of $\partial \widetilde{h} = ( \xi_{1}, \cdots, \xi_{3n} )$ are integers and $\delta \in \N$. Hence, the color of boundary vertices is black, a contradiction.
\end{proof}

\begin{lemma}\label{lem4.6}
Let $\widetilde{h} \in \MOB$ be chosen so that $\partial \widetilde{h} \in \Z^{3n}$. Let $h:V_{\Gamma_n} \rightarrow B_\delta$ be the associated map of $\widetilde{h}$. Then after coloring, the only possible cases of white vertices in $\Gamma_n (h)$ are those displayed in Figure \ref{fig46}, up to rotation and reflection.
\end{lemma}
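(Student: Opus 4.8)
The plan is to work entirely locally at a single white vertex $W$ of $\Gamma_n(h)$, feeding two ingredients into each other: the colour-free list of admissible vertex shapes provided by Lemma~\ref{lem4.5}, and an arithmetic restriction on which edges incident to $W$ can be black, coming from the fact that $x+y+z=0$ on $B$.

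First I would isolate the colour restriction. Colour propagates through $p_v$ and $\rho_h$ (Lemma~\ref{lemZ.1}) and along degenerate edges, so a white vertex $W$ of $\Gamma_n(h)$ lifts to a white vertex $\widetilde W\in V_{\widetilde\Gamma_n}$; write $\widetilde h(\widetilde W)=(x,y,z)$. By \eqref{eqnA.2} the constant coordinate of each edge incident to $W$ equals $x$, $y$, or $z$ according to its honeycomb direction. Since $x+y+z=0$, the number of integer entries among $x,y,z$ is $0$, $1$, or $3$, and ``$W$ white'' excludes $3$; hence either every incident edge is white, or exactly one entry is an integer --- say $x$, up to relabelling --- and then precisely the incident edges with constant coordinate $x$ are black and all others are white. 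In a honeycomb the edges at a vertex point in pairwise distinct directions organised into three antipodal pairs ($x$-type, $y$-type, $z$-type), so the conclusion is: at a white vertex the set of black incident edges is either empty or exactly one such antipodal family, i.e.\ the black edges are mutually parallel and lie on a single lattice line. I would also recall (as already noted just before the lemma) that an edge of $\Gamma_n(h)$ of multiplicity $m$ is monochromatic --- the $m$ underlying edges of $\Gamma_n$ join the same pair of $\rho_h$-classes, hence have the same endpoint images, the same direction, and the same constant coordinate --- so this conclusion is legitimately a statement about $\Gamma_n(h)$.

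Next I would dispose of the boundary. Because $\partial\widetilde h\in\Z^{3n}$ and $\delta\in\N$, the boundary conditions \eqref{eqn3.7} send every $\widetilde A_{0,j}$ into $B_\Z$, so the boundary vertices of $\Gamma_n$, and therefore of $\Gamma_n(h)$, are black; equivalently one may invoke Lemma~\ref{lem4.3} to place $\widetilde h(\widetilde W)$ in the interior of $\widetilde B_\delta$. Thus $W$ is not a boundary vertex of $\Gamma_n(h)$, Lemma~\ref{lem4.5} applies, and $W$ is one of the five shapes of Figure~\ref{fig16}. It then remains to intersect, type by type, the multiplicity data of Figure~\ref{fig16} with the constraint ``black incident edges form one antipodal family'': for the $\mathbf Y$ the three edges lie in three distinct types; for the crossing they split as two colinear pairs in two types; and for the rake, the $5$-valent, and the $6$-valent vertices they split into the analogous colinear families. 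In each case the only colourings compatible with ``$W$ white'' are (a) all incident edges white, and (b) exactly one antipodal family black and the rest white, and each of these is realised by choosing $\widetilde h(\widetilde W)$ on the appropriate lattice line but off the lattice. Collecting these over the five types, up to rotation and reflection, produces exactly Figure~\ref{fig46}.

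I expect the main obstacle to be the bookkeeping in this last step: for the rake and the $5$- and $6$-valent vertices one must correctly determine which edges are colinear --- hence which may simultaneously be black --- and then check that the reduction ``up to rotation and reflection'' collapses the colourings so produced to precisely the pictures of Figure~\ref{fig46}, with nothing extra and nothing omitted, i.e.\ that the enumeration is exhaustive and non-redundant. A secondary subtlety is keeping the whole argument at the level of $\Gamma_n(h)$ rather than $\Gamma_n$ or $\widetilde\Gamma_n$; this is exactly why the monochromaticity of multiple edges and the propagation of colour through $p_v$ and $\rho_h$ must be set up carefully before the case analysis is run.
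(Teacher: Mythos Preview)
Your proposal is correct and follows essentially the same argument as the paper: rule out boundary vertices via Lemma~\ref{lem4.3} so that Lemma~\ref{lem4.5} applies, observe that at a white vertex at most one of the coordinates $x,y,z$ of $\widetilde h(\widetilde W)$ is an integer (since $x+y+z=0$), and then run the resulting ``at most one colinear family of black edges'' constraint through each of the five vertex types. The only differences are organisational --- you isolate the colour restriction before invoking Lemma~\ref{lem4.5} and make the monochromaticity of merged edges explicit, whereas the paper handles the latter in the paragraph preceding the lemma and treats the 6-valent case first as a template.
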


\begin{proof}
First, we claim that all boundary vertices of $\Gamma_n (h)$ are colored in black. Suppose it is not true. Then some boundary vertices of $\Gamma_n$ are colored in white. This contradicts Lemma \ref{lem4.3}, proving our claim. Therefore, a white vertex in $\Gamma_n (h)$ is one of five types in Figure \ref{fig16}, due to Lemma \ref{lem4.5}.

Let $W$ be a white vertex of $\Gamma_n (h)$. Let $\widetilde{W} \in V_{\widetilde{\Gamma}_n}$ be chosen so that $(\rho_h \circ p_v) (\widetilde{W}) = W$. Let $(x,y,z):= \widetilde{h} (\widetilde{W})$. Since $(x,y,z) \notin B_\Z$, at least two of the coordinates are non-integers, due to $x+y+z = 0$. Also, $\widetilde{h}(\widetilde{W})$ is contained in lines $(x, * ,* )$, $(*,y,*)$ and $(*,*,z)$. 

If we assume that $W$ is a 6-valent, these three lines correspond to three lines passing through $W$ in $\Gamma_n (h)$. The constant coordinates $x$, $y$ and $z$ determine the color of the lines. In other words, at least two of the lines are in white, whereas the others are in black. This means there are two cases of 6-valent as in Figure \ref{fig46}. Similarly, we find all possible cases of coloring \textbf{Y}, crossing, rake and 5-valent, as in Figure \ref{fig46}.
\end{proof}

\begin{figure}
\centering
\includegraphics[scale=0.58]{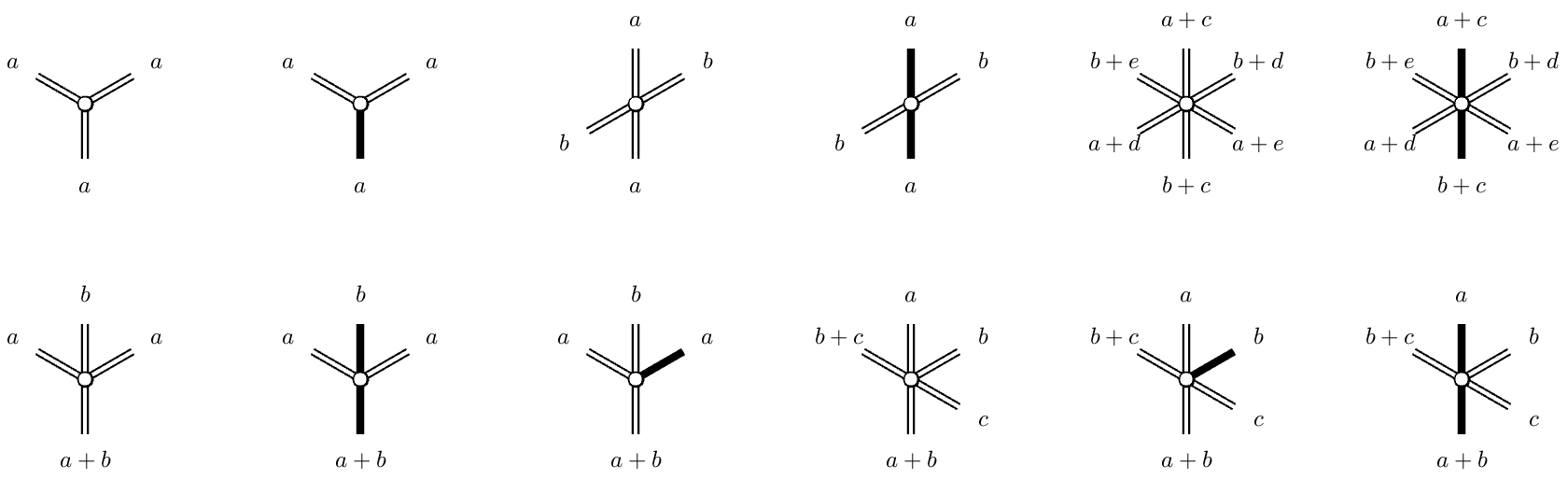}
\caption{Possible cases of white vertex in $\Gamma_n (h)$.}
\label{fig46}
\end{figure}

\begin{theorem}\label{lem4.2}
Let $\widetilde{h} \in \MOB$ be chosen so that $\partial \widetilde{h} \in \Z^{3n}$. Let $h: V_{\Gamma_n} \rightarrow B_\delta$ be the associated map of $\widetilde{h}$. Assume that $\widetilde{h}$ is a largest-lift. Then after coloring, a white vertex of $\Gamma_n(h)$ is one of five types in Figure \ref{fig23}, up to rotation and reflection.\footnote{The lemma, proof, and eventual application is analogous to \citep[Theorem 2]{Tao99}.}
\end{theorem}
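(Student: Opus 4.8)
The plan is to feed the list from Lemma~\ref{lem4.6} into the largest-lift hypothesis. By Lemma~\ref{lem4.6}, a non-boundary white vertex of $\Gamma_n(h)$ is already one of the finitely many colored types drawn in Figure~\ref{fig46}, so it suffices to exclude every type of Figure~\ref{fig46} that is not among the five of Figure~\ref{fig23}. The only extra input available is Lemma~\ref{lem3.4}: $\widetilde h$ is an extreme point of the fiber $\partial^{-1}(\xi)$ inside $\MOB$ (equivalently of the polytope $\iota(\MOB)\cap(\R^{\frac{3}{2}n(n-1)}\times\{\xi\})$), and moreover no hexagon of $\Gamma_n$ can be inflated. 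So the body of the proof is a case analysis in which, for each offending colored type, I exhibit a nonconstant affine family $\widetilde h_t\in\MOB$ ($|t|<\epsilon$) with $\widetilde h_0=\widetilde h$ and $\partial(\widetilde h_t)=\xi$; then $\widetilde h=\tfrac12\widetilde h_{+\epsilon/2}+\tfrac12\widetilde h_{-\epsilon/2}$ with both summands distinct from $\widetilde h$, contradicting the first bullet of Lemma~\ref{lem3.4}.

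The family is built by translating a white line. If $\widetilde W$ is a white vertex and $(x,y,z):=\widetilde h(\widetilde W)\notin B_\Z$, then, since $x+y+z=0$, at least two of $x,y,z$ are non-integers, so at least two of the lines $(x,*,*)$, $(*,y,*)$, $(*,*,z)$ carrying edges at $\widetilde h(\widetilde W)$ are white; pick one and translate the entire maximal collinear run of white edges it belongs to (a path, or a loop when it wraps around the M\"obius strip) by $t$ in its normal direction, while propagating the same move equivariantly across the $\sim$-orbit of every vertex it displaces by means of the affine identification \eqref{eqn3.5}. By construction the result satisfies \eqref{eqn3.8}; by Lemma~\ref{lem4.3} every displaced vertex is interior to $\widetilde B_\delta$, so \eqref{eqn3.7} is untouched; and \eqref{eqn3.9} persists for $|t|$ small because the only length functions that can reach $0$ are those of the ``spoke'' edges already of zero length in $\widetilde h$, i.e.\ the degenerate edges recorded in the colored picture. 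Hence both signs of $t$ give elements of $\MOB$ \emph{unless} a degenerate spoke pins the translation on one side.

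Carrying this out type by type: in every colored configuration of Figure~\ref{fig46} other than the five of Figure~\ref{fig23}, some white line through the vertex is free to move in both directions (no degenerate spoke on either side), which produces the forbidden convex decomposition; in the five types of Figure~\ref{fig23}, each white line is blocked on at least one side, so no such perturbation exists---these are exactly the configurations that survive. In the one borderline case where the only available two-sided motion would be the inflation/deflation of a white hexagon, it is cleaner to invoke the second bullet of Lemma~\ref{lem3.4} together with \eqref{eqn4.9} directly. This proves that, up to rotation and reflection, a white vertex of $\Gamma_n(h)$ is one of the five types of Figure~\ref{fig23}.

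I expect the real work to be the bookkeeping in this case analysis rather than any single hard idea: one must verify, for each type, that the proposed translation genuinely extends to a well-defined element of $B^{V_{\widetilde\Gamma_n}}$---that the maximal white run being moved closes up or terminates correctly, and that its various $\sim$-orbit copies have disjoint support inside $\widetilde B_\delta$ so that they do not collide---and that the precise list of sides pinned by degenerate spokes is the list of Figure~\ref{fig23} and nothing larger. The only feature absent from \citep[Theorem~2]{Tao99} is the equivariance under $\sim$; since each $\sim$-orbit is finite and \eqref{eqn3.5} is affine, this contributes bookkeeping but no new obstruction.
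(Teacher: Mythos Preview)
Your approach is genuinely different from the paper's, and it has a real gap.

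The paper does \emph{not} slide white lines. It builds a \emph{trail} of white edges of the maximal multiplicity $m\ge 2$, allowing turns at \textbf{Y}, rake, and $5$-valent vertices according to a careful rule (Types (I)--(VIII) in Figures~\ref{fig17}--\ref{fig18}), and then simultaneously \emph{inflates all hexagons} that collapse onto this trail (``molting''). By \eqref{eqn4.9} this strictly increases ${\sf wperim}$, contradicting the second bullet of Lemma~\ref{lem3.4}. The point of the elaborate trail construction is precisely that it makes sense at vertices where the white line does not continue collinearly.

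Your proposed deformation---translate a maximal collinear white run---is the sliding move of Section~\ref{sub5.2}, not molting, and it breaks down in exactly the cases you must exclude. At a rake or $5$-valent white vertex with a white edge of multiplicity $\ge 2$ (these are the offending types in Figure~\ref{fig46}), the white line does not continue straight: the high-multiplicity white edge meets edges of strictly smaller multiplicity in different directions, so there is no ``maximal collinear run'' to translate through that vertex. Defining $\widetilde h_t$ there requires deciding how a whole cluster of degenerate hexagons in $\Gamma_n$ moves, which is exactly the content of the molting pictures you are skipping. Secondly, your claim that the M\"obius identification contributes ``bookkeeping but no new obstruction'' is wrong on its face: if the white run closes into a \emph{non-orientable} loop, the alternating $\pm\epsilon$ pattern needed for a two-sided slide cannot close up consistently (this is the content of Lemma~\ref{lem5.0}), so you get no family $\widetilde h_t$ at all, let alone a convex decomposition. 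Molting is insensitive to orientability because hexagon inflation does not require a sign choice along the trail. Finally, your local characterization of Figure~\ref{fig23} as ``each white line is blocked on at least one side by a degenerate spoke'' is not correct: a white crossing with multiplicity-$1$ edges is in Figure~\ref{fig23}, yet nothing is blocked locally; whether it can be slid is a global question about the whole run, which your local case analysis cannot settle.
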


\begin{figure}
\centering
\includegraphics[scale=0.6]{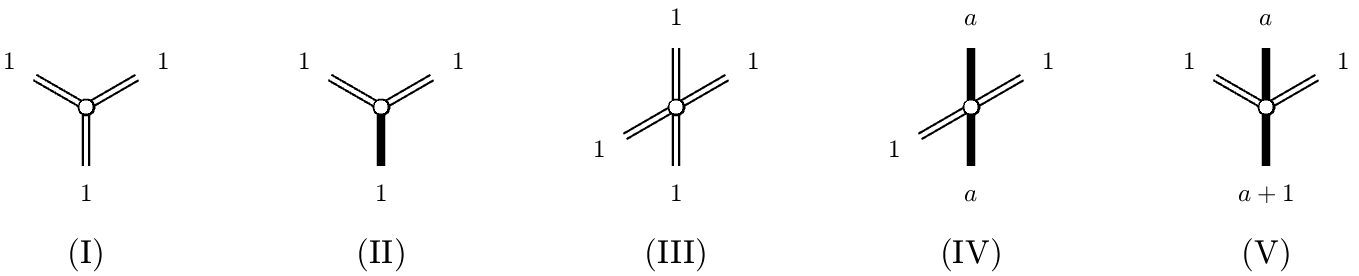}
\caption{Possible cases of white vertex in $\Gamma_n (h)$ when $\widetilde{h}$ is a largest-lift.}
\label{fig23}
\end{figure}
\begin{figure}
\centering
\begin{subfigure}[b]{\textwidth}
\centering
\includegraphics[scale=0.50]{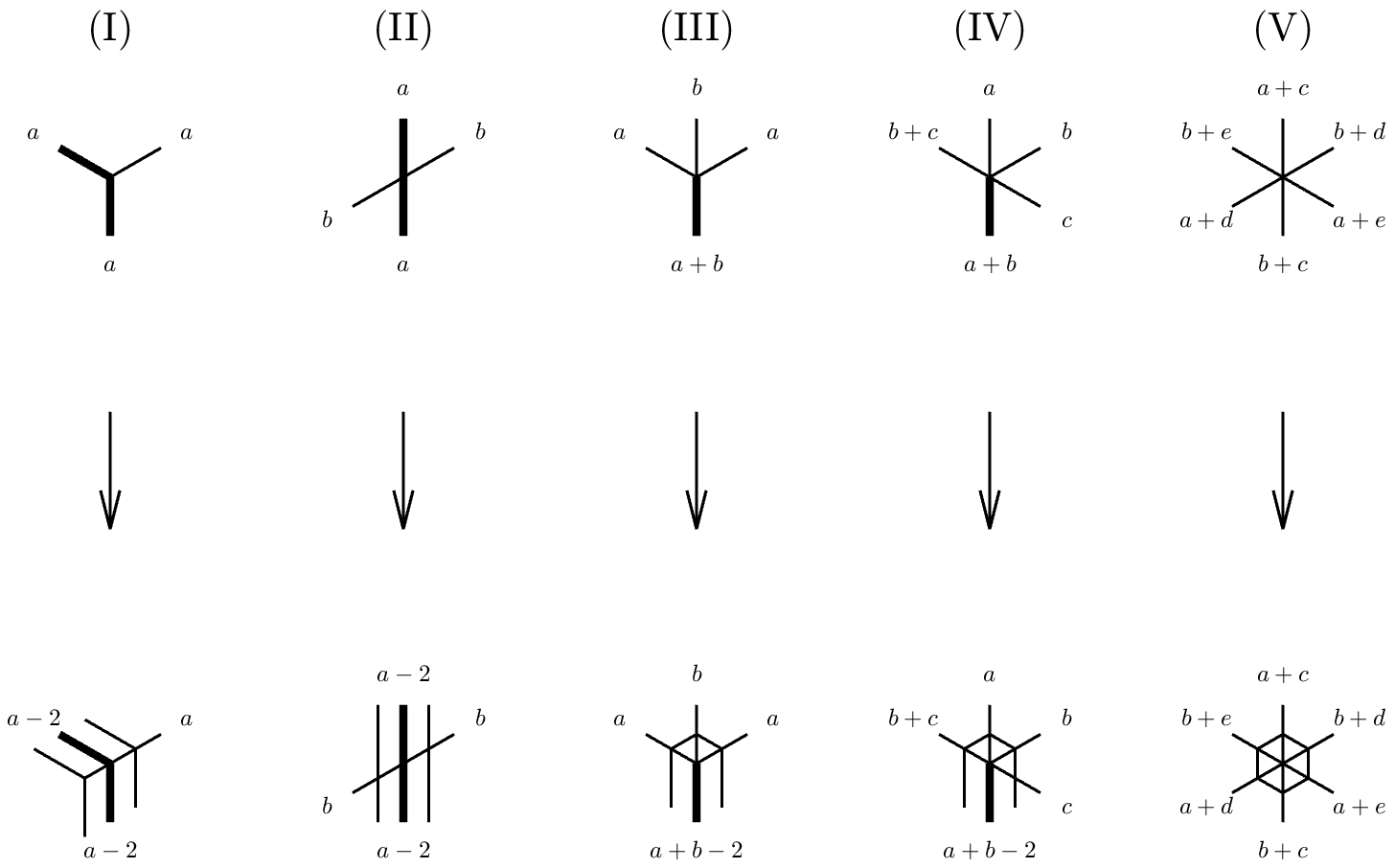}
\caption{Pre-existing molting techniques.}
\label{fig17}
\end{subfigure}

\begin{subfigure}[b]{\textwidth}
\centering
\includegraphics[scale=0.5]{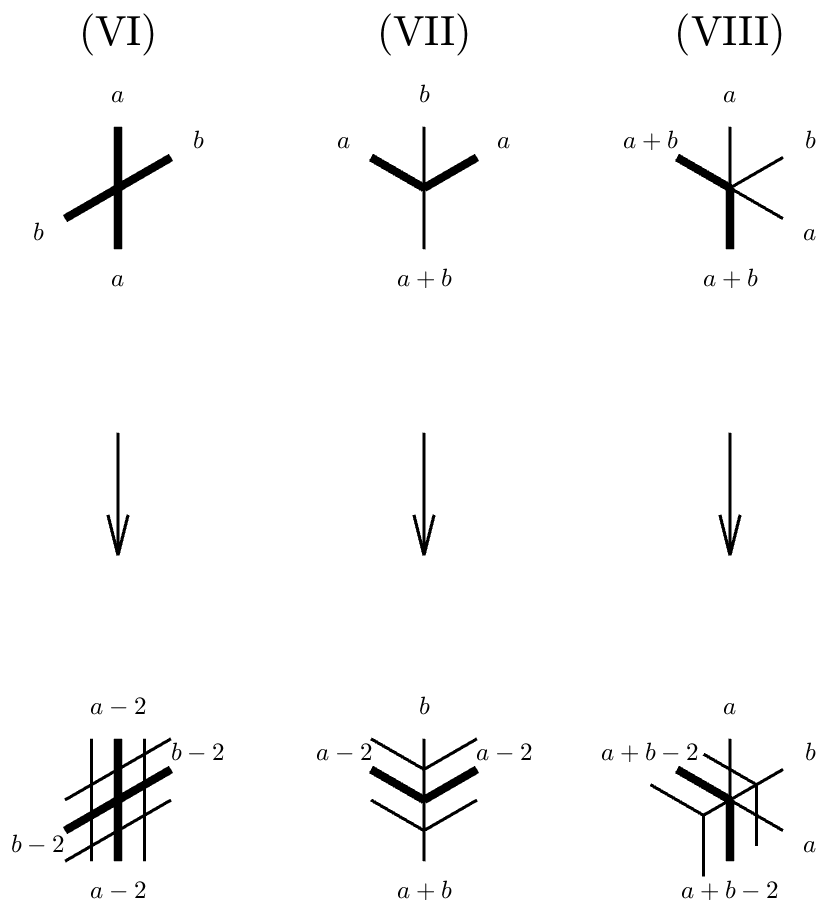}
\caption{New molting techniques.}
\label{fig18}
\end{subfigure}
\caption{Molting techniques.}
\label{fig1718}
\end{figure}

\begin{proof}
Choose $\epsilon >0$ so that
\begin{equation}
0< 2\epsilon < \min \{ {\sf length}(\widetilde{h} ; \widetilde{e}) \mid \widetilde{e} \in E_{\widetilde{\Gamma}_n}, \hspace{0.2em} {\sf length}(\widetilde{h} ; \widetilde{e}) \neq 0\},
\end{equation}
which is possible since there are finite number of values of ${\sf length}(\widetilde{h} ; \widetilde{e})$.

If there is a 6-valent white vertex $W$ in $\Gamma_n (h)$, let
\begin{equation}
H':=\{ \alpha \in H_{\Gamma_n} \mid  \rho_h (\alpha) = \{W\} \}.
\end{equation}
Inflating hexagons in $H'$ simultaneously by $\epsilon$, we have $\widetilde{h}' \in \MOB$, due to \citep[Lemma 10]{Tao99}. This contradicts Lemma \ref{lem3.4}. Hence, there is no 6-valent white vertex in $\Gamma_n (h)$.

We only need to prove that there is no white edges with multiplicity greater than $1$; then Figure \ref{fig23} follows from Figure \ref{fig46}. Let $m$ be the maximal multiplicity of white edges in $\Gamma_n (h)$. Assume that $m \geq 2$. We want to construct a trail\footnote{There is no edge repeated, but a vertex can be repeated.} in $\Gamma_n (h)$ satisfying the conditions as follows.
\begin{itemize}
\item The trail is composed of white vertices and white edges of multiplicity $m$.
\item Each vertex of the trail is one of the types on the top rows of Figures \ref{fig17} and \ref{fig18}, where bold edges are edges of the trail: (I), (II), (III), (IV), (VI), (VII) and (VIII).
\end{itemize}
We construct the trail by extending the endpoints, which are one of the four types: \textbf{Y}, crossing, rake and 5-valent. Let $W$ be an endpoint of the trail.

$\bullet$ $W$ is a \textbf{Y} : Connect another white edge of multiplicity $m$, possible due to Lemma \ref{lem4.6}. Then $W$ becomes the type (I) of Figure \ref{fig17}.

$\bullet$ $W$ is a \textit{crossing} : Connect the edge which is parallel, possible due to Lemma \ref{lem4.6}. If $W$ is chosen for the first time, then it is the type (II) of Figure \ref{fig17}. If it is the second time, then $W$ is the type (VI) of Figure \ref{fig18}.

$\bullet$ $W$ is a \textit{rake} : Then there are three cases: the trail is coming from an edge of multiplicity $a+b$, $a$ or $b$.
\begin{enumerate}
\item If the trail is coming from the edge of multiplicity $a+b$, then stop extending this endpoint. Then $W$ is the type (III) of Figure \ref{fig17}.
\item If the trail is coming from the edge of multiplicity $a$, then $a=m$. Then $a+b >m$, which means that $a+b$-multiplicity edge is in black, due to maximality of $m$. Due to Lemma \ref{lem4.6}, two edges of multiplicity $a$ are all in white, making it possible to extend the trail. Then $W$ is the type (VII) of Figure \ref{fig18}.
\item If the trail is coming from the edge of multiplicity $b$, then this edge is in white. According to Lemma \ref{lem4.6}, $(a+b)$-multiplicity edge is also in white, contradicting the maximality of $m$.
\end{enumerate}

$\bullet$ $W$ is a \textit{5-valent} : Due to Lemma \ref{lem4.6}, one of $(a+b)$-multiplicity edge and $(b+c)$-multiplicity edge is in white. Due to maximality, the trail is coming from one of these edges. Without loss of generality, assume that the trail is coming from $(b+c)$-multiplicity edge. Then there are three cases:

\begin{enumerate}
\item If $(a+b)$-multiplicity edge is in black, stop extending this endpoint of the trail. Then $W$ is the type (IV) of Figure \ref{fig17}.
\item If $(a+b)$-multiplicity edge is in white but $m>a+b$, stop extending this endpoint of the trail. Again, $W$ is the type (IV) of Figure \ref{fig17}.
\item Otherwise, there are two white edges of multiplicity $m$, making it possible to extend the trail. Then $W$ is the type (VIII) of Figure \ref{fig18}.
\end{enumerate} 

Since the graph $\Gamma_n$ is finite, either we have a closed trail, or it is no longer possible to extend the endpoints of the trail, leaving it an open trail. The vertices of the trail are one of the types on the top row of Figure \ref{fig17} and \ref{fig18} where only bold edges are contained in the trail. Write the vertices of the trail $v_0 , v_1 , \cdots , v_n$. If the trail is closed, then $v_0 = v_n$. Define
\begin{equation}\label{eqn4.10}
H_i := \{ \alpha \in H_{\Gamma_n} \mid \rho_h (\alpha) = \{ v_i \} \}, \quad H_{i,i+1} := \{ \alpha \in H_{\Gamma_n} \mid \rho_h (\alpha) = \{ v_i , v_{i+1}\} \}.
\end{equation}

Inflate all the hexagons by $\epsilon$ in $H_i$ and $H_{i,i+1}$ for $0 \leq i \leq n-1$. In addition, if the trail is open, then inflate all the hexagons in $H_n$ by $\epsilon$. It is possible that some hexagons are inflated twice \textit{i.e.} by $2\epsilon$, since $v_i = v_j$ may happen for $1 \leq i < j \leq n-1$. This happens when $v_i = v_j$ is the type (VI) of Figure \ref{fig18}.

Our claim is that by inflating all hexagons in $H_i$ and $H_{i,i+1}$ by $\epsilon$, we have $\widetilde{h}' \in \MOB$. Theorem \ref{lem4.2} immediately follows from the claim: if the claim is true, then it contradicts Lemma \ref{lem3.4}, since $\widetilde{h}' \in \MOB$ is constructed from a largest-lift $\widetilde{h}$ by inflating hexagons.

To prove our claim, we use the argument in \citep[Lemma 10]{Tao99} following four steps.
\begin{enumerate}
\item The image of $\widetilde{h}$ is a distortion of $\Gamma_n(h)$.
\item Choose $\delta >0$ greater than $4\epsilon$. On the image of $\widetilde{h}$, ``expand'' the length of all edges $\widetilde{e} \in E_{\widetilde{\Gamma}_n}$ by $\delta$. Then there is no degenerate edges. The hexagons of $H_{\widetilde{\Gamma}_n}$ corresponding to \eqref{eqn4.10} are marked in the gray region.
\item Inflate the hexagons in the gray region by $\epsilon$. Since $\delta > 4\epsilon$, we do not need to worry about ``spoke'' edges of hexagons being inflated. We check that each length is greater than or equal to $\delta$.
\item Since each length is greater than or equal to $\delta$, it is possible to ``shrink'' the length of all edges. The result is the image of $\widetilde{h}'$, which is a M{\"o}bius honeycomb.
\end{enumerate}

Consider the vertex $v_i$ of the trail, which is one of types in Figure \ref{fig17} and \ref{fig18}: (I), (II), (III), (IV), (VI), (VII) and (VIII). We follow four steps for each type.

$\bullet$ Type (I) : See Figure \citep[Figure 21]{Tao99}, depicting four pictures corresponding to four steps.

$\bullet$ Type (II) : See Figure \citep[Figure 22]{Tao99}.

$\bullet$ Type (III) : See Figure \citep[Figure 23]{Tao99}.

$\bullet$ Type (IV) : A.~Knutson and T.~Tao omitted the picture: see \citep[Lemma 10]{Tao99}.

$\bullet$ Type (VI) : Inflate hexagons as in type (II) twice, once for each direction.

$\bullet$ Type (VII) : See Figure \ref{fig19}. 

\begin{figure}
\centering
\includegraphics[scale=0.30]{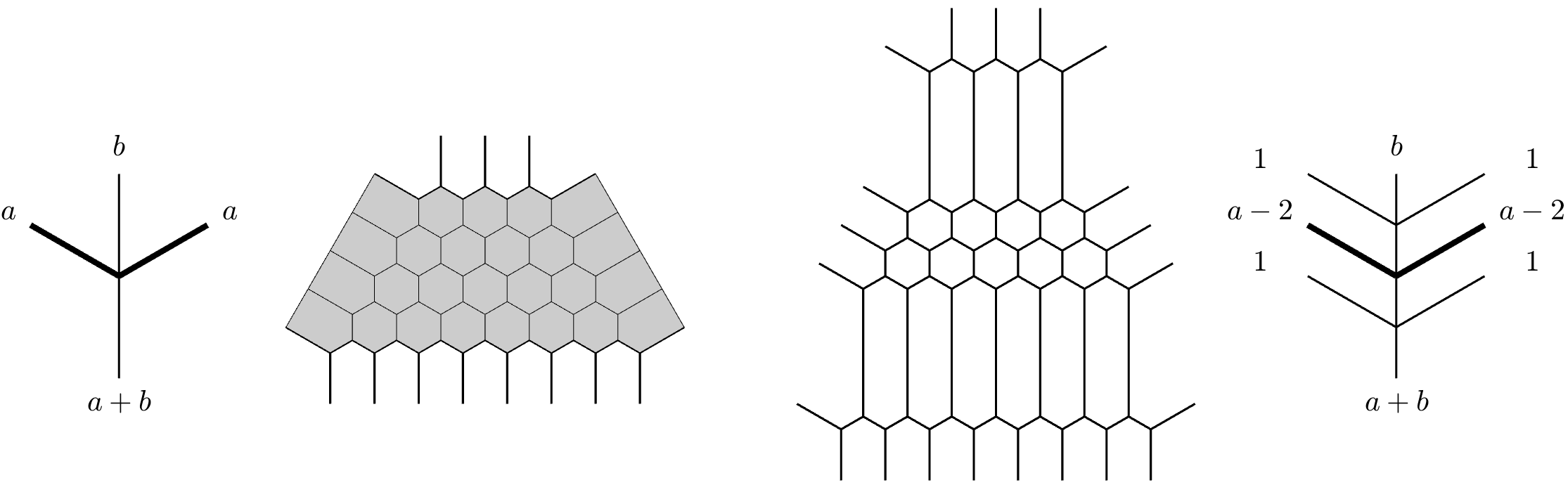}
\caption{Molting a 4-valent vertex by inflating gray regions.}
\label{fig19}
\end{figure}

$\bullet$ Type (VIII) : See Figure \ref{fig20}. 

\begin{figure}
\centering
\includegraphics[scale=0.30]{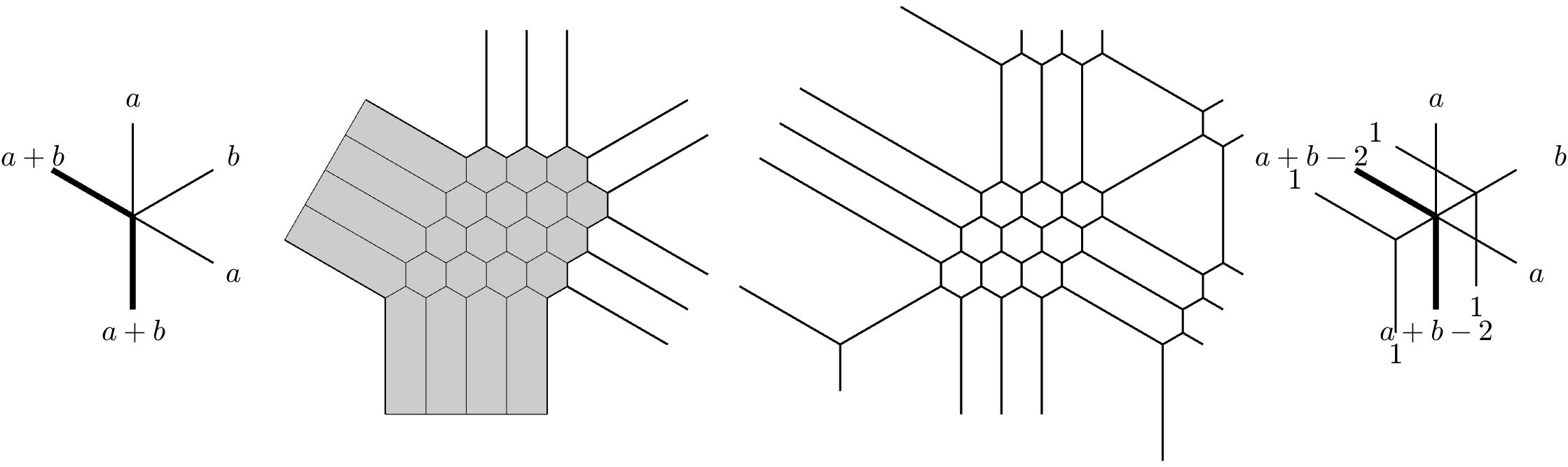}
\caption{Molting a 5-valent vertex by inflating gray regions.}
\label{fig20}
\end{figure}

Hence, inflating hexagons in \eqref{eqn4.10} leads to $\widetilde{h}' \in \MOB$, proving our claim.
\end{proof}

In \citep{Tao99}, the process of inflating hexagons simultaneously was called \textit{molting}, depicted in Figure \ref{fig17}.


\section{Loops in the M{\"o}bius strip}\label{sec5}

We continue working with largest-lifts $\widetilde{h} \in \MOB$. We sort out unfavorable white vertices in $\Gamma_n$ and connect them to construct loops. Since $\Gamma_n$ is embedded in the M{\"o}bius strip $B_\delta$, the loops may be non-orientable unlike \citep{Tao99}. Our claim is that such loops can be eliminated nicely, two at a time. Consequently, we prove Theorem \ref{thm3.2}, concluding that the Newell-Littlewood saturation holds.


\subsection{Fundamental groups of M{\"o}bius strips}

We list some well-known facts in algebraic topology. The fundamental group of the M{\"o}bius strip $B_\delta$ is $\mathbb{Z}$. Glue a disk to the boundary of the M{\"obius} strip and construct a real projective plane $\mathbb{RP}^{2}$. The fundamental group of $\mathbb{RP}^{2}$ is $\mathbb{Z}/2\mathbb{Z}$. The embedding $B_\delta \rightarrow \mathbb{RP}^{2}$ induces $\Pi_{1}(B_\delta) \rightarrow \Pi_{1}(\mathbb{RP}^{2})$, \textit{i.e.}, $\mathbb{Z} \rightarrow \mathbb{Z}/2\mathbb{Z}$. 

Intuitively, loops coiled around the M{\"o}bius strip for odd number of times are non-orientable loops whereas those coiled around for even number of times are orientable loops. Recall that two non-orientable loops must always intersect each other at least once. This follows from the fact that we have a disk by cutting $\mathbb{RP}^2$ along a non-orientable loop without self-intersection.

Let $G$ be a simple graph. In this paper, a list of vertices $(v_0, v_1 , \cdots, v_n)$ is called a \textbf{loop} in $G$ if $v_0 = v_n$ and $\{v_{i-1} , v_{i}\}$ are edges of $G$ for $1\leq i\leq n$. Vertices and edges are allowed to be repeated.

Let $\widetilde{h} \in \MOB$ and $h$ be its associated map. Let $C = (v_0 , v_1 , \cdots, v_k)$ be a loop in $\Gamma_n$. From $\rho_h : V_{\Gamma_n} \rightarrow V_{\Gamma_n (h)}$, construct a loop $C'$ by connecting vertices $\rho_h(v_0), \rho_h(v_1), \cdots, \rho_h (v_k)$. In other words, starting from a list of vertices $\rho_h(v_0), \rho_h(v_1), \cdots, \rho_h (v_k)$, whenever there is a pair of consecutive vertices which are the same, remove one of them. Then the remaining vertices $\rho_h (v_{i_0}) , \rho_h (v_{i_1}) , \cdots , \rho_h (v_{i_l})$ form the loop $C'$. Write $\rho_h (C):= C'$.

Note that $\widetilde{\Gamma}_n$ can be regarded as a planar graph embedded in $B$. Similarly, we regard $\Gamma_n$ as embedded in the M{\"o}bius strip $B_\delta$. Then as piecewise linear curves, we may define \textbf{orientable loops} and \textbf{non-orientable loops} in $\Gamma_n$. Similarly, note that $\Gamma_n (h)$ is also embedded in $B_\delta$ by the associated map $h : V_{\Gamma_n} \rightarrow B_\delta$. Therefore, it is possible to define \textbf{orientable loops} and \textbf{non-orientable loops} in $\Gamma_n (h)$. Then $C$ is an orientable loop in $\Gamma_n$ if and only if $\rho_h (C)$ is an orientable loop in $\Gamma_n (h)$. 

\begin{lemma}\label{lem5.0}
Let $C= (v_0 , v_1 , \cdots , v_k)$ be a loop in $\Gamma_n$. Then $C$ is orientable if and only if $k$ is an even integer.
\end{lemma}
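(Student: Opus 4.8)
The plan is to lift the loop $C$ through the covering $p_v\colon V_{\widetilde{\Gamma}_n}\to V_{\Gamma_n}$ to a path in $\widetilde{\Gamma}_n$, and then read off \emph{both} the parity of $k$ and the orientability of $C$ from the deck transformation that appears at the end of that lift. First I would record that $p_v$ (together with $p_e$) realizes $\widetilde{\Gamma}_n\to\Gamma_n$ as a covering map of graphs: it is the restriction of the universal covering $q\colon\widetilde{B}_\delta\to B_\delta$ to the embedded $1$-complexes, and its deck group is the infinite cyclic group generated by the map $T$ inducing the identifications \eqref{eqn3.3}, namely $T(\widetilde{A}_{i,j})=\widetilde{B}_{-i+n,-i+j+2n}$ and $T(\widetilde{B}_{i,j})=\widetilde{A}_{-i+n,-i+j+2n}$, so that $T^2(\widetilde{A}_{i,j})=\widetilde{A}_{i,j+3n}$. (The one genuinely delicate point here is to verify that this persists at the degree-one boundary vertices: the two $\widetilde{\Gamma}_n$-edges lying over the single incident edge of a boundary vertex of $\Gamma_n$ are incident to the two distinct lifts of that vertex, which is a direct check from \eqref{eqn3.23}, so path lifting still applies.) Path lifting then gives, for a chosen lift $\widetilde{v}_0$ of $v_0$, a unique lift $\widetilde{v}_0,\widetilde{v}_1,\dots,\widetilde{v}_k$ of the edge sequence of $C$; since $p_v(\widetilde{v}_k)=v_k=v_0=p_v(\widetilde{v}_0)$, there is a unique $t\in\Z$ with $\widetilde{v}_k=T^t\widetilde{v}_0$.

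Next I would extract the parity of $k$ combinatorially. Every edge of $\widetilde{\Gamma}_n$ runs from an $\widetilde{A}$-vertex to a $\widetilde{B}$-vertex, so $\widetilde{\Gamma}_n$ is bipartite with parts $\{\widetilde{A}_{i,j}\}$ and $\{\widetilde{B}_{i,j}\}$; hence the ``type'' of $\widetilde{v}_i$ alternates along the lifted path, and $\widetilde{v}_k$ has the same type as $\widetilde{v}_0$ if and only if $k$ is even. On the other hand $T$ interchanges the two parts while $T^2$ preserves each of them, so $T^t$ preserves type if and only if $t$ is even. Combining, $k$ is even $\iff$ $t$ is even.

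Finally I would identify ``$t$ even'' with orientability. Traversing the core of $B_\delta$ once is, by construction of $B_\delta$, the deck transformation $T$, whose underlying affine map $(x,y,z)\mapsto(y-2\delta,\,x-\delta,\,z+3\delta)$ from \eqref{eqn3.5} has linear part the transposition of $x$ and $y$, of determinant $-1$ on $B$; thus $T$ reverses orientation, and $T^t$ preserves orientation exactly when $t$ is even. Equivalently, in the language of the discussion preceding the lemma, $C$ is orientable precisely when it winds around the M\"obius strip an even number of times, and that number of times is exactly $t$ (since $\langle T\rangle\cong\Pi_1(B_\delta)\cong\Z$ with $T$ a generator). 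Therefore $C$ is orientable $\iff$ $t$ is even $\iff$ $k$ is even, as claimed.

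I expect the main obstacle to be precisely the bookkeeping in the first step: confirming that $p_v$ is a bona fide covering map even at the degree-one boundary vertices (so that the lift $\widetilde{v}_0,\dots,\widetilde{v}_k$ and the exponent $t$ are well defined), and matching the topological notion of ``orientable loop in $\Gamma_n$'' used in this section with the algebraic parity of $t$. Once those two identifications are in place, the conclusion is just the short parity chain $k\equiv(\text{type change along the lift})\equiv t\pmod 2$.
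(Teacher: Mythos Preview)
Your argument is correct and takes a genuinely different route from the paper. The paper works entirely inside $\Gamma_n$: using the positive/negative side convention of Figure~\ref{fig37} it assigns to each edge $e_i$ of $C$ a transverse orientation (positive side for even $i$, negative for odd $i$), and then observes from the local sign pattern that consecutive edges are forced to point to opposite sides, so the assignment closes up consistently exactly when $k$ is even. Your proof instead lifts $C$ to the cover $\widetilde{\Gamma}_n$, uses the bipartition $\{\widetilde A_{i,j}\}\sqcup\{\widetilde B_{i,j}\}$ to read off $k\bmod 2$, and uses the deck group $\langle T\rangle\cong\Z$ together with the fact that $T$ is orientation-reversing on $B$ to read off orientability; both pieces of information collapse to the parity of the exponent $t$. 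The two arguments encode the same phenomenon---the paper's alternation of $\pm$ at a vertex is precisely the alternation of $\widetilde A$/$\widetilde B$ type along your lift---but your version makes the connection to the topology of $B_\delta$ explicit and avoids appealing to the picture, at the cost of invoking covering-space machinery and checking the boundary-vertex case. Your caution about the degree-one boundary vertices is warranted but the check succeeds: over $A_{0,j}$ the fibre consists of the degree-one vertices $\widetilde A_{0,j+3kn}$ and $\widetilde B_{n,j+2n+3kn}$, each carrying the unique incident edge, so the star bijection holds and path lifting is available.
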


To prove this lemma, we need to define orientation to each edge of $\Gamma_n$. Note that for each $a \in \R$, the plane $B$ is a union of
\begin{equation}
B = \{ (x,y,z) \in B \mid x-a \geq 0 \} \cup \{ (x,y,z) \in B \mid x-a \leq 0\}.
\end{equation}
The subset $x-a \geq 0$ is ``positive side'' of $(a,*,*)$ and $x-a \leq 0$ is ``negative side''.

Let $\widetilde{\alpha}$ and $\widetilde{\beta}$ be hexagons of $\widetilde{\Gamma}_n$ adjoined by an edge $\widetilde{e}$.\footnote{$\widetilde{\alpha}$ or $\widetilde{\beta}$ may be ``unbounded'' hexagons \textit{i.e.} hexagons assigned $0$ in Figure \ref{fig14}.} Let $\widetilde{h} \in \MOB$. Suppose $a = {\sf const}(\widetilde{h}; \widetilde{e})$ and $d(\widetilde{e}) = (0,-1,1)$. Without losing generality, assume 
\begin{equation}
\widetilde{h}(\widetilde{\alpha}) \subseteq \{ (x,y,z) \in B \mid x-a \geq 0 \}, \quad \widetilde{h}(\widetilde{\beta}) \subseteq \{ (x,y,z) \in B \mid x-a \leq 0 \}.
\end{equation}
Then we say that $\widetilde{\alpha}$ is on the \textbf{positive side} of $\widetilde{e}$ whereas $\widetilde{\beta}$ is on the \textbf{negative side} of $\widetilde{e}$. Due to \eqref{eqn3.9}, this is determined regardless of the choice of $\widetilde{h} \in \MOB$. Similarly, define positive and negative sides of $\widetilde{e}$ when $d(\widetilde{e})= (1,0,-1)$ or $d(\widetilde{e}) = (-1,1,0)$ by replacing $x$ with $y$ or $z$, respectively. 

Assign $+$ (resp. $-$) to a hexagon if it lies on the positive side (resp. negative side) of its adjoining edge. In this way, we assign signs to hexagons in $\widetilde{\Gamma}_n$ as in Figure \ref{fig58}. As a result, write $+ \rightarrow -$ clockwise for inward vertices and anti-clockwise for outward vertices. For instance, in Figure \ref{fig58}, $\widetilde{\alpha}_1$ is on the positive side while $\widetilde{\beta}_1$ is on the negative side with respect to the adjacent edge between them.

Let $\widetilde{\alpha}_2$ and $\widetilde{\beta}_2$ be another pair of adjacent hexagons satisfying  $p_h(\widetilde{\alpha}_1) = p_h(\widetilde{\alpha}_2)$ and $p_h(\widetilde{\beta}_1) = p_h(\widetilde{\beta}_2)$. As in Figure \ref{fig58}, $\widetilde{\alpha}_2$ is also on the positive side while $\widetilde{\beta}_2$ is on the negative side. Therefore, from the previous context, we say $\alpha:= p_h (\widetilde{\alpha})$ is on the \textbf{positive side} of $e := p_e (\widetilde{e})$ whereas $\beta:= p_h (\widetilde{\beta})$ is on the \textbf{negative side} of $e$. Hence, the signs assigned to edges of $\Gamma_n$ are well-defined as in Figure \ref{fig37}.

\begin{figure}
\centering
\begin{subfigure}[b]{\textwidth}
\centering
\includegraphics[scale=0.45]{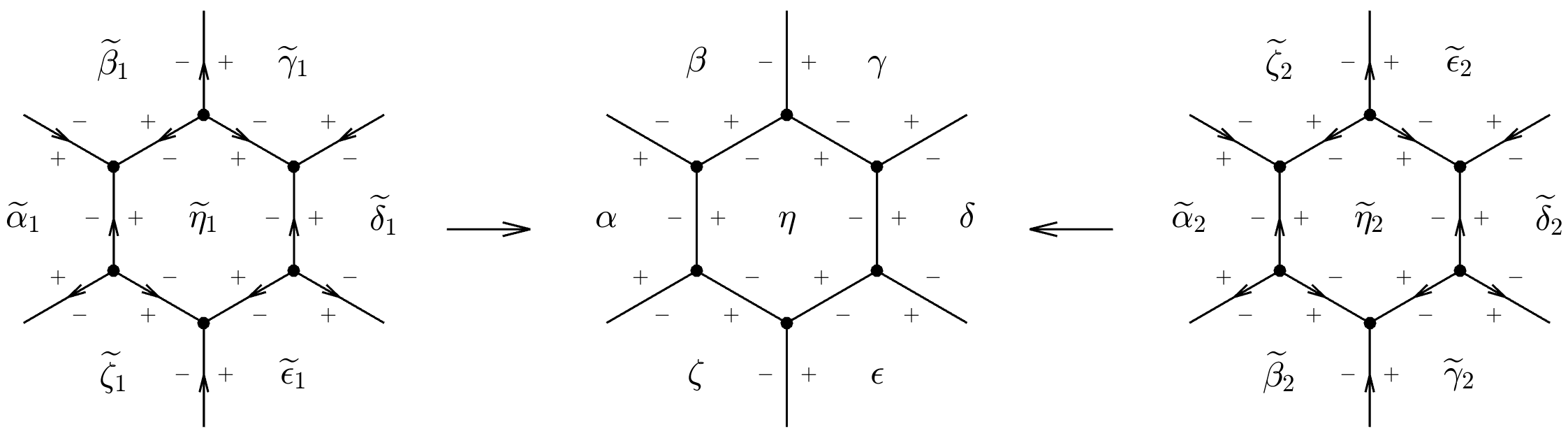}
\caption{Assigning signs to hexagons of $\widetilde{\Gamma}_n$ and $\Gamma_n$.}
\label{fig58}
\end{subfigure}

\begin{subfigure}[b]{\textwidth}
\centering
\includegraphics[scale = 0.5]{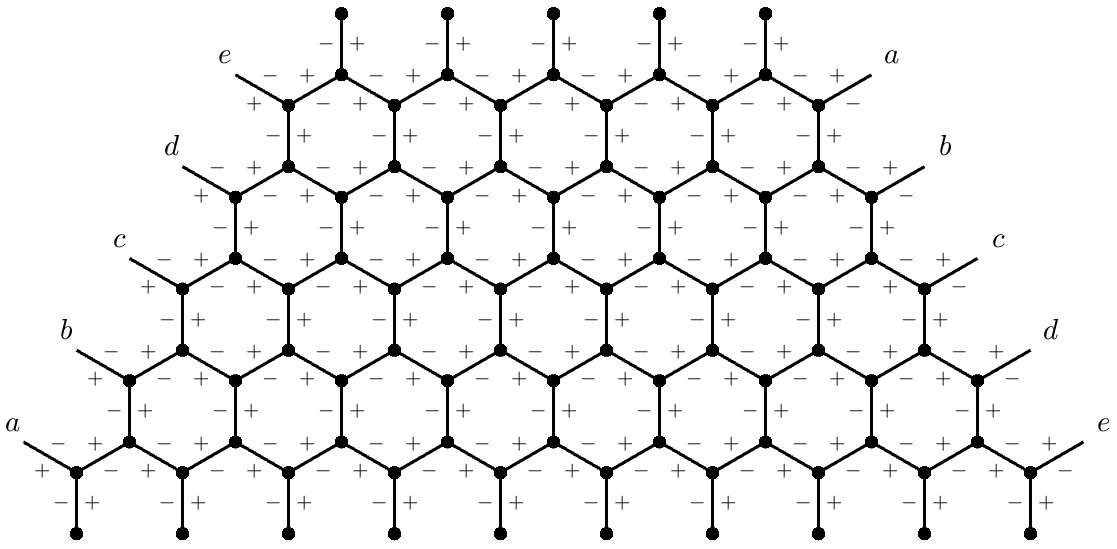}
\caption{Assigning signs to hexagons of the graph $\Gamma_{5}$.}
\label{fig37}
\end{subfigure}
\caption{Assigning signs to hexagons.}
\label{fig3758}
\end{figure}

\begin{proof}[Proof of Lemma \ref{lem5.0}]
For each edge $e_i:= \{v_{i-1}, v_i \} $ in $C$, define orientation as follows.
\begin{itemize}
	\item For even integer $1 \leq i \leq k$, set orientation of $e_i$ to the positive side.
	\item For odd integer $1 \leq i \leq k$, set orientation of $e_i$ to the negative side.
\end{itemize}
Indeed, for any loop in Figure \ref{fig37}, orientation of each edge should alternate between positive side and negative side. For the last edge $e_k$, its orientation should be on the positive side in order to comply with the orientation of $e_1$ on the negative side. This shows that the loop is orientable if and only if $k$ is even.
\end{proof}

\begin{figure}
\centering
\begin{subfigure}[b]{\textwidth}
\centering
\includegraphics[scale=0.45]{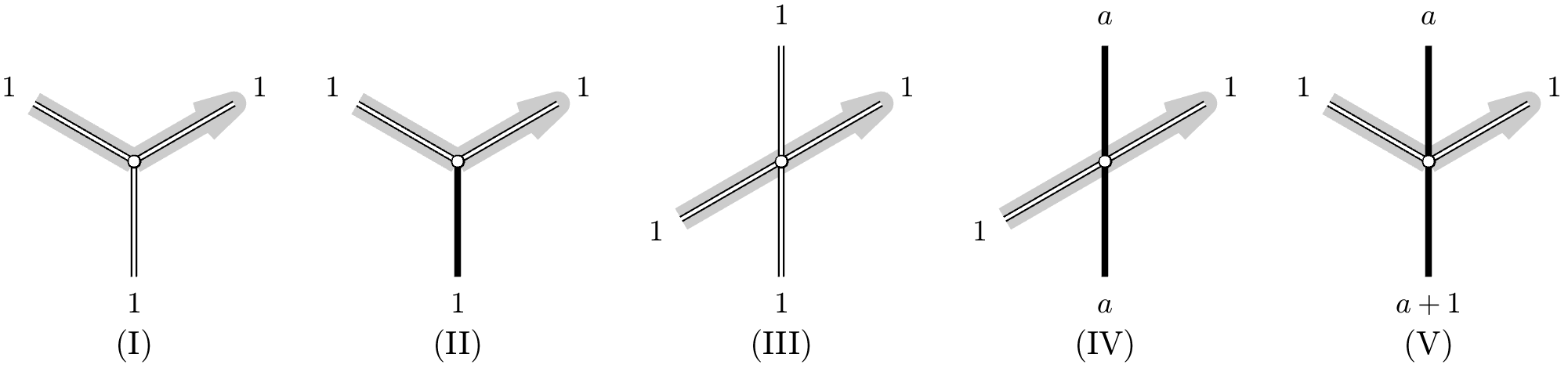}
\caption{Constructing a white loop in $\Gamma_n (h)$.}
\label{fig50}
\end{subfigure}

\begin{subfigure}[b]{\textwidth}
\centering
\includegraphics[scale=0.45]{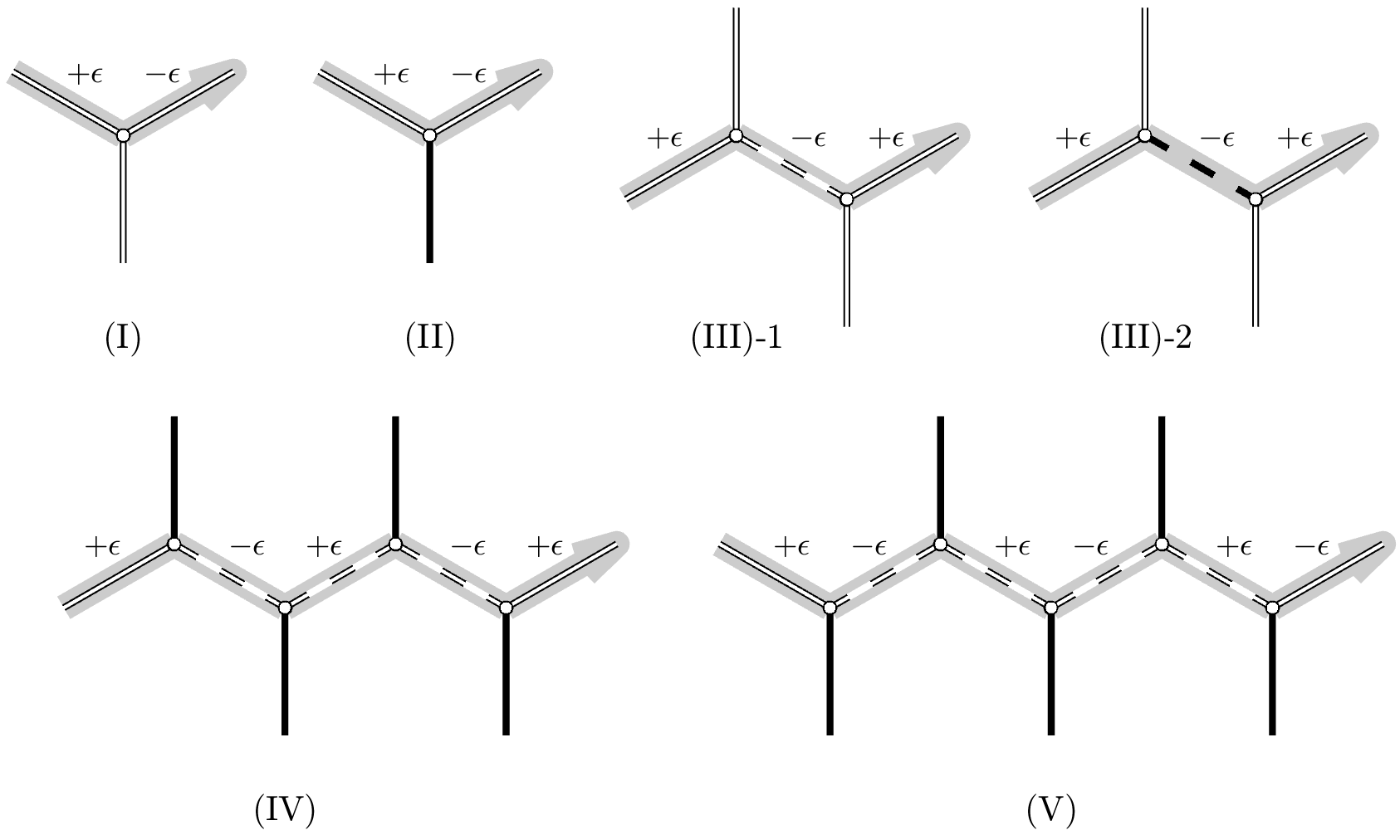}
\caption{Constructing a white loop in $\Gamma_n$.}
\label{fig49}
\end{subfigure}
\caption{A white loop.}
\label{fig4950}
\end{figure}

Let $\widetilde{h} \in \MOB$ be a largest-lift of $\xi \in \Z^{3n}$ and $h$ be its associated map. After coloring, there are five types of white vertices in $\Gamma_n(h)$ due to Theorem \ref{lem4.2}. We call a loop $C' = (w_0 , w_1 , \cdots , w_l)$ \textbf{a white loop} of $\Gamma_n (h)$ if it satisfies the following conditions.
\begin{itemize}
\item Its vertices and edges are all in white.
\item Whenever it encounters a crossing, it should go straight.
\end{itemize}
In other words, a white loop in $\Gamma_n (h)$ is constructed by following gray arrows in Figure \ref{fig50}. In addition, if a white loop $C'$ is a circuit\footnote{There is no edge repeated, but a vertex can be repeated.}, then $C'$ is called a \textbf{canonical white loop}.

Let $C$ be a loop in $\Gamma_n$. If $\rho_h (C)$ is a white loop (resp. canonical white loop) in $\Gamma_n (h)$, then we call $C$ a \textbf{white loop} (resp. \textbf{canonical white loop}) in $\Gamma_n$. A white loop in $\Gamma_n$ is constructed by following gray arrows in Figure \ref{fig49}. Here, degenerate edges are distinguished as dashed lines. Due to edge contraction, these degenerate edges are contracted to vertices. As a result, we have Figure \ref{fig50} from Figure \ref{fig49}. In particular, the white crossing type (III) in Figure \ref{fig50} is subdivided into types (III)-1 and (III)-2 in Figure \ref{fig49}, since the degenerate edge may be black or white.


\subsection{Sliding orientable loops}\label{sub5.2} Let $\widetilde{h} \in \MOB$. Suppose $\widetilde{A} \in V_{\widetilde{\Gamma}_n}$ is not a boundary vertex and let $\widetilde{e}_x , \widetilde{e}_y, \widetilde{e}_z \in E_{\widetilde{\Gamma}_n}$ be three protruding edges of $\widetilde{A}$
\begin{equation}\label{eqn5.5}
d(\widetilde{e}_x) = (0,-1,1), \quad d(\widetilde{e}_y) = (1,0,-1), \quad d(\widetilde{e}_z) = (-1,1,0).
\end{equation}
Then automatically, we have
\begin{equation}
\widetilde{h}(\widetilde{A}) = \left( {\sf const}(\widetilde{h};\widetilde{e}_x), {\sf const}(\widetilde{h};\widetilde{e}_y), {\sf const}(\widetilde{h};\widetilde{e}_z) \right).
\end{equation}
In other words, $ {\sf const}(\widetilde{h} ; \widetilde{e})$ has all the information of $\widetilde{h}$. One of the advantages of this approach is that we can extract statistics from $\Gamma_n$. Let $\widetilde{h}' \in \MOB$. Define
\begin{equation}
\varphi : E_{\Gamma_n} \rightarrow \R , \quad e \mapsto  {\sf const}(\widetilde{h}' ; \widetilde{e}) -{\sf const}(\widetilde{h} ; \widetilde{e}),
\end{equation}
for any $p_e (\widetilde{e}) = e$. This is well-defined by \eqref{eqn5.6} of Lemma \ref{lemZ.5}. Also, the sign of Figure \ref{fig37} is assigned so that whenever ${\sf const}(\widetilde{h}' ; \widetilde{e}) -{\sf const}(\widetilde{h} ; \widetilde{e}) > 0$, the image of $\widetilde{e}$ under $\widetilde{h}'$ lies on the positive side of that under $\widetilde{h}$, and vice versa.

Our strategy is to find an appropriate $\varphi$ from $\widetilde{h}$ to construct $\widetilde{h}'$. Define a map $\varphi : E_{\Gamma_n} \rightarrow \R$ so that for each vertex $v\in V_{\Gamma_n}$,
\begin{equation}\label{eqn5.2}
\sum_{v\text{ is an endpoint of }e} \varphi (e) = 0.
\end{equation}
Automatically, $\varphi (e) = 0$ if $e$ is connected to a boundary vertex.

For each $e \in E_{\Gamma_n}$, there are two hexagons (possibly unbounded) adjoined by $e$: $\alpha^+$ on the positive side and $\alpha^-$ on the negative side of $e$. Choose an endpoint of $e$ which is not a boundary vertex. Then this vertex is connected to two other edges: $f_1^+$ which  is a surrounding edge of $\alpha^-$ and $f_1^-$ of $\alpha^+$. If there is another endpoint of $e$ which is not a boundary vertex, choose $f_2^+$ and $f_2^-$ as well; see Figure \ref{fig59}. $\varphi$ is required to satisfy
\begin{equation}\label{eqn5.3}
{\sf length}(\widetilde{h} ; \widetilde{e}) \geq \frac{1}{2} \sum_i \left( \varphi(f_i^+) - \varphi(f_i^-) \right) .
\end{equation}

\begin{figure}
\centering
\begin{subfigure}[b]{\textwidth}
\centering
\includegraphics[scale=0.45]{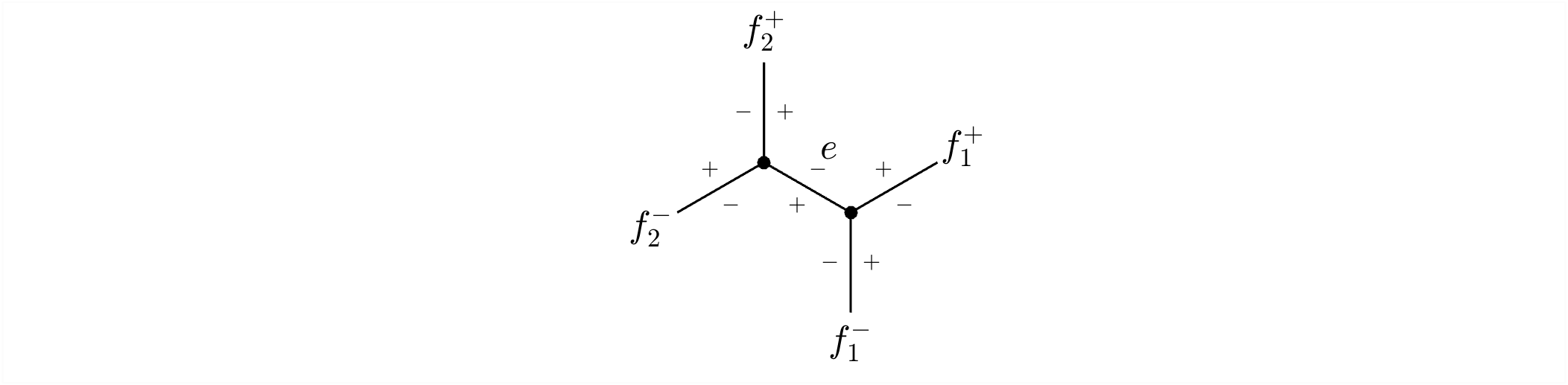}
\caption{Assigning $\varphi$ on edges of $\Gamma_n$.}
\label{fig59}
\end{subfigure}

\begin{subfigure}[b]{\textwidth}
\centering
\includegraphics[scale=0.45]{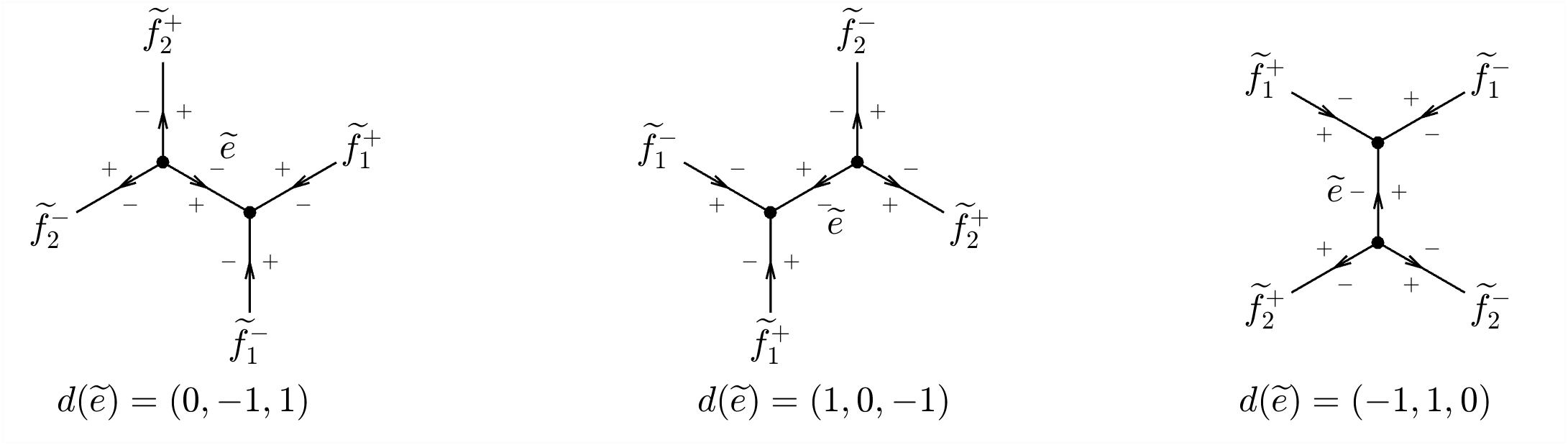}
\caption{${\sf head}(\widetilde{e}) = {\sf head}(\widetilde{f}_1^+) ={\sf head}(\widetilde{f}_1^-)$.}
\label{fig60}
\end{subfigure}

\begin{subfigure}[b]{\textwidth}
\centering
\includegraphics[scale=0.45]{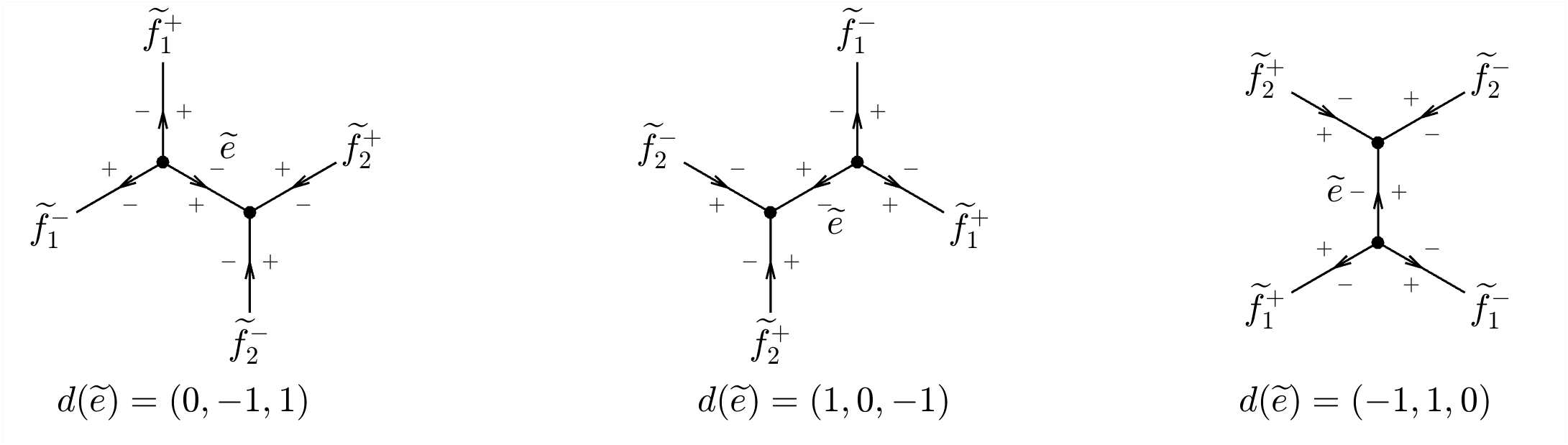}
\caption{${\sf head}(\widetilde{e}) = {\sf head}(\widetilde{f}_2^+) ={\sf head}(\widetilde{f}_2^-)$.}
\label{fig61}
\end{subfigure}

\caption{Sign of edges and $\varphi$.}
\label{fig596061}
\end{figure}

We define $\widetilde{h}_\varphi : V_{\widetilde{\Gamma}_n} \rightarrow B$ as follows: let $\widetilde{A} \in V_{\widetilde{\Gamma}_n}$. If $\widetilde{A}$ is a boundary vertex, then $\widetilde{h}_\varphi (\widetilde{A}) := \widetilde{h} (\widetilde{A})$. If $\widetilde{A}$ is not a boundary vertex, let $\widetilde{e}_x$, $\widetilde{e}_y$ and $\widetilde{e}_z$ be three protruding edges of $\widetilde{A}$ satisfying \eqref{eqn5.5}. Define
\begin{equation}\label{eqn5.4}
\widetilde{h}_\varphi (\widetilde{A}):= \widetilde{h}(\widetilde{A}) + (\varphi (e_x) , \varphi (e_y) , \varphi (e_z)). \quad \left( e_x = p_e (\widetilde{e}_x), \hspace{0.2em} e_y = p_e (\widetilde{e}_y), \hspace{0.2em} e_z = p_e (\widetilde{e}_z) \right)
\end{equation}
This defines the map $\widetilde{h}_\varphi : V_{\widetilde{\Gamma}_n} \rightarrow B$. Similar to \eqref{eqn5.6}, we have
\begin{equation}\label{eqn5.7}
{\sf const}(\widetilde{h}_\varphi ; \widetilde{e}) -{\sf const}(\widetilde{h} ; \widetilde{e}) = \varphi (e),
\end{equation}
for any $e = p_e(\widetilde{e})$.

\begin{lemma}\label{lem5.3}
Let $\widetilde{h} \in \MOB$ and $\varphi : E_{\Gamma_n} \rightarrow \R$ satifying \eqref{eqn5.2} and \eqref{eqn5.3}. Then $\widetilde{h}_\varphi $ is a M{\"o}bius honeycomb and $\partial \widetilde{h} = \partial \widetilde{h}_\varphi$.
\end{lemma}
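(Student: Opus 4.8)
The plan is to verify the three defining conditions \eqref{eqn3.9}, \eqref{eqn3.7}, \eqref{eqn3.8} for $\widetilde h_\varphi$ and then to read off $\partial\widetilde h_\varphi = \partial\widetilde h$. First note $\widetilde h_\varphi$ genuinely lands in $B$: at a non-boundary vertex the correction vector $(\varphi(e_x),\varphi(e_y),\varphi(e_z))$ added in \eqref{eqn5.4} has coordinate sum $0$ by the balance condition \eqref{eqn5.2} at that vertex, and at a boundary vertex $\widetilde h_\varphi=\widetilde h$. Since $\widetilde h_\varphi$ agrees with $\widetilde h$ on every boundary vertex of $\widetilde\Gamma_n$, condition \eqref{eqn3.7} holds automatically, and because $\partial$ reads off only the $z$-coordinates of the values at the vertices $\widetilde A_{0,j}$, we get $\partial\widetilde h_\varphi=\partial\widetilde h$ at once. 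It then remains to check the edge condition \eqref{eqn3.9} and the equivariance \eqref{eqn3.8}.

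For \eqref{eqn3.9}, fix $\widetilde e\in E_{\widetilde\Gamma_n}$, say with $d(\widetilde e)=(0,-1,1)$, and put $\widetilde A:={\sf tail}(\widetilde e)$, $\widetilde B:={\sf head}(\widetilde e)$. The edge $\widetilde e$ is the $(0,-1,1)$-protruding edge of both of its endpoints, so by \eqref{eqn5.4} the same scalar $\varphi(p_e(\widetilde e))$ is added to the $x$-coordinate of $\widetilde h$ at $\widetilde A$ and at $\widetilde B$ (with the convention $\varphi\equiv 0$ on edges meeting a boundary vertex, which is automatic from \eqref{eqn5.2}). As $\widetilde h(\widetilde A)$ and $\widetilde h(\widetilde B)$ already share an $x$-coordinate, so do $\widetilde h_\varphi(\widetilde A)$ and $\widetilde h_\varphi(\widetilde B)$; hence $\widetilde h_\varphi(\widetilde B)-\widetilde h_\varphi(\widetilde A)=a\cdot(0,-1,1)$ for some $a\in\R$, possibly negative. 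Computing $a$ from the $y$- and $z$-coordinates and using at each endpoint that the three $\varphi$-values on the protruding edges sum to $0$, one obtains $a={\sf length}(\widetilde h;\widetilde e)-\tfrac12\sum_i(\varphi(f_i^+)-\varphi(f_i^-))$, where $f_i^+,f_i^-$ are, at each non-boundary endpoint of $\widetilde e$, the two other incident edges, labelled by which of the two hexagons adjoining $\widetilde e$ they border — exactly the edges singled out in Figures \ref{fig59}, \ref{fig60}, \ref{fig61}. Thus $a\ge 0$ is precisely hypothesis \eqref{eqn5.3}, so \eqref{eqn3.9} holds; the directions $(1,0,-1)$ and $(-1,1,0)$ are identical after permuting coordinates, and the case where $\widetilde e$ meets a boundary vertex is the degenerate version of the same computation (one term in the sum).

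For \eqref{eqn3.8}, boundary vertices need no check since there $\widetilde h_\varphi=\widetilde h$. If $\widetilde P_1\sim\widetilde P_2$ are non-boundary, write the gluing map as $T(x,y,z)=(y-2\delta,x-\delta,z+3\delta)$, so that $\widetilde h(\widetilde P_2)=T(\widetilde h(\widetilde P_1))$ because $\widetilde h\in\MOB$. The linear part of $T$ is the transposition $L(x,y,z)=(y,x,z)$, and a direct check from the vertex identification \eqref{eqn3.3} shows that $p_e$ carries the $(0,-1,1)$-protruding edge of $\widetilde P_1$ to the $(1,0,-1)$-protruding edge of $\widetilde P_2$, and vice versa, while the $(-1,1,0)$-edges correspond. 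Since $\varphi$ is a function on $E_{\Gamma_n}$, hence constant on $p_e$-fibres, the correction vector added at $\widetilde P_2$ is $L$ applied to the correction vector added at $\widetilde P_1$; therefore $\widetilde h_\varphi(\widetilde P_2)=T(\widetilde h(\widetilde P_1))+L(\,\cdot\,)=T(\widetilde h_\varphi(\widetilde P_1))$, which is \eqref{eqn3.8}. This edge-matching is the same bookkeeping already used to justify \eqref{eqn5.7} (Lemmas \ref{lemZ.5}, \ref{lemZ.6}), so it may be invoked rather than redone.

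I expect the one genuinely delicate point to be the sign bookkeeping inside \eqref{eqn3.9}: matching the labelled spoke edges $f_1^\pm,f_2^\pm$ and their positive-/negative-side roles, as fixed by Figures \ref{fig59}--\ref{fig61}, with the $y$- and $z$-coordinate contributions to $\widetilde h_\varphi({\sf head}(\widetilde e))-\widetilde h_\varphi({\sf tail}(\widetilde e))$, consistently across all three directions of $\widetilde e$, both orientations of each endpoint, and the boundary-edge case. Once those conventions are pinned down it is a finite, mechanical verification, and everything else rests directly on $\widetilde h\in\MOB$ together with \eqref{eqn5.2} and \eqref{eqn5.3}.
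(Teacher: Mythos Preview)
Your proposal is correct and follows essentially the same route as the paper: verify \eqref{eqn3.9} by computing the head-minus-tail difference of $\widetilde h_\varphi$ along each edge direction and invoking \eqref{eqn5.3} for nonnegativity, get \eqref{eqn3.7} and $\partial\widetilde h_\varphi=\partial\widetilde h$ for free from $\widetilde h_\varphi=\widetilde h$ on boundary vertices, and check \eqref{eqn3.8} by observing that the equivalence swaps the $(0,-1,1)$- and $(1,0,-1)$-protruding edges while fixing the $(-1,1,0)$-edge, which matches the $x\leftrightarrow y$ swap in the linear part of the gluing map. One small imprecision: for general $\widetilde P_1\sim\widetilde P_2$ one has $\widetilde h(\widetilde P_2)=T^k(\widetilde h(\widetilde P_1))$ rather than $T$ itself, but since \eqref{eqn3.8} need only be checked on generators of $\sim$ (and the parity of $k$ determines whether the correction vector is fixed or has its first two entries swapped, exactly as in the paper's ``inward/outward'' dichotomy via Lemma~\ref{lemZ.1}), your argument goes through unchanged.
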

\begin{proof}
To prove that $\widetilde{h}_\varphi$ satisfies \eqref{eqn3.9}, let $\widetilde{e}$ be an edge of $\widetilde{\Gamma}_n$. Assume that $\widetilde{e}$ is not connected to a boundary vertex. Denote the edges connected to ${\sf head}(\widetilde{e})$ as $\widetilde{f}_1^+, \widetilde{f}_1^-$ and the edges connected to  ${\sf tail}(\widetilde{e})$ as $\widetilde{f}_2^+, \widetilde{f}_2^-$. Let $\widetilde{f}_i^+$ (resp. $\widetilde{f}_i^-$) be on the negative side (resp. positive side) of $\widetilde{e}$ for $i=1,2$. Then we have three cases depicted in Figure \ref{fig60}. Write $e:= p_e(\widetilde{e})$, $f_i^+ := p_e(\widetilde{f}_i^+)$ and $f_i^- := p_e(\widetilde{f}_i^-)$. Then we have Figure \ref{fig59}.

Suppose $d(\widetilde{e}) = (0,-1,1)$. Then from the left picture of Figure \ref{fig60},
\begin{subequations}
\begin{equation}
\widetilde{h}_\varphi ({\sf head }(\widetilde{e}))  = \widetilde{h} ({\sf head }(\widetilde{e})) + (\varphi (e), \varphi (f_1^+), \varphi(f_1^-)),
\end{equation}
\begin{equation}
\widetilde{h}_\varphi ({\sf tail }(\widetilde{e}))  = \widetilde{h} ({\sf tail }(\widetilde{e})) + (\varphi (e), \varphi (f_2^-), \varphi(f_2^+)),
\end{equation}
\begin{equation}
\widetilde{h}_\varphi ({\sf head }(\widetilde{e})) - \widetilde{h}_\varphi ({\sf tail }(\widetilde{e})) = \widetilde{h} ({\sf head }(\widetilde{e})) - \widetilde{h} ({\sf tail }(\widetilde{e})) + (0, \varphi (f_1^+)-\varphi (f_2^-) , \varphi(f_1^-) - \varphi (f_2^+)).
\end{equation}
\end{subequations}
Since $\varphi (f_1^+)-\varphi (f_2^-) + \varphi(f_1^-) - \varphi (f_2^+) = 0$, this proves that $\widetilde{h}_\varphi ({\sf head }(\widetilde{e})) - \widetilde{h}_\varphi ({\sf tail }(\widetilde{e}))$ is parallel to $d(\widetilde{e}) = (0,-1,1)$. Compute the dot product by using \eqref{eqnZ.7} and Lemma \ref{lemZ.2}
\begin{equation}
d(\widetilde{e}) \cdot \left( \widetilde{h}_\varphi ({\sf head }(\widetilde{e})) - \widetilde{h}_\varphi ({\sf tail }(\widetilde{e})) \right)  = 2 \cdot {\sf length} (\widetilde{h} , \widetilde{e})  - \varphi (f_1^+) + \varphi (f_2^-) + \varphi(f_1^-) - \varphi (f_2^+)).
\end{equation}
Since we are assuming \eqref{eqn5.3}, the above value is non-negative. Therefore, \eqref{eqn3.9} is satisfied. Similarly, if $d(\widetilde{e}) = (1,0,-1)$, then from the middle picture of Figure \ref{fig60}
\begin{subequations}
\begin{equation}
\widetilde{h}_\varphi ({\sf head }(\widetilde{e}))  = \widetilde{h} ({\sf head }(\widetilde{e})) + ( \varphi(f_1^-), \varphi (e), \varphi (f_1^+)),
\end{equation}
\begin{equation}
\widetilde{h}_\varphi ({\sf tail }(\widetilde{e}))  = \widetilde{h} ({\sf tail }(\widetilde{e})) + ( \varphi(f_2^+), \varphi (e), \varphi (f_2^-)).
\end{equation}
\end{subequations}
If $d(\widetilde{e}) = (-1,1,0)$, then from the right picture of Figure \ref{fig60}
\begin{subequations}
\begin{equation}
\widetilde{h}_\varphi ({\sf head }(\widetilde{e}))  = \widetilde{h} ({\sf head }(\widetilde{e})) + (\varphi (f_1^+), \varphi(f_1^-), \varphi (e)),
\end{equation}
\begin{equation}
\widetilde{h}_\varphi ({\sf tail }(\widetilde{e}))  = \widetilde{h} ({\sf tail }(\widetilde{e})) + ( \varphi (f_2^-), \varphi(f_2^+), \varphi (e)).
\end{equation}
\end{subequations}
For each case, we can check that ${\sf head}(\widetilde{e}) - {\sf tail}(\widetilde{e})$ is on the same direction as $d(\widetilde{e})$. 
Lastly, if ${\sf head}(\widetilde{e})$ is a boundary vertex, then by replacing $\varphi (f_1^+)$ and $\varphi (f_1^-)$ with $0$ in all the equations above, we still have the same result. Similarly, if ${\sf tail}(\widetilde{e})$ is a boundary vertex, replace $\varphi (f_2^+)$ and $\varphi (f_2^-)$ with $0$. Hence, $\widetilde{h}_\varphi$ satisfies \eqref{eqn3.9}.

Since the images of boundary vertices under $\widetilde{h}$ and $\widetilde{h}_\varphi$ are the same by the definition, $\widetilde{h}_\varphi$ satisfies \eqref{eqn3.7}.

To check \eqref{eqn3.8}, let $\widetilde{A} \in V_{\widetilde{\Gamma}_n}$ from \eqref{eqn5.4}. Suppose $\widetilde{A}' \in V_{\widetilde{\Gamma}_n}$ satisfying $p_v (\widetilde{A}) = p_v (\widetilde{A}')$. If the protruding edges of $\widetilde{A}$ and $\widetilde{A}'$ are all outward or inward, then $\widetilde{A}'$ satisfies the same relation as in \eqref{eqn5.4}. If not, then
\begin{equation}
\widetilde{h}_\varphi (\widetilde{A}') = \widetilde{h}(\widetilde{A}') + (\varphi (e_y) , \varphi (e_x) , \varphi (e_z)).
\end{equation}
In other words, the positions of $\varphi (e_x)$ and $\varphi (e_y)$ are switched. Due to Lemma \ref{lemZ.1}, $\widetilde{h}_\varphi$ satisfies \eqref{eqn3.8}.
\end{proof}

\begin{figure}
\centering
\includegraphics[scale=0.55]{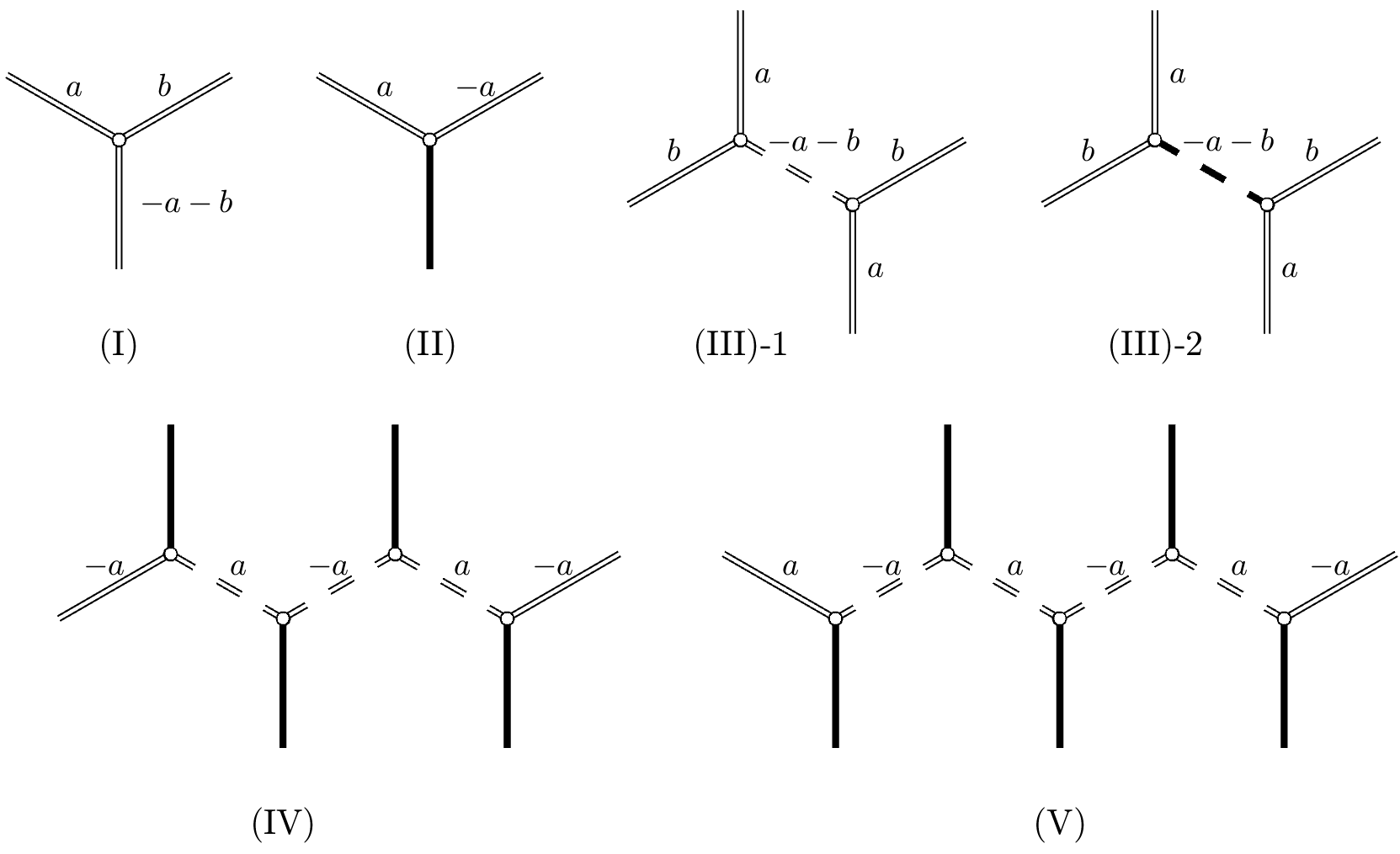}
\caption{Constructing a white loop to apply sliding.}
\label{fig48}
\end{figure}

\begin{figure}
\centering
\includegraphics[scale=0.33]{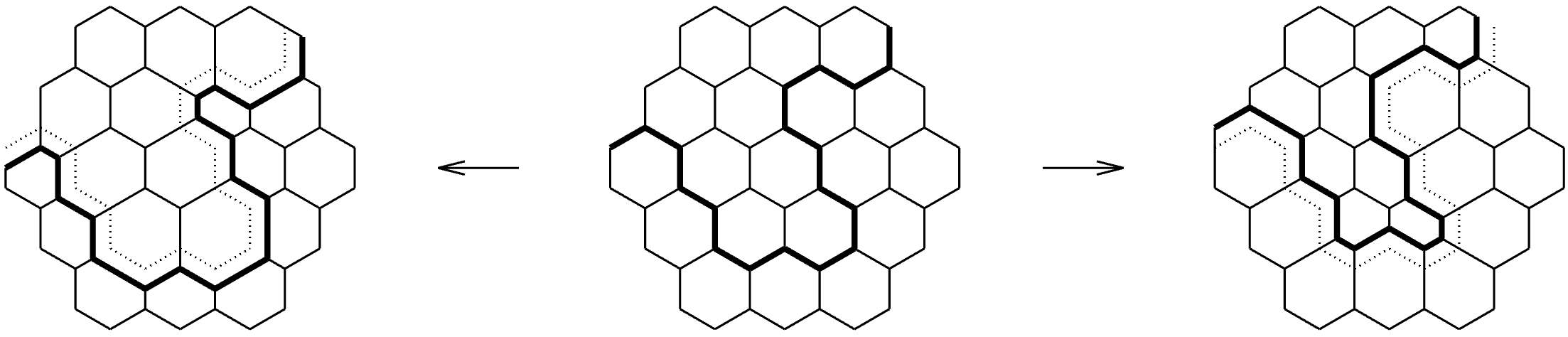}
\caption{Sliding an orientable loop.}
\label{fig25}
\end{figure}

\begin{lemma}\label{lem5.2}
Let $\widetilde{h} \in \MOB$ be a largest-lift of $\xi \in \Z^{3n}$. Let $C$ be a white loop in $\Gamma_n$. If there is an edge in $\Gamma_n$ which is used odd number of times\footnote{This condition is needed since we have an orientable loop by circling around a non-orientable one twice. We want to exclude such cases.} to construct $C$, then $C$ is non-orientable. In particular, all canonical white loops are non-orientable.
\end{lemma}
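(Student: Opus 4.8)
\textit{The plan is} to argue by contradiction: assume $C$ is a white loop using some edge an odd number of times, but that $C$ is orientable, and contradict the fact that $\widetilde h$ is a largest-lift. Orientability of $C$ is precisely the condition that lets us coherently ``slide'' the loop to one side; I would package such a slide as a map $\varphi : E_{\Gamma_n} \to \R$ in the sense of Subsection~\ref{sub5.2} and feed it to Lemma~\ref{lem5.3}.

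First I would fix a coherent orientation along $C$ and use it, together with the local pictures of white vertices from Theorem~\ref{lem4.2} (Figure~\ref{fig23}) and their refinement in Figure~\ref{fig49}, to define $\varphi$: on each edge traversed by $C$ set $\varphi = \pm\epsilon$ according to the side toward which the loop is pushed, and set $\varphi = 0$ off $C$, for a small $\epsilon > 0$ (as in Figures~\ref{fig48} and~\ref{fig25}). The orientability of $C$ is exactly what makes these signs consistent each time an edge is revisited and makes them ``close up'' after one circuit; this is where the hypothesis genuinely enters, and it is why the double of a non-orientable white loop — in which every edge is used evenly — must be excluded, as the footnote indicates. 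I would then verify that $\varphi$ satisfies \eqref{eqn5.2} at every vertex (the relation $\varphi(e_x)+\varphi(e_y)+\varphi(e_z)=0$ is read off each white-vertex type, using that the loop enters and leaves through two of the three edges and goes straight at crossings) and, after shrinking $\epsilon$, the inequality \eqref{eqn5.3} at every edge. The delicate case is the degenerate edges of $\Gamma_n$: there \eqref{eqn5.3} (with length $0$) forces an exact cancellation of the surrounding $\varphi$-values, which is exactly the bookkeeping that the splitting into types (III)-1 and (III)-2 in Figure~\ref{fig49} is set up to handle, and it is what makes $\pm\varphi$ both admissible.

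Granting such a $\varphi$, Lemma~\ref{lem5.3} yields $\widetilde h_\varphi, \widetilde h_{-\varphi} \in \MOB$ with $\partial\widetilde h_\varphi = \partial\widetilde h_{-\varphi} = \partial\widetilde h$. Since $C$ uses some edge an odd number of times, the signed contributions to $\varphi$ on that edge cannot cancel, so $\varphi \not\equiv 0$; hence $\widetilde h_\varphi \neq \widetilde h \neq \widetilde h_{-\varphi}$, while $\widetilde h = \tfrac12\widetilde h_\varphi + \tfrac12\widetilde h_{-\varphi}$ by construction. This contradicts the first bullet of Lemma~\ref{lem3.4}, because $\widetilde h$ is a largest-lift. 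Therefore $C$ is non-orientable. For the final assertion: a canonical white loop is a circuit, so each of its edges appears exactly once, which is odd, and the argument above applies.

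The step I expect to be the main obstacle is the explicit construction of $\varphi$ and the verification of \eqref{eqn5.3}: one must run through each of the five white-vertex configurations of Figure~\ref{fig23} (and their refinements once degenerate edges are put back in), define the local slide, and confirm the exact cancellation of $\varphi$-values around every degenerate edge so that both $\varphi$ and $-\varphi$ stay admissible. Making precise that orientability of $C$ produces globally consistent signs — morally an application of Lemma~\ref{lem5.0} to forbid a sign reversal after one circuit — is the conceptual crux, though it is brief once the local data around each white vertex is in place.
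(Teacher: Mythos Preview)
Your proposal is correct and follows essentially the same approach as the paper. The paper defines $\varphi$ by adding $(-1)^i\epsilon$ to the $i$th edge of $C$ (your geometric ``side'' prescription, recorded combinatorially via Lemma~\ref{lem5.0}), verifies \eqref{eqn5.2} and \eqref{eqn5.3} type-by-type with exact cancellation at degenerate edges so that $-\varphi$ is also admissible, applies Lemma~\ref{lem5.3} to obtain $\widetilde h_{\pm\varphi}$, and then contradicts the extremality clause of Lemma~\ref{lem3.4} via $\widetilde h = \tfrac12(\widetilde h_\varphi + \widetilde h_{-\varphi})$ together with $\varphi\not\equiv 0$ from the odd-usage hypothesis.
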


\begin{proof}
Suppose a white loop $C=(v_0 , v_1 , \cdots , v_k)$ in $\Gamma_n$ is orientable. Let $\epsilon >0$. Define $\varphi : E_{\Gamma_n} \rightarrow \R$ by following steps.
\begin{enumerate}
\item Initially, $\varphi (e) = 0$ for all $e \in E_{\Gamma_n}$.
\item For $i=1,2, \cdots , k$, add $\epsilon$ to $\varphi (\{v_{i-1} , v_i \})$ if $i$ is even. Subtract $\epsilon$ from $\varphi (\{v_{i-1} , v_i \})$ if $i$ is odd.
\end{enumerate}
As a result, $\varphi$ assigns real numbers to edges of the loop as in Figure \ref{fig48}. Indeed, this is possible since $k$ is an even integer due to Lemma \ref{lem5.0}. From Figure \ref{fig48}, \eqref{eqn5.2} is satisfied when a vertex is a white vertex. If a vertex is a black vertex, then the edges connected to it have $\varphi$ value zero. Therefore, $\varphi$ satisfies \eqref{eqn5.2}. 

To check $\eqref{eqn5.3}$, suppose we have a degenerate edge $e$. If $e$ is in white, then it should be one of (III)-1, (IV) or (V) types depicted in Figure \ref{fig48}. Then $\varphi(f_1^+) - \varphi (f_1^-) + \varphi (f_2^+) - \varphi (f_2^-) = 0$, leading to \eqref{eqn5.3}. If $e$ is in black, then it should be the type (III)-2 depicted in Figure \ref{fig48}, or it is connected to black edges with $\varphi$ value zero. Again, $\varphi(f_1^+) - \varphi (f_1^-) + \varphi (f_2^+) - \varphi (f_2^-) = 0$. 

Therefore, we only need to consider about the case when $e$ is non-degenerate in \eqref{eqn5.3}. In that case, since ${\sf length}(\widetilde{h} ; \widetilde{e})$, we can choose small enough $\epsilon$ to satisfy \eqref{eqn5.3}. Hence, $\varphi$ satisfies \eqref{eqn5.2} and \eqref{eqn5.3} so that $\widetilde{h}_\varphi \in \MOB$ and $\partial \widetilde{h} =\partial \widetilde{h}_\varphi$ due to Lemma \ref{lem5.3}.

On the other hand, $-\varphi : E_{\Gamma_n} \rightarrow \R$ also satisfies \eqref{eqn5.2} and \eqref{eqn5.3}: it assigns $-\epsilon$ instead of $\epsilon$. Therefore, $\widetilde{h}_{-\varphi} \in \MOB$ and $\partial \widetilde{h} =\partial \widetilde{h}_{-\varphi}$. By the definition \eqref{eqn5.4}, we have
\begin{equation}
\frac{1}{2} ( \widetilde{h}_\varphi + \widetilde{h}_{-\varphi} )= \widetilde{h}.
\end{equation}
Since there is an edge $e$ in $\Gamma_n$ which is chosen odd number of times to construct $C$, $\varphi (e) \neq 0$. Therefore, $\widetilde{h} \neq \widetilde{h}_\varphi$. However, this contradicts that $\widetilde{h}$ is a largest-lift, due to the Lemma \ref{lem3.4}. Therefore, the white loop constructed above should be non-orientable.
\end{proof}

In the proof of Lemma \ref{lem5.2}, constructing $\widetilde{h}_\varphi$ from an orientable loop can be understood as ``sliding'' the loop as in Figure \ref{fig25}. Here, we draw a picture of the image of $\widetilde{h}$ in $B$. Since the loop is orientable, it is possible to ``slide'' the loop as much as $\epsilon$, inward or outward. 

We now refine the result of Theorem \ref{lem4.2}.

\begin{theorem}\label{thm5.1}
Let $\widetilde{h} \in \MOB$ be a largest-lift of $\xi \in \Z^{3n}$. Then there is no white vertex connected to three non-degenerate white edges in $\Gamma_n$. In short, type (I) in Figure \ref{fig50} and Figure \ref{fig49} do not occur.
\end{theorem}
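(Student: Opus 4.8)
The plan is to argue by contradiction using the rigidity of largest-lifts (Lemma \ref{lem3.4}), in the spirit of the proof of Lemma \ref{lem5.2}: a white vertex with three non-degenerate white edges supplies just enough extra freedom to build a nonzero ``balanced'' perturbation $\varphi$ with $\widetilde h_\varphi \neq \widetilde h$ and $\widetilde h = \tfrac{1}{2} \widetilde h_\varphi + \tfrac{1}{2} \widetilde h_{-\varphi}$. So suppose $\widetilde h$ is a largest-lift and, for contradiction, that some white $\widetilde W \in V_{\widetilde{\Gamma}_{n}}$ has three non-degenerate white edges; equivalently, $\Gamma_n(h)$ contains a white vertex of type (I) (a \textbf{Y}). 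By Lemma \ref{lem4.3} the boundary of $\widetilde{\Gamma}_{n}$ is black, so every white vertex of $\Gamma_n(h)$ is one of the five types of Theorem \ref{lem4.2}.

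Next I would organize the white part of $\Gamma_n(h)$ into \emph{gentle paths} and \emph{gentle loops}: following the ``go straight at a crossing'' rule of the white-loop construction (Figure \ref{fig50}, Figure \ref{fig49}), every non-\textbf{Y} white vertex is passed through, so a maximal gentle white walk either closes up into a canonical white loop disjoint from all \textbf{Y} vertices, or runs between two ``stubs'' of \textbf{Y} vertices. It cannot terminate on the boundary (black), and it repeats no edge (a straight pass-through uses each white edge at a non-\textbf{Y} vertex at most once), hence cannot run forever. If $Y$ is the set of white \textbf{Y} vertices, the gentle paths therefore pair up the $3|Y|$ stubs; in particular $|Y|$ is even, and by hypothesis $|Y|\ge 2$.

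The key step is a dimension count for perturbations. I look for $\varphi : E_{\Gamma_n}\to\R$ supported on the white edges and satisfying \eqref{eqn5.2} and \eqref{eqn5.3}. Along any gentle path or gentle loop, the balance conditions \eqref{eqn5.2} at its interior (non-\textbf{Y}) vertices force $\varphi$ to alternate in sign, so $\varphi$ is pinned down there by a single real parameter; what remains of \eqref{eqn5.2} is precisely one homogeneous linear equation per \textbf{Y} vertex, stating that its three stub-values sum to $0$. This homogeneous system has at least $\tfrac{3}{2}|Y|$ unknowns (one per gentle path) but only $|Y|$ equations, hence a nonzero solution $\varphi_0$ --- here the surplus of freedom comes solely from the odd valence $3$ of a \textbf{Y} vertex. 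Putting $\varphi:=\epsilon\,\varphi_0$ for small $\epsilon>0$ makes \eqref{eqn5.3} hold at every non-degenerate white edge, while at degenerate edges the right-hand side of \eqref{eqn5.3} vanishes for structural reasons, exactly as checked in the proof of Lemma \ref{lem5.2}. By Lemma \ref{lem5.3}, $\widetilde h_\varphi,\widetilde h_{-\varphi}\in\MOB$ with $\partial\widetilde h_{\pm\varphi}=\partial\widetilde h$, and $\widetilde h=\tfrac{1}{2}\widetilde h_\varphi+\tfrac{1}{2}\widetilde h_{-\varphi}$ with $\widetilde h_\varphi\neq\widetilde h$ (as $\varphi_0\neq 0$), contradicting Lemma \ref{lem3.4}.

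I expect the main obstacle to be the combinatorial bookkeeping behind the gentle-path decomposition and the sign-alternation claim: one must go through the crossing, rake, and $5$-valent colorings of Figure \ref{fig23}, and their refinements in Figure \ref{fig49} where a degenerate edge may be black or white, and verify that each non-\textbf{Y} white vertex is genuinely ``transparent'' to gentle walks and that \eqref{eqn5.2} there collapses to the alternation rule --- the analogue, here, of the local checks in the proof of Lemma \ref{lem5.2} and of \citep[Lemma 10]{Tao99}. It is worth noting that this does not conflict with Lemma \ref{lem5.2}: $\varphi_0$ need not come from a single orientable loop, it is simply a nonzero element of the now positive-dimensional space of balanced perturbations, which is nonempty exactly because a \textbf{Y} vertex is present.
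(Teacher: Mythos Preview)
Your approach is correct in outline but takes a genuinely different route from the paper's proof. The paper argues as follows: starting from a type (I) white vertex, build an Eulerian trail in the white part of $\Gamma_n(h)$; since only type (I) vertices have odd white-degree, both ends of the trail are type (I), say $v_{int}$ and $v_{ter}$. The trail's underlying subgraph then decomposes either as three edge-disjoint paths between $v_{int}$ and $v_{ter}$ (Case 1) or as one such path plus a circuit at each end (Case 2). In Case 1, pairing the three paths two at a time gives three canonical white loops; since two non-orientable loops on a M\"obius strip must intersect (via $\Pi_1(\mathbb{RP}^2)=\Z/2\Z$), at least one of the three is orientable. In Case 2, the two circuits are canonical white loops, hence non-orientable by Lemma~\ref{lem5.2}, and concatenating them with the middle path traversed twice produces an orientable white loop. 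Either way one contradicts Lemma~\ref{lem5.2}.

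Your approach sidesteps this topological case split in favor of a linear-algebraic dimension count: the alternating functions $\varphi_P$ along gentle paths span an independent family of size $\tfrac{3}{2}|Y|$ inside the \eqref{eqn5.2}-kernel, and imposing the $|Y|$ balance constraints at the \textbf{Y} vertices leaves at least $\tfrac{1}{2}|Y|\ge 1$ degrees of freedom. This works and is arguably more portable (it does not invoke the M\"obius topology or Lemma~\ref{lem5.2}); the paper's argument is shorter and leans directly on Lemma~\ref{lem5.2} plus the geometry of the strip. Two points worth tightening in your write-up. First, $\varphi$ is not literally supported on white edges: at a type (III)-2 crossing the two gentle paths both pass through the \emph{black} degenerate edge $d$, so $\varphi(d)$ receives contributions from both. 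This is harmless --- the $\varphi_P$ remain linearly independent (witnessed on the non-degenerate white edges, which belong to exactly one path), and the \eqref{eqn5.3} check at $d$ is linear in $\varphi$ and vanishes for each $\varphi_P$ by the Figure~\ref{fig48} verification --- but the support claim should be stated for ``edges of the gentle network'' rather than ``white edges''. Second, the phrase ``\eqref{eqn5.2} forces $\varphi$ to alternate'' overstates: at a crossing, \eqref{eqn5.2} couples the two paths. What you actually need, and what suffices, is that each $\varphi_P$ \emph{individually} satisfies \eqref{eqn5.2} at every non-\textbf{Y} vertex, which is exactly the local check already carried out in the proof of Lemma~\ref{lem5.2}.
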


\begin{proof}
Suppose this is not true \textit{i.e.} there is a white vertex in $\Gamma_n (h)$ with three white edges connected to it. Starting from this vertex in $\Gamma_n (h)$, follow the gray arrows as in Figure \ref{fig50}, not using the same edge twice. In this way, we form a trail\footnote{There is no edge repeated, but a vertex can be repeated. The initial vertex and the terminal vertex are not necessarily the same. In this paper, a closed trail is a circuit.} until it is no longer possible to extend the both ends.

According to Theorem \ref{lem4.2} and Figure \ref{fig50}, all white vertices have even number of white edges protruding from them except for the type (I). Since the trail was constructed from the type (I) vertex, the trail cannot be a closed trail. Therefore, its initial vertex and terminal vertex should be the type (I), due to the fact that end points of Eulerian path always have odd numbers as their vertex degree. Write these initial and terminal vertices as $v_{int}$ and $v_{ter}$. Regarding the trail as a subgraph of $\Gamma_n (h)$, there are two cases.

(Case 1) There are three trails connecting $v_{int}$ and $v_{ter}$, all mutually edge disjoint : In other words, we have
\begin{subequations}
\begin{equation}
C_1 = (v_{int} , u_1 , u_2 , \cdots , u_k , v_{ter}),
\end{equation}
\begin{equation}
C_2 = (v_{int} , v_1 , v_2 , \cdots , v_l , v_{ter}),
\end{equation}
\begin{equation}
C_3 = (v_{int} , w_1 , w_2 , \cdots , w_m , v_{ter}).
\end{equation}
\end{subequations}
Then we have three canonical white loops in $\Gamma_n (h)$ by connecting $C_1$ and $C_2$, $C_2$ and $C_3$, $C_3$ and $C_1$. Since the fundamental group of a M{\"o}bius strip is $\Z / 2\Z$, one of them should be orientable loop, leading to contradiction with Lemma \ref{lem5.2}.

(Case 2) There is a trail connecting $v_{int}$ and $v_{ter}$, a circuit containing $v_{int}$ and a circuit containing $v_{ter}$, all mutually edge disjoint : In other words, we have
\begin{subequations}
\begin{equation}
C_1 = (v_{int} , u_1 , u_2 , \cdots , u_k , v_{int}),
\end{equation}
\begin{equation}
C_2 = (v_{int} , v_1 , v_2 , \cdots , v_l , v_{ter}),
\end{equation}
\begin{equation}
C_3 = (v_{ter} , w_1 , w_2 , \cdots , w_m , v_{ter}).
\end{equation}
\end{subequations}
Then $C_1$ and $C_3$ are canonical white loops in $\Gamma_n (h)$. Due to Lemma \ref{lem5.2}, $C_1$ and $C_3$ are non-orientable. Therefore, by connecting $C_1$, $C_2$ and $C_3$
\begin{equation}
C= ( v_{int} , u_1 , \cdots, u_k , v_{int} , v_1 , \cdots , v_l , v_{ter} , w_1 , \cdots , w_m , v_{ter}, v_l , \cdots , v_1 , v_{int}).
\end{equation}
Then $C$ is an orientable white loop. Due to Lemma \ref{lem5.2}, this leads to contradiction.\end{proof}

\begin{remark}\label{rmk5.1}
Lemma \ref{lem5.2} and Theorem \ref{thm5.1} give another proof of Theorem \ref{thm1.1}. In \citep{Tao99}, A.~Knutson and T.~Tao proved the special case when $\lambda , \mu , \nu$ are \textit{regular tableaux} \textit{i.e.} strictly decreasing partitions. Then they constructed a largest-lift globally, as a piecewise linear function ${\tt BDRY}(\tau_n) \rightarrow \HON$, where ${\tt BDRY}(\tau_n) := \partial (\HON)$. Using piecewise linearity, they proved that the theorem holds for the general case as well. On the other hand, our method can be used directly without assuming $\lambda , \mu , \nu$ are regular tableaux. Instead, we apply coloring technique on $\HON$ and use method in \citep{Tao99} only on white vertices and white edges. Due to Theorem \ref{thm5.1}, we can construct a canonical white loop from Figure \ref{fig4950} in $\Delta_n$, and it is non-orientable by Lemma \ref{lem5.2}. Since $\Delta_n$ is a planar graph, this proves that there are no white vertices or white edges, leading to Theorem \ref{thm2.2}.
\end{remark}


\subsection{Breaking non-orientable loops}

As a corollary of \citep[Theorem 2]{Tao99}, A.~Knutson and T.~Tao proved that a largest lift $g \in \HON$ maps vertices to lattice points if $\partial g \in \Z^{3n}$. In other words, a largest lift is the construction of $g$ in Theorem \ref{thm2.2}, leading to Theorem \ref{thm1.1}.

Theorem \ref{thm5.1} is analogous to \citep[Theorem 2]{Tao99}, classifying the image of a largest-lift as in Figure \ref{fig4950}. However, a largest lift in $\MOB$ has slightly different property, comparing to a largest-lift in $\HON$ of \citep{Tao99}. Not only does it map vertices of $\widetilde{\Gamma}_n$ to lattice points, but also to \textbf{half lattice points}: they are points $(x,y,z) \in B$ such that two of the coordinates $x,y,z$ are half integers  whereas the other one is an integer.

Similarly, define a \textbf{half lattice line} in $B$ if the constant coordinate $a$ in \eqref{eqn2.3} is a half integer. Recall that in Figure \ref{fig1}, lattice points and lattice lines are colored in black. We may add half lattice points and half lattice lines and color them in white which is illustrated in the Figure \ref{fig29}.

\begin{figure}
\centering
\includegraphics[scale=0.35]{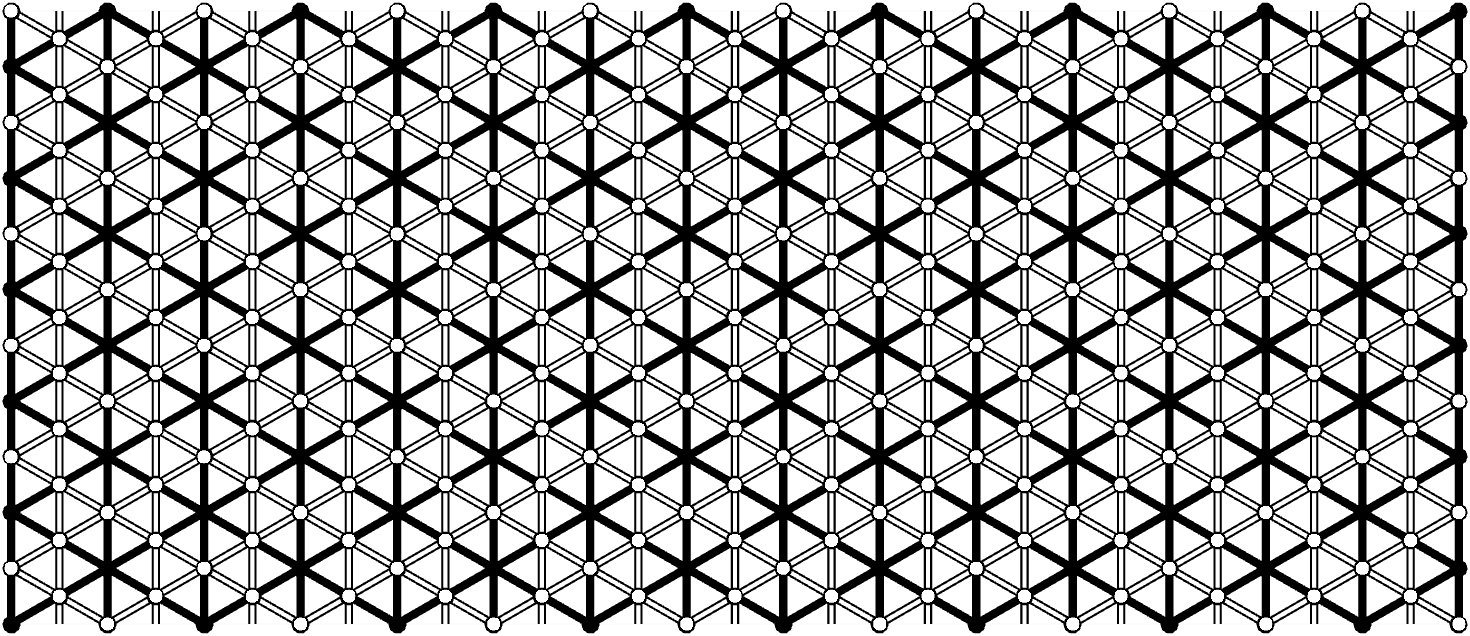}
\caption{Half lattice points, half lattice lines in the vector space $B$.}
\label{fig29}
\end{figure}

Note that we color the graph $\widetilde{\Gamma}_n$ in black and white in Subsection \ref{sub4.2}. By the definition of coloring, black vertices and black edges of $\widetilde{\Gamma}_n$ are mapped to black points and black line segments in $B$ under $\widetilde{h} \in \MOB$. Our next claim is that when $\widetilde{h}$ is a largest-lift, white vertices and white edges of $\widetilde{\Gamma}_n$ are mapped to white points and white line segments in $B$ under $\widetilde{h}$ as well.

\begin{theorem}\label{thm5.2}
Let $\widetilde{h} \in \MOB$ be a largest-lift of $\xi \in \Z^{3n}$. Then all white vertices in $\widetilde{\Gamma}_n$ are mapped to half lattice points under $\widetilde{h}$. 
\end{theorem}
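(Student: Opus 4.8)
The plan is to reduce the statement to a claim about edges and then propagate arithmetic information around the white loops introduced in Section~\ref{sec5}. First I would record the reduction: it suffices to show that every white edge $\widetilde e$ of $\widetilde\Gamma_n$ has ${\sf const}(\widetilde h;\widetilde e)\in\tfrac12\Z$. Indeed, a white vertex $\widetilde W$ has $\widetilde h(\widetilde W)=(x,y,z)\notin B_\Z$, so at least two of $x,y,z$ lie outside $\Z$; a white vertex is not a boundary vertex (boundary vertices are black when $\partial\widetilde h\in\Z^{3n}$; cf.\ Lemma~\ref{lem4.3}), so $\widetilde W$ has three protruding edges lying on the three lattice lines through $\widetilde h(\widetilde W)$, and at least two of these lines are white. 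If all three were white, then $x,y,z$ would be half-integers outside $\Z$ summing to an element of $\tfrac12+\Z$, contradicting $x+y+z=0$; hence exactly two of the lines are white with half-integer constants and one is black with integer constant, i.e.\ $\widetilde h(\widetilde W)$ is a half lattice point. (This also uses the evident correspondence between white edges and white lines through white vertices.)

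Next I would build the loop. By Theorem~\ref{thm5.1} no white vertex meets three non-degenerate white edges, while every white vertex meets white edges along each of its (at least two) white lines; reading off the vertex types in Figures~\ref{fig23} and \ref{fig50}, this forces every white vertex of $\Gamma_n(h)$ to meet exactly two white edges, so the white edges of $\Gamma_n(h)$ decompose into disjoint cycles, each of which --- going straight at crossings --- is a canonical white loop. Thus a given white edge $\widetilde e_0$, with $a_0:={\sf const}(\widetilde h;\widetilde e_0)\notin\Z$, lies on a canonical white loop $C$ of $\Gamma_n$, which is non-orientable by Lemma~\ref{lem5.2}.

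The key step is to transport $a_0\bmod\Z$ once around $C$ in two ways and compare. Lifting $C$ to a path in $\widetilde\Gamma_n$ beginning along $\widetilde e_0$, non-orientability means the path ends along $g^m(\widetilde e_0)$ for a generator $g$ of the (infinite cyclic) deck group of $\widetilde\Gamma_n\to\Gamma_n$ and some odd $m$. By \eqref{eqn3.8}, $\widetilde h$ intertwines $g$ with the affine map $\phi(x,y,z)=(y-2\delta,\,x-\delta,\,z+3\delta)$ of \eqref{eqn3.5}; since $\delta\in\Z$ and $\phi^2$ is an integer translation, $\phi^m$ with $m$ odd reduces modulo $\Z^3$ to the transposition of the first two coordinates, so ${\sf const}(\widetilde h;g^m\widetilde e_0)\equiv a_0\pmod\Z$ (the transposition only relabels the line-type, not the value mod $\Z$). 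On the other hand, running along the lifted path, at each interior vertex $C$ either goes straight through a crossing, leaving the current constant --- hence its class mod $\Z$ --- unchanged, or turns from one white line to another; at a turn the third line is black, which is exactly what the classification of largest-lift white vertices in Figure~\ref{fig23} records, so its constant is an integer, and $x+y+z=0$ at that vertex forces the new constant to be congruent to minus the old one $\pmod\Z$. Hence after going around $C$ the class of the constant is multiplied by $(-1)^t$, where $t$ is the number of turns; comparing with the equivariance computation gives $(-1)^t a_0\equiv a_0\pmod\Z$, and since $C$ is non-orientable $t$ is odd (the parity of $t$ is what decides, via the induced walk of edge-types on $\{x,y,z\}$, whether $C$ is orientable, matching Lemma~\ref{lem5.0}). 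Therefore $2a_0\in\Z$, which completes the proof.

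I expect the main obstacle to be the bookkeeping in this last paragraph in the presence of degenerate edges: verifying that the contractions carrying $\Gamma_n$ to $\Gamma_n(h)$ never produce a turn-vertex whose unused line is white --- so that "a turn negates the constant mod $\Z$" holds at every turn --- and making precise the assertion that the number of turns $t$ has exactly the parity required by the orientability criterion of Lemma~\ref{lem5.0}. Both of these are checks against the explicit lists of vertex types in Figures~\ref{fig23}, \ref{fig49}, and \ref{fig50}, rather than new ideas.
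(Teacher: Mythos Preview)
Your proposal is correct and takes essentially the same approach as the paper: the paper also propagates ${\sf const}(\widetilde h;\widetilde e_i)\bmod\Z$ around a white loop, showing via a two-case vertex-type analysis on Figure~\ref{fig49} that it lies in $\Z+(-1)^i a$, and then concludes $2a\in\Z$ from the odd length $k$ of the non-orientable loop (Lemma~\ref{lem5.0}). Your deck-equivariance step is precisely Lemma~\ref{lemZ.5}, and the bookkeeping you flag---that your turn count $t$ has the right parity---is exactly the paper's Case~1/Case~2 check (the ``turn'' types are those with odd $j$, so $t\equiv\sum j=k\pmod2$).
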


\begin{figure}
\centering
\includegraphics[scale=0.33]{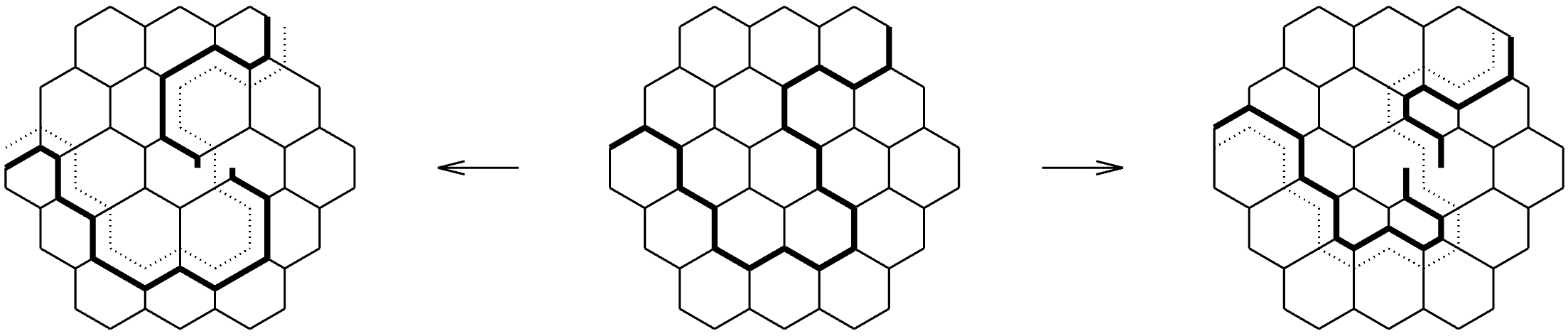}
\caption{Breaking a non-orientable loop.}
\label{fig26}
\end{figure}

Before proving the theorem, we discuss about the intuition of the proof. Let $C$ be a white loop, which is non-orientable due to Lemma \ref{lem5.2}. Moving edges of $C$ parallelly, one of the edges $e$ will be ``broken'', as depicted in Figure \ref{fig26}. Continue moving the edges until one of the edges lies on a lattice line. Then all edges of $C$ lie on lattice lines, due to Theorem \ref{thm5.1}. In particular, two pieces of the broken edge $e$ also lie on the lattice lines of constant coordinate $a$ and $a+1$ where $a \in \Z$. This means that the constant coordinate of $e$ was $a+ \frac{1}{2}$, proving the theorem. The process described in Figure \ref{fig26} is called ``breaking''.

\begin{proof}[Proof of Theorem \ref{thm5.2}]
For any $a \in \R$, write
\begin{equation}
\Z + a  := \{x \in \R \mid x= m+ a, m \in \Z \}.
\end{equation}
Let $e \in E_{\Gamma_n}$. Due to Lemma \ref{lemZ.5}, there exists $a \in \R$ such that for any $\widetilde{e} \in E_{\widetilde{\Gamma}_n}$ satisfying $p_e (\widetilde{e}) = e$, ${\sf const}(\widetilde{h} ; \widetilde{e}) \in \Z + a$.

Suppose there is a vertex $\widetilde{A}$ of $\widetilde{\Gamma}_n$ which is not mapped to lattice points or half lattice points under $\widetilde{h}$. Then there is a non-degenerate edge $\widetilde{e}_1$ of $\widetilde{\Gamma}_n$ such that ${\sf const}(\widetilde{h} ; \widetilde{e}_1) \in \Z -a$ where $a$ is neither an integer nor a half integer. Let $e_1 := p_e (\widetilde{e}_1)$. Due to Theorem \ref{thm5.1}, we may construct a white loop $C = (v_0, v_1, \cdots , v_k)$ in $\Gamma_n$ where $e_1 = \{v_0 , v_1 \}$. Let $e_i := \{v_{i-1}, v_i \}$ and $\widetilde{e}_i \in E_{\widetilde{\Gamma}_n}$ be chosen so that $p_e(\widetilde{e}_i) = e_i$ for $2 \leq i \leq k$. Let $f_i \in E_{\Gamma_n}$ be chosen so that $e_{i}$, $e_{i+1}$ and $f_i$ are protruding from a vertex and $\widetilde{f}_i \in E_{\widetilde{\Gamma}_n}$ such that $p_e (\widetilde{f}_i) = f_i$. Our claim is as follows.
\begin{itemize}
\item If $\widetilde{e}_i$ is a non-degenerate edge and $i$ is an odd integer, then ${\sf const}(\widetilde{h} ; \widetilde{e}_i) \in \Z - a$.
\item If $\widetilde{e}_i$ is a non-degenerate edge and $i$ is an even integer, then ${\sf const}(\widetilde{h} ; \widetilde{e}_i) \in \Z + a$.
\end{itemize}  
We use induction on $i$ to prove our claim. Let $e_i$ be non-degenerate where $i \geq 2$. Then there exists a positive integer $j$ such that $e_{i-1}, e_{i-2} , \cdots , e_{i-j+1}$ are degenerate whereas $e_{i-j}$ is non-degenerate. Due to Theorem \ref{thm5.1}, the non-degenerate edges $e_i$ and $e_{i-j}$ appear in $\Gamma_n$ as the both ends of the gray paths depicted in Figure \ref{fig49}, except for type (I). Then we have two cases below.

(Case 1) $j$ is an odd integer. Then we have type (II) or (V) in Figure \ref{fig49}. Then ${\sf const}(\widetilde{h}; \widetilde{f}_{i-j}) = {\sf const}(\widetilde{h}; \widetilde{f}_{i-j+1}) = \cdots = {\sf const}(\widetilde{h}; \widetilde{f}_{i-1}) \in \Z$. Therefore, ${\sf const}(\widetilde{h} ; \widetilde{e}_{i-j}) + {\sf const}(\widetilde{h} ; \widetilde{e}_i) \in \Z$, proving that the statement holds for $i$.

(Case 2) $j$ is an even integer. Then we have type (III) or (VI) in Figure \ref{fig49}. Then ${\sf const}(\widetilde{h}; \widetilde{e}_{i-j}) = {\sf const}(\widetilde{h}; \widetilde{e}_{i-j+2}) = {\sf const}(\widetilde{h}; \widetilde{e}_{i-j+4})  = \cdots = {\sf const}(\widetilde{h}; \widetilde{e}_{i})$, proving that the statement holds for $i$.

Hence, our claim is true. By Lemma \ref{lem5.2}, the loop $C$ is non-orientable \textit{i.e.} $k$ is an odd integer. By our claim, $\Z + a = \Z -a$, leading to contradiction.
\end{proof}

\begin{corollary}\label{coro5.2}
Let $\widetilde{h} \in \MOB$ be a largest-lift of $\xi \in \Z^{3n}$. Then the type (III)-1 in Figure \ref{fig50} and Figure \ref{fig49} do not occur.
\end{corollary}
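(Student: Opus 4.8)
I would argue by contradiction, with Theorem~\ref{thm5.2} used to pin down the local picture and the deformation machinery of Lemmas~\ref{lem5.3} and \ref{lem3.4} used to derive the contradiction.

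Assume a type (III)-1 vertex occurs. Unwinding Figure~\ref{fig49}, this produces a degenerate edge $e=\{A,B\}$ of $\Gamma_n$ with $A,B$ non-boundary and $h(A)=h(B)$, where $e$ is colored white; fix a lift $\widetilde e$, say with $d(\widetilde e)=(-1,1,0)$, and write $h(A)=h(B)=(a,b,c)$, so that $c={\sf const}(\widetilde h;\widetilde e)$. Since $e$ is white, $c\notin\Z$, so $A$ and $B$ are white vertices of $\widetilde\Gamma_n$; by Theorem~\ref{thm5.2} the point $(a,b,c)$ is a half-lattice point, hence $c\in\Z+\tfrac12$ and exactly one of $a,b$ is an integer. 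Say $a\in\Z$ (so the line $x=a$ and the edges on it are black) and $b\in\Z+\tfrac12$ (so the line $y=b$ and its edges are white). Contracting $e$, the vertex $\rho_h(A)=\rho_h(B)$ of $\Gamma_n(h)$ is a crossing of the white line $y=b$ with the black line $x=a$, which is precisely the picture of type (III)-1 in Figures~\ref{fig50} and \ref{fig49}. I also record the local incidences: in $\Gamma_n$, to pass from one side of this crossing to the other along the white line $y=b$ (edges $e_y^A$ at $A$, $e_y^B$ at $B$) or along the black line $x=a$ (edges $e_x^A$ at $A$, $e_x^B$ at $B$) one must traverse $e$, since $e$ is the only edge of $\Gamma_n$ joining $A$ and $B$; from this one reads off which of the four edges $e_x^A,e_y^A,e_x^B,e_y^B$ surrounds the hexagon on the positive, resp.\ negative, side of $e$.

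Now I produce a nonzero deformation fixing the boundary, contradicting Lemma~\ref{lem3.4}. Following the black line $x=a$ through the crossing in $\Gamma_n(h)$ gives a curve $\ell$ which either closes up into a loop or reaches the boundary at both ends; pull it back to a curve through $e$ in $\Gamma_n$. Define $\varphi:E_{\Gamma_n}\to\R$ to be $\pm\epsilon$ alternately along the edges of $\ell$ and $0$ elsewhere, for small $\epsilon>0$ (if $\ell$ is a non-orientable loop, apply this instead to $C\mathbin{\triangle}\ell$, where $C$ is the white loop through the crossing: $C$ is non-orientable by Lemma~\ref{lem5.2} and distinct from $\ell$ since $\ell$ contains a black edge, so $C\mathbin{\triangle}\ell$ is an orientable cycle). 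Then \eqref{eqn5.2} holds at interior vertices by the alternation and at the closing-up vertex (or automatically at the boundary) by orientability; \eqref{eqn5.3} holds on non-degenerate edges by smallness of $\epsilon$, and on the degenerate edge $e$ by the cancellation $\varphi(f_1^+)-\varphi(f_1^-)+\varphi(f_2^+)-\varphi(f_2^-)=0$, which follows from the incidence pattern around $e$ recorded above — exactly the computation carried out for white degenerate edges in the proof of Lemma~\ref{lem5.2}. By Lemma~\ref{lem5.3}, $\widetilde h_\varphi,\widetilde h_{-\varphi}\in\MOB$ with $\partial\widetilde h_\varphi=\partial\widetilde h_{-\varphi}=\partial\widetilde h$, and $\widetilde h=\tfrac12(\widetilde h_\varphi+\widetilde h_{-\varphi})$ with $\widetilde h_\varphi\ne\widetilde h$ (as $\varphi\ne 0$ on $\ell$), contradicting Lemma~\ref{lem3.4}. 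Hence type (III)-1 does not occur.

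The main obstacle is the last step: making the curve $\ell$ precise and checking that, in each of its possible topological types, the alternating assignment $\varphi$ genuinely satisfies \eqref{eqn5.2} and \eqref{eqn5.3} everywhere. The crux is the verification at the degenerate edge $e$, where the cancellation identity must be read off from the sides of the two hexagons of $\widetilde\Gamma_n$ meeting along $e$, and — in the non-orientable case — the verification that $C\mathbin{\triangle}\ell$ is a single orientable closed curve on which Lemma~\ref{lem5.3} still applies even though it runs through black edges. This is the same style of bookkeeping already done in the proofs of Lemma~\ref{lem5.2} and Theorem~\ref{thm5.1}, so the new content here is organizational rather than conceptual.
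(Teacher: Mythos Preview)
Your argument rests on a misreading of what ``type (III)-1'' means in Figures~\ref{fig49} and~\ref{fig50}. In the paper, type (III) is the crossing at which \emph{both} pairs of non-degenerate crossing edges are white --- this is why, in the proof of Corollary~\ref{coro5.1}, vertices of type (III)-2 are ``used exactly twice'' by canonical white loops (one passage in each white crossing direction), and why in the proof of Theorem~\ref{thm3.2} two distinct canonical white loops are able to intersect at a (III)-2 vertex. The subtypes (III)-1 and (III)-2 differ only in whether the degenerate edge is white or black. Thus type (III)-1 means all three edge directions at the vertex are white, i.e., all three constant coordinates are non-integers. Theorem~\ref{thm5.2} says the vertex sits at a half-lattice point, which has exactly one integer coordinate --- an immediate contradiction. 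That is the paper's entire one-line proof.

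Your own first paragraph already contains this proof: you correctly deduce from Theorem~\ref{thm5.2} that if the degenerate edge is white then exactly one of the two crossing-direction coordinates must be an integer, hence one crossing direction is black. This directly contradicts the definition of type (III), so (III)-1 cannot occur, and you are done. Instead, you misidentify the derived configuration (one white, one black crossing direction, white degenerate edge) as being type (III)-1 and go on to try to exclude it via a deformation. That configuration is not (III)-1 in the paper's sense, and the deformation you sketch has its own gaps: following a ``black line'' through $\Gamma_n(h)$ is not well-defined at generic black $\mathbf{Y}$-vertices (there is no ``straight'' direction to continue along), the boundary case forces $\varphi$ to vanish by propagation of \eqref{eqn5.2}, and the symmetric-difference construction $C\mathbin{\triangle}\ell$ is not verified carefully enough to invoke Lemma~\ref{lem5.3}.
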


\begin{proof}
This is true since coordinates of a half lattice point are composed of two half integers and one integer.
\end{proof}

Define the \textbf{total length of $\widetilde{h}$} by
\begin{equation}\label{eqn3.27}
{\sf ltotal}(\widetilde{h}):= \sum_{e \in E_{\Gamma_n}} {\sf length}(\widetilde{h} ; \widetilde{e}).
\end{equation}
Here, for each $e \in E_{\Gamma_n}$, choose a representative $\widetilde{e} \in E_{\widetilde{\Gamma}_n}$ such that $p_e(\widetilde{e}) = e$ and add ${\sf length} (\widetilde{h} ; \widetilde{e})$ to the sum. This is well defined due to \eqref{eqn4.4}. From Lemma \ref{lem5.1}, for any $\widetilde{h} \in \MOB$ with $\partial \widetilde{h} = \xi$,
\begin{equation}\label{eqn5.10}
{\sf ltotal}(\widetilde{h}) = \frac{1}{2}\sum_{j=1}^{3n} \xi_j.
\end{equation}

\begin{theorem}\label{thm5.3}
Let $\widetilde{h} \in \MOB$ be a largest-lift of $\xi \in \Z^{3n}$. In addition, assume that $\sum_{i=1}^{3n} \xi_i \equiv 0 \modtwo$. Then the number of white vertices in $\Gamma_n$ is even.
\end{theorem}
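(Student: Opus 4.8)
The plan is to exploit the length identity ${\sf ltotal}(\widetilde h)=\tfrac12\sum_{j}\xi_j$ of \eqref{eqn5.10} together with the description of a largest-lift given by Theorem \ref{thm5.2}, reducing the whole statement to a parity count on the white edges. Write $E_{1/2}$ for the set of edges $e\in E_{\Gamma_n}$ with ${\sf length}(\widetilde h;\widetilde e)\in\Z+\tfrac12$. By Theorem \ref{thm5.2} every vertex of $\widetilde\Gamma_n$ is sent by $\widetilde h$ to a point of $B$ all of whose coordinates lie in $\tfrac12\Z$ (a lattice point if the vertex is black, a half-lattice point if it is white), so each edge-length lies in $\tfrac12\Z$ and ${\sf ltotal}(\widetilde h)\equiv\tfrac12|E_{1/2}|\pmod 1$; comparing with \eqref{eqn5.10} gives $|E_{1/2}|\equiv\sum_{j}\xi_j\pmod 2$. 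Next I would pin down the colours of the edges at a white vertex: since $\partial\widetilde h\in\Z^{3n}$ all boundary vertices are black, so every white vertex $v$ has degree $3$, and since exactly two of the three coordinates of $\widetilde h(\widetilde v)$ are half-integers, $v$ is incident to exactly two white edges and one black edge. Hence the subgraph $H\subseteq\Gamma_n$ of white vertices and white edges is $2$-regular, a disjoint union of simple cycles $C_1,\dots,C_r$, and $W=\sum_i m_i$ with $m_i=\#E(C_i)$. Moreover $W=\sum_{v\text{ white}}\#\{\text{black edges at }v\}=\sum_{e\text{ black}}\#\{\text{white endpoints of }e\}$, and a direct check shows a black edge lies in $E_{1/2}$ exactly when it has one white and one black endpoint; therefore $W\equiv\#(E_{1/2}\cap\{\text{black edges}\})\equiv|E_{1/2}|+B\pmod 2$, where $B:=\#(E_{1/2}\cap\{\text{white edges}\})$. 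Combining, $W\equiv\sum_{j}\xi_j+B\pmod 2$, so the theorem reduces to showing $B$ is even.

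To show $B$ is even I would argue cycle by cycle. Each $C_i$ is non-orientable: if it were orientable, the sliding construction of Lemma \ref{lem5.2} (applied to the simple cycle $C_i$ of white edges, assigning $\pm\epsilon$ to its edges alternately) would produce $\widetilde h_\varphi\neq\widetilde h$ in $\MOB$ with $\partial\widetilde h_\varphi=\partial\widetilde h$, contradicting that $\widetilde h$ is a largest-lift; hence $m_i$ is odd by Lemma \ref{lem5.0}. Now traverse $C_i$ and record the directions $t_1,t_2,\dots$ of its successive edges (the $\widetilde\Gamma_n$-direction of an edge over it, an element of $\{x,y,z\}$): consecutive $t_j$ differ, because the two white edges at a white vertex are exactly those with the two directions other than that vertex's unique integer coordinate slot. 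A coordinate computation — valid also for degenerate edges, whose two endpoints then share the same integer slot — shows that a white edge of $C_i$ belongs to $E_{1/2}$ if and only if its two neighbouring edges on $C_i$ have distinct directions. Identifying $\{x,y,z\}$ with $\Z/3$, the walk $(t_j)$ is non-backtracking, encoded by signs $d_j:=t_{j+1}-t_j\in\{\pm1\}$, and the criterion becomes: the $j$-th edge lies in $E_{1/2}$ iff $d_{j-1}=d_j$. Thus $N_i:=\#(C_i\cap E_{1/2})=m_i-c_i$, where $c_i$ is the number of sign changes of $d$ around $C_i$.

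Finally, non-orientability forces $c_i$ odd: lifting $C_i$ to $\widetilde\Gamma_n$, one full traversal is realized by a deck transformation of $\widetilde\Gamma_n\to\Gamma_n$ lying over the map $(x,y,z)\mapsto(y-2\delta,\,x-\delta,\,z+3\delta)$ on $B$ (or an odd power of it); being orientation-reversing it interchanges the $x$- and $y$-constant directions and fixes the $z$-constant one, i.e. it acts on $\{x,y,z\}=\Z/3$ by the reflection $n\mapsto 1-n$, whence $d_{j+m_i}=-d_j$. Since the sequence $d$ returns to its negative after $m_i$ steps, the number $c_i$ of sign changes on one period is odd; as $m_i$ is odd, $N_i=m_i-c_i$ is even, so $B=\sum_i N_i$ is even, and the proof is complete. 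The part I expect to require the most care is this last step: verifying, case by colour and direction, precisely when ${\sf length}(\widetilde h;\widetilde e)\in\Z+\tfrac12$ in terms of the integer slots of the endpoints of $\widetilde e$, and correctly deducing the anti-periodicity of $(d_j)$ from the non-orientability of $C_i$ via the explicit deck transformation. One should also confirm that the sliding argument of Lemma \ref{lem5.2} goes through for an arbitrary simple cycle of white edges — in particular at degenerate white edges, where Corollary \ref{coro5.2} is used to exclude type (III)-1 — since such a cycle need not literally satisfy the ``goes straight at crossings'' condition defining a white loop in Subsection \ref{sub5.2}.
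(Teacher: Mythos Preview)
Your approach is essentially the paper's, with the same three pieces: (i) reduce via \eqref{eqn5.10} to showing that the number $B$ of white edges with half-integer length is even; (ii) decompose the white subgraph into simple cycles $C_i$; (iii) on each $C_i$, show the parity of half-integer edges is even. Your direction-walk encoding $(t_j)$ and sign sequence $(d_j)$ is exactly the paper's (N)/(C) dichotomy repackaged: your ``$d_{j-1}=d_j$'' is the paper's (C) type (the black edges $f_{i-1},f_i$ lie on the same side of $C$), and your ``sign change'' is the paper's (N) type (they switch sides).

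The one genuine issue is your claim that each $C_i$ is non-orientable. The paper explicitly does \emph{not} assert this --- it writes ``Not necessarily white loops'' and then treats both cases. Your proposed justification (transport the sliding argument of Lemma~\ref{lem5.2} to the white $2$-regular cycle $C_i$) is the step you yourself flag, and it does not go through cleanly: at a degenerate black edge with two white endpoints (the type (III)-2 / (WC) crossing), the cycle $C_i$ turns rather than going straight, and the inequality \eqref{eqn5.3} for that degenerate edge is not automatic for the alternating $\pm\epsilon$ assignment. Fortunately the claim is unnecessary: your own computation already settles the orientable case as well. If $C_i$ is orientable then the relevant deck transformation is an even power of the generator, so it fixes the three direction classes, giving $t_{j+m_i}=t_j$ and $d_{m_i+1}=d_1$; then $c_i$ is the number of sign changes of a genuinely cyclic $\pm1$ sequence, hence even, and $m_i$ is even by Lemma~\ref{lem5.0}, so $N_i=m_i-c_i$ is even. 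In the non-orientable case your anti-periodicity $d_{j+m_i}=-d_j$ gives $\prod_{j=1}^{m_i} d_j d_{j+1}=d_1 d_{m_i+1}=-1$, so $c_i$ is odd, and with $m_i$ odd you again get $N_i$ even. This two-case split is precisely what the paper does (phrased via the side on which $f_i$ lies), so once you drop the non-orientability claim your argument and the paper's coincide.
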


\begin{corollary}\label{coro5.1}
Let $\widetilde{h} \in \MOB$ be a largest-lift of $\xi \in \Z^{3n}$. In addition, assume that $\sum_{i=1}^{3n} \xi_i \equiv 0 \modtwo$. Then the number of canonical white loops in $\Gamma_n (h)$ is even.
\end{corollary}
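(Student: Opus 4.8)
The plan is to deduce this from Theorem~\ref{thm5.3} by a $\Z/2$-homology count in the Möbius strip $B_\delta$. Write $N$ for the number of canonical white loops in $\Gamma_n(h)$. Recall from Subsection~5.1 that $H_1(B_\delta;\Z/2)\cong\Z/2$, generated by the class $[\mathrm{core}]$ of the core circle, and that a loop embedded in $B_\delta$ represents $[\mathrm{core}]$ precisely when it is non-orientable (this is the content of Lemma~\ref{lem5.0} on the $\Gamma_n$ side, since an odd/even number of coilings is exactly non-orientable/orientable). The canonical white loops $C'_1,\dots,C'_N$ in $\Gamma_n(h)$ are pairwise edge-disjoint circuits whose union is the whole white subgraph $W'\subseteq\Gamma_n(h)$ (the straight-ahead convention at crossings is what makes this decomposition canonical, and every white vertex has even white-valence once type (I) is excluded by Theorem~\ref{lem4.2}, so every white edge is covered exactly once). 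Each $C'_i$ is non-orientable by Lemma~\ref{lem5.2}, so as $\Z/2$-$1$-cycles $[W']=\sum_i[C'_i]=N\cdot[\mathrm{core}]$; hence it suffices to prove $[W']=0$.

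Next I would compute $[W']$ on the $\Gamma_n$ side, where Theorem~\ref{thm5.2} provides the needed rigidity. By Theorem~\ref{thm5.2}, every white vertex of $\widetilde\Gamma_n$, hence of $\Gamma_n$, is mapped by $\widetilde h$ to a half lattice point, which has exactly two half-integer coordinates. Since white vertices are interior by Lemma~\ref{lem4.3} and therefore of degree $3$ in $\Gamma_n$, each white vertex is incident to exactly two white edges and one black edge, and both endpoints of a white edge are white. Thus the white subgraph $W\subseteq\Gamma_n$ is $2$-regular, i.e.\ a disjoint union of simple cycles $\gamma_1,\dots,\gamma_r$, and the number of white vertices of $\Gamma_n$ equals $\sum_i|\gamma_i|$. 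By Lemma~\ref{lem5.0}, $[\gamma_i]=0$ in $H_1(B_\delta;\Z/2)$ iff $|\gamma_i|$ is even, so $[W]=\sum_i[\gamma_i]=\bigl(\sum_i|\gamma_i|\bmod 2\bigr)[\mathrm{core}]$, which vanishes by Theorem~\ref{thm5.3}.

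It then remains to identify $[W']$ with $[W]$. Extending $h\colon V_{\Gamma_n}\to B_\delta$ over edges, one has $h(\Gamma_n)=h(\Gamma_n(h))$, and the canonical white loops of $\Gamma_n$ (the $\rho_h$-preimages of the $C'_i$) are obtained from the cycles $\gamma_i$ by two moves: inserting or deleting black degenerate edges, which $h$ collapses to points and which therefore contribute the zero $1$-chain; and re-routing ("going straight") at white crossings, which replaces $a_1a_2+b_1b_2$ by $a_1b_1+a_2b_2$ and hence leaves the underlying $\Z/2$-$1$-chain unchanged. So $h_*[W]$ and $\sum_i[h(C'_i)]=[W']$ are the same class, giving $N\cdot[\mathrm{core}]=[W']=h_*[W]=0$ and hence $N$ even. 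The main obstacle is exactly this last identification: one must check, using the white-vertex types of Theorem~\ref{lem4.2}, the subdivision into types (III)-1 and (III)-2 of Figure~\ref{fig49}, and Corollary~\ref{coro5.2}, that passing from $\Gamma_n$ to $\Gamma_n(h)$ — edge contraction, merging of multiple edges, and the straight-ahead convention — preserves the $\Z/2$-homology class of the white locus with no parity lost to a white edge of even multiplicity.
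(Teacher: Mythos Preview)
Your approach is correct and reaches the same endpoint as the paper, but by a genuinely different route. The paper does not invoke $\Z/2$-homology at all; it works directly with the canonical white loops $C_1,\dots,C_N$ in $\Gamma_n$ (the $\rho_h$-preimages of the $C'_i$). Each $C_i$ has odd length by Lemmas~\ref{lem5.0} and~\ref{lem5.2}, so $\sum_i|C_i|\equiv N\pmod 2$. Then, using the local pictures of Figure~\ref{fig49}, it observes that a white vertex of type (II), (IV), or (V) is visited exactly once across all the $C_i$, while a white vertex of type (III)-2 is visited exactly twice; since type (III)-2 vertices occur in pairs (the two endpoints of a black degenerate edge), this gives $\sum_i|C_i|\equiv(\#\text{white vertices of }\Gamma_n)\pmod 2$, and Theorem~\ref{thm5.3} finishes.

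Your argument replaces this vertex-type bookkeeping by the observation (from Theorem~\ref{thm5.2}) that the white subgraph $W\subset\Gamma_n$ is $2$-regular, hence a disjoint union of simple cycles $\gamma_j$ with $\sum_j|\gamma_j|=\#(\text{white vertices})$; Lemma~\ref{lem5.0} then computes $[W]$. The ``main obstacle'' you flag, identifying $[W]$ with $[W']$, dissolves once you use two facts already recorded in the paper: orientability is preserved under $\rho_h$ (stated just before Lemma~\ref{lem5.0}), so the map $H_1(\Gamma_n;\Z/2)\to H_1(B_\delta;\Z/2)$ factors compatibly through $H_1(\Gamma_n(h);\Z/2)$; and white edges in $\Gamma_n(h)$ have multiplicity $1$ by Theorem~\ref{lem4.2}, so non-degenerate white edges of $\Gamma_n$ biject with white edges of $\Gamma_n(h)$ while degenerate ones collapse to zero. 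Thus $(\rho_h)_*[W]=[W']$ and the diagram commutes, with no parity lost. You do not actually need to analyse the re-routing at crossings separately.

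What each approach buys: the paper's count is shorter and stays inside the combinatorics already developed, but leans on the detailed case analysis of Figure~\ref{fig49} and the implicit pairing of (III)-2 vertices. Your homological phrasing avoids that case analysis entirely and makes the role of non-orientability more transparent, at the cost of checking that $\rho_h$ respects the $\Z/2$-class of the white locus.
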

\begin{proof}
Due to Lemma \ref{lem5.0} and \ref{lem5.2}, the number of vertices in a canonical white loop is odd. From the construction of white loops depicted in Figure \ref{fig49}, a canonical white loop is composed of vertices of types (II), (III)-2, (IV) and (V). Observe
\begin{itemize}
\item Vertices of types (II), (IV) and (V) are used exactly once.
\item Vertices of type (III)-2 are used exactly twice.
\end{itemize}
Combined with Theorem \ref{thm5.3}, this proves the corollary.
\end{proof}

We first give explanation of Theorem \ref{thm5.3} by using ``breaking''. As illustrated in Figure \ref{fig26}, we apply ``breaking'' on all canonical white loops so that all edges lie on the lattice lines. Then all edges have integer lengths except for broken edges, which have half integer lengths. On the other hand, the total length of edges before ``breaking'' is an integer due to \eqref{eqn5.10}. ``Breaking'' does not change the total length of edges, as depicted in Figure \ref{fig26}. Therefore, the total length of edges is still an integer. Hence, there should be even number of broken edges, implying that there are even number of canonical white loops and Theorem \ref{thm5.3}.

\begin{figure}
\centering
\begin{subfigure}[b]{\textwidth}
\centering
\includegraphics[scale=0.55]{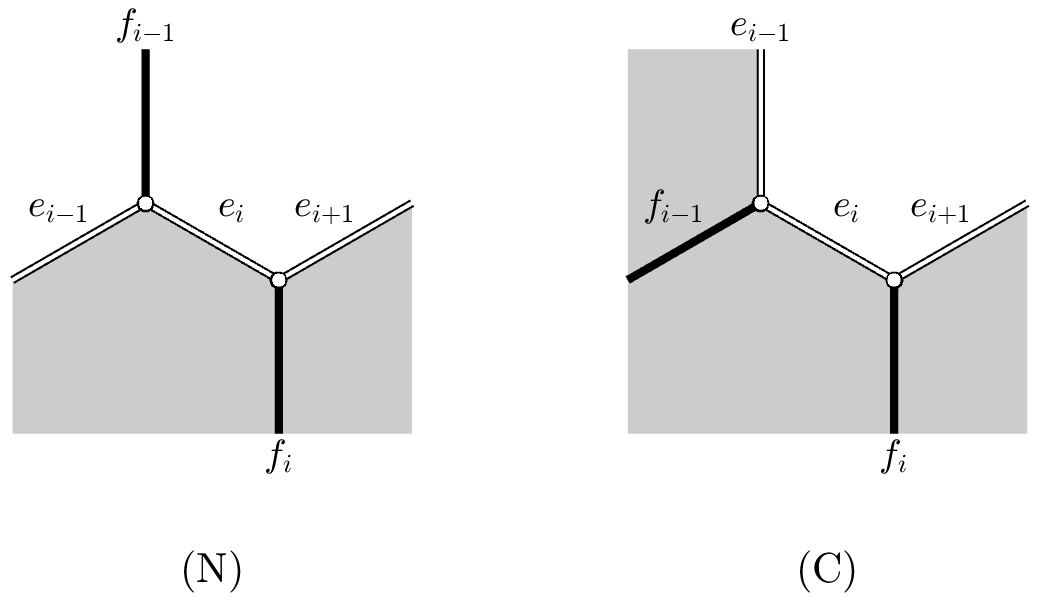}
\caption{White edge $e_i$: (N) type and (C) type.}
\label{fig52}
\end{subfigure}

\begin{subfigure}[b]{\textwidth}
\centering
\includegraphics[scale=0.55]{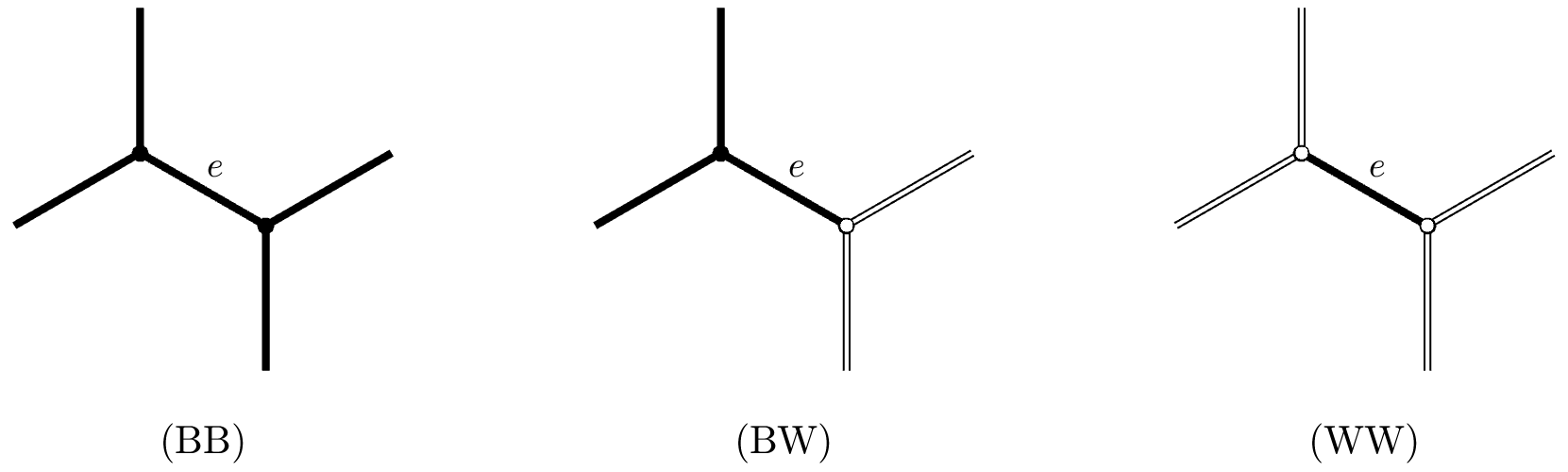}
\caption{Black edge $e$: (BB) type, (BW) type and (WW) type.}
\label{fig53}
\end{subfigure}
\caption{Five types of edges in $\Gamma_n$.}
\label{fig5253}
\end{figure}

\begin{proof}[Proof of Theorem \ref{thm5.3}]
Due to Theorem \ref{thm5.2}, a white vertex in $\Gamma_n$ is not a boundary vertex and is connected to exactly two white edges and one black edge. Therefore, we can construct cycles in $\Gamma_n$ composed of white vertices and white edges, mutually disjoint. (Not necessarily white loops.)

Let $C = (v_0 , v_1 , \cdots,  v_k)$ be one of those cycles. Write the edges $e_i := \{ v_{i-1} , v_i \}$ and choose $\widetilde{e}_i$ so that $e_i = p_e(\widetilde{e}_i)$. Then our claim is that
\begin{equation}\label{eqn5.21}
\sum_{i=1}^k {\sf length}(\widetilde{h}; \widetilde{e}_i) \in \Z.
\end{equation} 

To show this, observe that each $e_i$ is one of two types as in Figure \ref{fig52}.
\begin{itemize}
\item $e_i$ is of (N) type. In other words, $e_{i-1} , e_i , e_{i+1}$ form up a shape \textbf{\textit{N}}.
\item $e_i$ is of (C) type. In other words, $e_{i-1} , e_i , e_{i+1}$ form up a shape \textbf{\textit{C}}.
\end{itemize}
Write $(a_1, a_2, a_3):= \widetilde{h}({\sf head}(\widetilde{e}_i))$ and $(b_1, b_2, b_3):= \widetilde{h}({\sf tail}(\widetilde{e}_i))$. Since $\widetilde{e}_i$ are in white, so are ${\sf head}(\widetilde{e}_i)$ and ${\sf tail}(\widetilde{e}_i)$. By Theorem \ref{thm5.2}, white vertices are on half lattice points, which means that two out of three coordinates are half integers while the other one is an integer. Then
\begin{itemize}
\item If there exists $j$ such that $a_j, b_j \in \Z$, then $e_i$ is of (N) type.
\item Otherwise, $e_i$ is of (C) type.
\end{itemize}
Then we have
\begin{itemize}
\item If $e_i$ is of (N) type, then ${\sf length}(\widetilde{h} ; \widetilde{e}_i)$ is an integer.
\item If $e_i$ is of (C) type, then ${\sf length}(\widetilde{h} ; \widetilde{e}_i)$ is a half integer.
\end{itemize}
On the other hand, recall that we write edges protruding from $v_i$ as $e_i$, $e_{i+1}$ and $f_i$. Here, $f_i$ can be regarded as giving orientation to the loop $C$. Then
\begin{itemize}
\item If $e_i$ is of (N) type, then $f_{i-1}$ and $f_i$ are on different sides with respect to $C$.
\item  If $e_i$ is of (C) type, then $f_{i-1}$ and $f_i$ are on the same side with respect to $C$.
\end{itemize}

Therefore, we have
\begin{itemize}
\item If $C$ is orientable, then there are even number of (N) type $e_i$. Due to Lemma \ref{lem5.0}, $k$ is even. Therefore, there are even number of (C) type $e_i$.
\item If $C$ is non-orientable, then there are odd number of (N) type $e_i$. Due to Lemma \ref{lem5.0}, $k$ is odd. Therefore, there are even number of (C) type $e_i$.
\end{itemize}

Either way, we conclude that there are even number of (C) type edges. Therefore, we have \eqref{eqn5.21}, showing that
\begin{equation}
\sum_{\text{white edge }e}{\sf length}(\widetilde{h} ; \widetilde{e}) \in \Z,
\end{equation}
where $p_e (\widetilde{e}) = e$. Due to \eqref{eqn5.10} and the condition $\sum_{i=1}^{3n} \xi_i \equiv 0 \modtwo$,
\begin{equation}\label{eqn5.22}
\sum_{\text{black edge }e}{\sf length}(\widetilde{h} ; \widetilde{e}) \in \Z,
\end{equation}
where $p_e (\widetilde{e}) = e$.

For each black edge, there are three types as in Figure \ref{fig53}.
\begin{itemize}
\item If both endpoints of black edge $e$ are in black, then it is of (BB) type and ${\sf length}(\widetilde{h}, \widetilde{e}) $ is an integer.
\item If both endpoints of black edge $e$ are in white, then it is of (WW) type and ${\sf length}(\widetilde{h}, \widetilde{e}) $ is an integer.
\item If endpoints of black edge $e$ are in different colors, then it is of (BW) type and ${\sf length}(\widetilde{h}, \widetilde{e}) $ is a half integer.
\end{itemize}

Suppose the number of white vertices in $\Gamma_n$ is odd. Then the number of (BW) type edges is odd. Then we have
\begin{equation}
\sum_{\text{black edge }e}{\sf length}(\widetilde{h} ; \widetilde{e}) \in \Z + 0.5,
\end{equation}     
which contradicts \eqref{eqn5.22}. Hence, the number of white vertices in $\Gamma_n$ is even.
\end{proof}

The above theorem is the reason why we need $|\lambda| + |\mu| + |\nu| \equiv 0 \modtwo$ so that Newell-Littlewood saturation holds. It ensures that there are even number of canonical white loops in $B_\delta$, enabling us to pair them up.


\subsection{Proof of the main theorem}\label{sub5.4} 

As before, assume $\widetilde{h} \in \MOB$ is chosen so that it is a largest-lift of $\xi \in \Z^{3n}$ with $\sum_{i=1}^{3n} \xi_i \equiv 0 \modtwo$. 

\begin{proof}[Proof of Theorem \ref{thm3.2}]

Due to Theorem \ref{thm5.2}, for each edge $\widetilde{e}$ of $\widetilde{\Gamma}_n$,
\begin{itemize}
\item If $\widetilde{e}$ is in white, then ${\sf const}(\widetilde{h}; \widetilde{e}) \in \Z +0.5$.
\item If $\widetilde{e}$ is in black, then ${\sf const}(\widetilde{h} ; \widetilde{e}) \in \Z$.
\end{itemize}

We want to construct $\varphi : E_{\Gamma_n} \rightarrow \R$ so that for each edge $e$ of $\Gamma_n$,
\begin{itemize}
\item If $e$ is in white, then $\varphi (e)$ is $0.5$ or $-0.5$.
\item If $e$ is in black, then $\varphi (e)$ is $1$, $-1$ or $0$. 
\end{itemize}
If $\varphi$ satisfies \eqref{eqn5.2} and \eqref{eqn5.3}, then $\widetilde{g}:= \widetilde{h}_\varphi$ is a M{\"o}bius honeycomb and $\partial \widetilde{g} = \xi$ by Lemma \ref{lem5.3}. From \eqref{eqn5.7}, ${\sf const}(\widetilde{g} ; \widetilde{e}) \in \Z$ for all $\widetilde{e} \in E_{\widetilde{\Gamma}_n}$, proving the existence of $\widetilde{g}$ in Theorem \ref{thm3.2}.

To construct such $\varphi$, we first set it $\varphi \equiv 0$. Due to Corollary \ref{coro5.1}, it is possible to pair up canonical white loops. Let $C$ and $C'$ be a pair of canonical white loops in $\Gamma_n$
\begin{equation}
C = (W_0, W_1 , \cdots, W_k), \quad C' = (U_0 , U_1 ,  \cdots,  U_l).
\end{equation}
Due to Lemma \ref{lem5.2}, $C$ and $C'$ are non-orientable. Then by the fact from algebraic topology, they should intersect. Observing Figure \ref{fig49}, the intersection is only possible at crossing of type (III)-2 due to Corollary \ref{coro5.2}. Shift the vertices of $C$ and $C'$ so that $U_0 = W_1$ and $W_0 = U_1$ as in Figure \ref{fig54}. Re-define $\varphi$ by adding $(-1)^i \cdot 0.5$ as in Figure \ref{fig54}
\begin{subequations}
\begin{equation}
\varphi ( \{ W_{i-1} , W_{i} \}) \leftarrow  \varphi ( \{ W_{i-1} , W_{i} \}) + (-1)^i \cdot 0.5 \quad (2 \leq i \leq k),
\end{equation}
\begin{equation}
\varphi ( \{ U_{i-1} , U_{i} \}) \leftarrow \varphi ( \{ U_{i-1} , U_{i} \})  +  (-1)^i \cdot 0.5 \quad (2 \leq i \leq l).
\end{equation}
\end{subequations}

Since $k$ and $l$ are odd integers due to Lemma \ref{lem5.0} and \ref{lem5.2}, $\varphi$ still satisfies \eqref{eqn5.2}. This process is called \textbf{double breaking} at the intersection point of $C$ and $C'$. As a result, the degenerate edge $\{U_0 , U_1 \} = \{W_0,W_1\}$ becomes non-degenerate. It is important that reversing signs is not permitted unlike Figure \ref{fig48}. Continue applying double breaking on all pairs of canonical white loops. Then each non-boundary vertex of $\Gamma_n$ is one of the types in Figure \ref{fig55}, degenerate edges depicted as dashed lines and canonical white loops depicted as gray paths.

\begin{figure}
\centering
\includegraphics[scale=0.45]{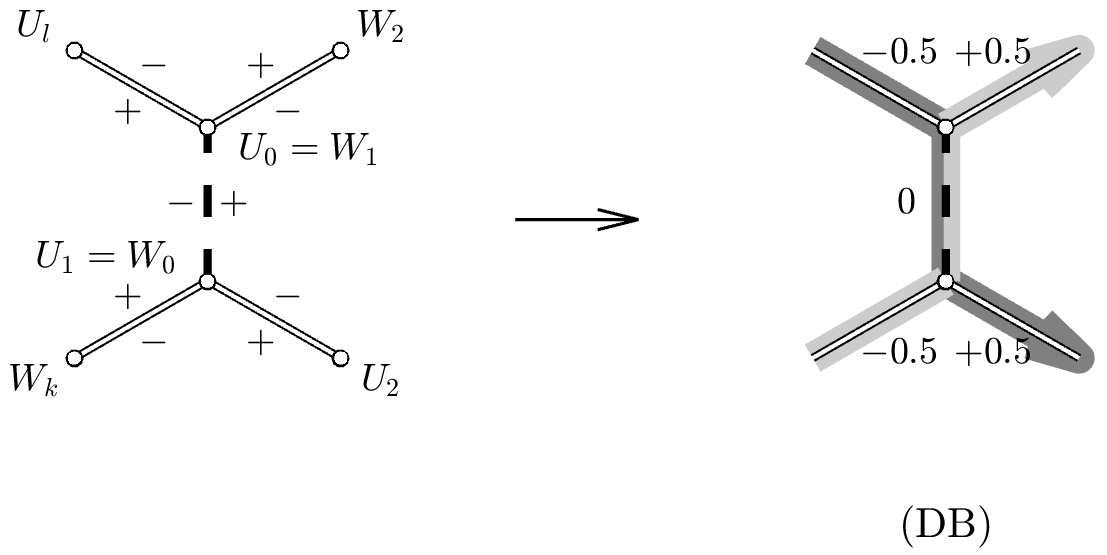}
\caption{Double breaking a pair of canonical white loops.}
\label{fig54}
\end{figure}

\begin{figure}
\centering
\includegraphics[scale=0.45]{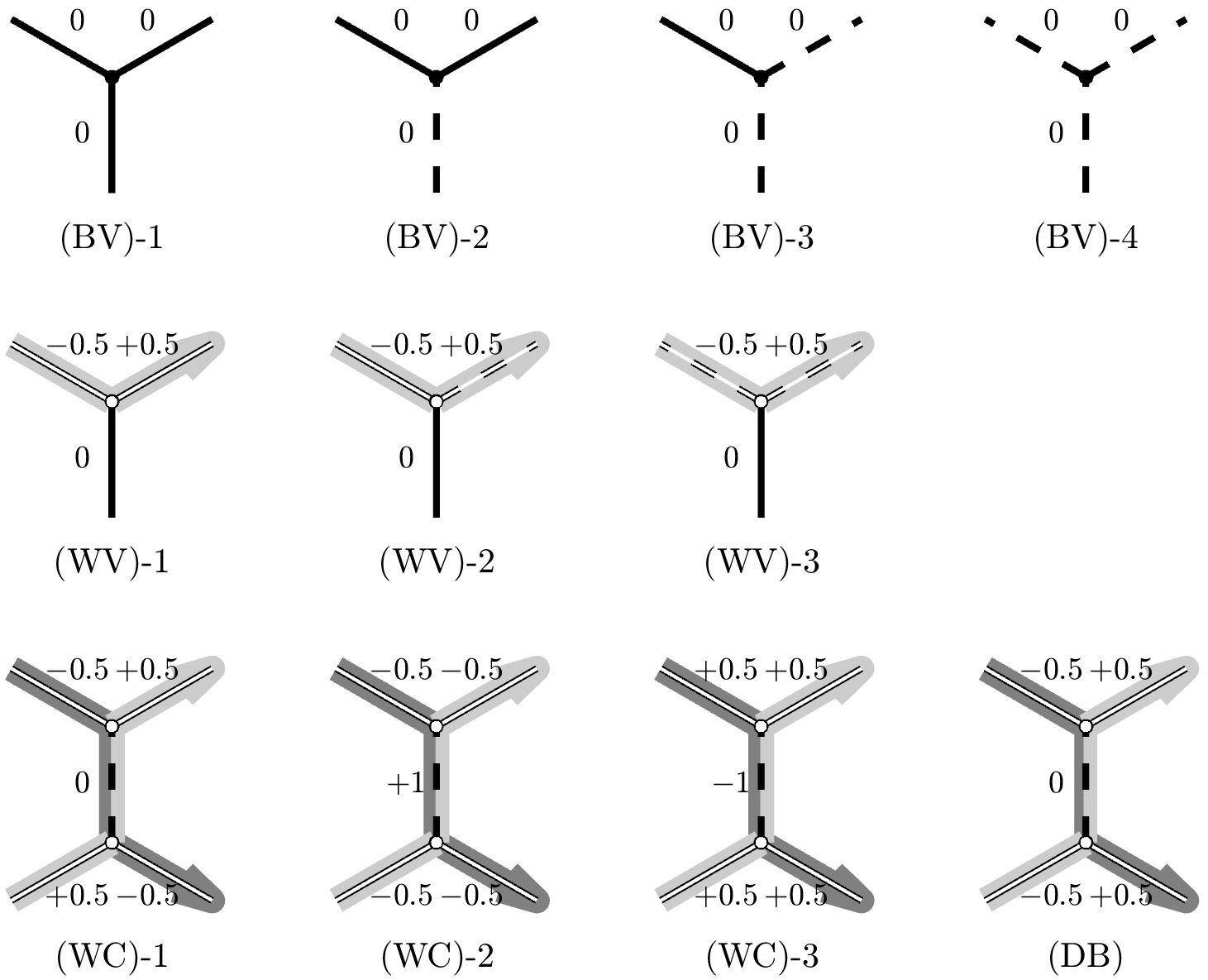}
\caption{Possible cases of vertices in $\Gamma_n$ after assigning $\varphi$.}
\label{fig55}
\end{figure}

We need to prove that $\varphi$ satisfies \eqref{eqn5.3}. Let $e$ be an edge of $\Gamma_n$ with $f_1^+ , f_1^- , f_2^+, f_2^-$ assigned as in Figure \ref{fig59}. Then there are four cases.

(Case 1) $e$ is a non-degenerate black edge : Since $e$ is non-degenerate, $\varphi (e) = 0$. According to Figure \ref{fig53}, there are three types of $e$: (BB), (BW) and (WW) types. If $e$ is (BB) type, then $\varphi (f_i^+) = \varphi(f_i^-) = 0$ for $i=1,2$ so that \eqref{eqn5.3} is satisfied. If $e$ is (BW) type, then ${\sf length}(\widetilde{h} ; \widetilde{e}) \geq 0.5 $ and the values of $\varphi (f_i^+)$ and $\varphi(f_i^-)$ should be one of $0 , 0 , -0.5 , +0.5$. Therefore, we again have \eqref{eqn5.3}. Lastly, if $e$ is of (WW) type, then ${\sf length}(\widetilde{h} ; \widetilde{e}) \geq 1 $ since $e$ is non-degenerate. Again, since the values of $\varphi (f_i^+)$ and $\varphi(f_i^-)$ should be one of $-0.5 , +0.5 , -0.5 , +0.5$, \eqref{eqn5.3} is satisfied.

(Case 2) $e$ is a degenerate black edge : Then the endpoints of $e$ are in the same color. If the color is black, then $\varphi (f_i^+) = \varphi(f_i^-) = 0$ for $i=1,2$. If the color is white, then $e$ is one of the types (WC)-1, (WC)-2, (WC)-3 and double breaking (DB). If $e$ is one of the types (WC)-1, (WC)-2 and (WC)-3, then both sides of \eqref{eqn5.3} are zeros. If $e$ is of type (DB), then the right-hand side of \eqref{eqn5.3} is negative due to assignment of $\varphi$ in Figure \ref{fig54}.

(Case 3) $e$ is a degenerate white edge : According to Figure \ref{fig49}, $e$ is type (IV) or (V) since (III)-1 is no longer possible. Therefore, \eqref{eqn5.3} is satisfied since both sides are zeros.

(Case 4) $e$ is a non-degenerate white edge : According to Figure \ref{fig52}, $e$ is one of the types (N) or (C). If $e$ is of type (N), then ${\sf length} (\widetilde{h} ; \widetilde{e}) \geq 1$. If \eqref{eqn5.3} is violated, then $\varphi (f_i^+), \varphi(f_i^-) $ should be one of $+0.5 , -0.5 , +1 , -1$, which is impossible. Therefore, \eqref{eqn5.3} is satisfied.

However, if $e$ is of type (C), then there is a case when \eqref{eqn5.3} is violated. To find the exceptional case, assume that $\eqref{eqn5.3}$ is violated for $e$. Since $e$ is of type (C), ${\sf length} (\widetilde{h} ; \widetilde{e}) \in \Z + 0.5$. Assuming that \eqref{eqn5.3} is violated, ${\sf length} (\widetilde{h} ; \widetilde{e}) = 0.5$. Without losing generality, assume that $f_1^+, f_2^+$ are white edges and $f_1^- , f_2^-$ are black edges. From Figure \ref{fig49}, $f_1^+$ and $f_2^+$ are non-degenerate. On the other hand, if one of $f_1^-$ and $f_2^-$ is non-degenerate, then the value of $\varphi$ is zero, so that \eqref{eqn5.3} is satisfied. Therefore, $f_1^-$ and $f_2^-$ are both degenerate black edges. Hence, the worst scenario when \eqref{eqn5.3} is violated is
\begin{equation}
\varphi(f_1^+) = \varphi(f_2^+) = +0.5, \quad \varphi(f_1^-) = \varphi(f_2^-) = -1, \quad \varphi(e) = +0.5, \quad {\sf length}(\widetilde{h} ; \widetilde{e}) = 0.5.
\end{equation}
Since $f_1^-$ and $f_2^-$ are degenerate black edges, their end points are in the same color, which is white. Therefore, the endpoints of $f_1^-$ and $f_2^-$ are connected to non-degenerate white edges. Since ${\sf length}(\widetilde{h}; \widetilde{e}) = 0.5$, we have a \textbf{white triangle} of size $0.5$ depicted in Figure \ref{fig56}. Here, canonical white loops are depicted as gray paths. 

\begin{figure}
\centering
\includegraphics[scale=0.45]{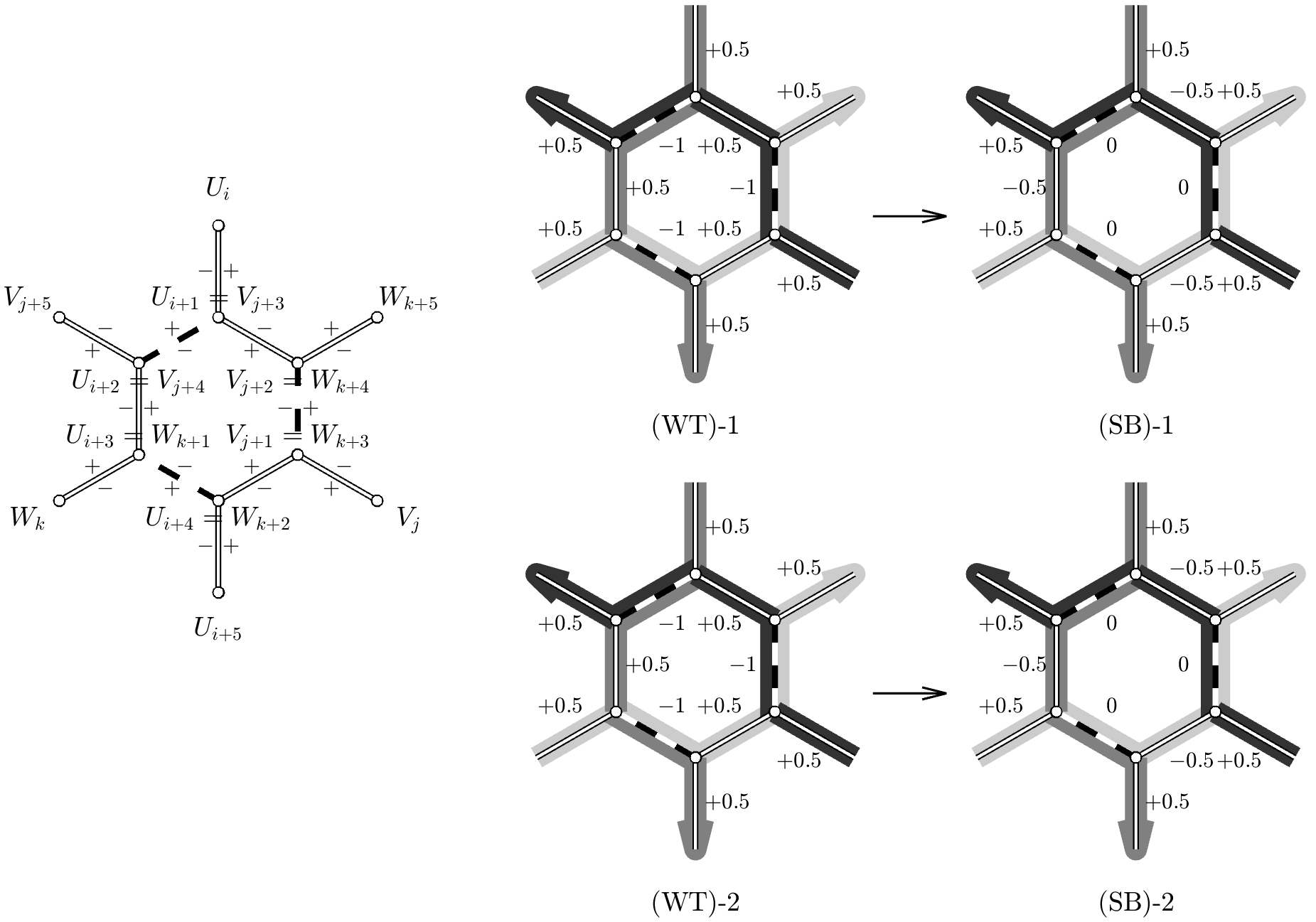}
\caption{Sixfold breaking in $\Gamma_n$.}
\label{fig56}
\end{figure}

Whenever we have a white triangle (WT)-1 or (WT)-2 as in Figure \ref{fig56}, modify $\varphi$ so that the assigned values are changed into (SB)-1 or (SB)-2. We call this process \textbf{sixfold breaking}. Note that white triangles (WT)-1 and (WT)-2 cannot be adjacent to each other. Therefore, we can modify $\varphi$ ``locally'' without affecting others. After the sixfold breaking, $\varphi$ satisfies \eqref{eqn5.3}. Apply Lemma \ref{lem5.3} to construct $\widetilde{h}_\varphi$ which is the construction of $\widetilde{g}$.
\end{proof}

\appendix

\section{Basic properties}\label{secZ}

In this section, we prove basic properties of M{\"o}bius honeycombs directly from \eqref{eqn3.9}, \eqref{eqn3.7} and \eqref{eqn3.8}.

Let $j,k \in \Z$ and $1 \leq j \leq 3n$. Construct a path ${\sf DPath}_j^{(2k)}$ in $\widetilde{\Gamma}_n$ between boundary vertices $\widetilde{A}_{0, j+3k}$ and $\widetilde{B}_{n,j+3k+n}$, disregarding the directions of edges:
\begin{equation}
{\sf DPath}_j^{(2k)} := ( \widetilde{A}_{0,j+3k}, \widetilde{B}_{0,j+3k},\widetilde{A}_{1,j+3k+1}, \widetilde{B}_{1,j+3k+1}, \cdots , \widetilde{A}_{n,j+3k+n}, \widetilde{B}_{n,j+3k+n} ).
\end{equation}
Similarly, define
\begin{equation}
{\sf DPath}_j^{(2k+1)} := ( \widetilde{B}_{n,j+3k+2n}, \widetilde{A}_{n,j+3k+2n},\widetilde{B}_{n-1,j+3k+2n}, \widetilde{A}_{n-1,j+3k+2n}, \cdots , \widetilde{B}_{0,j+3k+2n}, \widetilde{A}_{0,j+3k+2n} ).
\end{equation}

For each $k \in \Z$, ${\sf DPath}_j^{(k)}$ is called \textbf{$j$th diagonal path}. In Figure \ref{fig42}, four $3$rd diagonal paths are depicted as bold lines, each one contained in a trapezoid.

For fixed $j$, ${\sf DPath}_j^{(k)}$ are identified to each other by equivalence relation in $V_{\widetilde{\Gamma}_n}$. See Figure \ref{fig45}.

\begin{figure}
\centering
\begin{subfigure}[b]{\textwidth}
\centering
\includegraphics[scale = 0.44]{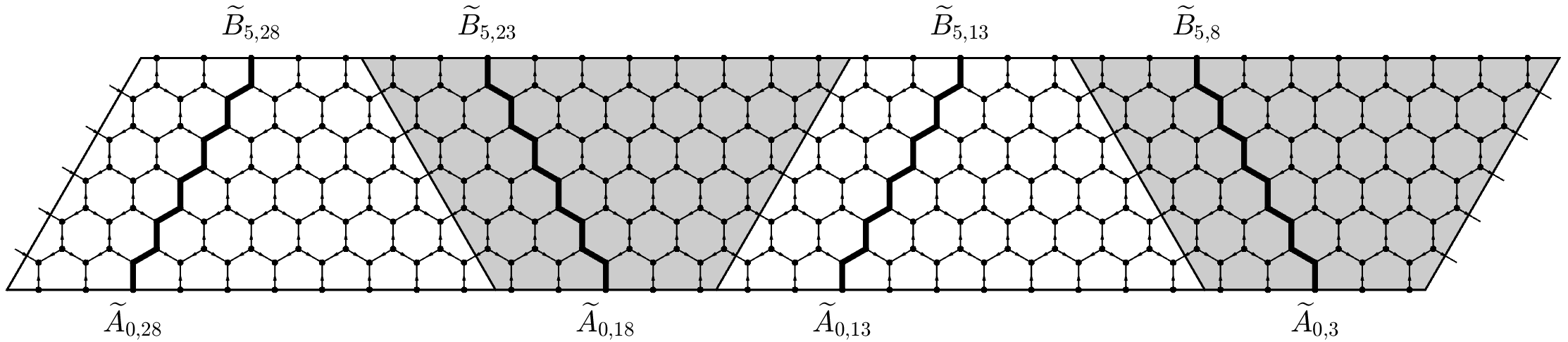}
\caption{$3$rd diagonal paths in $\widetilde{\Gamma}_5$.}
\label{fig42}
\end{subfigure}

\begin{subfigure}[b]{\textwidth}
\centering
\includegraphics[scale = 0.44]{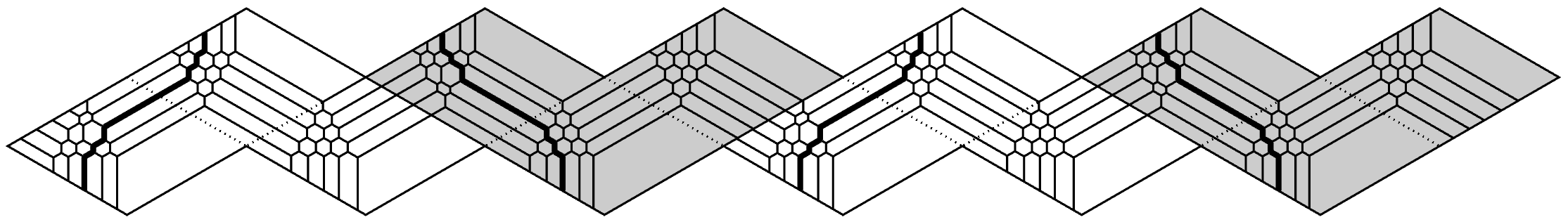}
\caption{Images of $3$rd diagonal paths in $\widetilde{B}_\delta$.}
\label{fig45}
\end{subfigure}
\caption{$j$th diagonal paths.}
\label{fig4245}
\end{figure}

\begin{lemma}\label{lemZ.0}
Let ${\sf DPath}_j^{(k)} = (\widetilde{W}_1 , \widetilde{W}_2 , \cdots , \widetilde{W}_{2n+2})$. Let $\widetilde{h} \in \MOB$. Then there exists $c \in \Z$ such that
\begin{equation}
\widetilde{h}(\widetilde{W}_1) , \widetilde{h}(\widetilde{W}_2) , \cdots , \widetilde{h}(\widetilde{W}_{2n+2}) \in \left( D_\delta^{(c)} \cup D_\delta^{(c+1)} \right).
\end{equation}
In particular,
\begin{equation}
\widetilde{h}(\widetilde{W}_1) \in D_{\delta}^{(c)}, \quad \widetilde{h}(\widetilde{W}_{2n+2}) \in D_{\delta}^{(c+1)}.
\end{equation}
\end{lemma}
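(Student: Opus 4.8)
The plan is to walk along the path $\widetilde{W}_1,\dots,\widetilde{W}_{2n+2}$ tracking the three coordinates, show they are monotone, identify precisely which boundary edges of $\widetilde{B}_\delta$ the two endpoints land on, and then observe that a coordinate‑monotone path joining those two edges cannot leave two consecutive rhombi. For Step 1 I would list the edges of ${\sf DPath}_j^{(k)}$ with their $d$‑values. For $k$ even the path alternates between the ``north'' edges $(\widetilde{A}_{i,\cdot},\widetilde{B}_{i,\cdot})$ of direction $(-1,1,0)$, traversed forwards, and the ``southeast'' edges $(\widetilde{A}_{i+1,\cdot},\widetilde{B}_{i,\cdot})$ of direction $(0,-1,1)$, traversed backwards; applying \eqref{eqn3.9} edge by edge shows that, reading $\widetilde{h}(\widetilde{W}_1),\dots,\widetilde{h}(\widetilde{W}_{2n+2})$ in order, the $x$‑coordinate is weakly decreasing, the $y$‑coordinate weakly increasing, and the $z$‑coordinate weakly decreasing (and $x+y+z=0$ throughout). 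The odd case follows from the even one, since by \eqref{eqn3.3} the path ${\sf DPath}_j^{(2k+1)}$ is $\sim$‑equivalent, vertex by vertex, to ${\sf DPath}_j^{(2k)}$, and the generator of \eqref{eqn3.5} permutes the rhombi $D_\delta^{(\cdot)}$; so I would only write out $k$ even. Writing $\widetilde{h}(\widetilde{W}_{2n+2})-\widetilde{h}(\widetilde{W}_1)=(-A,\,A+B,\,-B)$ with $A,B\ge 0$ the totals of the nonnegative edge scalars from \eqref{eqn3.9}, Step 1 already forces every $\widetilde{h}(\widetilde{W}_i)$ into the rectangle
\[
R:=\{(x,y,z)\in B\ :\ x(\widetilde{W}_{2n+2})\le x\le x(\widetilde{W}_1),\quad y(\widetilde{W}_1)\le y\le y(\widetilde{W}_{2n+2})\}.
\]

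For Step 2 I must locate the two endpoints. For $k$ even, $\widetilde{W}_1=\widetilde{A}_{0,\cdot}$ and $\widetilde{W}_{2n+2}=\widetilde{B}_{n,\cdot}$ are boundary vertices of $\widetilde{\Gamma}_n$. Using \eqref{eqn3.7}, the relations $\widetilde{A}_{i,j}\sim\widetilde{A}_{i,j+3n}$ and $\widetilde{B}_{n,q}\sim\widetilde{A}_{0,q+n}$ coming from \eqref{eqn3.3}, and the compatibility \eqref{eqn3.8}, each of $\widetilde{h}(\widetilde{W}_1),\widetilde{h}(\widetilde{W}_{2n+2})$ lies on the image, under some power of the generator $\sigma$ of the equivalence \eqref{eqn3.5}, of one of the three segments in \eqref{eqn3.7}. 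Each of those segments is the maximal‑$x$ edge of an even rhombus $D_\delta^{(2m)}$ (compare \eqref{eqn3.15}); since $\sigma$ interchanges ``$x$‑constant'' and ``$y$‑constant'' edges of $\widetilde{B}_\delta$ while $\sigma^2$ is a translation, $\widetilde{h}(\widetilde{W}_1)$ lies on the maximal‑$x$ edge of a uniquely determined even‑index rhombus $D_\delta^{(c)}$, which fixes $c$. A priori $\widetilde{h}(\widetilde{W}_{2n+2})$ could lie on $\sigma^\ell(\text{base edge})$ for any $\ell\in\Z$; but the inequalities $x(\widetilde{W}_{2n+2})\le x(\widetilde{W}_1)$ and $z(\widetilde{W}_{2n+2})\le z(\widetilde{W}_1)$ from Step 1 rule out all but one value of $\ell$, and that one turns out to put $\widetilde{h}(\widetilde{W}_{2n+2})$ on the maximal‑$y$ edge of $D_\delta^{(c+1)}$. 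I expect this exclusion to be the only delicate point: it is a finite check, split over the residue of $j$ modulo $3n$ (three cases) and the parity of $k$, each sub‑case being a short computation with the explicit coordinates of \eqref{eqn3.7} and the formulas for the powers of $\sigma$.

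For Step 3, with $\widetilde{h}(\widetilde{W}_1)$ on the maximal‑$x$ edge of $D_\delta^{(c)}$ ($c$ even) and $\widetilde{h}(\widetilde{W}_{2n+2})$ on the maximal‑$y$ edge of $D_\delta^{(c+1)}$, the defining inequalities of $R$ confine $R$ between two consecutive coordinate lines in the $x$‑direction and two coordinate lines (a distance $2\delta$ apart) in the $y$‑direction; comparing this with the description of $D_\delta^{(2m)}$ and $D_\delta^{(2m+1)}$ in \eqref{eqn3.15} gives $R\subseteq D_\delta^{(c)}\cup D_\delta^{(c+1)}$. Hence $\widetilde{h}(\widetilde{W}_i)\in D_\delta^{(c)}\cup D_\delta^{(c+1)}$ for all $i$, and since $\widetilde{h}(\widetilde{W}_1)$ lies on $\partial D_\delta^{(c)}$ and $\widetilde{h}(\widetilde{W}_{2n+2})$ on $\partial D_\delta^{(c+1)}$, the ``in particular'' clause also follows. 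The hard part, to repeat, is Step 2 — resolving the a‑priori ambiguity in which $\sigma$‑translate each boundary image is; this is exactly where the monotonicity of Step 1 is used in an essential way, and where the geometry of the zigzag strip $\widetilde{B}_\delta$ has to be invoked carefully.
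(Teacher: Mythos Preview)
Your proposal is correct and follows essentially the same approach as the paper: both arguments note that the two endpoints are boundary vertices whose images lie on $\partial\widetilde{B}_\delta$ by \eqref{eqn3.7} and \eqref{eqn3.8}, observe via \eqref{eqn3.9} that the successive differences $\widetilde{h}(\widetilde{W}_{2i-1})-\widetilde{h}(\widetilde{W}_{2i})$ all share one direction and $\widetilde{h}(\widetilde{W}_{2i})-\widetilde{h}(\widetilde{W}_{2i+1})$ another, and use this monotonicity to pin down which boundary segment $\widetilde{h}(\widetilde{W}_{2n+2})$ lands on and to trap the intermediate images. The paper's proof is considerably terser---it asserts in one line that the boundary constraint forces $\widetilde{h}(\widetilde{W}_{2n+2})\in D_\delta^{(c+1)}$ and takes the containment of the intermediate points as evident---so your Steps 2 and 3 simply make explicit what the paper leaves to the reader.
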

\begin{proof}
Note that $\widetilde{W}_1$ and $\widetilde{W}_{2n+2}$ are boundary vertices, so $\widetilde{h}(\widetilde{W}_1)$ and $\widetilde{h}(\widetilde{W}_{2n+2})$ are contained in the boundary of $\widetilde{B}_\delta$ due to \eqref{eqn3.7} and \eqref{eqn3.8}. In particular, choose $c \in \Z$ so that $\widetilde{h}(\widetilde{W}_1) \in D_\delta^{(c)}$.

On the other hand, from \eqref{eqn3.9}, the following vectors
\begin{equation}\label{eqnZ.1}
\widetilde{h}(\widetilde{W}_1)-\widetilde{h}(\widetilde{W}_{2}), \hspace{0.5em} \widetilde{h}(\widetilde{W}_3)-\widetilde{h}(\widetilde{W}_{4}), \hspace{0.5em} \widetilde{h}(\widetilde{W}_5)-\widetilde{h}(\widetilde{W}_{6}),  \cdots , \widetilde{h}(\widetilde{W}_{2n+1})-\widetilde{h}(\widetilde{W}_{2n+2}),
\end{equation}
are in the same direction. Similarly, so are
\begin{equation}\label{eqnZ.2}
\widetilde{h}(\widetilde{W}_2)-\widetilde{h}(\widetilde{W}_{3}), \hspace{0.5em} \widetilde{h}(\widetilde{W}_4)-\widetilde{h}(\widetilde{W}_{5}), \hspace{0.5em} \widetilde{h}(\widetilde{W}_6)-\widetilde{h}(\widetilde{W}_{7}),  \cdots , \widetilde{h}(\widetilde{W}_{2n})-\widetilde{h}(\widetilde{W}_{2n+1}).
\end{equation}
Since $\widetilde{h}(\widetilde{W}_{2n+2})$ should be contained in the boundary of $\widetilde{B}_\delta$, we have $\widetilde{h}(\widetilde{W}_{2n+2}) \in D_{\delta}^{(c+1)}$. This proves the lemma.
\end{proof}

\begin{lemma}\label{lem3.1}
Let $\widetilde{A}_{i,j} , \widetilde{B}_{i,j} \in V_{\widetilde{\Gamma}_n}$. Let $j = kn+r$ where $k , r \in \Z$ and $1 \leq r \leq n$. Then for all $\widetilde{h} \in \MOB$,
\begin{align}
\widetilde{h}(\widetilde{A}_{i,j}) , \widetilde{h}(\widetilde{B}_{i,j}) & \in D_\delta^{(2k-4)}  \quad (r>i),\\
\widetilde{h}(\widetilde{A}_{i,j}) , \widetilde{h}(\widetilde{B}_{i,j}) & \in D_\delta^{(2k-5)}  \quad (r \leq i).
\end{align}
\end{lemma}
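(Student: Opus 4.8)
The plan is to push the boundary data \eqref{eqn3.7} through $\widetilde{\Gamma}_n$ along its diagonal paths, using Lemma \ref{lemZ.0} as the engine. Observe first that the target rhombus depends on $(i,j)$ in the same way for $\widetilde{A}_{i,j}$ and $\widetilde{B}_{i,j}$: with $j=kn+r$, $1\le r\le n$, it is $D_\delta^{(2k-4)}$ when $r>i$ and $D_\delta^{(2k-5)}$ when $r\le i$. Every vertex of $\widetilde{\Gamma}_n$ lies on exactly one even diagonal path, whose initial bottom-boundary vertex is some $\widetilde{A}_{0,m}$, and on exactly one odd diagonal path, whose initial top-boundary vertex is some $\widetilde{B}_{n,m}$; concretely, both $\widetilde{A}_{i,j}$ and $\widetilde{B}_{i,j}$ sit on the even path whose bottom vertex is $\widetilde{A}_{0,j-i}$ and on the odd path whose top vertex is $\widetilde{B}_{n,j}$. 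So it suffices to (Step 1) determine the rhombi of all the boundary vertices $\widetilde{A}_{0,m},\widetilde{B}_{n,m}$ and then (Step 2) intersect the constraints coming from the two diagonal paths through an interior vertex.

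Step 1: boundary vertices. I would prove the claim for $\widetilde{A}_{0,m}$ and $\widetilde{B}_{n,m}$, $m\in\Z$, by induction outward from the window $1\le m\le 3n$. For $1\le m\le 3n$, reading \eqref{eqn3.7} against \eqref{eqn3.15} shows the three prescribed segments lie in $D_\delta^{(-4)},D_\delta^{(-2)},D_\delta^{(0)}$, i.e.\ in $D_\delta^{(2k-4)}$ for $k=0,1,2$; this is the claim for these $\widetilde{A}_{0,m}$. For the inductive step, apply Lemma \ref{lemZ.0} to the even path from $\widetilde{A}_{0,m-n}$ to $\widetilde{B}_{n,m}$ and then to the odd path from $\widetilde{B}_{n,m}$ to $\widetilde{A}_{0,m}$: the lemma relates the rhombus of the initial vertex of a diagonal path to that of its terminal vertex by index $+1$, so the rhombus of $\widetilde{A}_{0,m-n}$ forces that of $\widetilde{B}_{n,m}$ (one higher) and then of $\widetilde{A}_{0,m}$ (two higher than $\widetilde{A}_{0,m-n}$); since $2k-4=(2(k-1)-4)+2$, this matches the asserted formula for $\widetilde{A}_{0,m}$ and simultaneously gives $\widetilde{B}_{n,m}\in D_\delta^{(2k-5)}$. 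Running the same argument in the decreasing direction covers $m<1$.

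Step 2: interior vertices. Let $\widetilde{W}\in\{\widetilde{A}_{i,j},\widetilde{B}_{i,j}\}$ be non-boundary, $j=kn+r$, $1\le r\le n$. Using $j-i=kn+(r-i)$ when $r>i$ and $j-i=(k-1)n+(n+r-i)$ when $r\le i$, Step 1 gives $\widetilde{h}(\widetilde{A}_{0,j-i})\in D_\delta^{(c_1)}$ with $c_1=2k-4$ if $r>i$ and $c_1=2k-6$ if $r\le i$, and also $\widetilde{h}(\widetilde{B}_{n,j})\in D_\delta^{(c_2)}$ with $c_2=2k-5$. Lemma \ref{lemZ.0} applied to the even path through $\widetilde{W}$ then yields $\widetilde{h}(\widetilde{W})\in D_\delta^{(c_1)}\cup D_\delta^{(c_1+1)}$, and applied to the odd path yields $\widetilde{h}(\widetilde{W})\in D_\delta^{(c_2)}\cup D_\delta^{(c_2+1)}$. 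When $r>i$ these unions are $D_\delta^{(2k-4)}\cup D_\delta^{(2k-3)}$ and $D_\delta^{(2k-5)}\cup D_\delta^{(2k-4)}$, whose intersection is $D_\delta^{(2k-4)}$; when $r\le i$ they are $D_\delta^{(2k-6)}\cup D_\delta^{(2k-5)}$ and $D_\delta^{(2k-5)}\cup D_\delta^{(2k-4)}$, with intersection $D_\delta^{(2k-5)}$. (Here one uses that among three consecutive rhombi $D_\delta^{(a-1)},D_\delta^{(a)},D_\delta^{(a+1)}$ one has $D_\delta^{(a-1)}\cap D_\delta^{(a+1)}\subseteq D_\delta^{(a)}$, so that the two-element unions really do meet in a single rhombus.) In both cases this is exactly the rhombus asserted in the statement.

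The argument is not deep; the real content is index bookkeeping — correctly naming the two diagonal paths through each $\widetilde{A}_{i,j}$ or $\widetilde{B}_{i,j}$, tracking the $(k,r)$-data of their boundary endpoints, and handling the case split $j-i=kn+(r-i)$ versus $(k-1)n+(n+r-i)$. The only point requiring a little care is that vertex images can land on shared boundaries of adjacent rhombi, so the index produced by Lemma \ref{lemZ.0} is only pinned down up to this ambiguity; since the conclusions of both Lemma \ref{lemZ.0} and Lemma \ref{lem3.1} are mere set-membership statements, this is harmless, but it should be noted explicitly when matching the lemma's index to the value computed in Step 1.
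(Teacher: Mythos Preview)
Your proposal is correct and follows essentially the same approach as the paper's proof: first establish the rhombus membership for all boundary vertices by propagating \eqref{eqn3.7} along diagonal paths via Lemma~\ref{lemZ.0}, then pin down an arbitrary vertex by intersecting the constraints from the even diagonal path through $\widetilde{A}_{0,j-i}$ and the odd diagonal path through $\widetilde{B}_{n,j}$. Your write-up is in fact more explicit than the paper's in the index bookkeeping (the paper only spells out the case $r>i$ and says ``similarly'' for $r\le i$, and handles the extension to all boundary vertices in one sentence), and your remark about $D_\delta^{(a-1)}\cap D_\delta^{(a+1)}\subseteq D_\delta^{(a)}$ is a useful clarification the paper leaves implicit.
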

\begin{proof}
From \eqref{eqn3.7}, the lemma holds for boundary vertices $\widetilde{A}_{0,j}$ when $1 \leq j \leq 3n$. By Lemma \ref{lemZ.0}, we conclude that the lemma holds for all boundary vertices of $\widetilde{\Gamma}_n$.

Let $\widetilde{A}_{i,j} , \widetilde{B}_{i,j} \in V_{\widetilde{\Gamma}_n}$ be arbitrarily chosen. Find $k,r \in \Z$ so that $j=kn+r$ and $1\leq r \leq n$. Then there are two diagonal paths
\begin{subequations}
\begin{equation}
( \widetilde{A}_{0,j-i} , \widetilde{B}_{0, j-i} , \cdots , \widetilde{A}_{i,j} , \widetilde{B}_{i,j} , \cdots, \widetilde{A}_{n,j-i+n} , \widetilde{B}_{n,j-i+n} ),
\end{equation}
\begin{equation}
(\widetilde{B}_{n,j} , \widetilde{A}_{n,j} , \cdots , \widetilde{B}_{i,j} , \widetilde{A}_{i,j} , \cdots, \widetilde{B}_{0,j}, \widetilde{A}_{0,j}).
\end{equation}
\end{subequations}
Since $\widetilde{B}_{n,j}$ and $\widetilde{A}_{0,j}$ are boundary vertices, apply the lemma to have
\begin{equation}
\widetilde{h} (\widetilde{B}_{n,j}) \in D_\delta^{(2k-5)}, \quad \widetilde{h} (\widetilde{A}_{0,j}) \in D_\delta^{(2k-4)}.
\end{equation}
This implies
\begin{equation}
\widetilde{h}(\widetilde{A}_{i,j}) , \widetilde{h}(\widetilde{B}_{i,j}) \in (D_\delta^{(2k-5)} \cup D_\delta^{(2k-4)}).
\end{equation}

Assume $r>i$. Then again, since $\widetilde{A}_{0,j-i}$ and $\widetilde{B}_{n,j-i+n}$ are boundary vertices, apply the lemma, taking account of $r > i$
\begin{equation}
\widetilde{h} (\widetilde{A}_{0,j-i}) \in D_\delta^{(2k-4)}, \quad \widetilde{h} (\widetilde{B}_{n,j-i+n}) \in D_\delta^{(2k-3)}.
\end{equation}
Again, this leads to 
\begin{equation}
\widetilde{h}(\widetilde{A}_{i,j}) , \widetilde{h}(\widetilde{B}_{i,j}) \in (D_\delta^{(2k-4)} \cup D_\delta^{(2k-3)}).
\end{equation}
In other words, by assuming $r>i$, we have $\widetilde{h}(\widetilde{A}_{i,j}) , \widetilde{h}(\widetilde{B}_{i,j}) \in D_\delta^{(2k-4)} $, satisfying the lemma. Similarly, the lemma holds if $r \leq i$.
\end{proof}

Lemma \ref{lem3.1} can be expressed in terms of vectors in $B$.

\begin{lemma}\label{lemZ.1}
Let $\widetilde{h} \in \MOB$ and write $(x,y,z) := \widetilde{h}(\widetilde{A}_{i,j})$. Then the image of vertices which are equivalent to $\widetilde{A}_{i,j}$ i.e.
\begin{align}
\widetilde{h}(\widetilde{A}_{i,j+3k}) & =(x+ 3k \delta , \hspace{0.2em} y+3k\delta , \hspace{0.2em} z-6k\delta), \\
\widetilde{h}(\widetilde{B}_{-i+n, -i+j+2n+3k}) &= (y+(3k-2)\delta , \hspace{0.2em} x+(3k-1)\delta , \hspace{0.2em} z-(6k-3)\delta ).
\end{align}
\end{lemma}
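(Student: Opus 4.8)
The plan is to read this off from Lemma~\ref{lem3.1} together with the compatibility axiom \eqref{eqn3.8}: Lemma~\ref{lem3.1} confines the $\widetilde{h}$-image of every vertex to a definite rhombus $D_\delta^{(\cdot)}$, while \eqref{eqn3.8} forces the images of $\sim$-equivalent vertices to be $\sim$-equivalent in $B$; combining the two pins each image down to a single point, and evaluating gives the displayed formulas. In effect, Lemma~\ref{lemZ.1} is Lemma~\ref{lem3.1} rephrased as an explicit vector identity. First I would record the shape of the equivalence class: composing the two relations in \eqref{eqn3.3} gives $\widetilde{A}_{i,j}\sim\widetilde{A}_{i,j+3n}$ and $\widetilde{B}_{i,j}\sim\widetilde{B}_{i,j+3n}$ (as already noted), so the class of $\widetilde{A}_{i,j}$ consists exactly of the vertices $\widetilde{A}_{i,j+3nk}$ and $\widetilde{B}_{-i+n,\,-i+j+2n+3nk}$ for $k\in\Z$, i.e.\ the ones appearing in the statement. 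Let $f\colon B\to B$ generate the relation \eqref{eqn3.5}, so $f(x,y,z)=(y-2\delta,\,x-\delta,\,z+3\delta)$; then $f^{2}(x,y,z)=(x-3\delta,\,y-3\delta,\,z+6\delta)$ is a translation, and a short check from \eqref{eqn3.15} shows $f$ carries $D_\delta^{(c)}$ onto $D_\delta^{(c-3)}$.

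Next, by \eqref{eqn3.8}, every image $\widetilde{h}(\widetilde{A}_{i,j+3nk})$ and $\widetilde{h}(\widetilde{B}_{-i+n,-i+j+2n+3nk})$ equals $f^{m}(x,y,z)$ for some integer $m$, where $(x,y,z):=\widetilde{h}(\widetilde{A}_{i,j})$. To fix $m$ in each case I would invoke Lemma~\ref{lem3.1}, applied both to the given equivalent vertex and to $\widetilde{A}_{i,j}$ itself, after writing the relevant second index in the form $kn+r$ with $1\le r\le n$. This assigns a definite rhombus index to $\widetilde{h}(\widetilde{A}_{i,j})$ and to each of these images, and the two indices differ by a fixed multiple of $3$ depending only on $k$. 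Since $f$ shifts the rhombus index by exactly $-3$, and an $\sim$-orbit meets each rhombus of a given residue class mod $3$ in a single point, this index difference determines $m$ uniquely; if an image happens to lie on a common edge of two rhombi there is no difficulty, since $f^{m}$ still maps $D_\delta^{(c)}$ onto $D_\delta^{(c-3m)}$ as closed sets. Plugging the resulting $m$ into $f^{m}(x,y,z)$, using $f^{2}(x,y,z)=(x-3\delta,y-3\delta,z+6\delta)$, produces the two identities.

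The only genuine work is the bookkeeping in the middle step: expressing, via Lemma~\ref{lem3.1}, the rhombus indices of $\widetilde{h}(\widetilde{A}_{i,j})$ and of the images of the equivalent vertices as explicit functions of $(i,j,k)$, and checking their differences are exactly the multiples of $3$ compatible with a single power of $f$. This is where the case split ``$r>i$ versus $r\le i$'' in Lemma~\ref{lem3.1} has to be tracked with care, using $0\le i\le n$ and $1\le r\le n$ to see that the alternative always resolves consistently (e.g.\ for $\widetilde{B}_{-i+n,-i+j+2n}$ one rewrites its second index as $(k+2)n+(r-i)$ or $(k+1)n+(n+r-i)$ according as $r>i$ or $r\le i$, and both land in the regime the computation requires). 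Everything else — the description of the equivalence class via \eqref{eqn3.3}, the application of \eqref{eqn3.8}, and the arithmetic with $f$ — is routine.
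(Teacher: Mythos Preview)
Your approach is correct and is exactly the paper's: the paper's proof is the single line ``This follows from Lemma~\ref{lem3.1} and \eqref{eqn3.5},'' and your proposal is a detailed unpacking of precisely that---use \eqref{eqn3.8} to force equivalent vertices to have $\sim$-equivalent images, use Lemma~\ref{lem3.1} to pin each image to a specific rhombus, and then read off the unique power of the generator $f$ of \eqref{eqn3.5}. Your bookkeeping (including the $r>i$ versus $r\le i$ case split and your correction of the index $3k$ to $3nk$ in the equivalence class) is sound.
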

\begin{proof}
This follows from Lemma \ref{lem3.1} and \eqref{eqn3.5}.
\end{proof}

For each $\widetilde{e} \in E_{\widetilde{\Gamma}_n}$, define ${\sf const}(\widetilde{h} ; \widetilde{e})$ as in \eqref{eqnA.2}.

\begin{lemma}\label{lemZ.5}
Let $\widetilde{h} , \widetilde{h}' \in \MOB$ and $\widetilde{e}_1 , \widetilde{e}_2 \in E_{\widetilde{\Gamma}_n}$ such that $p_e (\widetilde{e}_1) = p_e (\widetilde{e}_2) $. Then we have
\begin{equation}\label{eqn5.6}
{\sf const}(\widetilde{h}' ; \widetilde{e}_1) - {\sf const}(\widetilde{h}' ; \widetilde{e}_2) = {\sf const}(\widetilde{h} ; \widetilde{e}_1) - {\sf const}(\widetilde{h} ; \widetilde{e}_2).
\end{equation}
\end{lemma}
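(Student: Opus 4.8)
The plan is to reduce the statement to the explicit coordinate formulas already recorded in Lemma~\ref{lemZ.1}. Fix $\widetilde{h},\widetilde{h}'\in\MOB$ and $\widetilde{e}_1,\widetilde{e}_2\in E_{\widetilde{\Gamma}_n}$ with $p_e(\widetilde{e}_1)=p_e(\widetilde{e}_2)$, i.e. $\widetilde{e}_1\equiv\widetilde{e}_2$. First I would choose an endpoint $\widetilde{W}_1$ of $\widetilde{e}_1$ and the endpoint $\widetilde{W}_2$ of $\widetilde{e}_2$ matched to it by the edge equivalence, so that $\widetilde{W}_1\sim\widetilde{W}_2$ as vertices of $\widetilde{\Gamma}_n$. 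Since the vertex equivalence $\sim$ on $V_{\widetilde{\Gamma}_n}$ is, by definition, the one generated by the elementary relations \eqref{eqn3.3}, iterating Lemma~\ref{lemZ.1} (and its inverse) shows that there is a coordinate permutation $\pi$ of $B$ --- either the identity or the transposition $(x,y,z)\mapsto(y,x,z)$ --- together with a vector $c\in B$, both depending only on the indices of $\widetilde{W}_1$ and $\widetilde{W}_2$ and on $\delta$ but \emph{not} on the chosen M\"obius honeycomb, such that $\widetilde{h}(\widetilde{W}_2)=\pi(\widetilde{h}(\widetilde{W}_1))+c$; and likewise $\widetilde{h}'(\widetilde{W}_2)=\pi(\widetilde{h}'(\widetilde{W}_1))+c$ with the \emph{same} $\pi$ and $c$.

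Next I would record the compatibility between the edge equivalence $\equiv$ and the direction map $d$: if $\widetilde{e}_1\equiv\widetilde{e}_2$ then $d(\widetilde{e}_2)=\pm\,\pi(d(\widetilde{e}_1))$, with $\pi$ the permutation above. This is immediate from the combinatorial definitions together with \eqref{eqn3.3} --- one checks, for instance, that the southeast edge at $\widetilde{A}_{i,j}$ is $\equiv$ to the southwest edge at $\widetilde{B}_{-i+n,\,-i+j+2n}$, and similarly in the remaining cases --- or it can simply be read off the index shifts that appear in the proof of Lemma~\ref{lemZ.1}. Now ${\sf const}(\widetilde{h};\widetilde{e})$ is, by \eqref{eqnA.2}, the $x$-, $y$-, or $z$-coordinate of $\widetilde{h}$ at an endpoint according to whether $d(\widetilde{e})$ equals $(0,-1,1)$, $(1,0,-1)$, or $(-1,1,0)$, and this selection is insensitive to the sign of $d(\widetilde{e})$; hence the coordinate of $\widetilde{h}(\widetilde{W}_2)$ computing ${\sf const}(\widetilde{h};\widetilde{e}_2)$ is exactly $\pi$ applied to the coordinate of $\widetilde{h}(\widetilde{W}_1)$ computing ${\sf const}(\widetilde{h};\widetilde{e}_1)$. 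Substituting into $\widetilde{h}(\widetilde{W}_2)=\pi(\widetilde{h}(\widetilde{W}_1))+c$ then yields ${\sf const}(\widetilde{h};\widetilde{e}_2)={\sf const}(\widetilde{h};\widetilde{e}_1)+c_0$, where $c_0$ is a single fixed component of $c$ and is therefore independent of $\widetilde{h}$.

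The same computation applied to $\widetilde{h}'$ gives ${\sf const}(\widetilde{h}';\widetilde{e}_2)={\sf const}(\widetilde{h}';\widetilde{e}_1)+c_0$ with the very same constant $c_0$, and subtracting the two identities produces \eqref{eqn5.6}. The only step requiring genuine care is the middle paragraph: tracking which coordinate of $B$ serves as the constant coordinate after the identification, and verifying that the sign ambiguity in $d(\widetilde{e}_2)=\pm\,\pi(d(\widetilde{e}_1))$ is harmless. I expect no new ideas are needed here, since all of this bookkeeping is already implicit in Lemma~\ref{lemZ.1}; it is purely a matter of assembling the cases ($\widetilde{A}\sim\widetilde{A}$ and $\widetilde{B}\sim\widetilde{B}$ translations, the $\widetilde{A}\sim\widetilde{B}$ reflections, and the possible reversal of edge orientation built into the definition of $\equiv$).
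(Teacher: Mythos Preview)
Your proposal is correct and follows essentially the same route as the paper. The paper's proof also reduces to Lemma~\ref{lemZ.1}, splitting into the two cases $p_v(\widetilde{P}_1)=p_v(\widetilde{P}_2)$ (your $\pi=\mathrm{id}$) and $p_v(\widetilde{P}_1)=p_v(\widetilde{Q}_2)$ (your $\pi$ the $x\leftrightarrow y$ swap), and in each case computes ${\sf const}(\widetilde{h};\widetilde{e}_1)-{\sf const}(\widetilde{h};\widetilde{e}_2)$ explicitly as an integer multiple of $\delta$ that visibly does not depend on $\widetilde{h}$; your argument is the same computation phrased structurally rather than coordinate-by-coordinate.
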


\begin{proof}
Let $\widetilde{e}_1 = (\widetilde{P}_1 , \widetilde{Q}_1)$ and $\widetilde{e}_2 = (\widetilde{P}_2 , \widetilde{Q}_2)$. Write $(x_P, y_P , z_P):= \widetilde{h}(\widetilde{P}_1)$ and $(x_Q , y_Q , z_Q):= \widetilde{h}(\widetilde{Q}_1)$. We have two cases as follows.

(Case 1) $p_v (\widetilde{P}_1) = p_v (\widetilde{P}_2)$ and $p_v (\widetilde{Q}_1) = p_v (\widetilde{Q}_2)$: Using Lemma \ref{lemZ.1}, there exists $k \in \Z$ such that
\begin{subequations}\label{eqnA.6}
\begin{equation}
\widetilde{h}(\widetilde{P}_2) = (x_P+3k\delta , y_P + 3k\delta , z_P - 6k\delta), 
\end{equation}
\begin{equation}
\widetilde{h}(\widetilde{Q}_2) = (x_Q+3k\delta , y_Q + 3k\delta , z_Q - 6k\delta).
\end{equation}
\end{subequations}
Therefore, the right-hand side of \eqref{eqn5.6} is $-3k\delta$ if $d(\widetilde{e}_1) \neq (-1,1,0)$. If $d(\widetilde{e}_1) = (-1,1,0)$, then the right-hand side of \eqref{eqn5.6} is $6k\delta$. Since the computed value does not depend on $\widetilde{h}$, we conclude that both sides of \eqref{eqn5.6} are equal.

(Case 2) $p_v (\widetilde{P}_1) = p_v (\widetilde{Q}_2)$ and $p_v (\widetilde{Q}_1) = p_v (\widetilde{P}_2)$: Using Lemma \ref{lemZ.1}, there exists $k \in \Z$ such that
\begin{subequations}\label{eqnA.7}
\begin{equation}
\widetilde{h}(\widetilde{P}_2) = (y_Q + (3k-2)\delta, x_Q+(3k-1)\delta , z_Q - (6k-3)\delta),
\end{equation}
\begin{equation}
\widetilde{h}(\widetilde{Q}_2) = (y_P + (3k-2)\delta, x_P+(3k-1)\delta , z_P - (6k-3)\delta).
\end{equation}
\end{subequations}
Therefore, the right-hand side of \eqref{eqn5.6} is $-(3k-1)\delta$ if $d(\widetilde{e}_1) = (0,-1,1)$. If $d(\widetilde{e}_1) = (1,0,-1)$, then the right-hand side of \eqref{eqn5.6} is $-(3k-2)\delta$. Lastly, if $d(\widetilde{e}_1) = (-1,1,0)$, then it is $(6k-3)\delta$. Since the computed value does not depend on $\widetilde{h}$, we conclude that both sides of \eqref{eqn5.6} are equal.
\end{proof}

From Lemma \ref{lemZ.1}, $\widetilde{h} \in \MOB$ is determined by $3n(n+1) = |V_{\Gamma_n}|$ number of points in $B$. Let $\lambda , \mu , \nu \in \Parn$. Choose $\delta \in \N$ so that $\delta > |\lambda|, |\mu|, |\nu|$. Write
\begin{subequations}
\begin{align}
\widetilde{h}(\widetilde{A}_{i,j}) &= (a_{i,j,1},a_{i,j,2},a_{i,j,3}) \quad (0\leq i \leq n, 1 \leq j \leq n+i) \\
\widetilde{h}(\widetilde{B}_{i,j}) &= (b_{i,j,1},b_{i,j,2},b_{i,j,3}) \quad (0\leq i \leq n, 1 \leq j \leq n+i)
\end{align}
\end{subequations}
Then $\widetilde{A}_{i,j}$ and $\widetilde{B}_{i,j}$ are representatives of $V_{\Gamma_n}$.

We now list the conditions of $(a_{i,j,k},b_{i,j,k}) \in \R^{9n(n+1)}$ so that $\widetilde{h}$ is a M{\"o}bius honeycomb. First of all, since the image of $\widetilde{h} \in \MOB$ should be contained in $B$,
\begin{subequations}\label{eqnB.2}
\begin{align}
a_{i,j,1}+ a_{i,j,2} + a_{i,j,3} = 0, \quad (1 \leq i \leq n , 1 \leq j \leq n+i) \\
b_{i,j,1} + b_{i,j,2} + b_{i,j,3} = 0. \quad (0 \leq i \leq n-1 , 1 \leq j \leq n+i)
\end{align}
\end{subequations}

From \eqref{eqn3.9},
\begin{subequations}\label{eqnB.3}
\begin{align}
& a_{i,j,3} = b_{i,j,3}, \quad a_{i,j,1}\leq b_{i,j,1}, \quad (0 \leq i \leq n, 1 \leq j \leq n+i) \\
& a_{i+1,j,2} = b_{i,j,2}, \quad a_{i+1,j,3} \leq b_{i,j,3} \quad (0 \leq i \leq n-1, 1 \leq j \leq n+i) \\
& a_{i+1,j+1,1} = b_{i,j,1}, \quad a_{i+1,j+1,2} \leq b_{i,j,2} \quad (0 \leq i \leq n-1, 1 \leq j \leq n+i)
\end{align}
\end{subequations}

From \eqref{eqn3.7}.
\begin{subequations}\label{eqnB.1}
\begin{align}
(a_{0,j,1},a_{0,j,2},a_{0,j,3}) &= (-2\delta, -\lambda_j-2\delta ,\lambda_j + 4\delta), \quad (1 \leq j \leq n) \\
(b_{n,j,1},b_{n,j,2},b_{n,j,3}) &= (-\mu_j -3\delta , -2\delta , \mu_j +5 \delta), \quad (1 \leq j \leq n) \\
(b_{n,j,1},b_{n,j,2},b_{n,j,3}) &= (-\nu_{j-n}-2\delta , -\delta , \nu_{j-n}+3\delta). \quad (n+1 \leq j \leq 2n)
\end{align}
\end{subequations}

Lastly, we need to take account of \eqref{eqn3.8}. Using \eqref{eqn3.5}, we have
\begin{equation}\label{eqnB.4}
a_{i,1,1} = a_{n-i,2n-i,2}-2 \delta, \quad a_{i,1,3} \geq a_{n-i,2n-i,3}+3 \delta. \quad (0 \leq i \leq n)
\end{equation}

We define a polytope $P_{\lambda,\mu, \nu , \delta} \subset \R^{9n(n+1)}$ using \eqref{eqnB.2}, \eqref{eqnB.3}, \eqref{eqnB.1} and \ref{eqnB.4}. 

By Theorem \ref{thm3.1}, we have a corollary.

\begin{corollary}\label{coroC.1}
Let $n \in \N$ and $\lambda , \mu , \nu \in \Parn$. Choose $\delta \in \N$ so that $\delta > |\lambda|, |\mu|, |\nu|$. Then
\begin{equation}
|P_{\lambda , \mu ,\nu, \delta} \cap \Z^{9n(n+1)}| = N_{\lambda, \mu ,\nu}.
\end{equation}
\end{corollary}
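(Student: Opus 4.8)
The plan is to identify the polytope $P_{\lambda,\mu,\nu,\delta}$ with (the coordinate description of) the set of M\"obius honeycombs counted in Theorem~\ref{thm3.1}, and then quote that theorem; everything else is bookkeeping with the equivalence relations $\sim$ and $\equiv$. The structural fact behind the definition of $P_{\lambda,\mu,\nu,\delta}$ is that, by Lemma~\ref{lemZ.1}, an $\widetilde h\in\MOB$ is determined by its values at the $3n(n+1)$ representative vertices $\widetilde A_{i,j},\widetilde B_{i,j}$ ($0\le i\le n$, $1\le j\le n+i$): the value of $\widetilde h$ at any other vertex is obtained from the value at its representative by a fixed affine map whose linear part permutes coordinates and whose translation part is an integer multiple of $\delta$ times an integer vector. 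Recording those values as $\bigl((a_{i,j,k}),(b_{i,j,k})\bigr)\in\R^{9n(n+1)}$, I claim the resulting assignment is a bijection
\[
\{\widetilde h\in\MOB : \partial\widetilde h=\xi_0\}\ \longleftrightarrow\ P_{\lambda,\mu,\nu,\delta},\qquad
\xi_0:=(\lambda_1+4\delta,\dots,\lambda_n+4\delta,\mu_1+2\delta,\dots,\mu_n+2\delta,\nu_1,\dots,\nu_n).
\]
In the forward direction one checks that \eqref{eqnB.2} is the statement that each recorded point lies in $B$; that \eqref{eqnB.3} transcribes \eqref{eqn3.9} along the edges incident to the representatives; that \eqref{eqnB.4} is the instance of the gluing condition \eqref{eqn3.8} read off from \eqref{eqn3.5}; and that \eqref{eqnB.1} says precisely $\partial\widetilde h=\xi_0$, the interval constraints in \eqref{eqn3.7} being automatically met because $\delta>|\lambda|,|\mu|,|\nu|$ forces $0\le\lambda_j,\mu_j,\nu_j\le\delta$.

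For the converse, given a point of $P_{\lambda,\mu,\nu,\delta}$ one defines $\widetilde h$ on the representatives by its coordinates, extends it over $V_{\widetilde\Gamma_n}$ by the formulas of Lemma~\ref{lemZ.1} (equivalently, by imposing \eqref{eqn3.8}), and then verifies \eqref{eqn3.9}, \eqref{eqn3.7}, \eqref{eqn3.8}: condition \eqref{eqn3.9} along a representative edge is \eqref{eqnB.3}, and along an arbitrary edge it follows by transporting \eqref{eqn3.9} from the equivalent representative edge --- here one uses that the reflection in \eqref{eqn3.5} interchanges head and tail of the glued edges, which cancels the sign produced by the coordinate permutation, so the nonnegativity in \eqref{eqn3.9} is preserved; condition \eqref{eqn3.7} holds by \eqref{eqnB.1} and the bound on $\delta$; and condition \eqref{eqn3.8} holds by \eqref{eqnB.4} together with the way the extension is built. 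By the structural fact, this $\widetilde h$ is the unique preimage of the given point.

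Next I would pass to integer points. A point of $P_{\lambda,\mu,\nu,\delta}$ lies in $\Z^{9n(n+1)}$ exactly when $\widetilde h(\widetilde A_{i,j}),\widetilde h(\widetilde B_{i,j})\in B_\Z$ for all representatives; since $\delta\in\N$, the affine maps of Lemma~\ref{lemZ.1} have integral translation parts, so this forces $\widetilde h(\widetilde W)\in B_\Z$ for \emph{every} $\widetilde W\in V_{\widetilde\Gamma_n}$. Hence the bijection above restricts to a bijection between $P_{\lambda,\mu,\nu,\delta}\cap\Z^{9n(n+1)}$ and the set of $\widetilde h\in\MOB$ with $\partial\widetilde h=\xi_0$ and $\widetilde h(\widetilde W)\in B_\Z$ for all $\widetilde W$. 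Since $\delta>|\lambda|,|\mu|,|\nu|\ge\lambda_1,\mu_1,\nu_1$, Theorem~\ref{thm3.1} applies verbatim and says this set has exactly $N_{\lambda,\mu,\nu}$ elements, which is the claim.

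The only non-formal point is the claim, inside the bijection step, that the \emph{finite} system \eqref{eqnB.2}--\eqref{eqnB.4} --- which constrains only the chosen representatives --- is genuinely equivalent to the \emph{a priori} infinite families \eqref{eqn3.9}, \eqref{eqn3.7}, \eqref{eqn3.8}; the delicate sub-point is the sign when transporting \eqref{eqn3.9} across a ``twisted'' equivalence class, since the M\"obius identification reverses orientation. This is precisely the computation already carried out in the paragraphs where \eqref{eqnB.2}--\eqref{eqnB.4} are derived, so in the write-up I would cite that derivation rather than repeat it; beyond Lemmas~\ref{lemZ.1}, \ref{lemZ.5} and Theorem~\ref{thm3.1}, nothing new is needed.
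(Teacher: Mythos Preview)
Your proposal is correct and follows the same approach as the paper: both set up a bijection between integer points of $P_{\lambda,\mu,\nu,\delta}$ and the M\"obius honeycombs counted in Theorem~\ref{thm3.1}, using Lemma~\ref{lemZ.1} to pass from the finite system of representative coordinates to the full configuration. The paper's proof is a one-line reference to Lemma~\ref{lemZ.1} (with Theorem~\ref{thm3.1} implicit from the ``corollary'' placement), whereas you have unpacked the bookkeeping that this reference hides; nothing in your write-up is extraneous or incorrect.
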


\begin{proof}
Due to Lemma \ref{lemZ.1}, $\widetilde{h}$ can be constructed from $a_{i,j,k}$ and $b_{i,j,k}$.
\end{proof}

\begin{lemma}\label{lemZ.4}
Let $\widetilde{W} \in V_{\widetilde{\Gamma}_n}$ and $\widetilde{h} \in \MOB$. Suppose $\widetilde{h} ( \widetilde{W} )$ is contained in the boundary of $\widetilde{B}_\delta$. Then there exists a boundary vertex $\widetilde{W}'$ such that $\widetilde{h} ( \widetilde{W} ) = \widetilde{h} ( \widetilde{W}' )$.
\end{lemma}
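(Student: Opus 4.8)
The plan is to exploit the ``diagonal path'' structure of Lemma~\ref{lemZ.0} together with monotonicity of coordinates along those paths, which comes directly from \eqref{eqn3.9} and the edge directions of $\widetilde{\Gamma}_n$. By \eqref{eqn3.8}, by the fact that the boundary vertices of $\widetilde{\Gamma}_n$ are closed under $\sim$, and because each application of the relation \eqref{eqn3.5} is an affine bijection $D_\delta^{(m)}\to D_\delta^{(m-3)}$ carrying $\partial\widetilde{B}_\delta$ to $\partial\widetilde{B}_\delta$, it is enough to prove the statement when $\widetilde{W}=\widetilde{A}_{i,j}$. Write $j=kn+r$ with $1\le r\le n$, so that Lemma~\ref{lem3.1} places $\widetilde{h}(\widetilde{W})$ in the rhombus $D:=D_\delta^{(2k-4)}$ when $r>i$ and $D:=D_\delta^{(2k-5)}$ when $r\le i$. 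Inspecting \eqref{eqn3.15} one sees that $\partial\widetilde{B}_\delta\cap D$ is exactly the union of the two sides of $D$ not shared with an adjacent rhombus, and that these two sides are a side on which $x$ is constant and equal to its extreme value over $D$, and a side on which $y$ is constant and equal to its extreme value over $D$ (the extremes being $\max$ and $\min$ when $D$ has even superscript and $\min$ and $\max$ when $D$ has odd superscript; in particular no side of any $D_\delta^{(m)}$ lies on a lattice line $(\ast,\ast,a)$).

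Next I would record two monotonicity facts. Traversing the even diagonal path $(\widetilde{A}_{0,\ast},\widetilde{B}_{0,\ast},\widetilde{A}_{1,\ast},\dots,\widetilde{B}_{n,\ast})$ through $\widetilde{W}$ from the $\widetilde{A}_{0,\ast}$-end to the $\widetilde{B}_{n,\ast}$-end, the successive displacements are nonnegative multiples of $(-1,1,0)$ and of $(0,1,-1)$ by \eqref{eqn3.9}, so $y$ is nondecreasing and $y$-constancy on a sub-stretch forces every edge there to be degenerate. Traversing the odd diagonal path $(\widetilde{B}_{n,\ast},\widetilde{A}_{n,\ast},\dots,\widetilde{B}_{0,\ast},\widetilde{A}_{0,\ast})$ through $\widetilde{W}$ from the $\widetilde{B}_{n,\ast}$-end to the $\widetilde{A}_{0,\ast}$-end, the successive displacements are nonnegative multiples of $(1,-1,0)$ and of $(1,0,-1)$, so $x$ is nondecreasing and $x$-constancy forces degeneracy. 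Both paths have both endpoints among the boundary vertices $\widetilde{A}_{0,\ast}$, $\widetilde{B}_{n,\ast}$ of $\widetilde{\Gamma}_n$.

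The argument concludes with a case analysis on which of the two $\partial$-sides of $D$ contains $\widetilde{h}(\widetilde{W})$. If it is the $x$-side, I use the odd diagonal path through $\widetilde{W}$; if it is the $y$-side, the even one. By Lemma~\ref{lemZ.0} the chosen path is covered by $D$ and one neighbouring rhombus, so one of its two boundary-vertex endpoints lies in $D$, and I travel from $\widetilde{W}$ toward that endpoint. Since $\widetilde{h}(\widetilde{W})$ already sits at the extreme value of the relevant coordinate over $D$, since that coordinate is monotone in the direction of travel (here one checks that an even-superscript $D$ meets $\partial\widetilde{B}_\delta$ only along its $\max$-$x$ and $\min$-$y$ sides and an odd-superscript $D$ only along its $\min$-$x$ and $\max$-$y$ sides, which is exactly the compatibility needed), and since any vertex of the stretch leaving $D$ would push the coordinate past that extreme, the coordinate is forced to be constant along the whole stretch. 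Hence every edge of the stretch is degenerate and $\widetilde{h}(\widetilde{W})$ equals the image of the endpoint, a boundary vertex. Concretely this splits into the four sub-cases ``$D=D_\delta^{(2k-4)}$, $\max$-$x$ side, go to $\widetilde{A}_{0,j}$'', ``$D=D_\delta^{(2k-4)}$, $\min$-$y$ side, go to $\widetilde{A}_{0,j-i}$'', ``$D=D_\delta^{(2k-5)}$, $\min$-$x$ side, go to $\widetilde{B}_{n,j}$'', and ``$D=D_\delta^{(2k-5)}$, $\max$-$y$ side, go to $\widetilde{B}_{n,j-i+n}$''.

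The step I expect to be the main obstacle is exactly this bookkeeping: for each of the four sub-cases one must verify, using the explicit rhombi \eqref{eqn3.15} and the index arithmetic $j\mapsto(k,r)$ behind Lemma~\ref{lem3.1}, that the target boundary vertex genuinely lies in the same rhombus $D$ as $\widetilde{W}$ and that the direction of monotonicity toward it is the one pinning the coordinate at its extreme. None of this is deep, but it is the part that must be carried out carefully for each side of each rhombus parity.
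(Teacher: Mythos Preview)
Your approach is correct and is essentially the same as the paper's: both arguments pick a diagonal path through $\widetilde{W}$ whose images lie in $D$ and one neighbouring rhombus, locate a boundary-vertex endpoint inside $D$, and use the monotonicity of a coordinate along the path (coming from \eqref{eqn3.9}) to force degeneracy of the stretch between $\widetilde{W}$ and that endpoint. The paper's proof compresses the whole thing into a single sentence invoking \eqref{eqnZ.1}--\eqref{eqnZ.2}, whereas you make the four sub-cases (parity of the rhombus $\times$ which boundary side) and the matching choice of even/odd diagonal path explicit; your added bookkeeping is exactly what is needed to see that the chosen endpoint really lands in $D$ and that the monotone coordinate is pinned at its extreme, so nothing further is required.
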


\begin{proof}
Let $c \in \Z$ be chosen so that $\widetilde{h}(\widetilde{W}) \in D_\delta^{(c)}$. Using Lemma \ref{lemZ.0}, choose ${\sf DPath}_j^{(k)} = (\widetilde{W}_1 , \widetilde{W}_2 , \cdots , \widetilde{W}_{2n+2})$ passing through $\widetilde{W}$ so that
\begin{equation}
\widetilde{h}(\widetilde{W}_1) , \widetilde{h}(\widetilde{W}_2) , \cdots , \widetilde{h}(\widetilde{W}_{2n+2}) \in \left( D_\delta^{(c-1)} \cup D_\delta^{(c)} \right),
\end{equation}
\begin{equation}
\widetilde{h}(\widetilde{W}_1) \in D_{\delta}^{(c-1)}, \quad \widetilde{h}(\widetilde{W}_{2n+2}) \in D_{\delta}^{(c)}.
\end{equation}
Note that $\widetilde{h}(\widetilde{W}_{2n+2})$ is contained in the boundary of $D_\delta^{(c)}$. Since vectors in \eqref{eqnZ.1} (resp. vectors in \eqref{eqnZ.2}) are in same direction, we conclude that $\widetilde{h} ( \widetilde{W} ) = \widetilde{h} ( \widetilde{W}_{2n+2})$.
\end{proof}

\begin{lemma}\label{lem3.0}
Let $\widetilde{h}_1 , \widetilde{h}_2 \in \MOB$ such that $\partial \widetilde{h}_1 = \xi$ and $ \partial \widetilde{h}_2 = \zeta$. Let $c_1 , c_2 \in \R_{\geq 0}$ and $m = c_1 + c_2$. Then $\widetilde{h}:=(c_1 \cdot \widetilde{h}_1 + c_2 \cdot \widetilde{h}_2)$ satisfies
\begin{equation}
\widetilde{h} \in {\tt M \ddot{O} BIUS}(\widetilde{\tau}_n , m  \delta), \quad \partial \widetilde{h} = c_1 \xi + c_2 \zeta.
\end{equation}
\end{lemma}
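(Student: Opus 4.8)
The plan is to show directly that the pointwise combination $\widetilde{h} := c_1\cdot\widetilde{h}_1 + c_2\cdot\widetilde{h}_2$ — defined as in \eqref{eqn2.13}, so $\widetilde{h}(\widetilde{v}) = c_1\widetilde{h}_1(\widetilde{v}) + c_2\widetilde{h}_2(\widetilde{v}) \in B$ for every $\widetilde{v}\in V_{\widetilde{\Gamma}_n}$ — satisfies the three defining conditions \eqref{eqn3.9}, \eqref{eqn3.7}, \eqref{eqn3.8} of a M{\"o}bius honeycomb \emph{with parameter $m\delta$}, and then to read off $\partial\widetilde{h}$. The reason every condition survives is uniform: the constraints cutting $\MOB$ out of $B^{V_{\widetilde{\Gamma}_n}}$ are affine in $\widetilde{h}$, with ``inhomogeneous parts'' that are $\Z$-linear in $\delta$, so a nonnegative combination of total mass $m$ carries $\delta$ to $m\delta$.

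For \eqref{eqn3.9}: this is precisely the configuration condition \eqref{eqn2.6} for the tinkertoy $\widetilde{\tau}_n$, and it does not involve $\delta$ at all. Since $\widetilde{h}_1,\widetilde{h}_2$ are configurations of $\widetilde{\tau}_n$ and $c_1,c_2\in\R_{\geq 0}$, Lemma \ref{lem2.1} applies verbatim and $\widetilde{h}$ satisfies \eqref{eqn3.9}. For \eqref{eqn3.7}: within each of the three ranges of $j$, the admissible locus for $\widetilde{h}(\widetilde{A}_{0,j})$ at parameter $\delta$ is a segment of the form $\{\,(a\delta,\ b\delta-\xi,\ \xi) : p\delta\leq\xi\leq q\delta\,\}$ for fixed integers $a,b,p,q$ (for instance $(a,b,p,q)=(-2,2,4,5)$ when $1\leq j\leq n$). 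If $\widetilde{h}_1(\widetilde{A}_{0,j})$ and $\widetilde{h}_2(\widetilde{A}_{0,j})$ lie in this segment, with $z$-coordinates $\xi^{(1)},\xi^{(2)}\in[p\delta,q\delta]$, then
\[
\widetilde{h}(\widetilde{A}_{0,j}) = \bigl(\,a\,m\delta,\ b\,m\delta-(c_1\xi^{(1)}+c_2\xi^{(2)}),\ c_1\xi^{(1)}+c_2\xi^{(2)}\,\bigr), \qquad c_1\xi^{(1)}+c_2\xi^{(2)}\in[p\,m\delta,\ q\,m\delta],
\]
which is exactly the admissible locus at parameter $m\delta$; hence \eqref{eqn3.7} holds with $\delta$ replaced by $m\delta$. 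Since taking the $z$-coordinate is linear, reading off these coordinates also gives $\partial\widetilde{h} = c_1\xi + c_2\zeta$.

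For \eqref{eqn3.8}, the one point that needs care: each generating identification in \eqref{eqn3.3}, pushed forward through any M{\"o}bius honeycomb, realises the map \eqref{eqn3.5} or its inverse on $B$, and both of these affine maps have the shape $\widetilde{x}\mapsto L\widetilde{x}+\delta\,v$ with $L$ the fixed transposition of the first two coordinates (independent of $\delta$, and satisfying $L^2=\mathrm{id}$) and $v\in\Z^3$ a fixed vector — this is what Lemma \ref{lemZ.1} records. Consequently, if $\widetilde{P}_1\sim\widetilde{P}_2$ in $V_{\widetilde{\Gamma}_n}$, then chaining such steps yields a fixed linear map $L_{12}\in\{\mathrm{id},L\}$ and a fixed $w_{12}\in\Z^3$, depending only on $\widetilde{P}_1,\widetilde{P}_2$ (the number and signs of the generating steps along the chain are combinatorial, hence honeycomb-independent, and $\sim$ on $B$ is an equivalence relation), with $\widetilde{h}_k(\widetilde{P}_2) = L_{12}\,\widetilde{h}_k(\widetilde{P}_1) + \delta\,w_{12}$ for $k=1,2$. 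Taking the $(c_1,c_2)$-combination gives $\widetilde{h}(\widetilde{P}_2) = L_{12}\,\widetilde{h}(\widetilde{P}_1) + m\delta\,w_{12}$, which is precisely the statement that $\widetilde{h}(\widetilde{P}_1)\sim\widetilde{h}(\widetilde{P}_2)$ for the relation \eqref{eqn3.5} at parameter $m\delta$. Therefore $\widetilde{h}\in{\tt M \ddot{O} BIUS}(\widetilde{\tau}_n,m\delta)$ with $\partial\widetilde{h}=c_1\xi+c_2\zeta$, as claimed.

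The proof is essentially bookkeeping and I do not anticipate a genuine obstacle. The only conceptual point — used for \eqref{eqn3.8} and implicitly for \eqref{eqn3.7} — is the observation that the gluing data scales linearly in $\delta$ while its linear part is $\delta$-independent; this is exactly what forces the new parameter to be $m\delta$ rather than $\delta$, and it is the analogue, in this Möbius setting, of the homogeneity that makes $\tfrac1k$ of an ordinary honeycomb an ordinary honeycomb.
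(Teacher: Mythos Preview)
Your proof is correct and follows essentially the same route as the paper: verify \eqref{eqn3.9} via Lemma~\ref{lem2.1}, verify \eqref{eqn3.7} by direct computation on the boundary segments, and verify \eqref{eqn3.8} using the explicit form of the gluing map recorded in Lemma~\ref{lemZ.1}. Your treatment of \eqref{eqn3.8} is phrased slightly more abstractly (isolating the ``linear part $L$ plus $\delta$-offset'' structure and observing the offsets are honeycomb-independent), whereas the paper writes out the coordinates from Lemma~\ref{lemZ.1} explicitly, but the content is identical.
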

\begin{proof}
Due to Lemma \ref{lem2.1}, $\widetilde{h}$ is a configuration, satisfying \eqref{eqn3.9}.

To check \eqref{eqn3.7}, recall for each $1\leq j \leq n$, there exists $4\delta \leq \xi_j , \zeta_j \leq 5\delta$ such that $\widetilde{h}_1(\widetilde{A}_{0,j}) = (-2\delta , 2\delta - \xi_j , \xi_j )$ and $\widetilde{h}_1(\widetilde{A}_{0,j}) = (-2\delta , 2\delta - \zeta_j , \zeta_j )$. Then $\widetilde{h}(\widetilde{A}_{0,j}) = (-2m\delta , 2m\delta - (c_1 \xi_j + c_2 \zeta) , (c_1 \xi_j+ c_2 \zeta_j) )$, where $4m\delta \leq (c_1 \xi_j + c_2 \zeta) \leq 5m\delta$. Similarly, we may check the cases when $n+1 \leq j \leq 2n$ and $2n+1 \leq j \leq 3n$, concluding that $\widetilde{h}$ satisfies \eqref{eqn3.7} for $m\delta$. Moreover, $\partial \widetilde{h}  = c_1 \xi + c_2 \zeta$.

To check \eqref{eqn3.8}, let $(x_l , y_l , z_l):=\widetilde{h}_l (\widetilde{A}_{i,j})$ for $l=1,2$. Using Lemma \ref{lemZ.1},
\begin{align}
\widetilde{h}_l(\widetilde{A}_{i,j+3k}) & =(x_l+ 3k \delta , \hspace{0.2em} y_l+3k\delta , \hspace{0.2em} z_l-6k\delta), \\
\widetilde{h}_l(\widetilde{B}_{-i+n, -i+j+2n+3k}) &= (y_l+(3k-2)\delta , \hspace{0.2em} x_l+(3k-1)\delta , \hspace{0.2em} z_l-(6k-3)\delta ).
\end{align}

Write $(x,y,z) = c_1 \cdot (x_1 , y_1 , z_1) + c_2 \cdot (x_2, y_2 , z_2)$. Using $\widetilde{h} = c_1 \cdot \widetilde{h}_1 + c_2 \cdot \widetilde{h}_2 $,
\begin{align}
\widetilde{h}(\widetilde{A}_{i,j+3k}) & =(x+ 3k m\delta , \hspace{0.2em} y+3k m\delta , \hspace{0.2em} z-6k m\delta), \\
\widetilde{h}(\widetilde{B}_{-i+n, -i+j+2n+3k}) &= (y+(3k-2)m\delta , \hspace{0.2em} x+(3k-1)m\delta , \hspace{0.2em} z-(6k-3)m\delta ).
\end{align}
This proves that $\widetilde{h}$ satisfies \eqref{eqn3.8}.
\end{proof}

Recall that in \eqref{eqn3.26} and \eqref{eqn3.28}, we define ${\sf length}$ and ${\sf perimeter}$ for $\widetilde{h} \in \MOB$. We want to generalize the concepts by $\MOB \subseteq B^{V_{\widetilde{\Gamma}_n}}$. Let $\widetilde{e} \in E_{\widetilde{\Gamma}_n}$. For each $\widetilde{h} \in B^{V_{\widetilde{\Gamma}_n}}$, define
\begin{equation}\label{eqnZ.7}
\widehat{{\sf length}}(\widetilde{h} ; \widetilde{e}):= \frac{1}{2} \left( \widetilde{h}({\sf head}(\widetilde{e})) - \widetilde{h}({\sf tail}(\widetilde{e})) \right) \cdot d(\widetilde{e})
\end{equation}
Let $\widetilde{e}_1, \cdots , \widetilde{e}_6$ be six edges surrounding $\widetilde{\alpha} \in H_{\widetilde{\Gamma}_n}$. For each $\widetilde{h} \in B^{V_{\widetilde{\Gamma}_n}}$, define
\begin{equation}
\widehat{{\sf perimeter}}(\widetilde{h} ; \widetilde{\alpha}):= \sum_{i=1}^6 \widehat{{\sf length}}(\widetilde{h} ; \widetilde{e}_i).
\end{equation}

Lastly, define for each $\widetilde{h} \in B^{V_{\widetilde{\Gamma}_n}}$
\begin{equation}
\widehat{\partial} (\widetilde{h}) := \left( \widetilde{h}(\widetilde{A}_{0,1}) \cdot (0,0,1) , \hspace{0.5em} \widetilde{h}(\widetilde{A}_{0,2}) \cdot (0,0,1), \cdots , \widetilde{h}(\widetilde{A}_{0,3n}) \cdot (0,0,1) \right).
\end{equation}

\begin{lemma}\label{lemZ.2}
Let $\widetilde{e} \in E_{\widetilde{\Gamma}_n}$, $\widetilde{\alpha} \in H_{\widetilde{\Gamma}_n}$ and $\widetilde{h} \in B^{V_{\widetilde{\Gamma}_n}}$.
\begin{enumerate}
\item The maps below are $\R$-linear.
\begin{subequations}
\begin{equation}
B^{V_{\widetilde{\Gamma}_n}} \rightarrow \R , \hspace{0.5em} \widetilde{h} \mapsto \widehat{{\sf length}}(\widetilde{h} ; \widetilde{e}),
\end{equation}
\begin{equation}
B^{V_{\widetilde{\Gamma}_n}} \rightarrow \R , \hspace{0.5em} \widetilde{h} \mapsto \widehat{{\sf perimeter}}(\widetilde{h} ; \widetilde{\alpha}),
\end{equation}
\begin{equation}
\widehat{\partial}  : B^{V_{\widetilde{\Gamma}_n}} \rightarrow \R.
\end{equation}
\end{subequations}
\item If $\widetilde{h} \in \MOB$, then
\begin{subequations}
\begin{equation}
\widehat{{\sf length}}(\widetilde{h} ; \widetilde{e}) = {\sf length}(\widetilde{h} ; \widetilde{e}),
\end{equation}
\begin{equation}
\widehat{{\sf perimeter}}(\widetilde{h} ; \widetilde{\alpha}) = {\sf perimeter}(\widetilde{h} ; \widetilde{\alpha}),
\end{equation}
\begin{equation}
\widehat{\partial}(\widetilde{h}) = \partial (\widetilde{h}).
\end{equation}
\end{subequations}
\end{enumerate}
\end{lemma}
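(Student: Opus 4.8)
The plan is to treat the two parts separately: part (1) is a statement about all of $B^{V_{\widetilde{\Gamma}_n}}$ and uses only the (componentwise) vector space structure, whereas part (2) invokes the defining property \eqref{eqn3.9} of a M\"obius honeycomb together with one trivial norm computation.

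For part (1), I would first record that for each fixed $\widetilde{v}\in V_{\widetilde{\Gamma}_n}$ the evaluation map $B^{V_{\widetilde{\Gamma}_n}}\to B$, $\widetilde{h}\mapsto\widetilde{h}(\widetilde{v})$, is $\R$-linear, by the componentwise vector space structure on $B^{V_{\widetilde{\Gamma}_n}}$ (cf.\ \eqref{eqn2.13}). Consequently $\widetilde{h}\mapsto\widetilde{h}({\sf head}(\widetilde{e}))-\widetilde{h}({\sf tail}(\widetilde{e}))$ is $\R$-linear into $B$, and postcomposing with the fixed linear functional $v\mapsto\tfrac12\,v\cdot d(\widetilde{e})$ on $B$ shows $\widetilde{h}\mapsto\widehat{{\sf length}}(\widetilde{h};\widetilde{e})$ is $\R$-linear. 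Then $\widetilde{h}\mapsto\widehat{{\sf perimeter}}(\widetilde{h};\widetilde{\alpha})$ is a sum of six such maps, hence $\R$-linear, and $\widehat{\partial}$ is $\R$-linear coordinatewise, each coordinate being evaluation at $\widetilde{A}_{0,j}$ followed by the linear functional $(x,y,z)\mapsto z=(x,y,z)\cdot(0,0,1)$.

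For part (2), fix $\widetilde{h}\in\MOB$. For any $\widetilde{e}\in E_{\widetilde{\Gamma}_n}$, \eqref{eqn3.9} gives $\widetilde{h}({\sf head}(\widetilde{e}))-\widetilde{h}({\sf tail}(\widetilde{e}))=a\,d(\widetilde{e})$ with $a\ge0$. Since $d(\widetilde{e})\in\{(0,-1,1),(1,0,-1),(-1,1,0)\}$, in every case $d(\widetilde{e})\cdot d(\widetilde{e})=2$; thus $\widehat{{\sf length}}(\widetilde{h};\widetilde{e})=\tfrac12(a\,d(\widetilde{e}))\cdot d(\widetilde{e})=a$, while by \eqref{eqn3.20} and \eqref{eqn3.26} we get ${\sf length}(\widetilde{h};\widetilde{e})=\tfrac1{\sqrt2}\lVert a\,d(\widetilde{e})\rVert=\tfrac1{\sqrt2}\,a\sqrt2=a$, using $a\ge0$. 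Hence $\widehat{{\sf length}}={\sf length}$ on $\MOB$, and summing over the six edges surrounding $\widetilde{\alpha}$ gives $\widehat{{\sf perimeter}}={\sf perimeter}$. Finally $\widehat{\partial}(\widetilde{h})=\partial(\widetilde{h})$ because the $j$th coordinate of $\widehat{\partial}(\widetilde{h})$ is $\widetilde{h}(\widetilde{A}_{0,j})\cdot(0,0,1)$, i.e.\ the $z$-coordinate of $\widetilde{h}(\widetilde{A}_{0,j})$, which is exactly the value $\xi_j$ recorded by $\partial$ (via \eqref{eqn3.7}).

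I do not anticipate a genuine obstacle: the lemma is deliberately a bookkeeping device whose purpose is to exhibit $\widehat{{\sf length}}$, $\widehat{{\sf perimeter}}$, $\widehat{\partial}$ as honest linear functionals on the ambient vector space---so that later statements may speak of convex polytopes (Lemma~\ref{lem4.4}) and of convex combinations (Lemma~\ref{lem3.4})---which nonetheless agree on $\MOB$ with the geometrically defined quantities. The only computation worth isolating is $d(\widetilde{e})\cdot d(\widetilde{e})=2$ for all three edge directions, which is precisely what makes the normalization $\tfrac12$ in \eqref{eqnZ.7} compatible with the factor $\tfrac1{\sqrt2}$ in \eqref{eqn3.20}.
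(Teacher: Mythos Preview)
Your proposal is correct and is precisely an unpacking of what the paper records as ``Straightforward from the definitions.'' The only computation of substance you isolate, namely $d(\widetilde{e})\cdot d(\widetilde{e})=2$, is exactly the reason the normalizations in \eqref{eqnZ.7} and \eqref{eqn3.20} match, so your argument is the intended one spelled out in full.
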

\begin{proof}
Straightforward from the definitions.
\end{proof}

\begin{lemma}\label{lemZ.3}
Let $n , \delta \in \N$. Then the following map is $\R$-linear:
\begin{equation}
\widehat{\iota} :  B^{V_{\widetilde{\Gamma}_n}} \rightarrow \R^{\frac{3}{2}n(n-1)} \times \R^{3n}, \quad \widetilde{h}  \mapsto \left( (p_{i,j})_{1 \leq i \leq n-1, \hspace{0.2em} 1 \leq j \leq n+i} , (\xi_j)_{1\leq j \leq 3n} \right),
\end{equation}
where $p_{i,j} := \widehat{{\sf perimeter}}(\widetilde{h} ; \widetilde{\alpha}_{i,j})$ and $(\xi_j)_{1\leq j \leq 3n} := \widehat{\partial} ( \widetilde{h})$. In particular, if $\widetilde{h} \in \MOB$, then
\begin{equation}
\widehat{\iota}(\widetilde{h}) = \iota (\widetilde{h}).
\end{equation}
\end{lemma}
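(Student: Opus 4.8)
The plan is to reduce the statement entirely to Lemma~\ref{lemZ.2}, which already carries all the content. First I would invoke the elementary fact that a map $f=(f_1,\dots,f_m)\colon W\to W_1\times\cdots\times W_m$ between $\R$-vector spaces is $\R$-linear if and only if each coordinate $f_k\colon W\to W_k$ is $\R$-linear. Here $W=B^{V_{\widetilde{\Gamma}_n}}$, and the coordinates of $\widehat{\iota}$ are precisely the maps $\widetilde{h}\mapsto\widehat{{\sf perimeter}}(\widetilde{h};\widetilde{\alpha}_{i,j})$ for the finitely many hexagons indexed by $1\le i\le n-1$, $1\le j\le n+i$, together with the coordinates $\widetilde{h}\mapsto\widetilde{h}(\widetilde{A}_{0,j})\cdot(0,0,1)$, $1\le j\le 3n$, of $\widehat{\partial}$. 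By part~(1) of Lemma~\ref{lemZ.2} each of these coordinate maps is $\R$-linear (in the last case it is evaluation at a vertex followed by a fixed linear functional), so $\widehat{\iota}$ is $\R$-linear; that settles the first assertion.

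For the identity $\widehat{\iota}(\widetilde{h})=\iota(\widetilde{h})$ on $\MOB$, I would unwind the two definitions side by side. In Lemma~\ref{lemA.1} the $(i,j)$-entry of $\iota(\widetilde{h})$ is ${\sf perimeter}(\widetilde{h};\widetilde{\alpha}_{i,j})$, ranging over the same index set $1\le i\le n-1$, $1\le j\le n+i$, and the last $3n$ entries form $\partial\widetilde{h}$. Part~(2) of Lemma~\ref{lemZ.2} states exactly that for $\widetilde{h}\in\MOB$ one has $\widehat{{\sf perimeter}}(\widetilde{h};\widetilde{\alpha}_{i,j})={\sf perimeter}(\widetilde{h};\widetilde{\alpha}_{i,j})$ for every hexagon and $\widehat{\partial}(\widetilde{h})=\partial\widetilde{h}$, so the two tuples agree entry by entry and hence $\widehat{\iota}(\widetilde{h})=\iota(\widetilde{h})$.

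There is no genuine obstacle here; the lemma is bookkeeping whose substance lives in Lemma~\ref{lemZ.2} and ultimately in the linearity of $\widehat{{\sf length}}$ built into \eqref{eqnZ.7}. The one point worth a moment's attention is to confirm that the hexagons used to build $\widehat{\iota}$ — the chosen representatives $\widetilde{\alpha}_{i,j}$ with $1\le i\le n-1$, $1\le j\le n+i$ — are literally those used to define $\iota$ in Lemma~\ref{lemA.1}, so that the two maps have identical coordinate functions and the comparison above is valid. I would also stress that $\widehat{{\sf perimeter}}$ is \emph{not} claimed to descend to $H_{\Gamma_n}$ for arbitrary $\widetilde{h}\in B^{V_{\widetilde{\Gamma}_n}}$: we only evaluate it on these fixed representatives, and well-definedness on equivalence classes (equation~\eqref{eqn4.5}) is invoked only after restricting to $\MOB$.
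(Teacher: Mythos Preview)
Your proposal is correct and mirrors the paper's own proof, which simply reads ``Direct consequences from Lemma~\ref{lemZ.2}.'' You have merely made explicit the two invocations of Lemma~\ref{lemZ.2} (part~(1) for linearity of each coordinate, part~(2) for agreement with $\iota$ on $\MOB$) that the paper leaves implicit.
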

\begin{proof}
Direct consequences from Lemma \ref{lemZ.2}.
\end{proof}

\begin{lemma}\label{lemZ.6}
Let $n,\delta \in \N$ and $\widetilde{h} \in \MOB$. Then we have
\begin{equation}
p_e(\widetilde{e}_1) = p_e(\widetilde{e}_2) \quad \Rightarrow \quad {\sf length}(\widetilde{h}; \widetilde{e}_1) = {\sf length}(\widetilde{h}; \widetilde{e}_2).
\end{equation}
\end{lemma}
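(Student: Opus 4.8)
The plan is to follow the template of the proof of Lemma \ref{lemZ.5}: unwind the hypothesis $p_e(\widetilde{e}_1)=p_e(\widetilde{e}_2)$ into the two cases coming from the definition of $\equiv$, apply Lemma \ref{lemZ.1} to the pairs of $\sim$-equivalent endpoints, and observe that the resulting change of coordinates on $B$ is an isometry for the metric $l$ of \eqref{eqn3.20}. Intuitively, the edges of $\widetilde{\Gamma}_n$ sitting over a single edge of $\Gamma_n$ form one orbit of the deck transformation gluing $\widetilde{B}_\delta$ into $B_\delta$, and that gluing map is a glide reflection of $B$, hence length-preserving.

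Concretely, write $\widetilde{e}_i=({\sf tail}(\widetilde{e}_i),{\sf head}(\widetilde{e}_i))$. Since $\widetilde{e}_1\equiv\widetilde{e}_2$, either (i) ${\sf tail}(\widetilde{e}_1)\sim{\sf tail}(\widetilde{e}_2)$ and ${\sf head}(\widetilde{e}_1)\sim{\sf head}(\widetilde{e}_2)$, or (ii) ${\sf tail}(\widetilde{e}_1)\sim{\sf head}(\widetilde{e}_2)$ and ${\sf head}(\widetilde{e}_1)\sim{\sf tail}(\widetilde{e}_2)$. In either case, exactly as in \eqref{eqnA.6} and \eqref{eqnA.7} of the proof of Lemma \ref{lemZ.5}, Lemma \ref{lemZ.1} furnishes a single integer $k$ and a single affine map $T\colon B\to B$ --- either the translation $(x,y,z)\mapsto(x+3k\delta,\,y+3k\delta,\,z-6k\delta)$ or the map $(x,y,z)\mapsto(y+(3k-2)\delta,\,x+(3k-1)\delta,\,z-(6k-3)\delta)$ --- such that $\widetilde{h}$ carries each endpoint of $\widetilde{e}_1$ to the $T$-image of the $\sim$-equivalent endpoint of $\widetilde{e}_2$ (with tails and heads interchanged in case (ii)). The consistency of $k$ across the two endpoints of the edge is forced by Lemma \ref{lem3.1}, which locates both endpoints in prescribed rhombi of the form $D_\delta^{(c)}$, precisely as in Lemma \ref{lemZ.5}.

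The one new ingredient is that every such $T$ is an isometry of $(B,l)$: the translational part is irrelevant to $l$, and the linear part of $T$ is either the identity or the transposition $(x,y,z)\mapsto(y,x,z)$, each of which preserves $x^2+y^2+z^2$ and hence the metric $l$ of \eqref{eqn3.20}. Therefore, in case (i),
\begin{align*}
{\sf length}(\widetilde{h};\widetilde{e}_2)
&= l\bigl(\widetilde{h}({\sf head}(\widetilde{e}_2)),\widetilde{h}({\sf tail}(\widetilde{e}_2))\bigr)
= l\bigl(T(\widetilde{h}({\sf head}(\widetilde{e}_1))),T(\widetilde{h}({\sf tail}(\widetilde{e}_1)))\bigr)\\
&= l\bigl(\widetilde{h}({\sf head}(\widetilde{e}_1)),\widetilde{h}({\sf tail}(\widetilde{e}_1))\bigr)
= {\sf length}(\widetilde{h};\widetilde{e}_1),
\end{align*}
and case (ii) is identical after exchanging the words ``head'' and ``tail'' in one argument of $l$, which is harmless since $l$ is symmetric. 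I do not anticipate a genuine obstacle here; the only point demanding care is the bookkeeping that the same $T$ (equivalently, the same $k$) governs both endpoints of an edge, and this is handled verbatim as in the proof of Lemma \ref{lemZ.5}.
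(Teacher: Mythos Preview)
Your proposal is correct and follows essentially the same approach as the paper: both proofs reduce to Lemma~\ref{lemZ.1}, obtaining the explicit formulas \eqref{eqnA.6} or \eqref{eqnA.7} for the images of the endpoints of $\widetilde{e}_2$, and then conclude the length equality (you make explicit the observation that the associated affine map $T$ is an isometry of $(B,l)$, which the paper leaves implicit). One cosmetic slip: the direction of $T$ in your prose (``$\widetilde{h}$ carries each endpoint of $\widetilde{e}_1$ to the $T$-image of the \ldots endpoint of $\widetilde{e}_2$'') is opposite to that in your displayed computation, but since $T^{-1}$ is an isometry of the same type this is harmless.
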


\begin{proof}
Let $\widetilde{e}_1 = (\widetilde{P}_1 , \widetilde{Q}_1)$ and $\widetilde{e}_2 = (\widetilde{P}_2 , \widetilde{Q}_2)$. Write $(x_P, y_P , z_P):= \widetilde{h}(\widetilde{P}_1)$ and $(x_Q , y_Q , z_Q):= \widetilde{h}(\widetilde{Q}_1)$. Using Lemma \ref{lemZ.1}, we have \eqref{eqnA.6} or \eqref{eqnA.7}. Either way, we have ${\sf length}(\widetilde{h}; \widetilde{e}_1) = {\sf length}(\widetilde{h}; \widetilde{e}_2)$.
\end{proof}

Using Lemma \ref{lemZ.6}, the map
\begin{equation}
E_{\Gamma_n} \rightarrow \R , \quad e \mapsto {\sf length}(\widetilde{h} ; \widetilde{e}),
\end{equation}
where $p_e(\widetilde{e}) =e$, is well-defined.

\begin{lemma}\label{lem5.1}
Let $\delta \in \N$ and $\widetilde{h} \in \MOB$ and $ (\xi_1 , \cdots , \xi_{3n}) := \partial \widetilde{h}$. Then
\begin{equation}\label{eqn5.1}
\sum_{e \in E_{\Gamma_n}} {\sf length}(\widetilde{h} ; \widetilde{e})= \frac{1}{2}\sum_{1\leq j \leq 3n}\xi_j,
\end{equation}
where $p_e(\widetilde{e}) = e$.
\end{lemma}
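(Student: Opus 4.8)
The plan is to compute the sum on the left-hand side by grouping edges according to the lattice line of $B$ that carries their image under $\widetilde{h}$, and to recognize each group as a "slice" whose total length telescopes to a difference of two boundary coordinates. More concretely, I would first pass from $\Gamma_n$ back to $\widetilde{\Gamma}_n$: by Lemma~\ref{lemZ.6} the quantity $\sum_{e\in E_{\Gamma_n}}{\sf length}(\widetilde{h};\widetilde{e})$ is exactly the sum of ${\sf length}(\widetilde{h};\widetilde{e})$ over one copy of the fundamental domain for the $\sim$-action on $E_{\widetilde{\Gamma}_n}$, e.g.\ over the $3n$ diagonal paths ${\sf DPath}_j^{(k)}$ for a fixed choice of representatives $k=k(j)$. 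So it suffices to show that, summing over one representative diagonal path for each $j$, the total edge length equals $\tfrac12\sum_j\xi_j$.

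The key step is the following telescoping observation on a single diagonal path. Fix $j$ and let ${\sf DPath}_j^{(k)}=(\widetilde{W}_1,\dots,\widetilde{W}_{2n+2})$ with $\widetilde{W}_1=\widetilde{A}_{0,j+3k}$ a boundary vertex. By \eqref{eqn3.9} the consecutive displacement vectors $\widetilde{h}(\widetilde{W}_{i+1})-\widetilde{h}(\widetilde{W}_i)$ alternate between two of the three direction vectors $(0,-1,1),(1,0,-1),(-1,1,0)$; in particular the $z$-coordinates of $\widetilde{h}(\widetilde{W}_i)$ change only across edges of direction $(0,-1,1)$ or $(-1,1,0)$, and along the whole path these coordinate changes, together with ${\sf length}$, combine so that
\[
\sum_{i=1}^{2n+1}{\sf length}(\widetilde{h};\{\widetilde{W}_i,\widetilde{W}_{i+1}\})
= \bigl(\text{a fixed linear combination of the endpoint coordinates}\bigr).
\]
Here one uses that each ${\sf length}$ is the nonnegative scalar $a$ with $\widetilde{h}({\sf head})-\widetilde{h}({\sf tail})=a\cdot d(\widetilde{e})$, that $d(\widetilde{e})$ has entries summing to $0$ and norm $\sqrt2$ (so $l$ is normalized as in \eqref{eqn3.20}), and that the two direction vectors occurring on a given diagonal path each contribute $+1$ to one coordinate per unit length. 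Tracking the $z$-coordinate specifically: along the path the $z$-coordinate increases by ${\sf length}$ across each $(0,-1,1)$-edge and each $(-1,1,0)$-edge and is constant across $(1,0,-1)$-edges; but every second edge of the path is a $(1,0,-1)$-edge, so this only accounts for the edges of one parity. The cleanest route is instead to use $\widehat{{\sf length}}$ from \eqref{eqnZ.7} and Lemma~\ref{lemZ.2}: sum $2\,{\sf length}(\widetilde{h};\widetilde{e})=(\widetilde{h}({\sf head}(\widetilde{e}))-\widetilde{h}({\sf tail}(\widetilde{e})))\cdot d(\widetilde{e})$ over the path, expand the dot products, and let the interior vertex contributions cancel in pairs (each interior $\widetilde{W}_i$ meets two path edges whose direction vectors, dotted against $\widetilde{h}(\widetilde{W}_i)$, sum to zero because the two outgoing/incoming directions at a vertex of $\widetilde{\Gamma}_n$ span the same line structure). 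What survives is a contribution only from $\widetilde{W}_1=\widetilde{A}_{0,j+3k}$ and $\widetilde{W}_{2n+2}=\widetilde{B}_{n,\cdot}$, both boundary vertices; by \eqref{eqn3.7} and the definition of $\partial$ this surviving term is precisely $\xi_j$ (the $z$-coordinate of $\widetilde{h}(\widetilde{A}_{0,j+3k})$, which equals $\xi_j$ by \eqref{eqn3.8} and Lemma~\ref{lemZ.1}), up to a sign and a $\delta$-shift that is killed because the two boundary pieces lie on opposite ends. Hence $\sum_{\text{edges of }{\sf DPath}_j}2\,{\sf length}=\xi_j$, i.e.\ the path contributes $\tfrac12\xi_j$.

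Finally I would assemble: the $3n$ diagonal paths ${\sf DPath}_j^{(k(j))}$, $1\le j\le 3n$, use each edge of $E_{\Gamma_n}$ exactly once (this is the combinatorial content of \eqref{eqn3.23} — every edge of $\Gamma_n$ lies on exactly one diagonal path through its "long" diagonal direction), so summing the per-path identity over $j$ gives $2\sum_{e\in E_{\Gamma_n}}{\sf length}(\widetilde{h};\widetilde{e})=\sum_{j=1}^{3n}\xi_j$, which is \eqref{eqn5.1}. The main obstacle I anticipate is purely bookkeeping: verifying that the diagonal paths partition $E_{\Gamma_n}$ (no edge omitted or double-counted) and getting the boundary term on each path to come out to exactly $\xi_j$ rather than $\xi_j$ plus some multiple of $\delta$ — this requires being careful with the identifications \eqref{eqn3.5}, \eqref{eqn3.8} and the exact indexing of which boundary segment of $\widetilde B_\delta$ each $\widetilde A_{0,j}$ lands on, as recorded in \eqref{eqn3.7} and Lemma~\ref{lem3.1}. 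Once the interior cancellation is set up via $\widehat{{\sf length}}$ and Lemma~\ref{lemZ.2}, the rest is routine.
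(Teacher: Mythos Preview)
Your approach has two concrete errors that are not just bookkeeping.

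First, the $3n$ diagonal paths do \emph{not} partition $E_{\Gamma_n}$. Each ${\sf DPath}_j$ consists of $n+1$ vertical edges (direction $(-1,1,0)$) and $n$ non-vertical edges, so the $3n$ paths contribute $3n(n+1)$ vertical edge-uses. But $\Gamma_n$ has only $\tfrac{3n(n+1)}{2}$ vertical edges (count from \eqref{eqn3.23}), so every vertical edge is traversed \emph{twice}; non-vertical edges are traversed once. This is exactly what the paper exploits: it computes the sum over the $3n$ diagonal paths and separately computes the total non-vertical length (which is $3n\delta$, a geometric fact read off from Figure~\ref{fig9}), then solves for the vertical total from $2V+N=\sum_j(\text{path}_j)$.

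Second, your per-path total is wrong. The interior cancellation you sketch fails: at an interior vertex of the path the two adjacent direction vectors are $(-1,1,0)$ and $(0,-1,1)$, which sum to $(-1,0,1)\neq 0$, so the dotted contributions do \emph{not} cancel. The correct telescoping (the paper's) dots everything with $(0,1,0)$, giving
\[
\sum_{\widetilde{e}\in {\sf DPath}_j}{\sf length}(\widetilde{h};\widetilde{e})=\bigl(\widetilde{h}(\widetilde{B}_{n,j+n})-\widetilde{h}(\widetilde{A}_{0,j})\bigr)\cdot(0,1,0),
\]
which evaluates to $\xi_j-3\delta$, $\xi_j-\delta$, or $\xi_j+\delta$ according to whether $1\le j\le n$, $n+1\le j\le 2n$, or $2n+1\le j\le 3n$ (using \eqref{eqn3.7} and Lemma~\ref{lemZ.1}); it is never $\tfrac12\xi_j$. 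Summing over $j$ gives $\sum_j\xi_j-3n\delta$, and the factor $\tfrac12$ in \eqref{eqn5.1} emerges only after correcting for the double-counted vertical edges and the separately computed non-vertical total $3n\delta$. Your anticipated ``$\delta$-shift that is killed'' is real and does not disappear at the level of a single path; it cancels only in the final assembly.
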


\begin{proof}
In $\Gamma_n$, there are two types of edges: vertical edges and non-vertical edges. Let $e \in E_{\Gamma_n}$ and $\widetilde{e} \in E_{\widetilde{\Gamma}_n}$ satisfying $p_e (\widetilde{e}) = e$. If $e$ is vertical, then $\widetilde{e}$ is a vertical representative. Otherwise, $\widetilde{e}$ is a non-vertical representative.
	
Note that $\widetilde{B}_{\delta}$ in Figure \ref{fig9} is just two copies of $B_{\delta}$ in Figure \ref{fig11}. If we collect all of the non-vertical edges in Figure \ref{fig9}, then the sum of lengths is $6n\delta$. Therefore, the sum of lengths of non-vertical representatives is $3n\delta$.

Next, consider the $j$th diagonal path ${\sf DPath}_j^{(0)}$ which is
\begin{equation}
{\sf DPath}_j^{(0)} = (\widetilde{A}_{0,j} , \widetilde{B}_{0,j} , \widetilde{A}_{1,j+1}, \widetilde{B}_{1,j+1}, \cdots, \widetilde{A}_{n,j+n}, \widetilde{B}_{n,j+n}).
\end{equation}
We want to compute the length of edges consisting a $j$th diagonal path ${\sf DPath}_j^{(k)}$. Due to Lemma \ref{lemZ.6}, it is sufficient to compute ${\sf DPath}_j^{(0)}$ \textit{i.e.}
\begin{equation}\label{eqnA.8}
\sum_{i=0}^n {\sf length}\left( \widetilde{h} ; (\widetilde{A}_{i,i+j},\widetilde{B}_{i,i+j}) \right) + \sum_{i=1}^{n} {\sf length} \left( \widetilde{h} ; (\widetilde{A}_{i,i+j},\widetilde{B}_{i-1,i+j-1}) \right).
\end{equation}
Since $d \left( (\widetilde{A}_{i,i+j},\widetilde{B}_{i,i+j}) \right) = (-1,1,0)$, $\widetilde{h}(\widetilde{B}_{i,i+j})-\widetilde{h}(\widetilde{A}_{i,i+j}) = (-c,c,0)$ for some $c \geq 0$. Then
\begin{equation}
{\sf length}\left( \widetilde{h} ; (\widetilde{A}_{i,i+j},\widetilde{B}_{i,i+j}) \right) = \left( \widetilde{h}(\widetilde{B}_{i,i+j})-\widetilde{h}(\widetilde{A}_{i,i+j}) \right) \cdot (0,1,0).
\end{equation} 
Similarly, we have
\begin{equation}
{\sf length} \left( \widetilde{h} ; (\widetilde{A}_{i,i+j},\widetilde{B}_{i-1,i+j-1}) \right) = \left( \widetilde{h}(\widetilde{A}_{i,i+j})-\widetilde{h}(\widetilde{B}_{i-1,i+j-1}) \right) \cdot (0,1,0).
\end{equation} 
Therefore, \eqref{eqnA.8} is simplified into
\begin{equation}
\left( \widetilde{h}(\widetilde{B}_{n,j+n}) - \widetilde{h}(\widetilde{A}_{0,j}) \right) \cdot (0,1,0).
\end{equation}
Here, $\widetilde{h}(\widetilde{A}_{0,j}) \cdot (0,1,0)$ can be computed from \eqref{eqn3.7}. Also, $\widetilde{h}(\widetilde{B}_{n,j+n}) \cdot (0,1,0)$ can be computed since $\widetilde{h}(\widetilde{B}_{n,j+n})$ is contained in one of the lines $(*, -\delta , *)$, $(*,0,*)$ or $(*, \delta , *)$ due to Lemma \ref{lemZ.1}. Indeed, this is depicted in depicted in Figure \ref{fig9}. Therefore, the length of $j$th diagonal path, \eqref{eqnA.8}, is $\xi_j - 3\delta$ if $1 \leq j\leq n$, $\xi_j - \delta$ if $n+1 \leq j\leq 2n$ and $\xi_j + \delta$ if $2n+1 \leq j\leq 3n$. 

To summarize, the sum of the lengths of the $j$th diagonal paths from $j=1$ to $j=3n$ is $-3n\delta+\sum_{j=1}^{3n} \xi_j$. During the calculation, a vertical representative occurs twice whereas a non-vertical representative occurs once. Therefore, the sum of lengths of vertical representatives is $-3n\delta+\frac{1}{2}\sum_{1\leq j \leq 3n}\xi_{j}$. In other words, the total length of $\widetilde{h}$ is $\frac{1}{2}\sum_{1\leq j \leq 3n}\xi_{j}$.
\end{proof}

\begin{lemma}\label{lemZ.7}
Let $\widetilde{e} \in E_{\widetilde{\Gamma}_n}$ and $\widetilde{f}_1^+ , \widetilde{f}_1^-, \widetilde{f}_2^+, \widetilde{f}_2^- \in E_{\widetilde{\Gamma}_n}$ be its adjacent edges assigned as in Figure \ref{fig60}. Let $\widetilde{h} \in \MOB$. Then
\begin{subequations}
\begin{equation}\label{eqnZ.3}
{\sf length}(\widetilde{h} ; \widetilde{e}) = {\sf const}(\widetilde{h} ; \widetilde{f}_1^-) - {\sf const}(\widetilde{h} ; \widetilde{f}_2^+) ,
\end{equation}
\begin{equation}\label{eqnZ.4}
{\sf length}(\widetilde{h} ; \widetilde{e})  = {\sf const}(\widetilde{h} ; \widetilde{f}_2^-) - {\sf const}(\widetilde{h} ; \widetilde{f}_1^+),
\end{equation}
\begin{equation}\label{eqnZ.5}
{\sf length}(\widetilde{h} ; \widetilde{e}) = {\sf const}(\widetilde{h} ; \widetilde{f}_1^-) + {\sf const}(\widetilde{h} ; \widetilde{f}_2^-) + {\sf const}(\widetilde{h} ; \widetilde{e}),
\end{equation}
\begin{equation}\label{eqnZ.6}
{\sf length}(\widetilde{h} ; \widetilde{e}) = -{\sf const}(\widetilde{h} ; \widetilde{f}_1^+) - {\sf const}(\widetilde{h} ; \widetilde{f}_2^+) - {\sf const}(\widetilde{h} ; \widetilde{e}).
\end{equation}
\end{subequations}
\end{lemma}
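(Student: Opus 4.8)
The plan is to rewrite both $\widetilde{h}({\sf head}(\widetilde{e}))$ and $\widetilde{h}({\sf tail}(\widetilde{e}))$ as coordinate triples read off from the ${\sf const}$-values of their incident edges, and then to compare the two triples. The basic tool is the following: if $\widetilde{W}\in V_{\widetilde{\Gamma}_n}$ is not a boundary vertex, its three incident edges carry the three directions $(0,-1,1)$, $(1,0,-1)$, $(-1,1,0)$, and for each such edge $\widetilde{g}$ the condition \eqref{eqn3.9} forces $\widetilde{h}({\sf head}(\widetilde{g}))-\widetilde{h}({\sf tail}(\widetilde{g}))$ to be parallel to $d(\widetilde{g})$; hence one coordinate of $\widetilde{h}(\widetilde{W})$ is constant along $\widetilde{g}$, and by \eqref{eqnA.2} that constant is exactly ${\sf const}(\widetilde{h};\widetilde{g})$ — the $x$-, $y$-, or $z$-coordinate according as $d(\widetilde{g})$ is $(0,-1,1)$, $(1,0,-1)$, or $(-1,1,0)$. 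Thus $\widetilde{h}(\widetilde{W})=\bigl({\sf const}(\widetilde{h};\widetilde{g}_x),\,{\sf const}(\widetilde{h};\widetilde{g}_y),\,{\sf const}(\widetilde{h};\widetilde{g}_z)\bigr)$, where $\widetilde{g}_x,\widetilde{g}_y,\widetilde{g}_z$ are the incident edges of directions $(0,-1,1),(1,0,-1),(-1,1,0)$. First I would apply this at $\widetilde{W}={\sf head}(\widetilde{e})$ and at $\widetilde{W}={\sf tail}(\widetilde{e})$ (both non-boundary, since the four auxiliary edges are present): the labelling of $\widetilde{f}_1^{\pm}$, $\widetilde{f}_2^{\pm}$ in Figure \ref{fig60} — equivalently the coordinate formulas computed inside the proof of Lemma \ref{lem5.3} — tells us which of the four neighbours plays the role of $\widetilde{g}_x,\widetilde{g}_y,\widetilde{g}_z$ at each endpoint, so $\widetilde{h}({\sf head}(\widetilde{e}))$ and $\widetilde{h}({\sf tail}(\widetilde{e}))$ become explicit triples in ${\sf const}(\widetilde{h};\widetilde{e})$ and the ${\sf const}(\widetilde{h};\widetilde{f}_i^{\pm})$.

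Next I would subtract the two triples. By \eqref{eqn3.9} the difference $\widetilde{h}({\sf head}(\widetilde{e}))-\widetilde{h}({\sf tail}(\widetilde{e}))$ is a non-negative multiple of $d(\widetilde{e})$, and since $d(\widetilde{e})$ has squared Euclidean length $2$ while the metric $l$ of \eqref{eqn3.20} carries the factor $\tfrac{1}{\sqrt2}$, that multiple equals ${\sf length}(\widetilde{h};\widetilde{e})$ of \eqref{eqn3.26} exactly. Comparing coordinates, the coordinate ``owned'' by $\widetilde{e}$ cancels and the other two coordinates change by $+{\sf length}(\widetilde{h};\widetilde{e})$ and $-{\sf length}(\widetilde{h};\widetilde{e})$; reading off these two equalities produces \eqref{eqnZ.3} and \eqref{eqnZ.4}. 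I would carry this out in each of the three cases $d(\widetilde{e})\in\{(0,-1,1),(1,0,-1),(-1,1,0)\}$; in all three the same pair \eqref{eqnZ.3}, \eqref{eqnZ.4} comes out, which is exactly why the $+/-$ labels on $\widetilde{f}_i^{\pm}$ were fixed as in Figure \ref{fig60}.

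Finally, \eqref{eqnZ.5} and \eqref{eqnZ.6} follow with no further geometry from $x+y+z=0$ on $B$. Since $\widetilde{h}({\sf head}(\widetilde{e})),\,\widetilde{h}({\sf tail}(\widetilde{e}))\in B$, the three ${\sf const}$-values in each of the two triples sum to $0$, giving ${\sf const}(\widetilde{h};\widetilde{e})+{\sf const}(\widetilde{h};\widetilde{f}_1^+)+{\sf const}(\widetilde{h};\widetilde{f}_1^-)=0$ and ${\sf const}(\widetilde{h};\widetilde{e})+{\sf const}(\widetilde{h};\widetilde{f}_2^+)+{\sf const}(\widetilde{h};\widetilde{f}_2^-)=0$; feeding the first into \eqref{eqnZ.3} to eliminate ${\sf const}(\widetilde{h};\widetilde{f}_1^-)$ gives \eqref{eqnZ.6}, and feeding the second into \eqref{eqnZ.3} to eliminate ${\sf const}(\widetilde{h};\widetilde{f}_2^+)$ gives \eqref{eqnZ.5}. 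The only step needing care is the first one — tracking which of $\widetilde{f}_1^+,\widetilde{f}_1^-,\widetilde{f}_2^+,\widetilde{f}_2^-$ carries which edge-direction at each endpoint in each of the three orientations of $\widetilde{e}$ — but this is bookkeeping already displayed in Figure \ref{fig60}; the one conceptual ingredient is the normalization in \eqref{eqn3.20}, which is exactly what makes the proportionality constant come out as ${\sf length}(\widetilde{h};\widetilde{e})$ and not $\sqrt2\,{\sf length}(\widetilde{h};\widetilde{e})$.
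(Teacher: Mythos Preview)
Your proposal is correct and follows essentially the same route as the paper: write $\widetilde{h}({\sf head}(\widetilde{e}))$ and $\widetilde{h}({\sf tail}(\widetilde{e}))$ as triples of ${\sf const}$-values of the incident edges, subtract to read off \eqref{eqnZ.3} and \eqref{eqnZ.4} from the coordinate change along $d(\widetilde{e})$, then use $x+y+z=0$ to obtain \eqref{eqnZ.5} and \eqref{eqnZ.6}. The only cosmetic difference is that the paper argues ``without loss of generality $d(\widetilde{e})=(0,-1,1)$'' rather than writing out all three cases.
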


\begin{proof}
Without losing generality, assume that $d(\widetilde{e}) = (0,-1,1)$. Let ${\sf tail}(\widetilde{e}) = \widetilde{P}$ and ${\sf head}(\widetilde{e}) = \widetilde{Q}$. Then
\begin{subequations}
\begin{equation}
\widetilde{h}(\widetilde{P}) = ({\sf const}(\widetilde{h} ; \widetilde{e}), {\sf const}(\widetilde{h} ; \widetilde{f}_2^-), {\sf const}(\widetilde{h} ; \widetilde{f}_2^+)),
\end{equation}
\begin{equation}
\widetilde{h}(\widetilde{Q}) = ({\sf const}(\widetilde{h} ; \widetilde{e}), {\sf const}(\widetilde{h} ; \widetilde{f}_1^+), {\sf const}(\widetilde{h} ; \widetilde{f}_1^-)).
\end{equation}
\end{subequations}
From $d(\widetilde{e}) = (0,-1,1)$, there exists $c \geq 0$ such that $\widetilde{h}(\widetilde{Q}) - \widetilde{h}(\widetilde{P}) = (0,-c,c)$. In particular, $c = {\sf length}(\widetilde{h}; \widetilde{e})$. This proves \eqref{eqnZ.3} and \eqref{eqnZ.4}. Together with
\begin{subequations}
\begin{equation}
{\sf const}(\widetilde{h} ; \widetilde{f}_1^-) + {\sf const}(\widetilde{h} ; \widetilde{f}_1^+) + {\sf const}(\widetilde{h} ; \widetilde{e}) = 0,
\end{equation}
\begin{equation}
{\sf const}(\widetilde{h} ; \widetilde{f}_2^-) + {\sf const}(\widetilde{h} ; \widetilde{f}_2^+) + {\sf const}(\widetilde{h} ; \widetilde{e}) = 0,
\end{equation}
\end{subequations}
\eqref{eqnZ.3} and \eqref{eqnZ.4} lead to \eqref{eqnZ.5} and \eqref{eqnZ.6} as well.
\end{proof}

\section{Existence of largest-lifts}\label{secA}

In this section, we prove Lemma \ref{lemA.1}.

Consider a hexagon $\widetilde{\alpha} \in H_{\widetilde{\Gamma}_n}$, surrounded by six edges $\widetilde{e}_j$; see the middle picture of the Figure \ref{figA}. Let $\widetilde{h} \in \MOB$. Use Lemma \ref{lemZ.7} and compute
\begin{subequations}\label{eqnA.4}
\begin{align}
& {\sf length}(\widetilde{h} ; \widetilde{e}_1) = {\sf const}(\widetilde{h} ; \widetilde{e}_6)+{\sf const}(\widetilde{h} ; \widetilde{e}_1)+{\sf const}( \widetilde{h} ; \widetilde{e}_2), \\
& {\sf length}(\widetilde{h} ; \widetilde{e}_2) = -{\sf const}(\widetilde{h} ; \widetilde{e}_1)-{\sf const}(\widetilde{h} ; \widetilde{e}_2)-{\sf const}(\widetilde{h} ; \widetilde{e}_3), \\
& {\sf length}(\widetilde{h} ; \widetilde{e}_3))  = {\sf const}(\widetilde{h} ; \widetilde{e}_2)+{\sf const}( \widetilde{h} ; \widetilde{e}_3)+ {\sf const}( \widetilde{h} ; \widetilde{e}_4), \\
& {\sf length}(\widetilde{h} ; \widetilde{e}_4)  = -{\sf const}(\widetilde{h} ; \widetilde{e}_3)-{\sf const}( \widetilde{h} ; \widetilde{e}_4)-{\sf const}(\widetilde{h} ; \widetilde{e}_5), \\
& {\sf length}(\widetilde{h} ; \widetilde{e}_5) = {\sf const}(\widetilde{h} ; \widetilde{e}_4)+{\sf const}(\widetilde{h} ; \widetilde{e}_5)+{\sf const}( \widetilde{h} ; \widetilde{e}_6), \\
& {\sf length}(\widetilde{h} ; \widetilde{e}_6)  = -{\sf const}(\widetilde{h} ; \widetilde{e}_5)-{\sf const}( \widetilde{h} ; \widetilde{e}_6)-{\sf const}(\widetilde{h} ; \widetilde{e}_1).
\end{align}
\end{subequations}

Consequently,
\begin{equation}\label{eqnA.5}
{\sf perimeter}(\widetilde{h} ; \widetilde{\alpha}) = \sum_{i=1}^6 (-1)^i \cdot {\sf const}(\widetilde{h} ; \widetilde{e}_i).
\end{equation}

\begin{figure}
\centering
\begin{subfigure}[b]{\textwidth}
\centering
\includegraphics[scale=0.45]{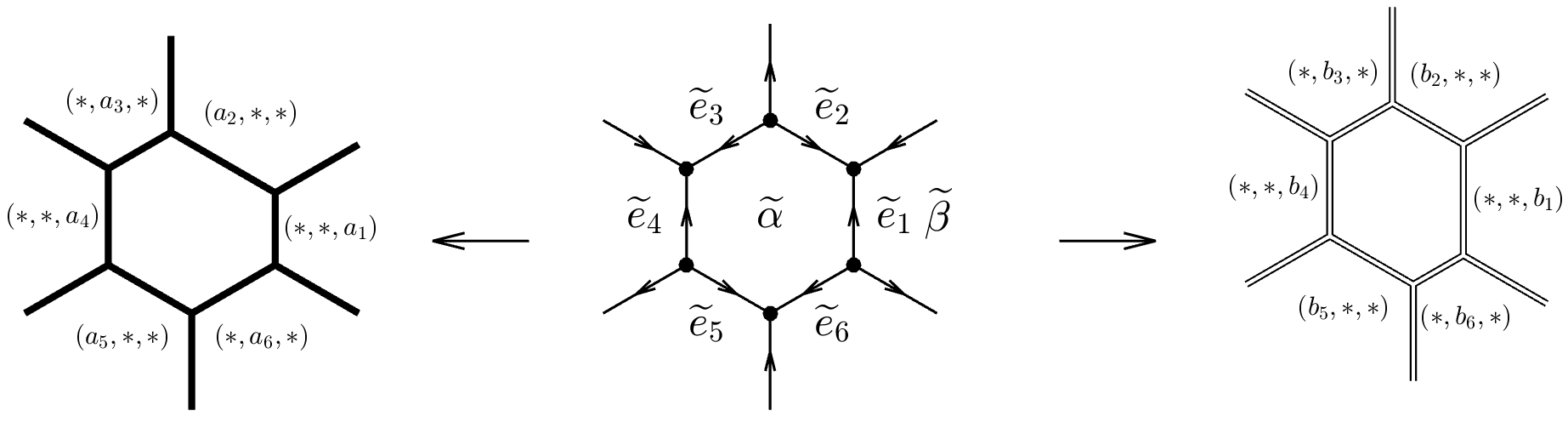}
\caption{The images of a hexagon $\widetilde{\alpha}$ under $\widetilde{h}_1$ and $\widetilde{h}_2$.}
\label{figA}
\end{subfigure}
\begin{subfigure}[b]{\textwidth}
\centering
\includegraphics[scale=0.45]{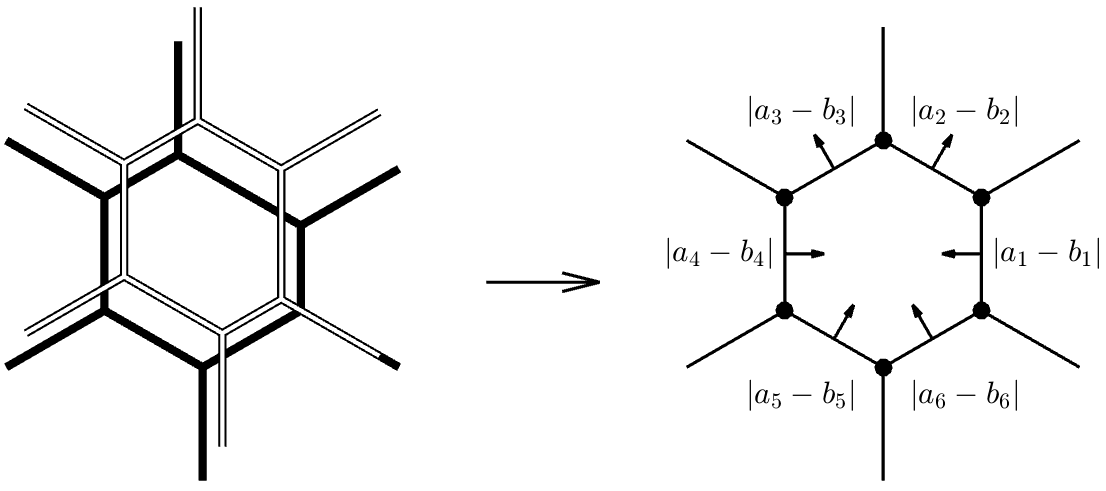}
\caption{Assigning arrows with weight to each edge.}
\label{figB}
\end{subfigure}
\caption{${\sf arrow}$ and ${\sf arrowsum}$.}
\label{figAB}
\end{figure}

Next, for each $\widetilde{h}_1 , \widetilde{h}_2 \in \MOB$ and $\widetilde{e} \in E_{\widetilde{\Gamma}_n}$, define
\begin{equation}
{\sf arrow} (\widetilde{h}_1 ; \widetilde{h}_2 ; \widetilde{e}):= {\sf const}(\widetilde{h}_2 ; \widetilde{e}) - {\sf const}(\widetilde{h}_1 ; \widetilde{e}).
\end{equation}
Then by computation,
\begin{equation}
p_e(\widetilde{e}) = p_e(\tilde{e}') \quad \Rightarrow \quad {\sf arrow} (\widetilde{h}_1 ; \widetilde{h}_2 ; \widetilde{e}) = {\sf arrow} (\widetilde{h}_1 ; \widetilde{h}_2 ; \tilde{e}' ).
\end{equation}
In other words, it makes sense to define ${\sf arrow}$ in $\Gamma_n$. In addition, define
\begin{equation}\label{eqnA.11}
{\sf arrowsum} (\widetilde{h}_1 ; \widetilde{h}_2 ; \widetilde{\alpha}):= \sum_{i=1}^6 (-1)^i \cdot {\sf arrow}(\widetilde{h}_1 ; \widetilde{h}_2 ; \widetilde{e}_i).
\end{equation}
Automatically from \eqref{eqnA.5},
\begin{equation}
{\sf arrowsum}(\widetilde{h}_1 ; \widetilde{h}_2 ; \widetilde{\alpha}) = {\sf perimeter}(\widetilde{h}_2 ; \widetilde{\alpha}) - {\sf perimeter}(\widetilde{h}_1 ; \widetilde{\alpha}).
\end{equation}

\begin{figure}
\centering
\includegraphics[scale=0.5]{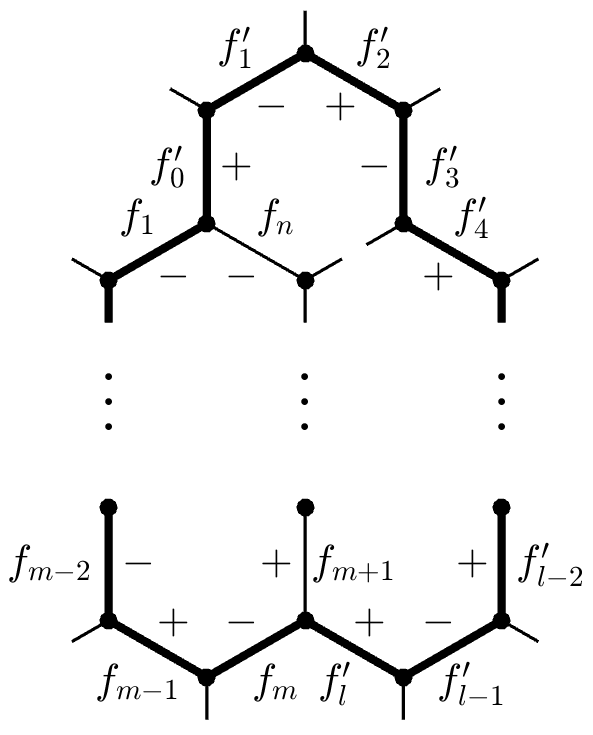}
\caption{Constructing an orientable loop with alternating sign of ${\sf arrow}$.}
\label{figD}
\end{figure}

\begin{proof}[Proof of Lemma \ref{lemA.1}]
Suppose there exist $\widetilde{h}_1 , \widetilde{h}_2 \in \MOB$ such that
\begin{equation}
\widetilde{h}_1 \neq \widetilde{h}_2 , \quad \iota ( \widetilde{h}_1 ) = \iota ( \widetilde{h}_2).
\end{equation}
Then there exists $f_1 \in E_{\Gamma_n}$ such that ${\sf arrow}(\widetilde{h}_1 ; \widetilde{h}_2 ; \widetilde{f}_1) \neq 0$ for $p_e (\widetilde{f}_1) = f_1$. Without losing generality, assume that 
\begin{equation}\label{eqnA.9}
{\sf arrow}(\widetilde{h}_1 ; \widetilde{h}_2 ; \widetilde{f}_1) < 0.
\end{equation}
Let $\widetilde{A}$ be an endpoint of $\widetilde{f}_1$. Since we are assuming $\partial \widetilde{h}_1 = \partial \widetilde{h}_2$, $\widetilde{A}$ is not a boundary vertex. Therefore, there are two more edges connected to $\widetilde{A}$: write them as $\widetilde{f}'$ and $\widetilde{f}''$. Then
\begin{equation}
{\sf const}(\widetilde{h}_1 ; \widetilde{f}_1) + {\sf const}(\widetilde{h}_1 ; \widetilde{f}') + {\sf const}(\widetilde{h}_1 ; \widetilde{f}'') = {\sf const}(\widetilde{h}_2 ; \widetilde{f}_1) + {\sf const}(\widetilde{h}_2 ; \widetilde{f}') + {\sf const}(\widetilde{h}_2 ; \widetilde{f}'')  = 0.
\end{equation}
In other words, 
\begin{equation}
{\sf arrow}(\widetilde{h}_1 ; \widetilde{h}_2 ; \widetilde{f}_1) + {\sf arrow}(\widetilde{h}_1 ; \widetilde{h}_2 ; \widetilde{f}') + {\sf arrow}(\widetilde{h}_1 ; \widetilde{h}_2 ; \widetilde{f}'') = 0.
\end{equation}
Due to \eqref{eqnA.9}, it is possible to choose $\widetilde{f}_2$ between $\widetilde{f}'$ and $\widetilde{f}''$ so that
\begin{equation}
{\sf arrow}(\widetilde{h}_1 ; \widetilde{h}_2 ; \widetilde{f}_2) > 0.
\end{equation}
Write $f_2 := p_e (\widetilde{f}_2)$. Select the endpoint of $\widetilde{f}_2$ aside from $\widetilde{A}$. Again, write the other two edges connected to the endpoint as $\widetilde{f}'$ and $\widetilde{f}''$ to find $\widetilde{f}_3$ satisfying ${\sf arrow}(\widetilde{h}_1 ; \widetilde{h}_2 ; \widetilde{f}_3)$.

In this way, there exists $\widetilde{f}_1 , \widetilde{f}_2 , \widetilde{f}_3 \cdots \in E_{\widetilde{\Gamma}_n}$ such that $\widetilde{f}_i$ and $\widetilde{f}_{i+1}$ are connected and
\begin{equation}\label{eqnA.10}
(-1)^i \cdot {\sf arrow}(\widetilde{h}_1 ; \widetilde{h}_2 ; \widetilde{f}_i) > 0.
\end{equation}
Write $f_i := p_e(\widetilde{f}_i)$. Since $\Gamma_n$ is a finite graph, $f_i$ forms up a loop, eventually. Assume that $f_1 , f_2 , \cdots , f_n$ form a loop $C$ in $\Gamma_n$ without self-intersection. 

We want to choose $C=(f_1 , f_2 , \cdots , f_n)$ so that $C$ is orientable. Suppose it is not. Due to Lemma \ref{lem5.0}, $n$ is an odd integer. Therefore,
\begin{equation}\label{eqnA.15}
{\sf arrow}(\widetilde{h}_1 ; \widetilde{h}_2 ; \widetilde{f}_1) <0, \quad {\sf arrow}(\widetilde{h}_1 ; \widetilde{h}_2 ; \widetilde{f}_n) <0.
\end{equation}
Let $f_0'$ be the edge connected to $f_1$ and $f_n$. Choose $p_e(\widetilde{f}_0') = f_0'$. Then from \eqref{eqnA.15},
\begin{equation}
{\sf arrow}(\widetilde{h}_1 ; \widetilde{h}_2 ; \widetilde{f}_0') >0.
\end{equation}
Again, choose $f_0' , f_1' , f_2' ,  \cdots$ in $E_{\Gamma_n}$ so that for each $p_e(\widetilde{f}_i') = f_i'$,
\begin{equation}
(-1)^i \cdot {\sf arrow}(\widetilde{h}_1 ; \widetilde{h}_2 ; \widetilde{f}_i') > 0.
\end{equation}

Since $\Gamma_n$ is a finite graph, there exists the minimal $l \in \N$ such that $f_l'$ meets one of $f_i$ or $f_i'$. If $f_l'$ share an endpoint with $f_i'$ and $f_{i+1}'$, then $f_{i+1}' , f_{i+2}' , \cdots, f_l ' $ form an orientable loop, as desired. Indeed, if not, then there are two non-orientable loops without intersection, leading to contradiction. 

Otherwise, $f_l'$ share an endpoint with $f_m$ and $f_{m+1}$. In Figure \ref{figD}, we can see the loop $(f_1 , f_2  ,  \cdots , f_n)$ and edges $f_0' , f_1' , \cdots , f_l'$ in $\Gamma_n$. Suppose $m$ is an odd integer \textit{i.e.} ${\sf arrow}(\widetilde{h}_1 ; \widetilde{h}_2 ; \widetilde{f}_m)$ is negative. Then construct
\begin{equation}
C = (f_0' , f_1 ' , f_2' , \cdots , f_{l-1}' , f_l' , f_m , f_{m-1} , \cdots , f_2 , f_1)
\end{equation}
The loop $C$ is depicted as a bold line in Figure \ref{figD}. Then the values of ${\sf arrow}$ alternate, meaning that there are even number of edges consisting the loop $C$. By Lemma \ref{lem5.0}, it is an orientable loop.

If $m$ is an even integer, construct the loop $C$ by
\begin{equation}
C = (f_0' , f_1 ' , f_2' , \cdots , f_{l-1}' , f_l' , f_{m+1} , f_{m+2} , \cdots , f_{n-1},  f_n).
\end{equation}
Either way, we have an orientable loop $C = (f_0'', f_1'', \cdots , f_l'')$ without self-intersection, the values ${\sf arrow}$ alternating.

Collect all hexagons $\alpha \in H_{\Gamma_n}$ which are in the interior region of the loop $C$. Write the subset of such hexagons as $H'$. Consider
\begin{equation}\label{eqnA.12}
\sum_{p_h(\widetilde{\alpha}) \in H'} {\sf arrowsum}(\widetilde{h}_1 ; \widetilde{h}_2 ; \widetilde{\alpha}).
\end{equation}
Here, for each $\alpha \in H'$, choose one of $\widetilde{\alpha} \in H_{\widetilde{\Gamma}_n}$ such that $p_h(\widetilde{\alpha}) = \alpha$ and add corresponding ${\sf arrowsum}$ to the summation.

According to \eqref{eqnA.11}, \eqref{eqnA.12} is an alternating sum of ${\sf arrow}(\widetilde{h}_1 ; \widetilde{h}_2 , \widetilde{e})$. Let two hexagons $\alpha_1 , \alpha_2$ be adjoined by $e \in E_{\Gamma_n}$. Let $p_e(\widetilde{e}) = e$. If $\alpha_1 , \alpha_2 \in H'$, then both values ${\sf arrow}(\widetilde{h}_1 ; \widetilde{h}_2 ; \widetilde{e})$ and $-{\sf arrow}(\widetilde{h}_1 ; \widetilde{h}_2 ; \widetilde{e})$ appear in the computation of \eqref{eqnA.12}, cancelling out each other. Indeed, in Figure \ref{figA}, hexagons $\widetilde{\alpha}$ and $\widetilde{\beta}$ are adjoined by $\widetilde{e}_1$. As a surrounding edge of $\widetilde{\beta}$, $\widetilde{e}_1$ can be denoted as $\tilde{e}_4'$.

This means that \eqref{eqnA.12} involves summation of $\widetilde{f}_i''$ consisting the loop $C$. Also, ${\sf arrow}(\widetilde{h}_1 ; \widetilde{h}_2 ; \widetilde{f}_i'')$ should alternate, since $f_i''$ and $f_{i+1}''$ are connected and on the same hexagon. Therefore,
\begin{equation}\label{eqnA.13}
\sum_{p_h(\widetilde{\alpha}) \in H'} {\sf arrowsum}(\widetilde{h}_1 ; \widetilde{h}_2 ; \widetilde{\alpha}) = \pm \left( \sum_{i=1}^l (-1)^i \cdot {\sf arrow}(\widetilde{h}_1 ; \widetilde{h}_2 ; \widetilde{f}_i'') \right).
\end{equation}
This is non-zero due to \eqref{eqnA.10}.

On the other hand, from \eqref{eqnA.11},
\begin{equation}\label{eqnA.14}
\sum_{p_h(\widetilde{\alpha}) \in H'} {\sf arrowsum}(\widetilde{h}_1 ; \widetilde{h}_2 ; \widetilde{\alpha}) = \sum_{p_h(\widetilde{\alpha}) \in H'} {\sf perimeter}( \widetilde{h}_2 ; \widetilde{\alpha}) - {\sf perimeter}( \widetilde{h}_1 ; \widetilde{\alpha}) = 0.
\end{equation}
Hence, \eqref{eqnA.13} and \eqref{eqnA.14} lead to contradiction, proving that $\iota$ is injective.
\end{proof}

\section*{Acknowledgements}
We thank Shiliang Gao and Alexander Yong  for helpful remarks on drafts of this preprint. We are grateful to Jiyang Gao for pointing out the exceptional case of ``white triangle of size $0.5$''. We also thank Allen Knutson for helpful conversations. This work was partially supported by UIUC Campus Research Board grant RB24025.


\end{document}